 %
 %
 %

\documentclass{conm-p-l}

\usepackage{amssymb}

\usepackage[cmtip,all]{xy}


\newtheorem{theorem}{Theorem}[section]
\newtheorem{lemma}[theorem]{Lemma}
\newtheorem{cor}[theorem]{Corollary}
\newtheorem{prop}[theorem]{Proposition}
\theoremstyle{definition}
\newtheorem{definition}[theorem]{Definition}
\newtheorem{example}[theorem]{Example}

\theoremstyle{remark}
\newtheorem{remark}[theorem]{Remark}

\numberwithin{equation}{section}

\newcommand{\K}{{k}}
\newcommand{\Sym}{{\mathrm{Sym}}}
\newcommand{\scriptEnd}{\mathcal E\!\mathit{nd}}
\newcommand{\scriptHom}{\mathcal H\!\mathit{om}}
\newcommand{\dow}{\downarrow}
\newcommand{\upa}{\uparrow}
\begin{document}
\title{The classical master equation}
\author{Giovanni Felder}
\address{Department of mathematics,
ETH Zurich, 8092 Zurich, Switzerland}
\email{felder@math.ethz.ch}
\author{David Kazhdan}
\address{Einstein Institute of Mathematics,
The Hebrew University of Jerusalem,
Jerusalem 91904, Israel}
\email{kazhdan@math.huji.ac.il}
\contrib[with an appendix by]{Tomer M. Schlank} 
\address{Department of mathematics, 
MIT,
Cambridge, MA 02139-4307}
\email{schlank@math.mit.edu}

\subjclass[2010]{Primary 81T70; Secondary 81T13, 81Q30, 14F99}

\date{20 December 2012}

\begin{abstract}
We formalize the construction by Batalin and Vilkovisky of a solution of the
classical master equation associated with a regular
function on a nonsingular affine variety (the classical action). 
We introduce the notion of stable
equivalence of solutions and prove that a solution exists and is unique up to
stable equivalence. A consequence is that the associated BRST cohomology, with
its structure of Poisson${}_0$-algebra, is independent of choices and is
uniquely determined up to unique isomorphism by the classical action.
We give a geometric interpretation of the BRST cohomology sheaf in degree
0 and 1 as the cohomology of a Lie--Rinehart algebra associated with the
critical locus of the classical action. Finally we consider
the case of a quasi-projective varieties and show that
the BRST sheaves defined on an open affine cover can be glued 
to a sheaf of differential Poisson${}_0$-algebras.
\end{abstract}
\maketitle

\tableofcontents
\section{Introduction}\label{s-1}
Batalin and Vilkovisky
{}\cite{BatalinVilkovisky1981,BatalinVilkovisky1983}, in their study
of generalized gauge symmetries in quantum field theory, proposed to
associate to a function $S_0$, called the classical action, on the
space of fields $X$, taken here to be an affine variety, a solution
$S$ of the classical master equation $[S,S]=0$ in a completed space of
functions on the $(-1)$-shifted cotangent bundle $T^*[-1]V$ of a
certain graded manifold $V$ containing $X$. The Poisson bracket
$[\;\,,\;]$ has degree 1 and the classical master equation implies
that $[S,\;\,]$ is a differential on functions on $T^*[-1]V$.  The
corresponding cohomology is called (classical) BRST cohomology and
comes with an induced product of degree $0$ and Poisson bracket of
degree $1$. The BRST cohomology in degree $0$ consists of regular
functions on the critical locus of $S_0$ that are annihilated by the
vector fields that annihilate $S_0$.  The ultimate aim is to study (or
make sense of) the asymptotic expansion of oscillatory integrals $\int
\exp( i S_0(x)/\hbar)f(x)dx$ as $\hbar\to 0$ in cases where the
critical points of $S_0$ are not isolated, particularly in the
infinite dimensional case. The classical master equation, considered
here, appears in the study of the critical locus, which is preliminary
to the study of the oscillatory integrals where the quantum master
equation arises. We plan to extend our approach to the quantum case in
a future publication.

The starting point of this paper is the remark that the Batalin--Vilkovisky
construction depends on several choices, so that the invariant meaning of the
BRST cohomology remained unclear.  Our aim is to formalize the construction by
introducing the notion of {\em BV variety} associated with a regular function
$S_0$ on a nonsingular affine variety, comprising a solution $S$ of the
classical master equation. We show that, given $S_0$, such a BV variety exists
and is unique up to a natural notion of stable equivalence and that
automorphisms act trivially on cohomology. A consequence is that the BRST
cohomology is uniquely determined, up to unique isomorphism of Poisson
algebras, by $S_0$.

Let us describe the result in more detail. Let $\K$ be a field of
characteristic $0$ and $X$ a nonsingular affine variety. We use the
language of $\mathbb Z$-graded varieties. A $\mathbb Z$-graded variety
with support $X$ is a $\mathbb Z$-graded commutative ringed space
$V=(X,\mathcal O_V)$ with structure sheaf $\mathcal
O_V=\bigoplus_{i\in\mathbb Z}\mathcal O_V^i$ locally isomorphic to the
completed symmetric algebra of a free graded $\mathcal O_X$-module
with homogeneous components of finite rank.  The completion is defined
by the filtration $F^p\mathcal O_V$ of ideals generated by the elements of
degree $\geq p$.  The (-1)-shifted cotangent bundle of a $\mathbb
Z$-graded variety $V$ such that $\mathcal O_V^i=0$ for $i<0$ is
$M=T^*[-1]V=(X,\mathcal O_M)$ with $\mathcal
O_M=\widehat\Sym_{\mathcal O_V}T_{V}[1]$, the completed graded
symmetric algebra of the tangent sheaf of $V$, with degree shift
$T_{V}[1]^i=T_V^{i+1}$.  Then $\mathcal O_M$ is, in the terminology of
\cite{CostelloGwilliam}, a sheaf of Poisson${}_0$ or $P_0$-algebras;
namely, it comes with a graded commutative product of degree zero and
a Poisson bracket of degree 1.\footnote{In general a $P_j$-algebra is
  a graded commutative algebra with a Poisson bracket of degree
  $1-j$.}  The Poisson bracket comes from the canonical symplectic
structure of degree $-1$ on $T^*[-1]V$.  More generally we define a
$(-1)$-symplectic variety with support $X$ to be a graded manifold
$(X,\mathcal O_M)$ such that $\mathcal O_M$ is locally isomorphic as a
$P_0$-algebra to a shifted cotangent bundle.  The classical master
equation for a function $S\in \Gamma(X,\mathcal O_M)$ of degree $0$ on
a $(-1)$-symplectic manifold $M$ is $[S,S]=0$. A solution $S$ defines
a differential $d_S=[S,\;\,]$ on the sheaf of $P_0$-algebras $\mathcal
O_M$. Let $I_M=F^1\mathcal O_M$ be the ideal of $\mathcal O_M$
generated by elements of positive degree. Then $d_S$ preserves $I_M$
and induces a differential on the non-positively graded complex of
sheaves $\mathcal O_M/I_M$.
\begin{definition}
  Let $S_0\in\Gamma(X,\mathcal O_X)$ be a regular function on
  $X\in\mathcal C$.  A {\em BV variety with support} $(X,S_0)$ is a
  pair $(M,S)$ consisting of a $(-1)$-symplectic variety $M$
  with support $X$ and a function
  $S\in\Gamma(X,\mathcal O_{M}^0)$ such that
  \begin{enumerate}
    \item[(i)] $S|_X=S_0$.
    \item[(ii)] $S$ is a solution of the classical master equation
      $[S,S]=0$.
    \item[(iii)] The cohomology sheaf of the complex $(\mathcal
      O_{M}/I_M,d_S)$ vanishes in non-zero degree.
  \end{enumerate}
\end{definition}
The complex of sheaves $(\mathcal O_M,d_S)$ is called BRST complex and
its cohomology is called BRST cohomology (after Becchi, Rouet, Stora
and Tyutin, who introduced it in the case of ordinary gauge theory
\cite{BRS}).  The BRST complex is a sheaf of differential
$P_0$-algebras, namely a sheaf of $P_0$-algebras with a differential
that is a derivation for both the product of and the bracket, so that
the BRST cohomology is a $P_0$-algebra.

Before stating our results we now add a few comments on the origin and
meaning of our axiomatic setting.  We refer to
\cite{HenneauxTeitelboim} for the physical background of this
construction and to \cite{Stasheff} for its mathematical context.  The
origin of the story is in the Faddeev--Popov description
\cite{FaddeevPopov} of path integrals over the quotient of the space
of fields by the action of the gauge group. With the work of Becchi,
Rouet, Stora and Tyutin, see \cite{BRS}, who identified gauge
invariant observables as cocycles in a differentialg graded algebra,
the BRST complex, and of Zinn-Justin \cite{Zinn} who introduced a
version of the master equation, it became clear that $-1$-sympectic
manifolds and the master equation are the structure underlying
perturbative gauge theory and renormalization, see \cite{Costello} for
a mathematical approach to this subject. In our finite-dimensional
classical setting the solutions of the master equation covered by the
approach of Faddeev and Popov arise in the case of a classical action
$S_0$ invariant under the action of a connected Lie group, see Example
\ref{exa-3} in Section \ref{s-5andahalf}.  To these data one
associates a solution \eqref{e-FP} of the master equation, the
Faddeev--Popov action.  This solution obeys (i) and (ii) but in
general not (iii). Condition (iii) is satisfied under additional
conditions of freeness of the action, see Section \ref{s-5andahalf},
Example \ref{exa-3}.  In the physics literature it was noticed that
the Faddeev--Popov construction had to be extended if the gauge group
does not act freely, or in more general situations in which one would
like to quotient by symmetries of the classical actions that are not
described by a group action.  For example, in certain quantum field
theories, such as the Poisson sigma model underlying Kontevich's
deformation quantization {}\cite{Kontsevich,CattaneoFelder}, the
classical action $S_0$ is invariant under (i.e., annihilated by) a
distribution of tangent planes which is integrable only when
restricted to the critical locus of $S_0$.  A finite-dimensinal model
for this phenomenon is given by Example \ref{exa-6} in Section
\ref{s-5andahalf}, where $S_0$ is the square of the norm on a
Euclidean vector bundle with orthogonal connection.  The contribution
of Batalin and Vilkovisky was to introduce a general construction of
solutions of the master equation replacing previous ad hoc attempts to
generalize the Faddeev--Popov solution. Their idea was to start from a
classical action without assuming a priori the existence of a group of
symmetries. Our observation is that the Batalin--Vilkovisky approach
amounts to add axiom (iii) to the wish list for solutions of the
master equation.  From the point of view of this paper, axiom (iii) is
important as it implies existence and uniqueness results: the
existence and uniqueness up to stable equivalence of a solution $S$
obeying (i)--(iii) given $S_0$ and the existence and uniqueness of the
corresponding differential $P_0$-algebra of observables (the BRST
complex) up to a contractible space of isomorphisms, see
Prop.~\ref{p-contractible} in Section \ref{s-7}.

Finally let us remark that there are
other interesting solutions of the master equation, that do not
necessarily obey property (iii) of the definition of BV varieties. They
are effective actions obtained by reduction from solutions in infinite dimensional
spaces of fields of a quantum field theory, 
see {}\cite{Mnev, BonechiMnevZabzine,  BonechiCattaneoMnev, CattaneoMnev}.

We now turn to the description of our results.

The main local result of this paper is that a BV variety $(M,S)$ with
given support $(X,S_0)$ such that $X$ is affine 
exists and is essentially unique. Two BV varieties
$(M_1,S_1)$, $(M_2,S_2)$ with support $(X,S_0)$ are called equivalent
if there is a Poisson isomorphism $M_1\to M_2$ inducing
the identity on $X$ whose pull-back sends $S_2$ is $S_1$.  They are
stably equivalent if they become equivalent after taking the product
with solutions of BV varieties with support
$(X=\{\mathrm{pt}\},S_0=0)$, see Section \ref{ss-products} for the
precise definition. The main property is that stably equivalent
solutions give rise to BRST complexes that are quasi-isomorphic as
sheaves of differential $P_0$-algebras.
\begin{theorem}\label{t-00} Let $S_0$ be a regular function on a
nonsingular affine variety $X$ over a field $\K$ of characteristic zero.
\begin{enumerate}
\item[(i)] 
There exists a BV variety $(M,S)$ with support $(X,S_0)$ such
that $M\cong T^*[-1]V$ for some non-negatively graded variety $V$. 
It is unique up to stable equivalence.
\item[(ii)] Poisson automorphisms of $(M,S)$ act as the identity on the
cohomology of the BRST complex.
Thus the BRST cohomology $\mathcal H^\bullet(\mathcal O_M,d_S)$ is determined by
$(X,S_0)$ up to unique isomorphism. 
\end{enumerate}
\end{theorem}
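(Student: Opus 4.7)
The plan is to treat existence and uniqueness in (i) as two separate inductive constructions along the grading, then deduce (ii) via a homotopy argument. The starting observation is that the pair $(T^*[-1]X,S_0)$ (with $V^{(0)}=X$) already satisfies (i) and (ii), and the associated complex $(\mathcal O_{T^*[-1]X}/I,d_{S_0})$ is the Koszul complex of the Jacobian ideal $J=(\partial_iS_0)$; axiom~(iii) holds if and only if the $\partial_iS_0$ form a regular sequence, which generically fails. For existence I would enlarge $V$ inductively via a Koszul--Tate procedure: having a variety $V^{(k)}$ and a function $S^{(k)}$ solving the master equation with the cohomology of $(\mathcal O_M/I_M,d_{S^{(k)}})$ vanishing in degrees $-k+1,\ldots,-1$, I would adjoin free generators $y^{(k)}_\alpha$ of degree $k$ together with conjugate antifields $y^{(k)*}_\alpha$ of degree $-1-k$, chosen to kill a generating set of obstructing cocycles, and modify $S$ by a coupling $\sum_\alpha y^{(k)*}_\alpha z^{(k-1)}_\alpha$ where the $z^{(k-1)}_\alpha$ are cocycle representatives. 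The master equation $[S,S]=0$ is maintained at each step because the correction required to cancel the new bracket terms is itself a $d_S$-cocycle in a degree already cleared. All constructions are carried out in the $F^\bullet$-adic completion, with convergence ensured by the strict filtration degree of each correction.

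For uniqueness up to stable equivalence, given two BV varieties $(M_1,S_1)$ and $(M_2,S_2)$, axiom~(iii) says each $\mathcal O_{M_j}/I_{M_j}$ resolves $\mathcal O_X/J$ in degree~$0$, so comparison of Koszul--Tate resolutions supplies a map on generators up to free summands. Stabilization by trivial BV varieties at a point---``contractible pairs'' consisting of a generator and its antifield with $S$ a quadratic pairing---provides exactly the free summands needed to align the two sides on the nose. The resulting assignment on generators then extends to a Poisson isomorphism $M_1\to M_2$ over $X$ intertwining $S_1$ and $S_2$ by a further obstruction argument whose obstructions again lie in cohomology killed by (iii). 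For part~(ii), I would show that any Poisson automorphism $\varphi$ of $(M,S)$ inducing the identity on $X$ and fixing $S$ is the time-one flow of a Hamiltonian vector field $[h,-]$ for some $h\in\Gamma(X,\mathcal O_M^{-1})$; such an $h$ is built order by order, with obstructions controlled by (iii). The induced action on cohomology is then $\mathrm{id}+[d_Sh,-]+\cdots$, and each correction is a coboundary, so the action is trivial.

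The main obstacle is the uniqueness clause in (i), specifically the need to mediate between minimal Koszul--Tate resolutions that genuinely differ. One must identify the correct class of trivial BV varieties at a point---large enough to absorb all free summands that arise from comparing generators in each positive degree, yet themselves satisfying axiom~(iii)---and then show that the generator-level comparison lifts through the completed symmetric algebra, with each lifting step introducing a higher-order obstruction that is a cocycle in a degree cleared by (iii). A secondary delicate point is the construction of the Hamiltonian $h$ in (ii), which is an analogous but separate obstruction argument carried out in the space of automorphisms rather than the space of solutions.
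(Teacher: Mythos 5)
Your treatment of part (i) is essentially the paper's own route: a Koszul--Tate resolution of the Jacobian ring, a recursive correction of $S$ whose obstructions are cocycles killed by the acyclicity of the resolution in negative degrees, and stabilization by acyclic pairs to turn the quasi-isomorphism between two Tate resolutions into an honest DGA isomorphism (this last step is the paper's Appendix A). The only differences are cosmetic (you interleave the construction of the resolution with that of $S$, where the paper fixes the full resolution first and then inducts on the filtration degree) or compressed (the passage from a DGA isomorphism of the quotients $\mathcal O_M/I_M$ to a Poisson isomorphism of the shifted cotangent bundles is done in the paper via the duality $T^*[-1]V\cong T^*[-1]V^\vee$, after which one invokes the transitivity of the gauge group on solutions attached to a fixed resolution).

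Part (ii) has a genuine gap. You assert that every Poisson automorphism $\varphi$ of $(M,S)$ fixing $S$ is the time-one flow $\exp(\mathrm{ad}_h)$ of a single Hamiltonian vector field, with $h$ built order by order. This fails at the very first order. The leading-order discrepancy between $\varphi$ and the identity is the induced automorphism $f$ of the Tate resolution $R_M=\mathcal O_M/I_M$, which can act nontrivially on the generators; such an automorphism cannot be reached by exponentiating elements of $I_M^{(2)}\cap\mathcal O_M^{-1}$ (these act trivially on the associated graded), and for Hamiltonians outside $I_M^{(2)}$ the exponential need not exist over a general field of characteristic zero. Concretely, on a trivial BV variety $(T^*[-1]W,S_W)$ over $\K=\mathbb Q$ the automorphism scaling $W^*$ by $2$ and $W[1]$ by $1/2$ preserves the bracket and $S_W$, but if it were $\exp(\mathrm{ad}_h)$ the induced derivation would have to have eigenvalue $\log 2$ on the generators, which is not in $\mathbb Q$. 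So ``$\varphi$ is a Hamiltonian flow'' is simply false, and your homotopy formula only covers the gauge subgroup (the paper's Proposition on gauge equivalences), not general automorphisms.

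What the paper actually does, and what your argument is missing, is an interpolation: $f$ and $\mathrm{id}$ are homotopic as DGA morphisms of the Tate resolution, $H=f_t+dt\,h_t$; the (not necessarily invertible) family $f_t$ is inverted at the generic point of $\mathbb A^1$, lifted to a family $F_t$ of Poisson automorphisms defined for $t$ in Zariski-open subsets containing $0$ and $1$, and corrected by gauge equivalences so that $F_t(S)=S$ with $F_0=\mathrm{id}$, $F_1=\varphi^*$. Only then does one differentiate: $F_t^{-1}\dot F_t$ is a degree-zero Poisson derivation, hence Hamiltonian with a unique Hamiltonian $K_t$, which is a $d_S$-cocycle of degree $-1$ and therefore exact, giving the homotopy formula. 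The existence and uniqueness of Hamiltonians for degree-zero Poisson derivations, and the vanishing of BRST cohomology in degree $-1$, are the inputs here; the single-exponential picture you propose cannot replace this family argument.
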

The existence proof is based on the construction described in
{}\cite{BatalinVilkovisky1981,BatalinVilkovisky1983} and is in two steps. In the
first step one extends the map $dS_0\colon T_X\to \mathcal O_X$ sending a
vector field $\xi$ to $\xi(S_0)$ to a semi-free resolution of the Jacobian
ring, namely a quasi-isomorphism $(R,\delta)\to (J(S_0),0)$ of differential
graded commutative $\mathcal O_X$-algebras, where $R$ is the symmetric algebra
of a negatively graded locally free $\mathcal O_X$-module with homogeneous
components of finite rank.  The existence of such resolutions is due to Tate
\cite{Tate1957} and $R$ is called Tate (or Koszul--Tate) resolution.
Geometrically $\delta$ is a cohomological vector field on the coisotropic
subvariety of a shifted cotangent bundle $M=T^*[-1]V$ determined by the ideal
$I_M$. In the second step one extends this vector field to a Hamiltonian
cohomological vector field $[S,\ ]$ on $T^*[-1]V$. This existence proof is
basically adapted from \cite{HenneauxTeitelboim}, {Chapter 17}, but we avoid
using the ``regularity condition'' on the smoothness of the critical locus
assumed there.

To show uniqueness up to stable equivalence we remark that all BV varieties
with given support are isomorphic to BV varieties obtained from some Tate
resolution and the question reduces to comparing different Tate resolutions. It
is a standard result that different Tate resolutions of the same algebra are
related by a quasi-isomorphism that is unique up to homotopy. We prove in the
Appendix the stronger result that any two such resolutions become isomorphic as
differential graded commutative algebras after taking the tensor product with
the symmetric algebra of an acyclic complex.

The existence part of Theorem \ref{t-00} (i) is proved in Section
\ref{s-existence} (Theorem \ref{t-T1}); the uniqueness up to stable equivalence
is Theorem \ref{t-33} in Section \ref{s-relating}.  Part (ii) is proved in
Section \ref{s-auto} (Theorem \ref{t-automorphisms} and Corollary \ref{c-unique}).

The next result is a partial description of the cohomology of the BRST complex.
The cokernel of the map $dS_0\colon T_X\to \mathcal O_X$ is the {\em Jacobian
ring}, the quotient of $\mathcal O_X$ by the ideal generated by partial
derivatives of $S_0$.  Vector fields in the kernel $L(S_0)$ of $dS_0$ are {\em
infinitesimal symmetries} of $S_0$. They form a sheaf of Lie subalgebras of
$T_X$.
\begin{theorem}\label{t-01}
\ 
\begin{enumerate}
\item[(i)] The cohomology sheaf of the BRST complex is supported on the critical
locus of $S_0$ and vanishes in negative degree.
\item[(ii)] 
The zeroth BRST cohomology algebra is isomorphic to the algebra of invariants
    \[
    J(S_0)^{L(S_0)}=\{f\in J(S_0)\,|\,\xi(f)=0, \forall \xi\in L(S_0)\},
    \]
    of the Jacobian ring for the Lie algebra of infinitesimal symmetries of $S_0$.
\end{enumerate}
\end{theorem}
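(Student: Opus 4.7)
The plan is to run the spectral sequence of the filtration $F^\bullet\mathcal O_M$ on the BRST complex $(\mathcal O_M,d_S)$. Locally writing $M\cong T^*[-1]V$ with positive-degree ``ghost'' generators $c_\alpha$ of degrees $|c_\alpha|\geq 1$, the filtration $F^p$ is by ghost weight and is preserved by $d_S$. The associated graded identifies with $\Sym(c_\alpha)\otimes(\mathcal O_M/I_M)$, on which the induced differential $d_0$ acts trivially on the ghost factor and as the Koszul--Tate differential $\delta=d_S\bmod I_M$ on $\mathcal O_M/I_M$. Axiom (iii) of the BV definition gives $H^\bullet(\mathcal O_M/I_M,\delta)=J(S_0)$ concentrated in degree zero, so
\[
E_1^{p,q}=\bigl(\Sym^p(c_\alpha)\otimes J(S_0)\bigr)_{p+q},
\]
where the subscript selects the cohomological-degree-$(p+q)$ part. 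Since each ghost monomial of $c$-weight $p$ has cohomological degree $\geq p$, we have $E_1^{p,q}=0$ whenever $q<0$ (and trivially for $p<0$).

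This immediately yields (i). For the vanishing in negative degree, when $n<0$ every $(p,q)$ with $p+q=n$ satisfies $q<0$ or $p<0$, so $E_1^{p,q}=0$ and the spectral sequence gives $\mathcal H^n=0$. For the support assertion, over any open $U\subset X\setminus\mathrm{Crit}(S_0)$ at least one $\partial_iS_0$ is invertible, so $J(S_0)|_U=0$; hence $E_1|_U=0$ and $\mathcal H^\bullet|_U=0$.

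For (ii), I would compute $d_1\colon E_1^{0,0}\to E_1^{1,0}$. The classical master equation at first order in ghosts forces
\[
S=S_0+\sum_{a:\,|c_a|=1}c_a\sum_i\xi_a^i\,x_i^*+(\text{ghost weight}\geq 2),
\]
with $\xi_a(S_0)=0$, so $\xi_a\in L(S_0)$. By the Tate-resolution construction of $(M,S)$ in the existence proof of Theorem \ref{t-00}, the $\xi_a$ generate $L(S_0)$ as an $\mathcal O_X$-module modulo the submodule of Koszul-trivial symmetries (those of the form $\sum_{ij}\lambda_{ij}(\partial_jS_0)\partial_{x_i}$ with $\lambda_{ij}=-\lambda_{ji}$), which act by zero on $J(S_0)$. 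A direct computation with $\{S,\tilde f_0\}=-\sum_i\partial_{x_i^*}S\cdot\partial_i\tilde f_0$ for any lift $\tilde f_0\in\mathcal O_X$ of $[f_0]\in J(S_0)$ gives
\[
d_S\tilde f_0\equiv -\sum_a c_a\,\xi_a(\tilde f_0)\pmod{F^2},
\]
hence $d_1[f_0]=-\sum_a c_a[\xi_a(\tilde f_0)]$ and $\ker d_1=J(S_0)^{L(S_0)}$. All higher differentials $d_r\colon E_r^{0,0}\to E_r^{r,1-r}$ with $r\geq 2$ vanish because $E_1^{r,1-r}=0$ by the same ghost-degree count ($r\geq 2$ ghosts have total cohomological degree $\geq 2>1$). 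The same count gives $E_\infty^{p,-p}=0$ for $p\geq 1$, so the filtration on $\mathcal H^0$ is trivial and $\mathcal H^0\cong E_\infty^{0,0}=J(S_0)^{L(S_0)}$ as algebras.

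The main obstacle is the identification of $d_1$ with the $L(S_0)$-action, which rests on the fact that the $\xi_a$ appearing linearly in $S$ generate $L(S_0)$ modulo Koszul-trivial symmetries. This follows from the construction in Theorem \ref{t-00}, where the lowest-level anti-ghosts $c_a^*$ of degree $-2$ are introduced precisely to kill $H^{-1}$ of the Koszul complex on $(\partial_iS_0)$, whose cycle module is $L(S_0)$. A secondary point is convergence of the spectral sequence, which holds in each fixed cohomological degree because the filtration is complete and only finitely many bidegrees contribute.
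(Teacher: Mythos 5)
Your proposal is correct and follows essentially the same route as the paper: the spectral sequence of the filtration $F^\bullet\mathcal O_M$ (Theorem \ref{t-co2}), with axiom (iii) collapsing $E_1$ to the row $q=0$, giving the support and negative-degree vanishing (Corollaries \ref{c-neg} and \ref{c-0}), and the identification of $d_1$ on $E_1^{0,0}$ with the action of the vector fields $\delta(\beta_i^*)$, which generate $L(S_0)$ modulo the trivially acting $L_0(S_0)$ (Proposition \ref{p-35}). The only cosmetic difference is that you index the filtration by ghost number rather than by total ghost degree, but the resulting $E_1$-page and the argument are the same.
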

The computation of the BRST cohomology is based on the spectral sequence
associated with the filtration $F^\bullet\mathcal O_M$ and is presented in
Section \ref{s-5}.  Theorem \ref{t-01} follows from the description of the
$E_2$-term in Theorem \ref{t-co2}, see Corollary \ref{c-neg} and Proposition
\ref{p-35}.

Can one describe the BRST cohomology in terms of the geometry of 
the critical locus? We give a conjectural description of this kind, which
we prove in degree 0 and 1: the BRST cohomology for an affine variety
is isomorphic to the cohomology of a Lie--Rinehart algebra naturally
associated to the critical locus, see Section \ref{s-6}. One encouraging
fact is that the bracket $H^0\otimes H^0\to H^1$ induced by the Poisson bracket
has a very natural geometric description in terms of this Lie--Rinehart
algebra.

Theorem \ref{t-01} refers to affine varieties and it is natural to ask whether
affine BV varieties glue well to build global objects defined on general
nonsingular varieties. We have a partial existence result in this direction.
We show in Corollary \ref{c-quasipro} that if $S_0$ is a (possibly multivalued)
function on a quasi-projective variety, then there is a sheaf of differential
$P_0$-algebras which is locally quasi-isomorphic to the BRST complex of
a BV variety associated to $S_0$. The necessary homotopy gluing technique
is explained in Appendix B by Tomer Schlank.

Apart from the extension of our results to the quantum case, namely the theory
of the quantum master equation and Batalin--Vilkovisky integration, see
{}\cite{BatalinVilkovisky1981, HenneauxTeitelboim,Schwarz,
Khudaverdian,AKSZ,Severa,Albertetal}, it is important to study the higher
dimensional case of local functionals in field theory, 
see {}\cite{BatalinVilkovisky1981,
HenneauxTeitelboim,CostelloGwilliam,Paugam}.  It would also be interesting to
compare our approach to the derived geometry approach of \cite{CostelloGwilliam},
developed in {}\cite{Vezzosi,
Pantevetal}, and consider, as these authors do, the more general situation of an
intersection of Lagrangian submanifolds in a symplectic manifold (the case
studied in this paper is the intersection of the zero section with the graph of
$dS_0$ in the cotangent bundle).

In most of the paper we formulate our results for a nonsingular affine
variety $X$ over a field $\K$ of characteristic zero for consistency of
language, but our results hold also, with the same proofs, for smooth manifolds
(with $\K=\mathbb R$) or complex Stein manifolds (with $\K=\mathbb C$).

Another straightforward generalization to which our results apply
with the same proofs is the case where $S_0$ is
a {\em multivalued function defined modulo constants} (alias a closed
one-form). 
By this we mean a formal indefinite integral $S_0 = \int \lambda$,
where $\lambda$ is a closed 1-form. The point is that it is not
$S_0$ that matters but the differential $[S_0,\;]$, 
which depends on $S_0$ through $dS_0$, see \ref{ss-mBV} for a
more formal treatment.

The paper is organized as follows. In Section \ref{s-2} we introduce a
notion of graded variety suitable for our problem. It is patterned on Manin's
definition of supermanifolds and supervarieties \cite{Manin}.  Shifted
cotangent bundles are also introduced there.  BV varieties
and their cohomology are introduced in Section \ref{s-3}. In Section \ref{s-4} 
we prove the existence and
uniqueness result for BV varieties, and the computation of the BRST cohomology
is contained in Section \ref{s-5}.  We then discuss several examples in
Section \ref{s-5andahalf} and in
Section \ref{s-6} we give a geometric description of the
cohomology in degree $0$ and $1$ and of the induced bracket $H^0\otimes H^0\to
H^1$. We conclude our paper with Section \ref{s-7} where we extend our
existence result to the case of quasi-projective varieties.

\medskip\noindent{\bf Conventions.} We work over a field $\K$ of
characteristic zero. The homogeneous component of degree $i$ of a graded
object $E$ is denoted by $E^i$ and $E[i]$ is $E$ with degrees shifted by $i$:
$E[i]^j=E^{i+j}$. Differentials have degree 1.
To avoid conflicts of notation we denote by $I^{(j)}$ the
$j$-th power $I\cdots I$ of an ideal $I$.   

\medskip\noindent{\bf Acknowledgements.} 
We thank Damien Calaque, Kevin Costello, 
Vladimir Hinich and Dmitry Roytenberg for instructive discussions,
Pavel Etingof for kindly providing 
Example \ref{exa-8} in Section \ref{s-5andahalf},
Bernhard Keller and Jim Stasheff for useful correspondence.
We also thank Tomer Schlank for discussions and explanations on homotopy limits and for
providing Appendix \ref{appB}.
We are grateful to Ricardo Campos, Ran Tessler and Amitai Zernik for comments and 
corrections to the manuscript. 
G.F. thans the Hebrew Univerisity
of Jerusalem and D.K. thanks the Forschungsinstitut f\"ur Mathematik at ETH
for hospitality. 
G.F. was supported in part by the Swiss National Science Foundation (Grant
200020-105450). D. K. was supported in part by an ERC Advanced Grant.

\section{Graded varieties}\label{s-2}
\subsection{Symmetric algebras of graded modules}\label{s21}
Let $V=\oplus_{i\in\mathbb Z}V^i$ be a $\mathbb Z$-graded module over a
commutative unital ring $B$ with free homogeneous components $V^i$ 
of finite rank such that $V^0=0$.
If $a\in V^i$ is homogeneous, we set $\deg a=i$. The symmetric algebra
$\Sym(V)=\Sym_B(V)$ is the quotient of the tensor algebra of $V$ by the
relations $ab=(-1)^{\deg a\deg b} ba$. It is a graded commutative algebra with
grading induced by the grading in $V$.  Let
$F^p\Sym(V)$ be the ideal generated by elements of degree $\geq p$. These ideal
form a descending filtration $\Sym(V)\supset F^1\Sym(V)\supset
F^2\Sym(V)\supset\cdots$.
\begin{definition}
  The {\em completion} $\widehat{\Sym}(V)$ of the graded algebra
  $\Sym(V)$ is the inverse limit of $\Sym(V)/F^p\Sym(V)$ in the
  category of graded modules. Namely,
  $\widehat{\Sym}(V)=\oplus_{i\in\mathbb Z} \widehat{\Sym}(V)^i$ with
  \[
  \widehat{\Sym}(V)^i=\lim_{\leftarrow p}\Sym(V)^i/(F^p\Sym(V)\cap
  \Sym(V)^i).
  \]
\end{definition}
Then $\widehat{\Sym}(V)$ is a graded commutative algebra and comes
with the induced filtration $F^p\widehat{\Sym}(V)$.  Note that the
completion has no effect if $V$ is $\mathbb Z_{\geq 0}$-graded or
$\mathbb Z_{\leq 0}$-graded, namely if $V^i=0$ for all $i<0$ or for
all $i>0$.
\begin{remark} 
  The assumption that $V^0=0$ is not essential. It can be achieved by
  replacing $B$ by $\Sym_B(V^0)$.
\end{remark}
\subsection{Graded manifolds and graded algebraic varieties}
We adapt the constructions and definitions of Manin \cite{Manin}, who
introduced a general notion of $\mathbb Z/2\mathbb Z$-graded spaces
(or superspaces), to the $\mathbb Z$-graded case.
\begin{definition} 
  Let $M_0$ be a topological space. A {\em graded space} with {\em
  support}\footnote{or body; we use the terminology of \cite{Berezin}}
  $M_0$ is a ringed space $M=(M_0,\mathcal O_M)$ where
  $\mathcal O_M$ (the {\em structure sheaf} of $M$) is a sheaf of
  $\mathbb Z$-graded commutative rings on $M_0$ such that the stalk
  $\mathcal O_{M,x}$ at every $x\in M_0$ is a local graded ring (namely it has a unique
  maximal proper graded ideal). Morphisms
  are morphisms of locally ringed spaces: a morphism $M\to N$ is a
  pair $(f,f^*)$ where $f:M_0\to N_0$ is a homeomorphism and
  $f^*\colon \mathcal O_N\to f_*\mathcal O_M$ is a grading preserving
  morphism of sheaves of rings, such that, for all $x\in M_0$, $f^*$
  maps the maximal ideal of $\mathcal O_{N,f(x)}$ to the maximal ideal
  of $\mathcal O_{M,x}$.
\end{definition}
\begin{definition} An {\em open subspace} of a graded space $M=(M_0,\mathcal O_M)$
is a graded space of the form $(U_0,\mathcal O_M|_{U_0})$ for some open
subset $U_0\subset M_0$. A {\em closed subspace} is a graded space of the form
$(N_0,(\mathcal O_M/J)|_{N_0})$ for some sheaf of ideals $J\subset
\mathcal O_M$ such that $\mathcal O_M/J$ has support on $N_0$.
\end{definition}
Both open and closed subspaces come with inclusion morphisms 
to $M$. 
\begin{definition} 
  Let $M$ be a graded variety.  Let $J_M$ be the ideal sheaf of
  $\mathcal O_M$ generated by sections of non-zero degree.  The
  {\em reduced space} is the locally ringed space
  $M_{\mathrm{rd}}=(M_0,\mathcal O_{M}/J_M)$.
\end{definition}
Thus $M_{\mathrm{rd}}$ is a closed subspace of $M$ and it is a
locally ringed space in the classical sense, with strictly commutative
structure sheaf sitting in degree 0. 

An important class of graded spaces is obtained from graded locally free
sheaves. Let $(X,\mathcal O_X)$ be a commutative locally ringed space (no
grading). If $\mathcal E$ is a graded module with homogeneous components
$\mathcal E^i$ of finite rank and $\mathcal
E^0=0$, then $U\mapsto \widehat\Sym_{\mathcal O_X(U)}(\mathcal E(U))$ is a
sheaf of rings on $X$ whose stalks are local rings. Thus $M=(X,\widehat
\Sym_{\mathcal O_X}(\mathcal E))$ is a graded space with
$M_\mathrm{rd}=(X,\mathcal O_X)$.

\begin{definition} 
  Let $\mathcal C$ be a subcategory of the category of locally ringed
  space, such as algebraic varieties, smooth manifolds or complex
  manifolds. A {\em graded $\mathcal C$-variety} with support $M_0\in
  \mathcal C$ is a graded space $M=(M_0,\mathcal O_M)$ such that every
  point $x\in M_0$ has an open neighborhood $U$ such that $(U,\mathcal
  O_M|_U)$ is isomorphic to $(U,\widehat \Sym_{\mathcal O_X}(\mathcal
  E))$ for some free graded $\mathcal O_X$-module $\mathcal E$ with 
  homogeneous components of finite rank and
  $\mathcal E^0=0$. Morphisms are morphisms of locally ringed spaces
  restricting to morphisms in $\mathcal C$ on their supports, namely
  such that there is a commutative diagram
  \[
\begin{array}{rcccl} M &\to & N\\
    \uparrow & & \uparrow\\
    M_0&\to& N_0
  \end{array}
  \]
 with lower arrow in $\mathcal C$.
\end{definition}

Depending on $\mathcal C$, we call graded $\mathcal C$-varieties
graded smooth manifolds, graded algebraic varieties, graded affine
varieties, and so on.
\begin{definition} A graded $\mathcal C$-variety $M$ is called $\mathbb Z_{\geq
0}$-{\em graded} ($\mathbb Z_{\leq0}$-{\em graded}) if $\mathcal O^j_V=0$
for $j<0$ ($j>0$).  
\end{definition}
Examples of graded $\mathcal C$-varieties are
$(X,\widehat\Sym_{\mathcal O_X}(\mathcal E))$ for some locally free
graded $\mathcal O_X$-module $\mathcal E$ with finite rank homogeneous
components such that $\mathcal
E^0=0$. Conversely, if $(X,\mathcal O_M)$ is a graded $\mathcal
C$-variety with support $(X,\mathcal O_X=\mathcal O_M/J_M)\in\mathcal
C$, then $\mathcal E=J_M/J_M^2$ is a locally free $\mathcal O_X$
module and $(X,\mathcal O_M)$ is locally isomorphic to
$(X,\widehat\Sym_{\mathcal O_X}(\mathcal E))$. The obstructions to
patch local isomorphisms to a global isomorphism lie in $H^1$ of a
certain vector bundle on $X$. Thus if $X$ is a smooth manifold or an
affine algebraic variety, then the obstruction vanish and Batchelor's
Theorem holds: every graded variety with support $X$ is isomorphic to
$(X,\widehat\Sym_{\mathcal O_X}(\mathcal E))$ for some locally free
$\mathcal E$ with homogeneous components of finite rank.
\subsection{$P_0$-algebras}
A $P_0$-{\em algebra} over a field $\K$ is a graded commutative algebra $A= 
\oplus_{d\in\mathbb Z}A^d$ over $\K$ with a Poisson bracket $[\;\,,\;]\colon
A\otimes A\to A$ of degree $1$. A {\em differential $P_0$-algebra} is a 
$P_0$-algebra together with a differential of degree $1$ which is a derivation
for both the product and the bracket. For completeness (and to fix sign 
conventions) let us write out the axioms for the bracket $[\;\,,\;]$ and the 
differential $d$.
For any homogeneous elements
$a,b,c$,
\begin{enumerate}
\item[(i)] The bracket is a bilinear map sending $A^r\otimes A^s$ to $A^{r+s+1}$
\item[(ii)] $[a,b]=-(-1)^{(\deg a-1)(\deg b-1)}[b,a]$.
\item[(iii)] $[ab,c]=a[b,c]+(-1)^{\deg a\deg b}b[a,c]$.
\item[(iv)] $d(ab)=(da) b+(-1)^{\deg a}a\,db$. 
\item[(v)] $d[a,b]=[da,b]+(-1)^{\deg a-1}[a,db]$.
\item[(vi)] $(-1)^{(\deg a-1)(\deg c-1)}[[a,b],c]+\text{cyclic permutations} =0.$
\end{enumerate}
If $S\in A^0$ obeys the classical master equation
$[S,S]=0$ then the map $d_S\colon b\to [S,b]$ is a
differential. We call $d_S$ a {\em hamiltonian differential} with {\em 
hamiltonian} $S$.

\subsection{Shifted cotangent bundles and $(-1)$-symplectic varieties}\label{s-scb}
We consider $\mathcal C$-varieties, where $\mathcal C$ is the category
of nonsingular algebraic
varieties over a field $\K$ of characteristic zero or of smooth manifolds (with $\K=\mathbb R$) or 
of complex manifolds (with $\K=\mathbb C$). Let
$V$ be such a graded variety with support $X$ and suppose that 
$\mathcal O_V^i=0$
for all $i<0$ ($V$ is $\mathbb Z_{\geq0}$-graded). To $V$ we associate
its shifted cotangent bundle $M=T^*[-1]V$ by the following construction. 
A (left) derivation
of $\mathcal O_V$ of degree $d$ is a section $\xi$ of the sheaf
$\Pi_{j=0}^\infty\scriptHom(\mathcal O_V^j,\mathcal O_V^{j+d})$ of degree $d$
endomorphisms of $\mathcal O_V$ such that
$\xi(ab)=\xi(a)b+(-1)^{d\deg\,a}a\xi(b)$. Derivations of $\mathcal
O_V$ of degree $d$ form a sheaf $T_V^d$ and $T_V=\oplus_d T^d_V$ is a
sheaf of graded Lie algebras acting on $\mathcal O_V$ by derivation. Then the
bracket extends to a Poisson bracket of degree 1 on 
\[
\tilde{\mathcal O}_M=\Sym_{\mathcal
  O_V}T_V[1],
\]
which thus becomes a sheaf of $P_0$-algebras. 
We will need a completion $\mathcal O_M$ of $\tilde{\mathcal O}_M$.  Let
$
F^p\tilde{\mathcal O}_M$ be the ideal in 
$\Sym_{\mathcal O_V}(T_V[1])$ generated by elements of degree at least $p$.
These sheaves of ideals form a descending filtration of $\tilde{\mathcal O}_M=F^0\tilde{\mathcal O}_M$.
\begin{definition}
  The $(-1)$-{\em shifted cotangent bundle} of $V$ is the graded variety
  $M=T^*[-1]V=(X,\mathcal O_M)$, where
  \[
  \mathcal O_{M}=\lim_{\leftarrow}\tilde{\mathcal O}_M/F^p\tilde{\mathcal O}_M.
  \]
  The inverse limit is taken in the category of $\mathbb Z$-graded
  sheaves, i.e., degree by degree.
\end{definition}
We denote by $F^p\mathcal O_M=\lim_{\leftarrow}F^p\tilde{\mathcal
  O}_M/F^{p+q}\tilde{\mathcal O}_M$ the filtration by ideals
topologically generated by elements of degree $\geq p$ in $\mathcal
O_M$.
\begin{prop} Let $V$ be a $\mathbb Z_{\geq0}$-graded variety and
  $M=T^*[-1]V$.  Then the Poisson bracket on $\tilde{\mathcal O}_M$
  extends to the completion making $\mathcal O_{M}$ a sheaf of
  $P_0$-algebras over $\K$.
\end{prop}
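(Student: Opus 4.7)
The plan is to construct the canonical Schouten bracket on the uncompleted algebra $\tilde{\mathcal O}_M$, verify that it is continuous for the $F$-adic topology, and extend to the completion by continuity.

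First I would exhibit the bracket on $\tilde{\mathcal O}_M=\Sym_{\mathcal O_V}(T_V[1])$, determined on generators by $[f,g]=0$, $[\xi,f]=\xi(f)$, and $[\xi,\eta]=[\xi,\eta]_{\mathrm{Lie}}$ for $f,g\in\mathcal O_V$ and $\xi,\eta\in T_V[1]$, and extended to the full symmetric algebra by the Leibniz rule on both slots. Graded antisymmetry and Leibniz hold on generators and propagate by bilinearity; the graded Jacobi identity, once checked on generators, extends to the whole algebra because the Jacobiator is a triderivation and a triderivation that vanishes on generators vanishes everywhere. Equivalently, this is the Poisson bracket of degree $+1$ induced by the canonical symplectic form of degree $-1$ on $T^*[-1]V$.

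The main technical step is continuity. In a local Darboux chart on $V$ with coordinates $t_i$ of degree $d_i$ and dual fibre coordinates $\xi_i$ of degree $-d_i-1$, the bracket reads
\[
[f,g]=\sum_i\left(\frac{\partial f}{\partial t_i}\frac{\partial g}{\partial \xi_i}+(-1)^{|f|}\frac{\partial f}{\partial \xi_i}\frac{\partial g}{\partial t_i}\right).
\]
Tracking how the derivations $\partial/\partial t_i$ of degree $-d_i$ and $\partial/\partial \xi_i$ of degree $d_i+1$ interact with the ideal $F^p$ gives an estimate of the form $[F^p\tilde{\mathcal O}_M,F^q\tilde{\mathcal O}_M]\subseteq F^{p+q-C}\tilde{\mathcal O}_M$, with $C$ controlled by the degrees of the coordinates that appear in a given pair $(f,g)$; only finitely many indices $i$ contribute for fixed homogeneous elements, so this is enough to make the bracket continuous in the $F$-adic topology. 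This is the main obstacle, because $F^p$ is not the $p$-th power of an ideal and the estimate must be extracted from the explicit local formula together with careful bookkeeping of how the partial derivatives act on elements of the ideal.

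Given the estimate, the bracket factors through $\tilde{\mathcal O}_M/F^{p+C}\otimes\tilde{\mathcal O}_M/F^{p+C}\to\tilde{\mathcal O}_M/F^p$ for each $p$, and passing to the inverse limit yields a continuous bilinear map $\mathcal O_M\otimes\mathcal O_M\to\mathcal O_M$. The $P_0$-axioms are closed conditions in the adic topology, hold on the dense subalgebra $\tilde{\mathcal O}_M$, and hence persist in the completion. Because the bracket is canonical in the symplectic structure, the local $P_0$-structures on open charts agree on overlaps and glue to a sheaf of $P_0$-algebras on $M$.
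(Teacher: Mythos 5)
Your overall strategy is the one the paper uses (Lemma \ref{l-Maud}): exhibit the bracket on $\tilde{\mathcal O}_M$, prove a compatibility estimate with the filtration $F^\bullet$, and pass to the inverse limit. The construction of the bracket and the final limiting/gluing arguments are fine. The problem is the estimate itself, which you rightly identify as the main obstacle but then state in a form that cannot work. An inclusion $[F^p,F^q]\subset F^{p+q-C}$ with $C$ depending on ``the degrees of the coordinates appearing in the pair $(f,g)$'' does not induce a map on the quotients $\tilde{\mathcal O}_M/F^{p+C}$, since $C$ then varies with the representative; and no uniform $C$ exists, because $[\beta_j^*,\beta^j]=\pm1\in F^0\setminus F^1$ while $\beta^j\in F^{d_j}$ with $d_j$ unbounded. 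Worse, the finiteness you invoke fails where it is needed: elements of a \emph{fixed} total degree can involve coordinates of unbounded degree (e.g.\ $\beta^j\beta_j^*$ has degree $-1$ for every $j$, and $\sum_j\beta^j\beta_j^*$ is a perfectly good degree~$-1$ element of the completion), so a constant extracted from the coordinate degrees is not controlled by anything that survives in $\mathcal O_M$.

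The fix is to index the loss of filtration by the \emph{total} degree of one homogeneous argument rather than by the coordinates it contains. The Leibniz rule applied to $a_d$ of degree $d$ and a generator $b\,c_{p'}$ of $F^p$ (with $\deg c_{p'}=p'\geq p$) gives $[a_d,bc_{p'}]=[a_d,b]c_{p'}\pm[a_d,c_{p'}]b$, and since $[a_d,c_{p'}]$ has degree $d+p'+1\geq d+p+1$ one gets
\[
[\tilde{\mathcal O}_M^d,\,F^p\tilde{\mathcal O}_M]\subset F^{\min(p,\,p+d+1)}\tilde{\mathcal O}_M ,
\]
i.e.\ no loss for $d\geq-1$ and a loss of exactly $|d+1|$ otherwise. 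Because the completion is taken degree by degree, $d$ is fixed throughout, $\min(p,p+d+1)\to\infty$ as $p\to\infty$, and the bracket $\mathcal O_M^d\otimes\mathcal O_M^e\to\mathcal O_M^{d+e+1}$ is well defined on the inverse limits; the $P_0$-axioms then persist by continuity exactly as you say. This is precisely the content and proof of Lemma \ref{l-Maud}, so your argument becomes correct once the estimate is reorganized in this way.
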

Let $\tilde{\mathcal O}_M=\Sym_{\mathcal O_V}T_V[1]$. The product
on $\tilde{\mathcal O}_M$ is compatible with the filtration 
in the sense that $F^p\tilde{\mathcal O}_M\cdot 
F^q\tilde{\mathcal O}_M\subset F^{p+q}\tilde{\mathcal O}_M$, 
and thus passes to the completion $\mathcal O_M$
but this is not true for the bracket, making the statement not completely trivial. 
However, we have the
following observation, which suffices to show that the bracket is defined
on the completion.
\begin{lemma}\label{l-Maud}
  Let $p\geq0$.
  \begin{enumerate}
  \item[(i)] If $d\geq-1$ then $[\tilde{\mathcal O}_M^d,F^p\tilde{\mathcal O}_M]
    \subset F^p\tilde{\mathcal O}_M$.
  \item[(ii)] If $d<-1$ and 
    $p+d\geq0$ then $[\tilde{\mathcal O}_M^d,F^p\tilde{\mathcal O}_M]
    \subset F^{p+d+1}\tilde{\mathcal O}_M$.
  \end{enumerate}
  Thus the bracket passes to the completion $\mathcal O_M=\oplus_i\lim_\leftarrow 
  \tilde{\mathcal O}^i_M/F^p\tilde{\mathcal O}^i_M$ 
  and (i), (ii) hold for $\tilde{\mathcal O}_M$ replaced by $\mathcal O_M$.
\end{lemma}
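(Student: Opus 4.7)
The key observation is that for a homogeneous section $\xi$ of $\tilde{\mathcal O}_M^d$, axiom (iii) rewrites as the statement that $[\xi,\cdot\,]$ is a graded derivation of $\tilde{\mathcal O}_M$ of degree $d+1$. Combined with the definition of $F^p\tilde{\mathcal O}_M$ as the ideal generated by sections of degree $\geq p$, both (i) and (ii) reduce to a degree count, and I would prove them simultaneously.

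First I would write an arbitrary element of $F^p\tilde{\mathcal O}_M$ as a sum $\sum_i \phi_i\psi_i$ with each $\phi_i$ homogeneous of degree $\geq p$ and apply the graded Leibniz rule
\[
 [\xi,\phi_i\psi_i]=[\xi,\phi_i]\,\psi_i\pm\phi_i\,[\xi,\psi_i].
\]
The term $\phi_i\,[\xi,\psi_i]$ already lies in $F^p\tilde{\mathcal O}_M$, hence also in $F^{p+d+1}\tilde{\mathcal O}_M$ in case (ii) because $d+1\leq -1$ forces $F^p\subset F^{p+d+1}$. The term $[\xi,\phi_i]\,\psi_i$ is controlled by the bound $\deg[\xi,\phi_i]\geq p+d+1$: if $d\geq -1$ this degree is $\geq p$, so $[\xi,\phi_i]\in F^p$ and (i) follows; if $d\leq -2$ the hypothesis $p+d\geq 0$ guarantees that $[\xi,\phi_i]$ is homogeneous of degree $\geq p+d+1\geq 1$, and hence belongs to $F^{p+d+1}\tilde{\mathcal O}_M$, giving (ii).

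For the extension to $\mathcal O_M=\lim_{\leftarrow}\tilde{\mathcal O}_M/F^q\tilde{\mathcal O}_M$ I would argue by continuity. Given homogeneous $a\in\mathcal O_M^d$ and $b\in\mathcal O_M^e$, lift them to $\tilde a,\tilde b\in\tilde{\mathcal O}_M$ modulo $F^N$. Using (i) and (ii) together with skew-symmetry (axiom (ii)) to swap the two arguments with a sign, perturbing $\tilde a$ or $\tilde b$ by an element of $F^N\tilde{\mathcal O}_M$ changes $[\tilde a,\tilde b]$ by an element of $F^{q(N)}\tilde{\mathcal O}_M$ with $q(N)\to\infty$ as $N\to\infty$: when the perturbed argument sits in degree $\geq -1$ the bracket is filtration-preserving, and when it sits in degree $\leq -2$ only a finite number of filtration levels are lost, which is absorbed by taking $N$ large. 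Hence $\{[\tilde a,\tilde b]\bmod F^q\}_q$ stabilizes and defines the bracket on $\mathcal O_M$, and the inclusions (i), (ii) persist by passage to the limit.

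I do not expect a substantive obstacle. The heart of the argument is one line of Leibniz plus degree bookkeeping; the only subtle point is the filtration shift in range (ii), which is precisely what makes the continuity step both necessary and sufficient in passing to the completion, so the two halves of the lemma really express two sides of the same estimate.
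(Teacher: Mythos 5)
Your proof is correct and follows essentially the same route as the paper: decompose an element of $F^p\tilde{\mathcal O}_M$ as a sum of products with one homogeneous factor of degree $\geq p$, apply the graded Leibniz rule, and bound the degree of the bracket with the homogeneous factor by $p+d+1$, which yields both cases at once via $F^{\min(p,\,p+d+1)}$. The paper leaves the passage to the completion essentially implicit, so your explicit continuity argument is a harmless (and correct) elaboration rather than a different method.
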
 
\begin{proof}
Let us adopt the convention that $a_j,b_j,\dots$ denote local sections of $\tilde{\mathcal O}_M$ 
of degree $j$ and $a,b,\dots$ sections of unspecified degree. 
Then, if $d+p+1\geq0$ (which is trivially true in (i)), and $p'\geq p$,
\[
[a_d,b c_{p'}]=[a_d,b]c_{p'}\pm[a_d,c_{p'}]b
\in F^p\tilde{\mathcal O}_M+F^{d+p+1}\tilde{\mathcal O}_M
\subset F^{\mathrm{min}(p,d+p+1)}\tilde{\mathcal O}_M,
\]
since $c_{p'}$ has degree $\geq p$ and $[a_d,c_{p'}]$ has degree $d+p'+1\geq d+p+1$.
\end{proof}
We note that with the same construction we can define $T^*[-1]N$ for a $\mathbb Z_{\leq0}$-graded variety $N$.
\begin{definition}
  A {\em Poisson morphism} $T^*[-1]V \to T^*[-1]W$ is a map of graded
  varieties respecting the Poisson bracket.
\end{definition}
An \'etale morphism $\varphi\colon V\to W$ of graded varieties (namely one
which is \'etale on supports and for which $\varphi^*$ is locally invertible)
induces a morphism $T_V\to T_W$ of sheaves of graded Lie algebras, defined by
$\theta\mapsto (\varphi^*)^{-1}\circ \theta\circ\varphi^*$, and thus induces a
Poisson isomorphism $\Phi\colon T^*[-1]V\to T^*[-1]W$, called the {\em
symplectic lift} of $\varphi$. 

\begin{definition}
  A {\em $(-1)$-symplectic variety} is a graded variety $M=(X,\mathcal
  O_M)$ locally Poisson isomorphic to a $(-1)$-shifted cotangent bundle:
  every point $x\in X$ has an open neighborhood $U$ so that $\mathcal
  O_M|_U$ is Poisson isomorphic to $\mathcal O_{T^*[-1]V}$ for some
  non-negatively graded variety $V=(U,\mathcal O_V)$.
\end{definition}
The structure sheaf $\mathcal O_M$ of a $(-1)$-symplectic variety comes
with a filtration $\mathcal O_M=F^0\mathcal O_M\supset\cdots\supset 
F^p\mathcal O_M\supset F^{p+1}\mathcal O_M\supset\cdots$. This filtration can be used
to uniquely reconstruct $V$ from $M$ up to isomorphism:
\begin{prop} \label{p-PoI}
  If $M=(X,\mathcal O_M)$ is Poisson isomorphic to $T^*[-1]V$ for a
  $\mathbb Z_{\geq 0}$-graded variety $V$ then $V$ is isomorphic to
  $(X,\Sym_{\mathcal O_X}\mathcal E)$, where the graded $\mathcal
  O_X$-module $\mathcal E$ has homogeneous components
  \[
  \mathcal E^p=\mathcal O_M^p/F^{p+1}\mathcal O^p_M+I_M\cdot
  I_M\cap\mathcal O^p_M,\quad I_M=F^1\mathcal O_M, \quad p\geq 1.
  \]
\end{prop}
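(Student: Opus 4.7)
The assertion is local on $X$, so I work in a neighborhood where $V$ is trivialized as $V\cong(X,\widehat{\Sym}_{\mathcal O_X}\mathcal F)$ for a graded locally free $\mathcal O_X$-module $\mathcal F$ with $\mathcal F^0=0$ and finite-rank homogeneous components in strictly positive degrees. The plan is to verify that, canonically, $\mathcal E^p\cong\mathcal F^p$ as $\mathcal O_X$-modules. Because $\mathcal E$ is defined intrinsically from the data $(\mathcal O_M,F^\bullet,I_M)$ of $M$, the identifications $\mathcal E\cong\mathcal F$ obtained on trivializing neighborhoods are compatible with changes of trivialization, and the local isomorphisms $V|_U\cong(U,\Sym\mathcal E|_U)$ glue into the required global isomorphism $V\cong(X,\Sym_{\mathcal O_X}\mathcal E)$.

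For the local identification I use the standard description $T_V\cong\mathcal O_V\otimes_{\mathcal O_X}(T_X\oplus\mathcal F^\vee)$, which follows from the fact that a derivation of $\Sym_{\mathcal O_X}\mathcal F$ is determined by its restrictions to $\mathcal O_X$ and to the generators in $\mathcal F$. Taking symmetric powers and shifting by $1$ yields
\[
\mathcal O_M\cong\widehat{\Sym}_{\mathcal O_X}\bigl(\mathcal F\oplus T_X[1]\oplus\mathcal F^\vee[1]\bigr),
\]
so $\mathcal O_M$ is generated over $\mathcal O_X$ by factors of three kinds: those from $\mathcal F$ (degrees $\geq 1$), from $T_X[1]$ (degree $-1$), and from $\mathcal F^\vee[1]$ (degrees $\leq -2$). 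For a monomial $m$ in these generators let $D^+(m)$ denote the sum of the strictly positive degrees of the factors of $m$. The function $D^+$ is additive under multiplication, so the $\mathcal O_X$-span of monomials with $D^+\geq q$ is an ideal, which contains every homogeneous element of total degree $\geq q$; together with the observation that any monomial factors (up to Koszul sign) as a sub-monomial of degree $D^+(m)$ times a sub-monomial of non-positive degree, this shows that $F^q\mathcal O_M$ is exactly the $\mathcal O_X$-span of monomials with $D^+\geq q$.

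Applied to a monomial $m$ of total degree $p$, the criterion says $m\in F^{p+1}\mathcal O_M^p$ if and only if $m$ contains at least one negative-degree factor. Hence the inclusion $\mathcal O_V\hookrightarrow\mathcal O_M$ descends to a canonical isomorphism $\mathcal O_V^p\cong\mathcal O_M^p/F^{p+1}\mathcal O_M^p$. Under this identification the image of $I_M\cdot I_M\cap\mathcal O_M^p$ is exactly $I_V\cdot I_V\cap\mathcal O_V^p$, where $I_V=\mathcal F\cdot\mathcal O_V$ is the augmentation ideal of $\mathcal O_V=\Sym_{\mathcal O_X}\mathcal F$. Quotienting the symmetric power $(\Sym_{\mathcal O_X}\mathcal F)^p$ by products of at least two elements of $I_V$ leaves exactly the linear summand $\mathcal F^p$, yielding $\mathcal E^p\cong\mathcal F^p$.

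The main difficulty is the combinatorial identification of $F^q\mathcal O_M$ with the span of monomials with $D^+\geq q$: one must check that the Koszul signs in $\Sym$ do not mix the $(D^+,D^-)$-bidegree (they do not, since multiplication adds both $D^+$ and $D^-$), and that passage from $\Sym$ to the completion $\widehat{\Sym}$ does not enlarge the quotients $\mathcal O_M^p/F^{p+1}\mathcal O_M^p$ for $p\geq 1$ (these quotients already stabilize at finite level of the inverse limit because the cutoff on $D^+$ constrains $D^-$). Once this is in hand, the rest of the argument is formal and the globalization is automatic from the naturality of $\mathcal E$ in $M$.
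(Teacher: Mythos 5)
Your local computation is correct and is in substance the same argument as the paper's: the paper also observes that $\mathcal O_M^p$ is spanned over $\mathcal O_X$ by the image of the generators together with products of sections of non-zero degree, that a product with at least two positive-degree factors lies in $I_M\cdot I_M$, and that a degree-$p$ product containing a negative-degree factor is forced into $F^{p+1}\mathcal O_M$; your bidegree function $D^+$ just makes that monomial bookkeeping explicit, and your remarks about the completion are a harmless elaboration. The one place where you overclaim is the globalization. The identification $\mathcal F^p\cong\mathcal E^p$ is \emph{not} independent of the trivialization in the naive sense: two trivializations of $V$ over an overlap differ by an automorphism of $\widehat\Sym_{\mathcal O_X}\mathcal F$ that sends generators to generators plus higher-order terms, so the induced local isomorphisms $V|_U\cong(U,\Sym\mathcal E|_U)$ agree on overlaps only up to automorphisms that are the identity modulo $F^{p+1}+I^{(2)}$. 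What is intrinsic is the linear model $\mathcal E$ (the transition cocycle of $\mathcal E$ is the linearization of that of $V$); the existence of an actual isomorphism $V\cong(X,\Sym_{\mathcal O_X}\mathcal E)$ is Batchelor's theorem, i.e.\ the vanishing of the $H^1$ obstruction discussed just before the proposition, valid for $X$ affine or a smooth manifold. The paper avoids the issue by assuming a global splitting $\mathcal O_V=\Sym_{\mathcal O_X}\tilde{\mathcal E}$ from the outset and running the same computation globally; you should either do the same or explicitly invoke Batchelor rather than asserting that the gluing is automatic.
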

\begin{proof}
  Suppose $\mathcal O_V=(X,\Sym_{\mathcal O_X}\tilde{\mathcal
    E})$. Then we have a monomorphism $\tilde{\mathcal E}^p\to
  \mathcal O_M^p$ given by the composition
  \[
  \tilde{\mathcal E}\hookrightarrow\mathcal O_V\hookrightarrow\mathcal
  O_{M}\to \mathcal E
  \]
  But $\mathcal O_M^p$ is spanned over $\mathcal O_X$ by the image of
  $\mathcal E^p$ and products of sections of non-zero degree.
  Products with at least two factors of positive degree are in
  $I_M\cdot I_M$ and products with at least a factor of negative
  degree have a factor of degree $\geq p$ and lie therefore in
  $F^{p+1}\mathcal O_M$.
\end{proof}

\subsection{Local description}\label{s-ld}
Let $X$ be an $n$-dimensional nonsingular algebraic variety over a
field $\K$ of characteristic zero. Then every point $p\in X$ has an
affine open neighborhood $U$ with an \'etale map $U\to \mathbb A^n$ to
the affine $n$-space. Thus there are functions
$x^1,\dots,x^n\in\mathcal O_X(U)$ generating the maximal ideal at $p$
and commuting vector fields $\partial_1,\dots,\partial_n\in T_X(U)$
with $\partial_ix^j=\delta_{ij}$. Similarly if $V$ is a graded variety
with support $X$, then $U$ can be chosen so that $\mathcal O_V|_U\cong
\widehat\Sym_{\mathcal O_X}\mathcal E(U)$ where $\mathcal E$ is a free
$\mathcal O_X$-module with homogeneous components $\mathcal E^i$ of
finite rank and $\mathcal E^0=0$.  Let us assume that $\mathcal
O_V^i=0$ for $i<0$ (the case $i>0$ is treated similarly). Then there
are sections $\beta^1,\beta^2,\dots$ of $\mathcal E(U)$, such that,
for all $i>0$, those of degree $i$ are a basis of the free $\mathcal
O_X(U)$-module $\mathcal E^i(U)$.  Let $\beta^*_j$ be the dual basis
of $\mathcal E^*[1]$, so that
\[
\deg\beta_j^*=-\deg\beta^j-1\leq-2.  
\]
Then $\mathcal O_{T^*[-1]V}(U)\cong \widehat\Sym(T_X[1]\oplus\mathcal
E\oplus\mathcal E^*[1])(U)$. 
Its homogeneous component of degree $j$ consists
of formal power series with
coefficients in $\mathcal O_X(U)$ whose terms are monomials of degree $j$ in
generators $x_i^*=-\partial_i\in T_X[1]$, $i=1,\dots,n$ of degree
-1 and $\beta^j,\beta_j^*$, $j=1,2,\dots$. The Poisson bracket 
is
\[
[f,x^*_j]=\partial_jf,\quad f\in\mathcal O_X,\quad [\beta^i,\beta_j^*]=\delta_{ij},
\]
and vanishes on other pairs of generators.

\subsection{Associated graded}
It will be useful to have a description of the associated graded of the
structure sheaf $\mathcal O_M$ of the cotangent bundle $M=T^*[-1]V$ of a
$\mathbb Z_{\geq0}$-graded variety $V$. Let as above $F^p\mathcal O_M$ be the
ideal generated by sections of degree $\geq p$ and $\mathrm{gr}\,\mathcal
O_M=\oplus_{p\geq0}F^p\mathcal O_M/F^{p+1}\mathcal O_M$. Then $R_M=\mathcal
O_M/F^1\mathcal O_M$ is an algebra over $\mathcal O_X=\mathcal O_V/F^1\mathcal
O_V$ and for each $p$, $\mathrm{gr}^p\mathcal O_M$ is naturally an
$R_M$-module. Also there is a natural $\mathcal O_X$-linear map
$\iota\colon\mathcal O_V\to\mathrm{gr}\,\mathcal O_M$ induced from the
inclusion $\mathcal O_V^p\subset F^p\mathcal O_M$.
\begin{lemma}\label{l-assgr} 
The  composition of $\iota$ with the structure map of the $R_M$-module
\[
R_M\otimes \mathcal O_V\to R_M\otimes \mathrm{gr}\,\mathcal O_M\to\mathrm{gr}\,\mathcal O_M
\]
factors through $R_M\otimes_{\mathcal O_X}\mathcal O_V$ and induces
an isomorphism of graded algebras
\[
R_M\otimes_{\mathcal O_X}\mathcal O_V \cong\mathrm{gr}\,\mathcal O_M.
\] 
\end{lemma}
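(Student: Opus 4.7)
The plan is to construct the morphism $\Phi\colon R_M\otimes_{\mathcal O_X}\mathcal O_V\to\mathrm{gr}\,\mathcal O_M$ of sheaves of graded $\mathcal O_X$-algebras globally, and then verify it is an isomorphism by a local computation in the coordinates of Section~\ref{s-ld}.

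For the construction, I would first check that $\iota$ is well defined: since $\mathcal O_V^p\subset F^p\mathcal O_M$, the class of $v\in\mathcal O_V^p$ in $F^p\mathcal O_M/F^{p+1}\mathcal O_M$ makes sense, and $\iota$ is multiplicative because the product on $\mathcal O_M$ satisfies $F^p\cdot F^q\subset F^{p+q}$. Composing $\iota$ with the $R_M$-module structure on $\mathrm{gr}\,\mathcal O_M$ produces an $\mathcal O_X$-bilinear pairing $R_M\times\mathcal O_V\to\mathrm{gr}\,\mathcal O_M$; this descends to the balanced tensor product because the embedding $\mathcal O_X\hookrightarrow R_M$ agrees with $\iota|_{\mathcal O_X}$. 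The resulting $\Phi$ is then a morphism of graded commutative $\mathcal O_X$-algebras, compatibly with the graded-commutative sign conventions on both sides.

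To show $\Phi$ is an isomorphism I would pass to a chart as in Section~\ref{s-ld}, where $\mathcal O_V\cong\widehat\Sym_{\mathcal O_X}(\mathcal E)$ and hence $\mathcal O_M\cong\widehat\Sym_{\mathcal O_X}(T_X[1]\oplus\mathcal E\oplus\mathcal E^*[1])$. The key intermediate step is to characterize $F^p\mathcal O_M$ in the chart by the \emph{$\mathcal E$-weight} of a monomial, defined as the total degree contributed by its $\mathcal E$-factors alone: since $T_X[1]$ and $\mathcal E^*[1]$ sit in non-positive degrees, any monomial of total degree $\geq p$ has $\mathcal E$-weight $\geq p$, and conversely any monomial of $\mathcal E$-weight $p$ factors as a degree-$p$ element of $\mathcal O_V$ times an element of $\mathcal O_M$. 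Consequently $\mathrm{gr}^p\mathcal O_M$ locally consists of monomials of $\mathcal E$-weight exactly $p$, which split as a degree-$p$ element of $\mathcal O_V$ times a polynomial in $T_X[1]\oplus\mathcal E^*[1]$; the latter factor matches $R_M$ locally because $F^1\mathcal O_M$ in the chart is the ideal generated by the $\mathcal E$-variables, so $R_M|_U\cong\widehat\Sym_{\mathcal O_X}(T_X[1]\oplus\mathcal E^*[1])$. Summing over $p$ gives $R_M|_U\otimes_{\mathcal O_X|_U}\mathcal O_V|_U\cong\mathrm{gr}\,\mathcal O_M|_U$, and unwinding the construction shows this is $\Phi$ itself.

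The main technical obstacle is the local characterization of $F^p\mathcal O_M$ by the $\mathcal E$-weight filtration, together with the bookkeeping check that taking the associated graded commutes with the completion in the $\mathcal E$-direction (the filtration $F^{p+\bullet}$ becoming exhausting on each subquotient). Once this is settled, the algebra isomorphism statement is essentially formal.
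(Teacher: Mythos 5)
Your proposal is correct and follows essentially the same route as the paper: factor the map through the tensor product by $\mathcal O_X$-linearity, then check locally in the chart of Section~\ref{s-ld} that $\mathrm{gr}^p\mathcal O_M$ is a free $R_M$-module spanned by the degree-$p$ monomials in $\mathcal O_V$. Your ``$\mathcal E$-weight'' characterization of $F^p\mathcal O_M$ just makes explicit the observation the paper leaves implicit (namely that the $T_X[1]$- and $\mathcal E^*[1]$-factors contribute non-positive degree), and the check that passing to the associated graded commutes with the completion is routine since the subquotients $F^p/F^{p+1}$ stabilize in the inverse system.
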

\begin{proof}
Both the map $\iota$ and the action of $R_M$ are $\mathcal O_X$-linear so the map
factors through the tensor product over $\mathcal O_X$. In the local description
of the preceding section $\mathcal O_V=\Sym_{\mathcal O_X}{\mathcal E}$ for some
locally free sheaf $\mathcal E$. On the other hand, 
$F^{p}\mathcal O_M/F^{p+1}\mathcal O_M$ is a free $R_M$-module spanned by monomials
in $\mathcal O_V$ of total degree $p$. Thus $\mathrm{gr}\,\mathcal O_M$ is a free module
over $R_M$ generated by $\mathcal O_V$.
\end{proof}

\subsection{Poisson center and Poisson derivations}
Let $M=T^*[-1]V$ be the shifted cotangent bundle
of a $\mathbb Z_{\geq0}$-graded variety $V$ with support $X$. The Poisson center $Z_M$ is 
the subalgebra of $\mathcal O_M$ of sections $z$ such that $[z,g]=0$ for
all sections $g\in\mathcal O_M$. Let $\K_X$ be the locally constant sheaf
with fiber $\K$. 

\begin{prop}\label{p-center}
$Z_M=\K_X$
\end{prop}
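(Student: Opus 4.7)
The plan is to reduce to the local model of Section \ref{s-ld} and argue in coordinates. Since the Poisson center is defined by local conditions, it suffices to prove $Z_M(U)=\mathbb{K}_X(U)$ on a sufficiently small affine open $U\subset X$ on which $\mathcal{O}_M|_U$ is the completed symmetric algebra in generators $x^*_1,\dots,x^*_n,\beta^j,\beta^*_j$ over $\mathcal{O}_X(U)$, with bracket determined by $[f,x^*_i]=\partial_i f$ for $f\in\mathcal{O}_X$, $[\beta^i,\beta^*_j]=\delta_{ij}$, and zero on all other pairs of generators.

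The key observation is that, because the bracket is a biderivation, for each generator $g$ the operator $[\,\cdot\,,g]$ is a continuous graded derivation of $\mathcal{O}_M|_U$ that is entirely determined by its values on generators. From the fundamental relations above one reads off, up to signs coming from the graded Leibniz rule, that $[z,x^i]=\pm\,\partial z/\partial x^*_i$, that $[z,\beta^j]=\pm\,\partial z/\partial \beta^*_j$, that $[z,\beta^*_j]=\pm\,\partial z/\partial \beta^j$, and that $[z,x^*_i]=\partial_i z$, where on the last line $\partial_i$ acts only on the $\mathcal{O}_X$-coefficients of $z$, since $x^*_i$ Poisson-commutes with every generator other than $x^i$.

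Given this, the conclusion follows at once: if $z\in Z_M(U)$, the first three families of identities, applied for all $i$ and $j$, force $z$ to be independent of each of $x^*_i$, $\beta^*_j$ and $\beta^j$, so that $z\in\mathcal{O}_X(U)$; the last family then gives $\partial_i z=0$ for all $i$, so $z$ is locally constant. The reverse inclusion $\mathbb{K}_X\subset Z_M$ is immediate, since any locally constant section $c$ satisfies $[c,g]=0$ on every generator $g$ (each bracket reduces either to $\partial_i c=0$ or to zero) and hence on every section by the Leibniz rule.

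The one technical point that needs care is justifying the formal partial derivative interpretation on the completed algebra $\mathcal{O}_M$ rather than on its polynomial subalgebra $\tilde{\mathcal{O}}_M=\Sym_{\mathcal{O}_V}T_V[1]$. Concretely, one must verify that $[\,\cdot\,,g]$ extends continuously from $\tilde{\mathcal{O}}_M$ to $\mathcal{O}_M$ and that the vanishing of this extension on $z$ really does detect the absence of the conjugate generator in the inverse-limit expansion of $z$. Both facts follow from the filtration estimates of Lemma \ref{l-Maud} together with the monomial description of $\mathrm{gr}\,\mathcal{O}_M$ provided by Lemma \ref{l-assgr}, and I expect this bookkeeping to be the main (though entirely routine) obstacle in writing out the argument in full.
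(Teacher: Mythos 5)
Your proposal is correct and follows essentially the same route as the paper: reduce to the local description of Section \ref{s-ld} and use the brackets with the generators $\beta^j$, $\beta^*_j$ and with $\mathcal O_X(U)$ (equivalently with the coordinates $x^i$) to successively eliminate dependence on $\beta^*_j$, $\beta^j$ and $x^*_i$, then use the bracket with $x^*_i$ to conclude that $z$ is locally constant. The paper's proof is terser and leaves the completion bookkeeping implicit, but the substance is identical.
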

\begin{proof}
Using the local description we see that every point of $X$
has an open neighborhood $U$
such that $\mathcal O_M(U)$ is a completion of a free $\mathcal O_X(U)$-%
module generated by monomials in $x_i^*,\beta^j,\beta_j^*$. The condition
that the bracket of $z\in Z_M(U)$ with $\beta_i$ vanishes implies that $\beta_i^*$ cannot
appear in the monomials contributing to $z$ with nontrivial coefficients. Similarly $\beta^j$ cannot appear
and the vanishing of bracket with $\mathcal O_X(U)$ and with $x_i^*$ implies
that $z$ is locally constant. 
\end{proof}

A {\em Poisson derivation} of degree zero of $\mathcal O_M$ is a vector 
field $D\in\Gamma(X,T_M^0)$ of degree zero such that $D[a,b]=
[D(a),b]+[a,D(b)]$ for all local sections $a,b\in\mathcal O_M$. 
If $h\in\mathcal O_M(X)^{-1}$ is a global section of degree $-1$
then $a\mapsto[h,a]$ is a Poisson derivation by the Jacobi identity.
Derivations of this form are called hamiltonian and $h$ is called a
hamiltonian of the derivation.

\begin{prop}\label{p-ham}
All Poisson derivations of degree zero of $\mathcal O_M$ 
are hamiltonian with unique hamiltonian. 
\end{prop}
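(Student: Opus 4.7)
For uniqueness, suppose $h_1,h_2\in\mathcal O_M(X)^{-1}$ both satisfy $[h_i,-]=D$. Then $h_1-h_2$ lies in the Poisson center $Z_M(X)$, which equals $\K_X(X)$ by Proposition~\ref{p-center} and is concentrated in degree $0$; since $\deg(h_1-h_2)=-1$, we conclude $h_1=h_2$. The same observation implies that local hamiltonians, once constructed, automatically agree on overlaps and patch to a global hamiltonian, so it suffices to prove existence on each open $U$ of a cover of the type described in Section~\ref{s-ld}.

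For local existence on such a $U$, work with the explicit generators $x^i,x^*_i,\beta^j,\beta^*_j$ and the canonical symplectic form $\omega=\sum_i dx^*_i\wedge dx^i+\sum_j d\beta^*_j\wedge d\beta^j$ of degree $-1$, whose associated Poisson bracket reproduces the one of Section~\ref{s-ld}. Given $D$, form $\alpha:=i_D\omega$, a 1-form of degree $-1$; from $\mathcal L_D\omega=0$ (equivalent to $D$ being a Poisson derivation) and $d\omega=0$, Cartan's formula $\mathcal L_D=d\,i_D+i_D\,d$ gives $d\alpha=0$. To invert $d$ on such forms, introduce the Euler derivation $E=\sum_i x^*_i\partial_{x^*_i}+\sum_j \beta^*_j\partial_{\beta^*_j}$ along the cotangent fibre directions; it has degree $0$ and acts as multiplication by the weight (total polynomial degree in $x^*_i,\beta^*_j$) on weight-homogeneous pieces. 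Decompose $\alpha=\sum_{k\geq 0}\alpha_k$ by $E$-weight. The weight-$0$ component $\alpha_0$ has coefficients in $\mathcal O_V$ (which is $\mathbb Z_{\geq 0}$-graded) and only involves the differentials $dx^i,d\beta^j$ (whose internal degrees are $\geq 0$), so cannot carry internal degree $-1$; hence $\alpha_0=0$. For $k\geq 1$, Cartan's identity gives $k\alpha_k=\mathcal L_E\alpha_k=d(i_E\alpha_k)$, so
\[
h:=\sum_{k\geq 1} k^{-1}\,i_E\alpha_k
\]
satisfies $dh=\alpha$ and has degree $-1$, and the nondegeneracy of $\omega$ then yields $D=[h,-]$ on $U$.

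The only point requiring care is convergence of the sum defining $h$ in the $F^\bullet$-completion $\mathcal O_M(U)$. This is straightforward: $i_E\alpha = x^*_i\,\alpha(\partial_{x^*_i})+\beta^*_j\,\alpha(\partial_{\beta^*_j})$ is already a convergent element of $\mathcal O_M(U)$ because $D\in\Gamma(X,T_M^0)$ is a genuine section, and $F^p$ is weight-graded (being generated by weight-$0$ products of $\beta$'s), so rescaling each fibre-weight component of $i_E\alpha$ by $1/k$ preserves the finiteness of any $F^p$-quotient. Combined with the gluing step provided by uniqueness, this completes the proof. The conceptual core is the Poincar\'e step, in which the vanishing of $\alpha_0$ encodes the absence of negative internal degrees in $\mathcal O_V$; everything else is the standard linear-algebra package of the graded symplectic de Rham complex.
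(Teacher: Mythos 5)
Your proof is correct, but it takes a genuinely different route from the paper's. The paper argues by explicit reduction on generators: it first treats the one-variable model $B[z,z^*]$, showing any Poisson derivation becomes trivial on $z,z^*$ after subtracting an explicit hamiltonian $h=G-z^*f_0$, then peels off the pairs $\beta^j,\beta^*_j$ one at a time (tracking that the successive hamiltonians sink into $F^p\mathcal O_M$ so the sum converges), then handles the remaining derivation of $\Sym_{\mathcal O_X}T_X[1]$ by subtracting the hamiltonian $\xi\in T_X[1]$ of the induced vector field on $X$, and finally shows the residual derivation kills all generators and hence vanishes on the inverse limit. You instead run the graded-symplectic Cartan calculus: $D$ Poisson iff $\mathcal L_D\omega=0$, so $\alpha=i_D\omega$ is a closed $1$-form of internal degree $-1$, and the fibrewise Euler derivation $E$ gives a Poincar\'e-type contraction $k\,\alpha_k=d(i_E\alpha_k)$ on positive fibre weights, with the weight-$0$ obstruction killed by the observation that weight-$0$ one-forms have internal degree $\geq 0$. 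This is essentially Roytenberg's argument (degree-$k$ symplectic vector fields on a degree-$n$ symplectic graded manifold are hamiltonian when $k+n\neq 0$), adapted by using only the fibrewise Euler field since the base $X$ carries no scaling action. Your route is more conceptual, yields a closed formula $h=\sum_k k^{-1}i_E\alpha_k$, and generalizes cleanly to other degrees; its cost is that it imports machinery the paper never sets up — the de Rham complex of a completed graded variety, the non-degeneracy of $\omega$ there, and the equivalence between preserving the bracket and preserving $\omega$ — all of which are standard but would need to be stated and checked in this setting, whereas the paper's proof uses only the local coordinate description it has already established. Your convergence remark is the one place where the wording is loose: the correct statement is that $F^p$ is a weight-homogeneous ideal and that, in each fixed internal degree, only finitely many fibre weights survive modulo $F^p$ (because a monomial outside $F^p$ has bounded total $\beta$-degree, which together with the fixed internal degree bounds the number of $x^*$ and $\beta^*$ factors); this is what makes the termwise rescaling by $1/k$ continuous. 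The uniqueness and gluing steps via triviality of the Poisson center in degree $-1$ coincide with the paper's.
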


\begin{proof}
The basic case is the one dimensional case: let $B$ be a graded commutative
ring and $A=B[z,z^*]=B[z]+B[z]z^*$ where $z$ is an even variable of degree
$j\in 2\mathbb Z$ and $z^*$ has degree $-j-1$. Let $[\;\,,\;]$ be a
$P{}_0$-bracket such that $[z,z^*]=1$ and $[z,B]=0=[z^*,B]$. Then
the action of a Poisson derivation on generators has the form 
$D(z)=f_0+z^*f_1$, $D(z^*)=g_0+z^*g_1$ with $f_i,g_i\in
B[z]$. The conditions for $D(z)$ and $D(z^*)$ to define a derivation are
\[
[D(z),z]+[z,D(z)]=2 [D(z),z]=0,\quad [D(z),z^*]+[z,D(z^*)]=0.
\]
They imply that $f_1=0$ and $g_1=-[f_0,z^*]$ and it follows that 
\begin{equation}\label{e-ham}
h=G-z^*f_0,
\end{equation}
where $G=\int g_0\,dz\in B[z]$ with $\int bz^jdz=bz^{j+1}/(j+1)$, defines a
hamiltonian derivation with the same action as $D$ on $z$ and $z^*$. Thus after
subtracting a hamiltonian derivation we get a derivation vanishing on $z$ and
$z^*$. Moreover, a Poisson derivations obeys $[D(b),z]=0=[D(b),z^*]$, for all
$b\in B$. It follows that $D(B)\subset B$.

Thus adding to any Poisson derivation of $B[z,z^*]$ a suitable hamiltonian
derivation we obtain the $\K[z,z^*]$-linear extension of a derivation of $B$.

Now let $D$ be a Poisson derivation of $\mathcal O_M$ and use the
local description to study the action of $D$ on $\mathcal O_M(U)$ for
some open neighborhood $U$ of a point of $X$. By the repeating the
above reasoning for each pair $\beta^j,\beta_j^*$ of variables, we may
subtract a hamiltonian derivation to get a derivation vanishing on
$\beta^j,\beta_j^*$.  Note that this works even if $V$ is infinite
dimensional as the hamiltonian for each pair lies in $F^p\mathcal O_M$
with $p$ increasing as the degree of $\beta^j$ increases (in
Eq.~\eqref{e-ham} $h$ is of degree $-1$ and $G$ is divisible by $z$ of
degree $j$ so both terms are a product of an element of degree $j$ and
one of degree $-j-1$).  We are left with a derivation of $\mathcal
O_{T^*[-1]X}(U)=\Sym_{\mathcal O_X}T_X[1](U)$ extended to $\mathcal
O_M(U)$ by linearity over $\K[\beta^j,\beta^*_j]$.  The restriction to
$\mathcal O_X(U)$ is a vector field $\xi$ on $U$. By subtracting a
hamiltonian derivation with hamiltonian $\xi\in T_X[1]$ we may assume
that $D$ vanishes on $\mathcal O_X$.  We claim that $D$ also vanishes
on $T_X[1]$ and is thus zero. Indeed $D$ maps $T_X[1]$ to itself and
being a derivation obeys $[D(\xi),f]+[\xi, D(f)]=0$ for all $\xi\in
T_X[1], f\in\mathcal O_X$; since $D(f)=0$ and $[D(\xi),f]$ is the
action of the vector field $D(\xi)$ on the function $f$, we see that
$D(\xi)=0$.  Then $D$ vanishes on generators. Since it preserves the
filtration, it is well-defined and vanishes on each $\mathcal
O_M/F^p\mathcal O_M$, and thus vanishes on the inverse limit $\mathcal
O_M$.

Now let $D\in\Gamma(X,T_V)$  be a global derivation of degree zero. Then we have
an open cover $(U_i)$ of $X$ such that $D|_{U_i}=[h_i,\;\,]$ for some hamiltonian
$h_i\in\mathcal O_M(U_i)$. On intersections $U_i\cap U_j$, $h_i-h_j$ is a
Poisson central element of degree ${-1}$ and thus vanishes by
Prop.~\ref{p-center}. Hence the hamiltonians $h_i$ agree on intersections and
are restrictions of a globally defined hamiltonian $h$ of degree $-1$ with
$D=[h,\;\,]$. Since the Poisson center is trivial in degree $-1$, $h$ is unique.
\end{proof}
\subsection{Duality}\label{ss-Duality}
The following is an extension of a result of Roytenberg \cite{Roytenberg},
who considered the case of smooth graded manifolds.
\begin{prop} 
  Let $V=(X,\Sym_{\mathcal O_X}\mathcal E)$ for some positively graded
  locally free $\mathcal O_X$-module $\mathcal E$ with homogeneous components
of finite rank on a nonsingular 
  algebraic variety. Let
  $V^\vee=(X,\Sym_{\mathcal O_X}\mathcal E^*[1])$. Then $T^*[-1]V$ is
  Poisson isomorphic to $T^*[-1]V^\vee$.
\end{prop}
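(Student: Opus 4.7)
The plan is to mimic the classical fact that for a vector bundle $E\to X$ the cotangent bundles $T^*E$ and $T^*E^*$ are canonically symplectomorphic: I will exhibit a common local model for $\mathcal O_{T^*[-1]V}$ and $\mathcal O_{T^*[-1]V^\vee}$ in which the interchange $\mathcal E\leftrightarrow \mathcal E^*[1]$ is manifest, and then glue the resulting local isomorphisms. First, on a sufficiently small affine open $U\subset X$ over which $\mathcal E$ trivializes and which carries \'etale coordinates $x^1,\dots,x^n$, the computation of Section~\ref{s-ld} provides a Poisson isomorphism
\[
\Psi_U\colon\mathcal O_{T^*[-1]V}(U)\;\longrightarrow\;\widehat\Sym_{\mathcal O_X(U)}\bigl(T_X[1]\oplus\mathcal E\oplus\mathcal E^*[1]\bigr)(U),
\]
whose only nontrivial brackets between the generating factors are the $T_X$-action $[f,\xi]=\xi(f)$ and the evaluation pairing $[e,\varphi]=\langle\varphi,e\rangle$, with $f\in\mathcal O_X$, $\xi\in T_X[1]$, $e\in\mathcal E$, $\varphi\in\mathcal E^*[1]$. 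Using the dual trivialization of $\mathcal E^*[1]$ and the canonical identification $(\mathcal E^*[1])^*[1]\cong \mathcal E$, the symmetric construction applied to $V^\vee$ yields an entirely parallel $\Psi_U^\vee$ with identical brackets on generators. Setting $\Phi_U:=(\Psi_U^\vee)^{-1}\circ\Psi_U$ then gives a local Poisson isomorphism inducing the identity on $U$.

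Next I would verify that the $\Phi_U$ are compatible on overlaps. On $U\cap U'$ two trivializations of $\mathcal E$ differ by some $g\in\mathrm{GL}(\mathcal E)(U\cap U')$, and the dual trivializations of $\mathcal E^*$ differ by $(g^{-1})^*$. The induced automorphism of the common local model is hamiltonian, with infinitesimal generator (for $g=\exp(A)$) the quadratic element $\beta^*_j A^j_k\beta^k$: one checks directly that $[\beta^*_jA^j_k\beta^k,\beta^i]=A^i_k\beta^k$ and $[\beta^*_jA^j_k\beta^k,\beta^*_l]=\beta^*_jA^j_l$. Because the pair $(g,(g^{-1})^*)$ preserves the evaluation pairing, exactly the same transition automorphism acts on both $\Psi_U$ and $\Psi_U^\vee$, and therefore cancels in $\Phi_U$. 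Hence $\Phi_U|_{U\cap U'}=\Phi_{U'}|_{U\cap U'}$, and the local isomorphisms glue to a global Poisson isomorphism $\Phi\colon T^*[-1]V\to T^*[-1]V^\vee$ inducing the identity on $X$.

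The principal difficulty is exactly this gluing step: a priori the asymmetry of the two pictures---sections of $\mathcal E$ appearing inside $\mathcal O_V$ on the $V$-side but inside $T_{V^\vee}[1]$ on the $V^\vee$-side, and conversely for $\mathcal E^*[1]$---could obstruct coherence of the identifications across patches. The saving observation is the naturality of Poisson duality under $\mathrm{GL}(\mathcal E)$: a change of local trivialization acts on the two sides by mutually dual linear automorphisms which together preserve the evaluation pairing, the only nontrivial bracket linking $\mathcal E$ and $\mathcal E^*[1]$. This is the graded analog of Roytenberg's argument \cite{Roytenberg} in the smooth category, and crucially requires no global choice of connection on $\mathcal E$.
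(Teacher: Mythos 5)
Your proof is correct and is essentially the paper's argument in different packaging: the paper's proof also reduces to exhibiting the common local model $\widehat\Sym_{\mathcal O_X}(T_X[1]\oplus\mathcal E\oplus\mathcal E^*[1])$ (using an algebraic connection on each affine rather than local trivializations) and then checking in local coordinates that the composite $\phi^\vee\circ\phi^{-1}$ is independent of the auxiliary choice because the correction terms to the $T_X[1]$-generators coincide on the two sides --- which is exactly your cancellation $\tau=\tau^\vee$ of transition automorphisms. The only imprecision is your reliance on $g=\exp(A)$ to present the transition as a hamiltonian flow; this is not needed, since the equality of the two transition automorphisms (including the induced quadratic correction to the $T_X[1]$-generators) can be verified directly from the dual pair $(g,(g^{-1})^*)$, as the paper in effect does.
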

\begin{proof}
  Let us first assume that $X$ is affine. Then $\mathcal E$ admits an
  algebraic connection $\nabla\colon T_X\to \scriptEnd(\mathcal E)$
  (the obstruction to the existence of connection lies in
  $H^1(\Omega^1\otimes\scriptEnd(\mathcal E))$ and thus vanishes for
  affine varieties).  Such a connection extends to a map of $\mathcal
  O_X$-modules $T_X\to \mathrm{Der}(\Sym_{\mathcal O_X}\mathcal
  E)$. Also the pairing between $\mathcal E$ and $\mathcal E^*$
  defines an inner multiplication $\iota\colon \mathcal E^*\to
  \mathrm{Der}(\Sym_{\mathcal O_X}\mathcal E)$.  Then the sheaf of
  derivations of $\Sym_{\mathcal O_X}\mathcal E$ is isomorphic to
  $T_X\oplus \mathcal E^*$, where $T_X$ acts via $\nabla$ and
  $\mathcal E^*$ via $\iota$. Thus
  \begin{equation}
  \label{e-002}
    \mathcal O_{T^*[-1]V}\cong 
    \widehat\Sym_{\mathcal O_X}
    (T_X[1] \oplus \mathcal E \oplus \mathcal E^*[1]).
  \end{equation}
  By using the dual connection on $\mathcal E^*$, the same result is
  obtained for $V^\vee$ as $\mathcal E$ and $\mathcal E^*[1]$ are
  interchanged. It remains to show that the resulting isomorphism
  respects the bracket and that it is independent of the choice of
  connection, implying that the isomorphisms on affine subsets glue to
  a global isomorphism. For this we work locally on an open subset $U$
  and use the local description of the previous section. Then
  $\mathcal O_V(U)=\mathcal O_X(U)[\beta^j, j\in I]$, $\mathcal
  O_{V^\vee}(U)=\mathcal O_X(U)[\beta_j^*,j\in I]$ and the right-hand
  side of \eqref{e-002} is $A(U)=\mathcal
  O_X(U)[[x_i^*,\beta^j,\beta_j^*,i=1,\dots,n,j\in I]]$.  The degree
  $d$ component of $A(U)$ consists of formal power series with
  coefficients in $\mathcal O_X(U)$ such that all terms have degree
  $d$. Let $\nabla \beta^i=\sum_j a^i_j\beta^j$ for some
  $a^i_j\in\Omega^1(X)$.  Then the isomorphism $\phi\colon A(U)\to
  \mathcal O_{T^*[1]V}(U)$ is
  \[
    x_i^*\mapsto
    -\partial_i-\sum_{j,k}\beta^ja_j^k(\partial_i)
    \frac\partial{\partial{\beta^k}},\qquad
    \beta^j\mapsto \beta^j,\qquad \beta_j^*\mapsto
    -\frac\partial{\partial \beta^j},
  \]
  and the isomorphism $\phi^\vee\colon A(U)\to \mathcal
  O_{T^*[1]V^\vee}(U)$ is
  \[
    x_i^*\mapsto
    -\partial_i+\sum_{k,j}\beta^*_ka^k_j(\partial_i)\frac\partial{\partial
    \beta^*_j},\qquad \beta^j\mapsto \frac{\partial}{\partial
    \beta^*_j},\qquad \beta_j^*\mapsto\beta_j^*.
  \]
  The composition $\phi^\vee\circ \phi^{-1}$ sends $\partial_i$ to
  $\partial_i$, $\beta^j$ to $\partial/\partial \beta^*_j$ and
  $\partial/\partial\beta^j$ to $-\beta_j^*$, therefore it is
  independent of the choice of connection and it is easy to check that
  it is an isomorphism of Poisson algebras.
\end{proof}
\subsection{The group of gauge equivalences}\label{s-gauge}
Let $M=T^*[-1]V$ then 
$\mathcal O_M^{-1}$, the homogeneous component of $\mathcal O_M$ 
of degree $-1$, is a sheaf of Lie algebras
acting on the sheaf $\mathcal O_M$ by derivations (for both the
product and the bracket) of degree $0$. Thus the action preserves the
filtration $F^\bullet\mathcal O_M$.  Let $I_M=F^1\mathcal O_M$ be the
graded ideal generated by elements of positive degree and $I_M^{(j)}$
be the $j$-th power of $I_M$: $I_M^{(0)}=\mathcal O_M$;
$I_M^{(j+1)}=I_M\cdot I_M^{(j)}$. Clearly $I_M^{(p)}\subset
F^p\mathcal O_M$.
\begin{lemma}\label{l-pro}
  \
\begin{enumerate}
  \item[(i)] $[I_M^{(2)}, \mathcal O_M]\subset I_M$.
  \item[(ii)] If $d\geq -1$, and $j\geq0$, $[I_M^{(j)},\mathcal O_M^d]\subset
    I_M^{(j)}$
  \item[(iii)] Let $p\geq 1, d\in\mathbb Z$. Then $[\mathcal
    O_M^{-1}\cap I_M^{(2)},F^p\mathcal O^d_M]\subset I_M^{(2)}\cap
    F^p\mathcal O^d_M$.
  \end{enumerate}
\end{lemma}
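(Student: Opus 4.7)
The three statements all follow from combining the Leibniz rule for the bracket with the filtration estimates of Lemma~\ref{l-Maud}; parts (i) and (ii) are preparatory and (iii) is the main content, most cleanly extracted from them.

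For (i), I would write an element of $I_M^{(2)}$ as a sum of products $a_i' a_i''$ with $a_i',a_i''\in I_M$ and apply axiom (iii) of a $P_0$-algebra:
\[
[a_i' a_i'',c] = a_i'[a_i'',c]+(-1)^{\deg a_i'\deg a_i''}a_i''[a_i',c].
\]
Each summand has a factor in $I_M$, so the bracket lies in $I_M\cdot\mathcal O_M = I_M$. For (ii) I would induct on $j$: the case $j=0$ is trivial, the case $j=1$ is Lemma~\ref{l-Maud}(i) with $p=1$ together with graded antisymmetry (using the hypothesis $d\geq -1$), and for the inductive step I would write elements of $I_M^{(j)}$ as sums of products in $I_M\cdot I_M^{(j-1)}$ and apply the Leibniz rule, using the inductive hypothesis to conclude each summand lies in $I_M\cdot I_M^{(j-1)} = I_M^{(j)}$.

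For (iii), the decisive observation is that $X_a := [a,\;\cdot\;]$ is a derivation of $\mathcal O_M$ of degree $0$: since $\deg a=-1$, the sign prefactor in its Leibniz rule reduces to $+1$. I would prove two containments separately. First, $X_a(F^p\mathcal O_M)\subset F^p\mathcal O_M$ follows from Lemma~\ref{l-Maud}(i) applied to $\deg a = -1 \geq -1$, and the degree count $\deg[a,b] = d$ handles the $\mathcal O_M^d$ factor. Second, to show $X_a(F^p\mathcal O_M)\subset I_M^{(2)}$: by (i), $X_a(\mathcal O_M)\subset I_M$; by (ii) with $j=2$, for every homogeneous element $g$ of positive degree (an ideal generator of $I_M$), $X_a(g)\in I_M^{(2)}$ since $\deg g\geq 1\geq -1$. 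Writing an arbitrary $x\in I_M$ as $\sum_k c_k g_k$ with $c_k\in\mathcal O_M$ and $g_k$ a generator, the derivation rule gives
\[
X_a(x) = \sum_k\bigl(X_a(c_k)g_k + c_k X_a(g_k)\bigr) \in I_M\cdot I_M + \mathcal O_M\cdot I_M^{(2)} = I_M^{(2)}.
\]
Since for $p\geq 1$ the ideal $F^p\mathcal O_M$ is generated by elements of degree $\geq p\geq 1$, one has $F^p\mathcal O_M\subset I_M$, and so $X_a(F^p\mathcal O_M)\subset X_a(I_M)\subset I_M^{(2)}$. Combining the two containments yields (iii).

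The main obstacle I anticipate is the temptation to prove (iii) by expanding $a = \sum a_i'a_i''$ and bracketing term by term against $b\in F^p\mathcal O_M^d$: some $a_i'$ or $a_i''$ may have degree strictly less than $-1$, in which case Lemma~\ref{l-Maud}(ii) gives only weaker control on brackets with $F^p$. Working instead with the whole degree~$-1$ element $a$ via the derivation $X_a$ sidesteps this issue, since the filtration behaviour of $X_a$ is then controlled uniformly by Lemma~\ref{l-Maud}(i), independently of any internal decomposition of $a$.
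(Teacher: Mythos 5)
Your proof is correct and follows essentially the same route as the paper's: parts (i) and (ii) by the Leibniz rule and induction on $j$, and part (iii) by establishing the two containments separately --- preservation of $F^p\mathcal O_M^d$ by a degree count on the whole degree $-1$ element, and membership in $I_M^{(2)}$ by a Leibniz computation. The only cosmetic difference is in the second containment of (iii), where the paper expands the element of $I_M^{(2)}$ and uses $[I_M^{(2)},I_M]\subset I_M[I_M,I_M]\subset I_M^{(2)}$, whereas you expand the element of $I_M$ as $\sum_k c_kg_k$ and invoke (i) and (ii); both are one-line Leibniz arguments and both, as you correctly anticipate, avoid bracketing low-degree factors of $a$ against $F^p\mathcal O_M$.
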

\begin{proof}
  (i) This follows from the Leibniz rule: $[I_M^{(2)},\mathcal
  O_M]\subset I_M[I_M,\mathcal O_M]\subset I_M$. The same argument
  proves (ii) by induction, by taking into account that $[I_M,\mathcal O_M^d]\subset
  I_M$ for $d\geq -1$, as the bracket of an element of positive degree
  with one of degree $-1$ has positive degree.  (iii) We have
  $[\mathcal O_M^{-1},F^p\mathcal O^d_M]\subset F^p\mathcal O^d_M$ for
  degree reasons as the bracket has degree 1. Similarly, $I_M$ is
  closed under the Poisson bracket and therefore
  $[I_M^{(2)},I_M]\subset I_M[I_M,I_M]\subset I_M^{(2)}$. Since
  $F^p\mathcal O_M\subset I_M$ for $p\geq1$ the claim follows.
\end{proof}
\begin{cor}\label{c-pro}
  Let $j\geq0$.  Then $[I_M^{(2)}\cap \mathcal
  O_M^{-1},I_{M}^{(j)}]\subset I_M^{(j+1)}$. In particular $\mathcal
  O_M^{-1}$ is a pronilpotent Lie algebra acting nilpotently on 
  $\mathcal O_M/F^p\mathcal O_M$.
\end{cor}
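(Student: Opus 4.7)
The plan is to establish the containment $[I_M^{(2)}\cap\mathcal O_M^{-1},I_M^{(j)}]\subset I_M^{(j+1)}$ by induction on $j\geq 0$ using the Poisson Leibniz rule, and then to read off the pronilpotency statement as a routine consequence together with the inclusion $I_M^{(p)}\subset F^p\mathcal O_M$. Both base cases come directly from Lemma \ref{l-pro}: the case $j=0$ is part (i), and the case $j=1$ follows from part (iii) with $p=1$ summed over the internal degree $d$, giving $[I_M^{(2)}\cap\mathcal O_M^{-1},I_M]\subset I_M^{(2)}$.

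For the inductive step, I would fix $a\in I_M^{(2)}\cap\mathcal O_M^{-1}$, assume $[a,I_M^{(j)}]\subset I_M^{(j+1)}$, and consider a local generator $bz\in I_M\cdot I_M^{(j)}=I_M^{(j+1)}$ with $b\in I_M$ and $z\in I_M^{(j)}$. Applying the Leibniz rule for the degree-$1$ bracket (axiom (iii) of a $P_0$-algebra) gives
\[
[a,bz] \;=\; [a,b]\,z \;\pm\; b\,[a,z].
\]
Both summands land in $I_M^{(j+2)}$: the first because $[a,b]\in I_M^{(2)}$ by the $j=1$ case, so $[a,b]\cdot z\in I_M^{(2)}\cdot I_M^{(j)}=I_M^{(j+2)}$; the second by the inductive hypothesis, which gives $b\cdot[a,z]\in I_M\cdot I_M^{(j+1)}=I_M^{(j+2)}$. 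This closes the induction.

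For the remaining assertion I would first record the elementary fact that $I_M^{(p)}\subset F^p\mathcal O_M$, since any local generator of $I_M^{(p)}$ is a product of $p$ elements of $I_M$, each contributing a factor of positive degree. Iterating the containment just proved then shows that the $p$-fold adjoint action of elements of $I_M^{(2)}\cap\mathcal O_M^{-1}$ sends $\mathcal O_M$ into $I_M^{(p)}\subset F^p\mathcal O_M$ and hence vanishes on the quotient $\mathcal O_M/F^p\mathcal O_M$. Taking the filtration on $\mathcal O_M^{-1}$ by $I_M^{(k)}\cap\mathcal O_M^{-1}$, this exhibits $\mathcal O_M^{-1}$ as pronilpotent and its induced action on each truncation $\mathcal O_M/F^p\mathcal O_M$ as nilpotent.

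The only delicate point is the promotion of $[a,b]$ from $I_M$ to $I_M^{(2)}$ at the start of the inductive step; this is exactly the gain that part (iii) of Lemma \ref{l-pro} provides over the automatic bound from part (ii), and everything else is Leibniz bookkeeping.
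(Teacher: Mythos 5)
Your proof is correct and follows essentially the same route as the paper's: the key input is Lemma \ref{l-pro}(iii) with $p=1$ to get $[I_M^{(2)}\cap\mathcal O_M^{-1},I_M]\subset I_M^{(2)}$, followed by induction on $j$ via the Leibniz rule, and the nilpotency on $\mathcal O_M/F^p\mathcal O_M$ from $I_M^{(p)}\subset F^p\mathcal O_M$. Your write-up merely makes explicit the Leibniz bookkeeping that the paper compresses into one sentence.
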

\begin{proof}
  By Lemma \ref{l-pro} (iii) for $p=1$, we have
  $[I_M^{(2)}\cap\mathcal O_M^{-1},I_M]\subset I_M^{(2)}$ and the
  claim follows with the Leibniz rule by induction on $j$.
\end{proof}
Thus the adjoint action of the Lie algebra $\mathfrak g_M=\mathcal
O_M^{-1}\cap I^{(2)}_M$ exponentiates to a sheaf of groups
$G_M=\exp(\mathrm{ad}\,\mathfrak g_M)$.
\begin{definition}\label{d-gaugegroup} Let $\mathfrak g(M)=\Gamma(X,\mathfrak g_M)$. 
  The group of Poisson automorphisms $G(M)=\exp(\mathrm{ad}\,\mathfrak g(M))$ 
  is called group of {\em gauge equivalences}.
\end{definition}
\section{Solutions of the classical master equation}\label{s-3} 
Let $\mathcal C$ be as in Section \ref{s-scb}. We formulate most of
the statements for $\mathcal C$ consisting of smooth algebraic varieties,
but they are valid with slight change of vocabulary to the other cases. 
\subsection{The classical master equation} 
Let $M$ be a $(-1)$-symplectic variety with
support $X\in\mathcal C$. The classical master equation is the
equation $[S,S]=0$ for a function $S\in\Gamma(X,\mathcal O_M^0)$ of
degree $0$ on $M$.  If $S$ is a solution of the master
equation then the operator $d_S=[S,\; ]$ 
is a differential on the
sheaf of $P_0$-algebras $\mathcal O_{M}$. Moreover, being a derivation
of degree 1, it preserves $I_M=F^1\mathcal O_{M}$ and thus defines a
differential on the sheaf of $\mathbb Z_{\leq0}$-graded algebras
$\mathcal O_M/I_M$.
\subsection{BV varieties}
\begin{definition}\label{def-BV} 
  Let $S_0$ be a regular function on $X\in\mathcal C$.
  A {\em BV variety} with support $(X,S_0)$ is a pair $(M,S)$
  consisting of a $(-1)$-symplectic variety $M$ with support $X$
  and a
  function $S\in\Gamma(X,\mathcal O_{M}^0)$ such that
  \begin{enumerate}
    \item[(i)] $S|_X=S_0$.
    \item[(ii)] $S$ is a solution of the classical master equation
      $[S,S]=0$.
    \item[(iii)] The cohomology sheaf of the complex $(\mathcal
      O_{M}/I_M,d_S)$ vanishes in non-zero degree.
  \end{enumerate}
\end{definition}
\begin{remark}
The inclusion $X$ in $M$ is described by the map $\mathcal O^0_M\to \mathcal O_X$ which has kernel $I_M^0=I_M\cap \mathcal O_M^0$. Thus we can write (i) as $S_0\equiv S\mod I_M^0$.
\end{remark}

\subsection{Multivalued BV varieties}\label{ss-mBV}
As the bracket of two functions depends only on their differential it is natural to consider a slight generalization
where we allow $S$ to be multivalued. Let $\tilde O_X$ be the $\mathcal O_X$ 
module $\Omega^{\mathrm{cl}}_X$ of closed differentials. We think of it as 
the space of ``regular multivalued functions modulo constants'', namely as
formal expressions $S_0=\int\lambda$ where $\lambda\in \Omega^{\mathrm{cl}}_X$.
\begin{definition}\label{def-mBV} 
  Let $S_0\in\Gamma(X,\tilde O_X)$ be a multivalued
  function on $X\in\mathcal C$.
  A {\em BV variety} with support $(X,S_0)$ is a pair $(M,S_0+S')$
  consisting of a $\mathbb Z_{\geq 0}$-graded variety $V$ with support $X$
  and a
  function $S'\in\Gamma(X,\mathcal O_{M}^0)$ such that
  \begin{enumerate}
    \item[(i)] $S'|_X=0$.
    \item[(ii)] $S=S_0+S'$ is a solution of the classical master equation
      $[S,S]=0$.
    \item[(iii)] The cohomology sheaf of the complex $(\mathcal
      O_{M}/I_M,d_S)$ vanishes in non-zero degree.
  \end{enumerate}
\end{definition}
We notice that $d_{S_0}=[S_0,\; ]$  is well defined on $\mathcal O_M$ 
as the bracket involves only the
derivative of $S_0$ and
it is easy to check that it is a differential. Then condition (ii)
means $d_{S_0}S'+\frac12[S',S']=0$.

Our results hold in the more general setting of multivalued BV varieties
with the same proofs, except for notational adjustments.
\subsection{The resolution of the Jacobian ring associated with a BV variety}
\label{ss-Jac}
Let $(M,S)$ be a BV variety with support $(X,S_0)$.
Then $S_0=S|_{X}$ is a regular function on $X$. The complex $R_M=(\mathcal
O_M/F^1\mathcal O_M,d_S)$ looks like 
\[ 
  \cdots \to T_X\to \mathcal O_X, 
\] 
and the last map is $\xi\mapsto [S_0,\xi]=\xi(S_0)$, so the cohomology of $R_M$ is
the Jacobian ring $J(S_0)$, the quotient of $\mathcal O_X$ by the
ideal generated by partial derivatives of $S_0$. Thus $R_M$ is a resolution of
$J(S_0)$.

\subsection{The BRST complex of a BV variety}
\begin{definition} 
  The {\em BRST complex} of a BV variety $(M,S)$ is the sheaf of
  differential $P_0$-algebras $(\mathcal O_{M},d_S)$.
\end{definition}
\begin{prop}\label{p-gaugeaction}
  Let $(M,S)$ be a BV variety.
  The subgroup $G(M,S)$ of gauge equivalences of $M$
  fixing $S$ acts as the identity on the cohomology sheaf of the BRST
  complex.
\end{prop}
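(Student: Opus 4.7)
The plan is to realize each element of $G(M,S)$ as chain-homotopic to the identity on the BRST complex, using the one-parameter family inside $G(M)$ connecting it to $\mathrm{id}$. Since $\mathfrak g(M)$ is pronilpotent (Corollary~\ref{c-pro}), the exponential is a bijection $\mathfrak g(M)\to G(M)$, so I write $g = \exp(\mathrm{ad}\,h)$ for the unique $h \in \mathfrak g(M) = \Gamma(X, \mathcal O_M^{-1} \cap I_M^{(2)})$, and set $g_t := \exp(t\,\mathrm{ad}\,h)$ and $S(t) := g_t(S)$. Each $S(t)$ is a solution of the classical master equation, $\dot S(t) = [h, S(t)]$, and the hypothesis $g(S) = S$ provides the boundary condition $S(0) = S(1) = S$. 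Moreover $g_t:(\mathcal O_M, d_S)\to(\mathcal O_M, d_{S(t)})$ is a Poisson isomorphism of complexes.

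To turn this family of isomorphisms into a single chain homotopy I introduce the sheaf of $P_0$-algebras $\tilde{\mathcal O} := \mathcal O_M \otimes_\K \Omega^\bullet([0,1])$, with product and bracket extended $\Omega^\bullet([0,1])$-bilinearly using the standard Koszul signs. The key computation is that
\[
\tilde S := S(t) + h\, dt \in \Gamma(X, \tilde{\mathcal O}^0)
\]
solves the extended master equation $d_{\mathrm{dR}}\tilde S + \tfrac12 [\tilde S,\tilde S] = 0$: written in components, this is precisely $\dot S(t) = [h, S(t)]$ (using $dt^2 = 0$ and $[h,h] = 0$, the latter since $\deg h$ is odd). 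Consequently $\tilde d := d_{\mathrm{dR}} + [\tilde S, -]$ is a differential on $\tilde{\mathcal O}$, and since $S(0) = S(1) = S$ the evaluations $\mathrm{ev}_0, \mathrm{ev}_1:(\tilde{\mathcal O}, \tilde d) \to (\mathcal O_M, d_S)$ (setting $dt = 0$ and $t = 0$ or $1$) are chain maps.

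I would then construct an interpolating chain map $\sigma:(\mathcal O_M, d_S) \to (\tilde{\mathcal O}, \tilde d)$ whose $0$-form part is $a\mapsto g_t(a)$ and whose $1$-form part $\Lambda(t, a)\,dt$ is determined recursively from the chain-map equation $\tilde d\,\sigma(a) = \sigma(d_S a)$. The pronilpotency of $\mathrm{ad}\,h$ on the $I_M^{(\bullet)}$-filtration (Corollary~\ref{c-pro}) ensures that the recursion converges in the completion. By construction $\mathrm{ev}_0 \circ \sigma = \mathrm{id}$ and $\mathrm{ev}_1 \circ \sigma = g$. A Poincar\'e-lemma argument in the $t$-direction then shows that $\mathrm{ev}_0$ and $\mathrm{ev}_1$ induce the same map on cohomology, whence
\[
\mathrm{id}_* \;=\; (\mathrm{ev}_0)_*\,\sigma_* \;=\; (\mathrm{ev}_1)_*\,\sigma_* \;=\; g_*
\]
on $H^\bullet(\mathcal O_M, d_S)$.

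The main obstacle is the construction of $\sigma$ together with the Poincar\'e lemma for $(\tilde{\mathcal O}, \tilde d)$; both arguments reduce to an integration-by-parts along $[0,1]$, and it is precisely the boundary condition $S(0) = S(1) = S$ — equivalently $g \in G(M, S)$ — that cancels the boundary terms and makes the argument work. Careful bookkeeping of Koszul signs in the extended bracket and of the filtration $I_M^{(\bullet)}$ is needed throughout.
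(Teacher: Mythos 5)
Your construction of the extended complex $(\tilde{\mathcal O},\tilde d)$ is fine as far as it goes, but the argument has a genuine gap at the end, and the gap sits exactly where axiom (iii) of Definition \ref{def-BV} must enter. Note first that the statement is \emph{false} for a general solution of the master equation: if $h\in\mathfrak g(M)$ is a $d_S$-cocycle of degree $-1$ with nonzero cohomology class, then $\exp(\mathrm{ad}_h)$ fixes $S$ but induces $\exp([\,[h],\cdot\,])$ on cohomology, which need not be the identity. Since your proof nowhere uses the acyclicity of the BRST complex in negative degree, it cannot be complete. Concretely, the failure is in the ``Poincar\'e-lemma argument'': because $\tilde d=d_{\mathrm{dR}}+[\tilde S,\cdot]$ contains the term $[h,\cdot]\,dt$, the fibre-integration operator $K\bigl(a(t)+b(t)\,dt\bigr)=\int_0^1 b\,dt$ satisfies $d_SK+K\tilde d=\mathrm{ev}_1-\mathrm{ev}_0\pm\mathrm{ad}_h\circ\int_0^1(\cdot)\,dt$, and the extra term is, on cohomology, precisely the bracket with the class of $h$; it does not vanish for free. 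For the same reason the recursion defining the $1$-form part $\Lambda$ of $\sigma$ is obstructed \emph{cohomologically} --- at each stage you must produce a null-homotopy of $\mathrm{ad}_h$ --- and pronilpotency only addresses convergence, not these obstructions.

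The missing ingredients are exactly the paper's (much shorter) proof: (1) from $\exp(\mathrm{ad}_h)S=S$ one gets $\bigl(\frac{\exp(\mathrm{ad}_h)-\mathrm{id}}{\mathrm{ad}_h}\bigr)[h,S]=0$, and since this operator is $\mathrm{id}+\frac12\mathrm{ad}_h+\cdots$ and hence invertible, $[h,S]=0$ --- in particular your family $S(t)$ is constant, a fact you never establish but implicitly need; (2) $h$ is therefore a $d_S$-cocycle of degree $-1$, and by the vanishing of the BRST cohomology in negative degree (Corollary \ref{c-neg}, a consequence of axiom (iii)) one can write $h=[S,b]$ for some $b$ of degree $-2$; (3) the Jacobi identity then gives $\mathrm{ad}_h=d_S\circ\mathrm{ad}_b+\mathrm{ad}_b\circ d_S$, so $\mathrm{ad}_h$ is null-homotopic, acts as zero on cohomology, and $\exp(\mathrm{ad}_h)$ acts as the identity. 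Once (2) and (3) are in hand your homotopy-over-the-interval machinery does close up, but at that point it is no longer needed.
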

\begin{proof} 
  Let $\mathrm{ad}_a$ be the operator $b\mapsto [a,b]$. If
  $\exp(\mathrm{ad}_a) S=S$ and $a\in \mathfrak g(M)$ then
  \[
  \left(\frac{\exp(\mathrm{ad}_a)-\mathrm{id}}{\mathrm{ad}_a}\right)[a,S]=0.
  \]
  The expression in parentheses is a power series
  $\mathrm{id}+\frac12\mathrm{ad}_a+\cdots$ starting with the identity
  and is thus an invertible operator acting on $\mathcal
  O_M(X)$. Thus $[a,S]=0$, i.e., $a$ is a cocycle of degree
  $-1$. Since the cohomology vanishes in this degree, there is a
  $b\in\mathcal O_M(X)^{-2}$ such that $a=[S,b]$. It follows that
  \[
  \mathrm{ad}_a=d_S\circ\mathrm{ad}_b+\mathrm{ad}_b\circ d_S,
  \]
  and thus $\mathrm{ad_a}$ is homotopic to the zero map.
\end{proof}

\subsection{Products, trivial BV varieties and stable equivalence}\label{ss-products}
If $(M',S')$ and $(M'',S'')$ are BV varieties with supports
$(X',S_0')$, $(X'',S_0'')$ then $M'\times M''$ with $S=S'\otimes 1
+1\otimes S''\in\Gamma(X'\times X'',\mathcal O_{M'\times M''})=
\Gamma(X'\times X'',\mathcal O_{M'}\hat\otimes \mathcal O_{M''})$ is
also a BV variety, called the {\em product} of the BV varieties
$(M',S')$, $(M'',S'')$. It is denoted by slight abuse of notation
$(M'\times M'', S'+S'')$.

Let $W=\oplus_{i<0} W^{i}$ be a negatively graded vector space over
$\K$ with finite dimensional homogeneous components $W^i$ and set
$W^*=\oplus_{i>0}(W^*)^i$ with $(W^*)^i=(W^{-i})^*$. Then $W$ may be
considered as a graded variety $W=(\{\mathrm{pt}\},\Sym\,W^*)$
supported at a point.  Its shifted cotangent bundle is
$T^*[-1]W=(\{\mathrm{pt}\},\widehat\Sym(W^*\oplus W[1]))$.  Let
$d_{W^*}$ be a differential on $W^*$ with trivial cohomology and set
$S_W$ be the element of the completion of $W^*\otimes W[1]$ in
$\mathcal O_{T^*[-1]W}=\widehat\Sym(W^*\oplus W[1])$ corresponding to
$d_{W^*}$: for any homogeneous basis $(\beta^j)$ of $W^*$ and dual
basis $(\beta^*_j)$ of $W[1]$,
\[
S_W=\sum_jd_{W^*}(\beta_j^*)\beta^j\in\mathcal O_{T^*[-1]W}.
\]
Then $(T^*[-1]W,S_W)$
is a BV variety with support $(\{\mathrm{pt}\},S_0=0)$. Its BRST
cohomology is $\K$ and the BRST complex is   $\widehat\Sym(W^*\oplus
W[1])$ with differential induced from $d_{W^*}$ on $W^*$ and its
dual on $W$.

\begin{definition} A BV variety of the form $(T^*[-1]W,S_W)$ for a
  negatively graded acyclic complex of vector spaces $W$ is called
  {\em trivial}.
\end{definition}

The K\"unneth formula implies:
\begin{lemma}\label{l-55}
If $(M,S)$ is a BV variety with support $(X,S_0)$ and $(T^*[-1]W,S_W)$ is a trivial 
BV variety then their product is a BV variety with support $(X,S_0)$ and
the canonical map 
\[
\mathcal O_{M}\to\mathcal O_{M\times T^*[-1]W}
\]
is a quasi-isomorphism of sheaves of $P_0$-algebras between the corresponding 
BRST complexes.
\end{lemma}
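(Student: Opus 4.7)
My plan is to verify BV axioms (i) and (ii) for the product directly, and then deduce both axiom (iii) and the quasi-isomorphism claim together by a K\"unneth-style spectral sequence argument. Axiom (i) is immediate: the support of $M\times T^*[-1]W$ is $X\times\{\mathrm{pt}\}=X$, and $(S+S_W)|_X = S|_X + S_W|_{\mathrm{pt}} = S_0$. Axiom (ii) reduces to the vanishing of the cross bracket $[S,S_W]$, which holds because $S$ and $S_W$ are sections of complementary tensor factors of $\mathcal O_{M}\hat\otimes\mathcal O_{T^*[-1]W}$, and the Poisson bracket of the product is the Leibniz extension of the two brackets with vanishing cross brackets. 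Consequently the BRST differential of the product splits as $d_{S+S_W} = d_S\hat\otimes\mathrm{id} + \mathrm{id}\hat\otimes d_{S_W}$, so the canonical inclusion $\iota\colon a\mapsto a\otimes 1$ is automatically a map of sheaves of differential $P_0$-algebras.

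To show $\iota$ is a quasi-isomorphism, I filter the product by $\mathcal F^p = \mathcal O_M\hat\otimes F^p\mathcal O_{T^*[-1]W}$. Since $S_W$ has degree $0$, Lemma \ref{l-Maud}(i) gives $d_{S_W}(F^p)\subset F^p$, so $d_{S+S_W}$ preserves $\mathcal F^\bullet$. The $E_0$-page is $\mathcal O_M\otimes\mathrm{gr}\,\mathcal O_{T^*[-1]W}\cong\mathcal O_M\otimes\Sym(W^*\oplus W[1])$ with differential $d_S\otimes\mathrm{id} + \mathrm{id}\otimes\bar d$, where $\bar d$ is the derivation extension of $d_{W^*}$ on $W^*$ and its dual on $W[1]$. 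By the ordinary K\"unneth formula, $E_1 = H^\bullet(\mathcal O_M,d_S)\otimes H^\bullet(\Sym(W^*\oplus W[1]),\bar d)$, and the second factor collapses to $\K$ in symmetric degree zero because, in characteristic zero, the graded-symmetric algebra of an acyclic complex of finite-dimensional vector spaces has cohomology equal to $\K$ concentrated in symmetric degree $0$. Thus $E_1\cong H^\bullet(\mathcal O_M,d_S)$ and $\iota$ realizes this isomorphism, so $\iota$ is a quasi-iso. Axiom (iii) for the product then follows by applying the same filtration to the quotient $\mathcal O_{M\times T^*[-1]W}/I_{M\times T^*[-1]W}$.

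The main obstacle is ensuring the spectral sequence converges strongly in the completed setting. Since $W$ has finite-dimensional homogeneous components, each $F^p/F^{p+1}$ of $\mathcal O_{T^*[-1]W}$ is finite-dimensional in every cohomological degree, so the filtration $\mathcal F^\bullet$ on $\mathcal O_M\hat\otimes\mathcal O_{T^*[-1]W}$ is complete and Hausdorff in each total degree, with no $\varprojlim^1$ obstruction. Consequently the spectral sequence converges to the cohomology of the product, validating the K\"unneth identification above and establishing compatibility with the sheaf of $P_0$-algebra structure.
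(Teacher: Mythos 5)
Your argument is correct and follows the same route as the paper, which simply asserts ``The K\"unneth formula implies'' the lemma without further detail. Your write-up supplies exactly the justification that invocation presupposes --- the splitting $d_{S+S_W}=d_S\hat\otimes\mathrm{id}+\mathrm{id}\hat\otimes d_{S_W}$, the acyclicity of $\Sym$ of an acyclic complex in characteristic zero, and the filtration/convergence argument needed to run K\"unneth through the completed tensor product (the convergence criterion you use is the same one the paper proves later in Section \ref{s-5}).
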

\begin{definition} 
  Two BV varieties $(M,S)$, $(M',S')$ with support $(X,S_0)$
  are called {\em equivalent} it there is a Poisson isomorphism $\Phi\colon M\to M'$ such that $\Phi^*S'=S$. They are called {\em stably equivalent} if they
  become equivalent after taking products with trivial BV varieties.
\end{definition}
\begin{remark}\label{rTate} 
  If  $R_M=(\mathcal O_M/I_M,\delta)$ is the resolution of the Jacobian ring
  associated with $(M,S)$, see \ref{ss-Jac}, then the resolution associated with
  the product with the trivial BV variety $(W,S_W)$ is $R_M\otimes\Sym(W[1])$ with
  differential $\delta\otimes \mathrm{id}+\mathrm{id}\otimes d_W$, where $d_W$ is
  induced from the differential dual to $d_{W^*}$.
\end{remark}
\section{Existence and uniqueness for affine varieties}\label{s-4} Let
$S_0$ be a regular function on a nonsingular affine algebraic variety
over a field $\K$ of characteristic zero. In this section we prove the
existence and uniqueness up to stable equivalence of a BV variety with
support $(X,S_0)$ and thus prove Theorem \ref{t-00}, (i).
The existence result occupies Sections
\ref{ss-TateResolutions}--\ref{ss-43} and is contained in Theorem \ref{t-T1}. The uniqueness up to stable equivalence is Theorem \ref{t-33} and is discussed in Section \ref{s-relating}. A variant of the argument, discussed in 
Section \ref{ss-4}, shows that adding a square of a linear function to
$S_0$ does not influence the BRST cohomology.
Finally, in \ref{s-auto} we show that
Poisson automorphism act trivially on cohomology thereby proving
Theorem \ref{t-00}, (ii). 

 The existence proof is an adaptation of the
construction proposed in {}\cite{BatalinVilkovisky1981,
  BatalinVilkovisky1983, HenneauxTeitelboim} to our context.  The idea
is to start from a resolution of the Jacobian ring of $S_0$ and show
that there exists a BV variety $(M,S)$ with $M$ a shifted cotangent bundle
$M=T^*[-1]V$ such that $\mathcal O_M/I_M$ with the differential
induced from $d_S=[S,\;\,]$ is the given resolution.

\subsection{Tate resolutions}\label{ss-TateResolutions}
Let $S_0\in\mathcal O_X(X)$.  The Jacobian ring $J(S_0)$ of $S_0$ is the cokernel of the
map $\delta\colon T_X\to \mathcal O_X$ sending $\xi$ to $\xi(S_0)$.  The first
step of the existence proof is the extension of $\delta$ to a {\em Tate
resolution} $R$ of the Jacobian ring, namely a quasi-isomorphism of 
differential graded commutative $\mathcal O_X$-algebras $(R,\delta)\to
(J(S_0),0)$ such that $R=\mathrm{Sym}_{\mathcal O_X}(\mathcal W)$ for
some graded $\mathcal O_X$-module $\mathcal W=\oplus_{j\leq-1}\mathcal W^j$
with locally free $\mathcal O_X$-modules $\mathcal W^j$ such that $\mathcal 
W^{-1}=T_X$. See the Appendix for more properties of Tate resolutions.

By construction, $\mathcal W=T_X[1]\oplus \mathcal E^*[1]$, where $\mathcal E$ is
concentrated in positive degree. Tate resolutions exist by the
classical recursive construction of Tate \cite{Tate1957}: assume that 
$\mathcal W$ is constructed down to degree $-d$ in such a way that the negative degree 
cohomology of $\Sym_{\mathcal O_X}(\mathcal W)$ is
zero down to degree $-d$. Then the cohomology of degree $-d$ is a finitely
generated $\mathcal O_X$-module so that it can be killed
by adding to $\mathcal W$ a direct summand of degree $-d-1$.

Given a Tate resolution,  let $V=(X,\Sym_{\mathcal O_X}(\mathcal E))$
the corresponding graded variety and $M=T^*[-1]V$ its (-1)-shifted cotangent
bundle. We have
\[
\mathcal O_M=\widehat
\Sym_{\mathcal O_X}(T_X[1]\oplus \mathcal E\oplus\mathcal E^*[1]),
\]
and $R$ is identified with the quotient $R_M=\mathcal O_M/F^1\mathcal O_M$ by the
ideal generated by elements of positive degree.

The first approximation to a solution of the master equation is $S_0+\delta|_{\mathcal E^*[1]}$,
where the restriction $\delta|_{\mathcal E^*[1]}\in \mathrm{Hom}(\mathcal E^*[1],(\widehat\Sym(T_X[1]\oplus \mathcal E^*[1]))[1])$ 
of $\delta$ to $\mathcal E^*$
is viewed as an element of 
$\mathcal E\hat\otimes\Sym(T^*[1]\oplus \mathcal E^*[1])\subset\mathcal O_M(X)$. More explicitly, 
\[
S_\mathrm{lin}=S_0+\sum_i\delta\beta_i^*\beta^i,
\] 
for any basis $\beta^i$ of $\mathcal E$ and dual basis $\beta^*_i$ of
$\mathcal E^*$.  Although $S_{\mathrm{lin}}$ does
not obey the master equation, its hamiltonian vector field $[S_\mathrm{lin},\
]$ does define a differential on the associated graded of $\mathcal O_M$.
\begin{prop}\label{p-assgr}
The operator $[S_\mathrm{lin},\;\,]$ preserves the filtration
$F^\bullet\mathcal O_M$ and thus induces a differential on the graded
$\mathcal O_X$-algebra $\mathrm{gr}\,\mathcal O_M$. The canonical
isomorphism $\mathrm{gr}\,\mathcal O_M\cong R_M\otimes_{\mathcal
O_X}\mathcal O_V$ (see Lemma \ref{l-assgr}) identifies the differential with
$\delta\otimes \mathrm{id}$.
\end{prop}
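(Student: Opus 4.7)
The plan is in two steps: first verify that $[S_{\mathrm{lin}},\,\cdot\,]$ preserves $F^\bullet\mathcal O_M$, then check that the induced derivation on $\mathrm{gr}\,\mathcal O_M$ agrees with $\delta\otimes\mathrm{id}$ by a computation on generators. The first step is immediate: $S_{\mathrm{lin}}\in\Gamma(X,\mathcal O_M^0)$ has degree $0\geq -1$, so Lemma~\ref{l-Maud}(i) gives $[S_{\mathrm{lin}},F^p\mathcal O_M]\subset F^p\mathcal O_M$ and hence a well-defined degree-$1$ derivation $D$ of $\mathrm{gr}\,\mathcal O_M$.

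For the second step, observe that under the isomorphism $\mathrm{gr}\,\mathcal O_M\cong R_M\otimes_{\mathcal O_X}\mathcal O_V$ of Lemma~\ref{l-assgr}, both $D$ and $\delta\otimes\mathrm{id}$ are $\mathcal O_X$-linear derivations of the graded-commutative product (linearity over $\mathcal O_X$ holds because both operators vanish on $\mathcal O_X\subset R_M$: the tail of $[S_{\mathrm{lin}},f]$ lies entirely in $F^1$, while $\delta$ sends $R_M^0=\mathcal O_X$ to $R_M^1=0$). Equality therefore reduces to a comparison on the $\mathcal O_X$-algebra generators $x_i^*,\beta_j^*\in\mathcal W=T_X[1]\oplus\mathcal E^*[1]$ and $\beta^j\in\mathcal E$, which I would verify locally in the coordinates of Section~\ref{s-ld}.

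On the $R_M$-generators, the Leibniz rule applied to $[\delta\beta_k^*\beta^k,\,\cdot\,]$ together with the bracket relations of Section~\ref{s-ld} yields
\[
  [S_{\mathrm{lin}},x_i^*]\equiv[S_0,x_i^*]\pmod{F^1},\qquad
  [S_{\mathrm{lin}},\beta_j^*]\equiv\delta\beta_j^*\pmod{F^1},
\]
where every dropped contribution carries a factor of $\beta^k\in F^1$; these are the values of the Tate differential on the generators of $\mathcal W$ with the convention of Section~\ref{ss-Jac}. The more delicate case is the $\mathcal O_V$-generators: for $\beta^j\in\mathcal E$,
\[
  [S_{\mathrm{lin}},\beta^j]=\sum_k[\delta\beta_k^*,\beta^j]\,\beta^k,
\]
and a summand is nonzero only when the expansion of $\delta\beta_k^*\in\Sym_{\mathcal O_X}(T_X[1]\oplus\mathcal E^*[1])$ contains $\beta_j^*$ as a factor. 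Since every generator of $\mathcal W$ sits in strictly negative degree and $\delta\beta_k^*$ has total degree $-\deg\beta^k$, stripping off a $\beta_j^*$ of degree $-\deg\beta^j-1$ forces $\deg\beta^k\geq\deg\beta^j+1$, so $\beta^k\in F^{\deg\beta^j+1}$ and the whole expression lies in $F^{\deg\beta^j+1}$. Thus $D$ vanishes on the image of $\mathcal O_V$, which matches $(\delta\otimes\mathrm{id})(1\otimes\beta^j)=0$.

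The main obstacle is precisely this last degree-counting step: the one unit of filtration gained by each $\beta^k$ in the expansion of $[S_{\mathrm{lin}},\beta^j]$ is what lets the induced differential decouple from the $\mathcal O_V$-factor. Without the strict negativity of the grading on $\mathcal W$, one would instead obtain a differential with a nontrivial cross term, and the clean identification with $\delta\otimes\mathrm{id}$ would fail. Once this observation is recorded, the derivation property finishes the proof.
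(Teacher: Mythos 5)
Your proof is correct and follows essentially the same route as the paper's: establish filtration preservation (for which your appeal to Lemma~\ref{l-Maud}(i) is fine) and then compare the two $\mathcal O_X$-linear derivations on the generators of $R_M$ and of $\mathcal O_V$. The only divergence is at the $\mathcal O_V$-generators, where the paper replaces your monomial-by-monomial analysis of $[\delta\beta_k^*,\beta^j]\beta^k$ with the one-line observation that for $a\in\mathcal O_V^p$ the bracket $[S_{\mathrm{lin}},a]$ has degree $p+1$ and any homogeneous element of degree $p+1$ lies in $F^{p+1}\mathcal O_M$ by definition of the filtration --- so the decoupling you attribute to the strict negativity of the grading on $\mathcal W$ is in fact automatic, and that remark in your last paragraph is not the crux of the argument.
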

\begin{proof}
The operator preserves the filtration because it is of degree one.
 Let us compute the action of
$[S_\mathrm{lin},\;\,]$ on the associated graded starting with 
$R_M=\mathrm{gr}^0\mathcal O_M$: $R_M$ is a
locally free graded $\mathcal O_X$-module 
whose generators are the classes of
$x_i^*$, a local basis of $T_X[-1]$ and of $\beta_j^*$, a local basis of
$\mathcal E^*[-1]$. The non-zero brackets are $[f,x_i^*]=\partial_if$,
$f\in\mathcal O_X$ and $[\beta^j,\beta_l^*]=\delta_{jl}$. Thus
$[S_\mathrm{lin},x^*_i]\equiv\partial_iS_0 \mod F^1\mathcal O_M$ and
$[S_\mathrm{lin},\beta_j^*]\equiv\delta \beta^*_j\mod F^1\mathcal O_M$. Also
$[S_\mathrm{lin},\mathcal O_X]\equiv 0\mod F^1\mathcal O_M$. Thus
$[S_\mathrm{lin},\;\,]$ coincides with $\delta$ on $R_M=\mathrm{gr}^0\mathcal O_M$.
The claim in the general case follows from the fact that if $a\in\mathcal O_V=
\Sym_{\mathcal O_X}(\mathcal E)$ has positive degree $p$ then
$[S_\mathrm{lin},a]$ has degree $p+1$ and thus vanishes modulo $F^{p+1}\mathcal
O_M$.
\end{proof}
\begin{remark}\label{r-bigraded}
 Note that $\mathrm{gr}\,\mathcal O_M$ is bigraded, with 
\[
\mathrm{gr}^{p,q}\mathcal O_M=F^p\mathcal O_M^q/F^{p+1}\mathcal O_M^q,
\]
and nonzero components for $p\geq \max(q,0)$.
\end{remark}
\subsection{BV varieties associated to a Tate resolution}\label{s-existence}

Let $I_M=F^1\mathcal O_M$ be the ideal generated by elements of positive degrees
and $I^{(j)}_M$ its $j$-th power, cf.~Section \ref{s-gauge}.
\begin{lemma} \label{l-Fitzherbert}
  $[I_M^{(2)}\cap \mathcal O_M^{0},F^p\mathcal O_M]\subset F^{p+1}\mathcal O_M$
\end{lemma}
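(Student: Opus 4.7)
The plan is to expand the bracket by the Leibniz rule and reduce to the filtration bounds of Lemma~\ref{l-Maud}. The key point is that an element of $I_M^{(2)}$ is locally a sum of products $ab$ with $a, b \in I_M = F^1\mathcal O_M$; differentiating one factor via the Poisson bracket leaves the other as a companion in $F^1\mathcal O_M$, which supplies the extra unit of filtration needed to pass from $F^p$ to $F^{p+1}$.

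Concretely, I would work locally and write $c \in I_M^{(2)}\cap\mathcal O_M^0$ as a finite sum $c = \sum_\alpha a_\alpha b_\alpha$ with $a_\alpha, b_\alpha$ homogeneous sections of $I_M$. Since $I_M$ is generated by sections of positive degree, one can arrange $\deg a_\alpha \geq 1$, and then $\deg b_\alpha = -\deg a_\alpha \leq -1$ because $\deg c = 0$. For $f \in F^p\mathcal O_M$, the Leibniz rule gives
\[
[a_\alpha b_\alpha, f] = a_\alpha[b_\alpha, f] \pm b_\alpha[a_\alpha, f],
\]
so it suffices to place each summand in $F^{p+1}\mathcal O_M$. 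The term $b_\alpha[a_\alpha, f]$ is immediate: Lemma~\ref{l-Maud}(i) applies to $a_\alpha$ (its degree is $\geq -1$) and gives $[a_\alpha, f] \in F^p\mathcal O_M$; multiplication by $b_\alpha \in F^1\mathcal O_M$ then lands the product in $F^{p+1}\mathcal O_M$.

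For $a_\alpha[b_\alpha, f]$, the idea is that $a_\alpha$ is homogeneous of positive degree $-\deg b_\alpha$, hence lies in $F^{-\deg b_\alpha}\mathcal O_M$. When $p + \deg b_\alpha \geq 0$, Lemma~\ref{l-Maud} gives $[b_\alpha, f] \in F^{p+\deg b_\alpha + 1}\mathcal O_M$ (or $F^p\mathcal O_M$ in the boundary case $\deg b_\alpha = -1$), and the filtration degrees combine via $F^i \cdot F^j \subset F^{i+j}$ to yield $F^{p+1}\mathcal O_M$. When $p + \deg b_\alpha < 0$ the lemma is silent on $[b_\alpha, f]$, but then $\deg a_\alpha = -\deg b_\alpha > p$ forces $a_\alpha \in F^{p+1}\mathcal O_M$ by itself, so the product belongs to $F^{p+1}\mathcal O_M$ irrespective of the other factor. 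The whole argument is bookkeeping tied together by the identity $\deg a_\alpha + \deg b_\alpha = 0$; the only mild obstacle is ensuring that the Leibniz expansion is finite, which is handled by computing modulo $F^{p+1}\mathcal O_M$, where only finitely many homogeneous contributions to $c$ survive, and invoking that the bounds of Lemma~\ref{l-Maud} pass to the completed sheaf.
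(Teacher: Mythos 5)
Your proof is correct, and it takes a genuinely different route from the paper's in which argument of the bracket gets expanded by the Leibniz rule. You decompose the element $c\in I_M^{(2)}\cap\mathcal O_M^0$ into homogeneous products $a_\alpha b_\alpha$ with $\deg a_\alpha\geq 1$ and $b_\alpha\in I_M$ of the opposite degree, and then do explicit degree/filtration bookkeeping via Lemma~\ref{l-Maud} on each of the two Leibniz terms, the extra unit of filtration coming either from the companion factor sitting in $F^1\mathcal O_M$, or, in the regime $p+\deg b_\alpha<0$ where Lemma~\ref{l-Maud} says nothing, from $a_\alpha$ alone already lying in $F^{p+1}\mathcal O_M$. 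The paper instead leaves $c$ intact and decomposes the \emph{other} argument, writing an element of $F^p\mathcal O_M$ as a sum of products $ab_{p'}$ with $\deg b_{p'}=p'\geq p$: the term $[c,b_{p'}]a$ then lands in $F^{p'+1}\mathcal O_M$ for pure degree reasons, while the term $[c,a]b_{p'}$ is dispatched by the already-proved inclusion $[I_M^{(2)},\mathcal O_M]\subset I_M$ of Lemma~\ref{l-pro}(i), which packages exactly the ``companion factor in $I_M$'' mechanism that you re-derive by hand. The paper's version is shorter because it recycles Lemma~\ref{l-pro} and needs no case split on $p+\deg b_\alpha$; yours is self-contained modulo Lemma~\ref{l-Maud} and makes the degree arithmetic fully explicit. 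Both treat the completion the same way, by observing that the bound $F^{p+1}\mathcal O_M$ is uniform over the (possibly infinite) homogeneous decomposition.
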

\begin{proof}
  Let us  again adopt the convention that $a_j,b_j,\dots$ denote arbitrary
  local sections of degree $j$ and $a,b,\dots$ sections of unspecified degree.
  Then an element of $F^p\mathcal O_M$ is a sum of terms of the form $a b_{p'}$
  with $p'\geq p$ and if $c_0\in I_M^{(2)}\cap\mathcal O_M^0$, $[c_0,a
  b_{p'}]=[c_0,a]b_{p'}\pm[c_0,b_{p'}]a$.  The bracket in the second term has
  degree $p'+1$ so  the second term is in $F^{p+1}$.  By Lemma \ref{l-pro}, the
  first term belongs to $[I_M^{(2)},\mathcal O_M]F^{p}\mathcal O_M \subset
  I_MF^p\mathcal O_M \subset F^{p+1}\mathcal O_M$.
\end{proof}
  Thus $I^{(2)}_M\cap\mathcal O_M$ acts trivially on the associated graded. In
  particular any $S\in\mathcal O_M^0$ such that $S\equiv S_{\mathrm{lin}}\mod
  I_M^{(2)}$ will induce the same differential as $S_\mathrm{lin}$ on
  $\mathrm{gr}\,\mathcal O_M$. The idea is to construct a solution of
  $S\in\Gamma(X,\mathcal O_M)$ of the classical master equation 
  such that $S\equiv S_{\mathrm{lin}}\mod I_M^{(2)}$. This is done recursively:
\begin{definition}\label{d-asso} Let $R=(\mathcal O_M/I_M,\delta)$ be a Tate resolution
  of the Jacobian ring $J(S_0)$ with $M=T^*[-1]V$ and $S_{\mathrm{lin}}$ be the
  corresponding hamiltonian function.  We say that a solution
  $S\in\Gamma(X,{\mathcal O}_M)$ of the master equation is {\em associated with $R$} if
  $S\equiv S_{\mathrm{lin}}\mod I_M^{(2)}$.
\end{definition}
\begin{theorem}\label{t-T1} Let $S_0\in{\mathcal O}(X)$.
  Let $R=(\mathcal O_M/I_M,\delta)$
  be a Tate resolution of the Jacobian
  ring $J(S_0)$. Then there exists a solution $S\in\mathcal O_M(X)$ 
  of the classical master equation
  \begin{equation}\label{e-T1}
    [S,S]=0
  \end{equation}
  associated with $R$.
  If $S'$ is another solution with this property then $S'=g\cdot S$
  for some gauge  equivalence $g\in G(M)$.
\end{theorem}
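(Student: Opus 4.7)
The plan is to handle existence and uniqueness in parallel by a recursive construction along the filtration $F^\bullet \mathcal{O}_M$, reducing each step to a cohomological computation on the associated graded. The key input is Proposition \ref{p-assgr}: under the isomorphism $\mathrm{gr}\,\mathcal{O}_M \cong R_M \otimes_{\mathcal{O}_X} \mathcal{O}_V$ of Lemma \ref{l-assgr}, the operator $[S_\mathrm{lin},\,\cdot\,]$ induces $\delta \otimes \mathrm{id}$; since $R_M$ is a resolution of $J(S_0)$, the cohomology of $\mathrm{gr}\,\mathcal{O}_M$ is concentrated along the diagonal $q = p$ in the bigrading of Remark \ref{r-bigraded}. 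Both the existence obstruction (of total degree $q = 1$) and the uniqueness obstruction (of total degree $q = 0$) will live strictly off this diagonal and hence automatically vanish in cohomology.

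For existence, I would set $S^{(1)} := S_\mathrm{lin}$ and construct inductively a sequence of global sections with $S^{(n+1)} - S^{(n)} \in F^n \mathcal{O}_M^0(X)$ and $[S^{(n)}, S^{(n)}] \in F^n \mathcal{O}_M^1(X)$. A short direct calculation gives the base case $[S_\mathrm{lin}, S_\mathrm{lin}] \in F^2$, using the structural property that the degree-$(-2)$ generators $\beta^*$ of $\mathcal{E}^*[1]$ satisfy $\delta \beta^* \in L(S_0)$, whence $[S_0, \delta \beta^*] = \delta \beta^*(S_0) = 0$. For the inductive step with $n \geq 2$, the Jacobi identity gives $[S^{(n)}, [S^{(n)}, S^{(n)}]] = 0$, so the class of $\tfrac{1}{2}[S^{(n)}, S^{(n)}]$ in $\mathrm{gr}^n \mathcal{O}_M^1(X)$ is a $\delta$-cocycle at total degree $q = 1$. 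Since $q = 1 < n$ lies off the diagonal $q = p$, this cocycle is a coboundary $\delta \sigma_n$; setting $S^{(n+1)} := S^{(n)} - \widetilde{\sigma}_n$ for any lift $\widetilde{\sigma}_n$ raises the order of the obstruction. The sequence is Cauchy in the $F^\bullet$-adic topology and converges in $\mathcal{O}_M$ to a solution $S$ of the master equation.

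For uniqueness, given another associated solution $S'$, set $\sigma := S' - S \in I_M^{(2)} \cap \mathcal{O}_M^0(X) \subset F^2$, and build a gauge equivalence piece by piece. Inductively, suppose a previously applied gauge transformation has already placed $\sigma$ in $F^n$ with $n \geq 2$; subtracting the master equations for $S$ and $S + \sigma$ yields $[S, \sigma] + \tfrac{1}{2}[\sigma, \sigma] = 0$. By Lemma \ref{l-pro} and Lemma \ref{l-Fitzherbert} the quadratic term lies in $F^{n+1}$, so $\delta \bar{\sigma} = 0$ in $\mathrm{gr}^n \mathcal{O}_M^0(X)$. Since $q = 0 \neq n$, the cocycle $\bar{\sigma}$ is a coboundary $\delta \bar{h}_n$; replacing $S$ by $\exp(\mathrm{ad}_{h_n}) S$ for any lift $h_n \in F^n \cap \mathcal{O}_M^{-1}(X) \subset \mathfrak{g}(M)$ then pushes $\sigma$ into $F^{n+1}$. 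Corollary \ref{c-pro} ensures that the resulting composition $\cdots \exp(\mathrm{ad}_{h_3}) \exp(\mathrm{ad}_{h_2})$ is well defined modulo each $F^p$, hence assembles into a gauge equivalence $g \in G(M)$ with $g \cdot S = S'$.

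The main obstacle will be the filtration arithmetic: one must verify that $\sigma_n$ and $h_n$ can be chosen with the correct degree and filtration level, that nonlinear terms such as $[\sigma, \sigma]$ and the higher-order tails of $\exp(\mathrm{ad}_{h_n}) S$ strictly raise the filtration (for which Lemma \ref{l-pro}, Lemma \ref{l-Fitzherbert}, and Corollary \ref{c-pro} are precisely tailored), and that the infinite products of corrections and gauge transformations converge in $\mathcal{O}_M$ rather than only as formal series.
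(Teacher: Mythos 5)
Your overall strategy---recursive correction along the filtration $F^\bullet\mathcal O_M$, with obstructions computed in $\mathrm{gr}\,\mathcal O_M\cong R_M\otimes_{\mathcal O_X}\mathcal O_V$ and killed by the off-diagonal acyclicity---is exactly the paper's. The existence half is essentially sound, with one omission: the theorem asks for a solution \emph{associated with} $R$, i.e.\ $S\equiv S_\mathrm{lin}\bmod I_M^{(2)}$, so the corrections $\widetilde\sigma_n$ must be chosen in $I_M^{(2)}\cap F^n\mathcal O_M^0$, not merely in $F^n\mathcal O_M^0$. This forces you to track that $[S^{(n)},S^{(n)}]$ stays in $I_M^{(2)}$ and to solve the coboundary equation inside the subcomplex $\mathcal G_{n,2}=\pi_n(F^n\mathcal O_M\cap I_M^{(2)})$, whose acyclicity in degrees $<n$ is the content of Lemma \ref{l-vanish}; you also need $X$ affine (quasi-coherence) to pass from the cohomology sheaf to global sections and to lift primitives globally. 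These are patches rather than changes of route.

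The uniqueness half has a genuine gap, in two places. First, you assert $h_n\in F^n\mathcal O_M^{-1}(X)\subset\mathfrak g(M)$; this inclusion is false, since $F^n\mathcal O_M^{-1}$ contains elements of the form $a_{-n-1}b_{n}$ with a single factor of positive degree, which lie in $I_M$ but not in $I_M^{(2)}=I_M\cdot I_M$, so $\exp(\mathrm{ad}_{h_n})$ need not be a gauge equivalence in $G(M)$. A primitive must be produced inside $\pi_n\bigl(F^n\mathcal O_M^{-1}\cap I_M^{(q)}\bigr)$ for $q\ge2$, which again requires the refined vanishing statement for the subcomplexes $\mathcal G_{n,q}$. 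Second, and more structurally, the higher-order tail of $\exp(\mathrm{ad}_{h_n})S$ does \emph{not} raise the filtration: with $u\in I_M^{(2)}\cap F^n\mathcal O_M^{-1}$ one only gets $[u,[u,S]]\in I_M^{(3)}\cap F^n\mathcal O_M$ (Lemma \ref{l-Maud} and Corollary \ref{c-pro}), not $F^{n+1}\mathcal O_M$, so your single induction on the filtration degree does not close. The paper resolves this with a secondary induction on the power $q$ of $I_M$ at fixed filtration level $n$: each gauge step improves membership from $I_M^{(q)}\cap F^n+F^{n+1}$ to $I_M^{(q+1)}\cap F^n+F^{n+1}$, and the inner induction terminates because $I_M^{(n+1)}\subset F^{n+1}\mathcal O_M$. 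Without this device the verification you defer to the end (that the exponential tails strictly raise the filtration) fails.
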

\subsection{Proof of Theorem \ref{t-T1}}\label{ss-43}
\subsubsection*{(a) Filtration and bracket}
  The following Lemma gives a compatibility condition between bracket
  and filtration needed for the recursive construction of the solution
  of the classical master equation.
\begin{lemma}\label{l-ooo}
  Let $p\geq0$.
\begin{enumerate}
  \item[(i)] $[F^p\mathcal O_M^0,\mathcal O_M^0]\subset F^p\mathcal
    O_M^1$.
  \item[(ii)] $[F^p\mathcal O_M^0,F^p\mathcal O_M^0]\subset
    F^{p+1}\mathcal O_M^1$.
  \end{enumerate}
\end{lemma}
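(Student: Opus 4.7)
The proof of (i) follows immediately from Lemma \ref{l-Maud}(i) applied with $d=0$: that gives $[\mathcal O_M^0,F^p\mathcal O_M]\subset F^p\mathcal O_M$, and combining graded antisymmetry with the degree count (the bracket raises total degree by $1$) yields exactly $[F^p\mathcal O_M^0,\mathcal O_M^0]\subset F^p\mathcal O_M^1$.

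For part (ii) the case $p=0$ is immediate since any element of $\mathcal O_M^1$ is itself a generator of $F^1$; assume henceforth $p\geq 1$. I would work locally in the coordinates of Section \ref{s-ld} and factor each monomial of degree $0$ in $F^p\mathcal O_M$ as $a=m_+^a\,m_-^a$, with $m_+^a$ a monomial in the positive-degree generators $\beta^j$ of total degree $\geq p$ and $m_-^a$ collecting the non-positive-degree generators $x_i^*,\beta_k^*$ (together with an $\mathcal O_X$-coefficient), of complementary total degree $-\deg m_+^a$. Since positive-degree generators bracket trivially among themselves, the Leibniz rule gives
\[
  [a,b]\;=\;m_+^a\,[m_-^a,m_+^b]\,m_-^b\;\pm\;m_+^a m_+^b\,[m_-^a,m_-^b]\;\pm\;m_+^b\,[m_+^a,m_-^b]\,m_-^a.
\]

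The middle term contains the factor $m_+^a m_+^b$ and so has positive content $\geq 2p\geq p+1$. For the first term, each summand of $[m_-^a,m_+^b]$ arises from contracting one $\beta^j$ in $m_+^b$ against its dual $\beta_j^*$ in $m_-^a$; what remains of $m_-^a$ after removing $\beta_j^*$ is still a product of non-positive-degree generators, hence has total degree $\leq 0$, which forces $\deg\beta^j\leq\deg m_+^a-1$. The positive content of the resulting summand is then $\deg m_+^a+\deg m_+^b-\deg\beta^j\geq\deg m_+^b+1\geq p+1$. The third term is handled symmetrically, with the roles of $m_-^a$ and $m_-^b$ exchanged, giving $\deg\beta^j\leq\deg m_+^b-1$ and positive content $\geq p+1$. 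By continuity of the bracket with respect to the filtration the estimate extends from monomials to arbitrary sections, and $[a,b]\in F^{p+1}\mathcal O_M^1$ as required.

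The delicate point is the estimate for the two contraction terms: a purely formal bound based on Lemma \ref{l-Maud} and the multiplicativity $F^p\cdot F^q\subset F^{p+q}$ gives only $F^p$, and the missing unit of filtration is won specifically by observing that the residual $m_-^a/\beta_j^*$ (respectively $m_-^b/\beta_j^*$) must still live in the non-positive part of the symmetric algebra, which strictly bounds the degree of the contracted $\beta^j$.
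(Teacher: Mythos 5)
Your proof is correct. Part (i) is fine: Lemma \ref{l-Maud}(i) with $d=0$ plus graded antisymmetry and the degree count does give the claim, and this is if anything a touch slicker than the paper's direct Leibniz computation. Part (ii) is also correct, but it takes a different route from the paper's. The paper stays coordinate-free: it writes a degree-zero element of $F^p\mathcal O_M$ as a sum of products $a_{-j}b_j$ with $\deg b_j=j\geq p$, observes that the only dangerous term is $b_p[a_{-p},c_0]$ in the borderline case $j=p$, and then wins the extra unit of filtration by applying Leibniz \emph{a second time} to $c_0=d_{-p}e_p$: one resulting summand carries two positive factors of degree $p$ (content $2p>p$), the other carries $b_p$ times a bracket $[a_{-p},e_p]$ of degree $+1$ (content $p+1$). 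You instead pass to the local generators of Section \ref{s-ld}, split each monomial into its full positive and non-positive parts, and extract the extra unit from the contraction structure of the bracket: the contracted generator $\beta^j$ must have degree at most $\deg m_+^a-1$ because the residue of the non-positive part after removing $\beta_j^*$ still has non-positive degree. Both mechanisms are elementary degree counts; the paper's has the advantage of not invoking the local description or the explicit form of the bracket on generators, while yours makes visible exactly which contractions are responsible for the potential loss of filtration degree and why they cannot in fact cause one. Your handling of the completion (monomial-wise estimate plus continuity of the bracket, which is supplied by Lemma \ref{l-Maud}) and of the trivial case $p=0$ matches the paper's.
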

\begin{proof} 
  Clearly $[\mathcal O_M^0,\mathcal O_M^0]\subset \mathcal O_M^1$ since the
  bracket has degree 1. Also $F^1\mathcal O_M^1=\mathcal O_M^1$ so if $p=0$
  there is nothing to prove. So let us assume that $p\geq 0$. Let
  $a_i,b_i,\dots $ denote general local sections of $\mathcal O_M$ of degree
  $i$. Then, for $j\geq p$, the bracket
\begin{equation}\label{e-001} [a_{-j}b_j,c_0]=a_{-j}[b_j,c_0]\pm
    b_j[a_{-j},c_0]
  \end{equation}
  lies in $F^p\mathcal O_M$ since $\deg\,[b_j,c_0]=j+1\geq p+1\geq p$ and
  $\deg b_j=j\geq p$. This proves (i). Now let us assume that $c_0\in
  F^p\mathcal O_M^0$. If $j>p$, or if $c_0\in F^{p+1}\mathcal O_M^0$
  (ii) follows from (i), so let $j=p$ and $c_0=d_{-p}e_p$.  Then the
  first term in \eqref{e-001} is in $F^{p+1}\mathcal O_M$ and the
  second term is
  \[
  b_p[a_{-p},c_0]=b_p([a_{-p},d_{-p}]e_p\pm [a_{-p},e_p]d_{-p})\in
  F^{p+1}\mathcal O_M,
  \]
  since $\deg b_p e_p=2p>p$ and $\deg[a_{-p},e_p]=1$.
\end{proof}
  The construction of $S$ and the construction of the gauge equivalence in the
  uniqueness proof both rely on the vanishing of the cohomology of a complex of
  sheaves, that we now introduce.  Let $0\leq q\leq p$. Let 
\[
  \pi_p\colon F^p\mathcal O_M\to \mathrm{gr}^p\mathcal O_M,
\] 
be the canonical projection and consider the subcomplex of $\mathrm{gr}^p\,O_M$ 
\[
\mathcal G^\bullet_{p,q}=\pi_p(F^p\mathcal
O^\bullet_M\cap I_M^{(q)}),
\]
\begin{lemma}\label{l-vanish}
\[
 H^j(\Gamma(X,\mathcal G^\bullet_{p,q}))=0,\quad \text{if $j<p$.}
\]
\end{lemma}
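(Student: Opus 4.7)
The plan is to identify the subcomplex $\mathcal G_{p,q}^\bullet$ explicitly via Lemma \ref{l-assgr} and Proposition \ref{p-assgr}, and then reduce the vanishing to the acyclicity of the Tate resolution in negative degree. By Proposition \ref{p-assgr}, the differential on $\mathrm{gr}\,\mathcal O_M$ induced by $[S_\mathrm{lin},\;\,]$ corresponds, under the isomorphism $\mathrm{gr}\,\mathcal O_M\cong R_M\otimes_{\mathcal O_X}\mathcal O_V$ of Lemma \ref{l-assgr}, to $\delta\otimes\mathrm{id}$, acting only on the first factor; in particular $\mathrm{gr}^p\mathcal O_M\cong R_M\otimes_{\mathcal O_X}\mathcal O_V^p$ with differential $\delta\otimes\mathrm{id}$.

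Let $\mathcal N^{p,q}\subset\mathcal O_V^p$ denote the sub-$\mathcal O_X$-module of degree-$p$ elements that are sums of products of at least $q$ generators of $\mathcal E$. The key identification is that, under the above isomorphism,
\[
\mathcal G_{p,q}^\bullet\;\cong\;R_M[-p]\otimes_{\mathcal O_X}\mathcal N^{p,q},
\]
with differential $\delta\otimes\mathrm{id}$. I would check this locally using the description of Section \ref{s-ld}: a monomial $\prod x_i^{*\,a_i}\prod\beta^{j\,b_j}\prod\beta_j^{*\,c_j}$ lies in $F^p\mathcal O_M$ iff $\sum_j b_j\deg\beta^j\geq p$, and in $I_M^{(q)}$ iff $\sum_j b_j\geq q$. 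Projecting to $\mathrm{gr}^p$ turns the first inequality into an equality, and under the isomorphism the $R_M$-factor collects the $x_i^*,\beta_j^*$-dependence while the $\mathcal O_V^p$-factor collects the $\beta^j$-dependence; the $I_M^{(q)}$ condition then translates precisely to membership in $\mathcal N^{p,q}$. The shift $R_M[-p]$ appears because elements of $\mathcal O_V^p$ already contribute $p$ to the total cohomological degree.

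With this identification in hand, the rest is routine. Each $\mathcal N^{p,q}$ is locally free of finite rank over $\mathcal O_X$: it is locally a finite direct sum indexed by tuples $(i_1,\dots,i_k)$ with $i_\ell\geq 1$, $\sum i_\ell=p$ and $k\geq q$, and is therefore flat. Since $X$ is affine and each $R_M^j$ is a locally free coherent sheaf of finite rank, the functor $\Gamma(X,\cdot)$ is exact on the sheaves in question and commutes with tensoring with $\mathcal N^{p,q}$. We therefore obtain
\[
H^j\bigl(\Gamma(X,\mathcal G_{p,q}^\bullet)\bigr)\;\cong\;H^{j-p}\bigl(\Gamma(X,R_M)\bigr)\otimes_{\mathcal O_X(X)}\Gamma(X,\mathcal N^{p,q}).
\]
Because $\Gamma(X,R_M)$ is the Tate resolution of $J(S_0)$, its cohomology is concentrated in degree $0$, so the right-hand side vanishes unless $j=p$; in particular it vanishes for all $j<p$, as required.

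The main obstacle is the identification in the second paragraph: one must verify carefully that the intersection $F^p\mathcal O_M\cap I_M^{(q)}$, after passing to $\mathrm{gr}^p$, matches $R_M\otimes_{\mathcal O_X}\mathcal N^{p,q}$ under Lemma \ref{l-assgr}, and that the cohomological grading transports correctly through the shift $[-p]$. Once this is in place, everything reduces to the standard acyclicity of the Tate resolution in negative degree combined with exactness of global sections and flatness of $\mathcal N^{p,q}$.
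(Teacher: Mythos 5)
Your proposal is correct and follows essentially the same route as the paper: identify $\mathcal G^\bullet_{p,q}$ under the isomorphism of Lemma \ref{l-assgr} with $R_M\otimes_{\mathcal O_X}(I_V^{(q)}\cap\mathcal O_V^p)$ (your $\mathcal N^{p,q}$ is exactly $I_V^{(q)}\cap\mathcal O_V^p$), then conclude from the acyclicity of $R_M$ in negative degree, local freeness of that factor, and exactness of global sections on the affine $X$. The extra detail you supply on the local monomial bookkeeping and the shift $[-p]$ is a harmless elaboration of what the paper leaves implicit.
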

\begin{proof}
The canonical isomorphism $\mathrm{gr}\,\mathcal O_M\cong
(R_M\otimes_{\mathcal O_X}\mathcal O_V,\delta\otimes \mathrm{id})$ of Lemma
\ref{l-assgr} identifies $\mathcal G^\bullet_{p,q}$ with with
$R_M\otimes_{\mathcal O_X} (I_V^{(q)}\cap \mathcal O^p_V)$. The cohomology
sheaf $\mathcal H^j(X,\mathcal G^\bullet_{p,q})$ is zero in degree $j< p$ 
because the cohomology groups of $R_M$ are trivial in
negative degree and $I_V^{(q)}$ is a locally free $\mathcal O_X$-module. 
Since $X$ is affine and $\mathcal G_{p,q}$ is
quasi-coherent the same holds for the complex of global section.
\end{proof}

\subsubsection*{(b) Existence proof}\label{ss-existence} 
We prove by induction that for each $p\geq 1$ there is an $S_{\leq p}\in\Gamma(X,\mathcal
O_M^0)$ such that
\begin{enumerate}
\item[(i)]
$S_{\leq p}\equiv S_\mathrm{lin}\mod I_M^{(2)}$,
\item[(ii)]
$[S_{\leq p},S_{\leq p}]\in I_M^{(2)}\cap F^{p+1}\mathcal O_M$,
\item[(iii)]
$S_{\leq p+1}\equiv S_{\leq {p}}\mod F^{p+1}\mathcal O_M$.
\end{enumerate}
We set $S_{\leq 1}=S_{\mathrm{lin}}$
Then obviously (i) holds for $p=1$ and (iii) does not apply. To prove
(ii) for $p=1$ we use the local description and choose a local basis of sections
$\beta^i$ of generators of $\mathcal O_V$ with dual sections $\beta_i^*$, see \ref{s-ld}. 
Then $S_\mathrm{lin}=S_0+\sum_{i}\delta\beta_i^*\beta^i$. 
Using the fact that $[S_0,S_0]=[S_0,\beta^i]=0$, we obtain
\begin{equation}\label{e-T3a}
  [S_{\leq 1},S_{\leq 1}]= 2\sum_j [S_0,\delta
  \beta_j^*]{}\beta^j+\sum_{i,j}[\delta {}\beta_i^*{}\beta^i,\delta
  {}\beta_j^*{}\beta^j].  
\end{equation} The summand in the second term modulo
$I^{(2)}_M$ can be written as
\begin{eqnarray*}
[\delta {}\beta_i^*{}\beta^i,\delta {}\beta_j^*{}\beta^j]
&\equiv& \delta {}\beta_i^*[{}\beta^i,\delta {}\beta_j^*]{}\beta^j+
\delta {}\beta_j^*[{}\beta^j,\delta {}\beta_i^*]{}\beta^i\mod  I_M^{(2)}\\
 &\equiv& [\delta {}\beta_i^*{}\beta^i,\delta {}\beta_j^*]{}\beta^j+
[\delta {}\beta_j^*{}\beta^j,\delta {}\beta_i^*]{}\beta^i\mod I_M^{(2)}.
\end{eqnarray*}
Summing over $i,j$ and inserting in \eqref{e-T3a} yields
\begin{eqnarray*}
 [S_{\leq 1},S_{\leq 1}]&\equiv&
2\sum_j[S_{\leq 1},\delta\beta^*_j]\beta^j\mod I_M^{(2)}
\\
&\equiv& 2\sum_j\delta^2(\beta^*_j)\beta^j\mod I_M^{(2)}
\\
&\equiv& 0\mod I_M^{(2)}.
\end{eqnarray*}

For the induction step we write
\[
S_{\leq p+1}=S_{\leq p}+v,
\]
with $v\in \Gamma(X,I_M^{(2)}\cap F^{p+1}\mathcal O_M^0)$ to be determined.
Then $S_{\leq p+1}$ obeys (i) and (iii). As for (ii) we notice that by
Proposition \ref{p-assgr}, $[S_{\leq p},v]\equiv [S_\mathrm{lin}, v]\equiv
\delta v\mod F^{p+2}\mathcal O_M$ where $\delta$ is the differential of
$\mathrm{gr}\,\mathcal O_M$, and, by Lemma \ref{l-ooo} (ii), $[v,v]\equiv 0\mod
F^{p+2}\mathcal O_M$. Thus
\[
  [S_{\leq p+1},S_{\leq p+1}]\equiv[S_{\leq p},S_{\leq p}]+2\delta
  v\mod F^{p+2}\mathcal O_M.
\]
On the other hand, by the Jacobi identity, Lemma \ref{l-Fitzherbert}  
and Proposition \ref{p-assgr},
\[
  0=[S_{\leq p},[S_{\leq p},S_{\leq p}]]\equiv
  \delta[S_{\leq p},S_{\leq p}]\mod F^{p+2}{\mathcal O_M}.
\]
Then $[S_{\leq p},S_{\leq p}]\mod F^{p+2}\mathcal O_M \in \Gamma(X,\mathcal
G^1_{p+1,2})$ is a cocycle of degree 1. Since by Lemma \ref{l-vanish} the
cohomology vanishes in degree $p+1\geq2$ there exists a $\bar
v\in\Gamma(X,\mathcal G^0_{p+1,2})$ with $2\delta \bar v+[S_{\leq p},S_{\leq
p}]\equiv 0\mod F^{p+2}\mathcal O_M$. Let $v\in\Gamma(X,F^{p+1}\mathcal
O_M^0\cap I_M^{(2)})$ such that $\pi_p v=\bar v$. Such a lift certainly exists
locally and, because $X$ is affine, also globally.  Then $S_{\leq p+1}=S_{\leq
p}+v$ is a solution of the master equation modulo $F^{p+2}\mathcal O_M$.

It remains to show that $[S_{\leq p+1},S_{\leq p+1}]\in I_M^{(2)}$.  It is
clear that $I_M$ is a Lie subalgebra. Thus $[v,v]\in I_M^{(2)}$ and $[S_{\leq
p},v]\equiv [S_0,v] \mod I_M^{(2)}$ since $S_{\leq p}\equiv S_0\mod I_M$. But
clearly $[S_0,I_M]\subset I_M$ and therefore $[S_0,v]\in I^{(2)}_M$ for $v\in
I^{(2)}_M$.

This completes the induction step.

\subsubsection*{(c) Uniqueness up to gauge equivalence}
Next we prove the transitivity of the action of the group of gauge equivalences
on the space of solutions $S$ of the master equation
associated with a given Tate resolution $(R_M,\delta)$.
Assume that $S,S'$ are two such solutions.
Since both $S$ and $S'$
are congruent to $S_\mathrm{lin}$ modulo $I_M^{(2)}$, we know that
\begin{equation}\label{e-Werner} 
  S-S'\equiv 0\mod I_M^{(2)}\subset F^2\mathcal O_M.  
\end{equation}
The proof is by induction: we show that if $S-S'\in F^p\mathcal O_M(X)$, with
$p\geq2$, we can find a gauge equivalence $g$ such that $g\cdot S-S'\in
\Gamma(X,F^{p+1}\mathcal O_M)$.  This induction step is done by a second induction:
for fixed $p$ let us suppose inductively over $q$ that the difference between
the two solutions is a section of $I_M^{(q)}\cap F^p\mathcal
O_M+F^{p+1}\mathcal O_M$ with $p\geq q\geq2$ and show that we can find a gauge
equivalence $g$ so that 
\[
  g\cdot S-S'\in\Gamma(X,I_M^{(q+1)}\cap F^{p}\mathcal O_M+F^{p+1}\mathcal O_M).  
\]
Since $I_M^{(p+1)}\subset F^{p+1}\mathcal O_M$ this shows by induction that we
may achieve that $S-S'\equiv 0 \mod F^{p+1}\mathcal O_M$ and in fact, because
of \eqref{e-Werner}, in $I_M^{(2)}\cap F^{p+1}\mathcal O_M$, completing the
induction step in $p$.  So let us assume that $v=S-S'$ is a section of
$I_M^{(q)}\cap F^p\mathcal O_M+F^{p+1}\mathcal O_M$ with $p\geq q\geq2$ so that
it defines a section
\[
  \bar v\in\Gamma(X,\mathcal G_{p,q}^0).
\]
Since $S$ and $S'$ both obey the master equation, we see that
$0=[S+S',S-S']=[S+S',v]=2\delta v\mod F^{p+1}\mathcal O_M$ and thus $\bar v$ is
a cocycle. As the cohomology vanishes in degree $0$ (it starts in degree $p\geq
2$), $\bar v$ is exact and there exists a $\bar u\in\Gamma(X,\mathcal
G_{p,q}^{-1})$ such that $\delta\bar u=\bar v$. Let $u\in \Gamma(X,I_M^{(q)}\cap
F^p\mathcal O_M^{-1})$ be a lift of $u$, so that
$v\equiv [S,u]\mod F^{p+1}\mathcal
O_M$. As in the existence proof, it is clear that such a lift exists locally and
we use the fact that $X$ is affine to show that it exists globally on $X$.
Since $[u,S]=-[S,u]$, we have 
\[
v+[u,S]\in  \Gamma(X,F^{p+1}\mathcal O_M).
\]
Let $g=\exp(\mathrm{ad}_u)$. Then
\begin{eqnarray*}
  g\cdot S - S' &=& g\cdot S - S + v\\
  &=& v+[u,S]+\frac12[u,[u,S]]+\cdots\\
  &\equiv& \frac12[u,[u,S]]+\cdots\mod F^{p+1}\mathcal O_M.
\end{eqnarray*}
By Lemma \ref{l-Maud} (i) and Lemma \ref{l-pro} (ii), $[u,S]\in F^{p}\mathcal
O_M^{0}\cap I_M^{(q)}$; also $u\in I_M^{(2)}\cap O_M^{-1}$ so by Lemma
\ref{l-Maud} (i) and the fact that $[I_M^{(2)},I_M^{(q)}]\subset I_M^{(q+1)}$,
we conclude that $[u,[u,S]]$, and by the same argument any of the higher
brackets in the sum, is a section of $I_M^{(q+1)}\cap F^p\mathcal O_M^0$.
Therefore $g\cdot S-S'\in \Gamma(X,I_M^{(q+1)}\cap F^p\mathcal
O_M+F^{p+1}\mathcal O_M)$, as required.

The proof of Theorem \ref{t-T1} is complete.

\subsection{Relating Tate resolutions}\label{s-relating}
Let $S_0$ be a regular function on a nonsingular affince variety 
$X$ and suppose $(M=T^*[-1]V,S), (M'=T^*[-1]V',S')$ are two BV
varieties with support $(X,S_0)$.  Then the quotients
$R_M=\mathcal O_M/I_M$, $R_{M'}=\mathcal O_{M'}/I_{M'}$ by the ideals
generated by positive elements are both resolutions of the Jacobian
sheaf of rings $J(S_0)$.  Let us first consider the case where $R_{M}$
is isomorphic to $R_{M'}$ as a differential graded algebra by an
isomorphism that is the identity in degrees $-1$ and $0$ (so that in
particular $\varphi$ is a morphism of $\mathcal O_X$-modules).
\begin{prop}\label{p-lift}
  Suppose $\varphi\colon R_M\to R_{M'}$ is an isomorphism of sheaves of
  differential graded algebras which is the identity in degree $0$ and $-1$. Then
  $\varphi$ is induced by a Poisson isomorphism 
  $M'\to M$ sending $S$ to $S'$.
\end{prop}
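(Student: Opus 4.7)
The plan is to construct the Poisson isomorphism $\Phi\colon M'\to M$ in three steps, the crux being an inductive Poisson lift of $\varphi$ combined with the uniqueness clause of Theorem~\ref{t-T1}.

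First, I extract the linear data. The quotient $I_{R_M}^+/(I_{R_M}^+)^{(2)}$ of the augmentation ideal by its square is canonically the graded $\mathcal O_X$-module $T_X[1]\oplus\mathcal E^*[1]$, and similarly for $R_{M'}$. Since $\varphi$ is a graded algebra map that is the identity in degrees $0$ and $-1$, the induced map on this quotient is the identity on $T_X[1]$, and for degree reasons restricts to an $\mathcal O_X$-linear isomorphism $\mathcal E^*[1]\to\mathcal E'^*[1]$. Dualizing gives $\mathcal E'\to\mathcal E$, whence an iso $V'\to V$ of graded varieties over the identity on $X$; its symplectic lift $\Phi_1\colon M'\to M$ is a Poisson iso that agrees with $\varphi$ on indecomposables.

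Second, I promote $\Phi_1$ to a Poisson isomorphism $\Phi_0\colon M'\to M$ that induces $\varphi$ exactly on the Tate resolutions. This is achieved by a recursive correction on the generators of $\mathcal O_M$: at each stage the discrepancy between the current lift and $\varphi$ defines a cocycle in $\Gamma(X,\mathcal G^\bullet_{p,q})$ which by Lemma~\ref{l-vanish} is exact in the relevant bidegree, and the resulting correction is implemented by a further adjustment of the pullbacks of the generators, using Hamiltonian perturbations (Proposition~\ref{p-ham}) or modifications of the symplectic lift as needed. The argument parallels the existence and uniqueness proofs of Theorem~\ref{t-T1} and is the technical core of the proposition; the main difficulty is simultaneously respecting the Poisson bracket and the filtration by powers of $I_M$.

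Third, set $\tilde S:=\Phi_0^* S$, a solution of the classical master equation on $M'$. Because $\Phi_0^*$ realizes $\varphi$ and $\varphi$ intertwines $\delta$ and $\delta'$, the Tate differential on $R_{M'}$ induced by $\tilde S$ equals $\delta'$, the one induced by $S'$. Hence both $\tilde S$ and $S'$ are solutions on $M'$ associated (in the sense of Definition~\ref{d-asso}) with the Tate resolution $(R_{M'},\delta')$, and the uniqueness part of Theorem~\ref{t-T1} supplies a gauge equivalence $g\in G(M')$ with $g\cdot\tilde S=S'$. The composition $\Phi^*:=g\circ\Phi_0^*$ is then a Poisson isomorphism $\mathcal O_M\to\mathcal O_{M'}$ with $\Phi^* S=S'$, and still induces $\varphi$ on Tate resolutions because gauge equivalences act trivially on the quotient $\mathcal O_{M'}/I_{M'}$ by Corollary~\ref{c-pro}.
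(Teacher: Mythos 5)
Your overall architecture (linearize, lift, then invoke the uniqueness clause of Theorem \ref{t-T1} together with the triviality of the gauge action on $\mathcal O_{M'}/I_{M'}$) matches the paper's, and your Steps 1 and 3 are sound. The problem is Step 2, which you yourself flag as ``the technical core'': it is the entire content of the proposition, and as written it does not go through. Two concrete objections. First, the symplectic lift $\Phi_1$ of the linearized map induces on $R_M$ only the linearization of $\varphi$; since $\varphi$ may send a degree $-2$ generator $\beta^*_j$ to $\beta^*_j$ plus a decomposable term in $\wedge^2 T_X$ (and similarly in lower degrees), $\Phi_1$ and $\varphi$ genuinely differ on $R_M$, and the correction cannot be effected by the tools you cite: Hamiltonian perturbations by elements of $\mathfrak g_M=I_M^{(2)}\cap\mathcal O_M^{-1}$ act \emph{trivially} on $\mathcal O_M/I_M$ (Lemma \ref{l-pro} (i), Corollary \ref{c-pro}) --- this is exactly the fact you use in Step 3 --- so they cannot move the induced map on $R_M$ at all, while exponentials of general degree $-1$ Hamiltonians are not covered by any convergence statement in the paper. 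Second, the claim that the discrepancy at each stage is a cocycle in $\Gamma(X,\mathcal G^\bullet_{p,q})$ killed by Lemma \ref{l-vanish} is unjustified: that lemma concerns the differential $\delta$ induced by $S_{\mathrm{lin}}$, and at this stage $\Phi_1^*$ is not a chain map for the relevant differentials (it does not yet relate $S$ and $S'$), so the discrepancy has no reason to be $\delta$-closed.

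The paper circumvents Step 2 entirely with the duality of Section \ref{ss-Duality}. Writing $M\cong T^*[-1]V^\vee$ with $\mathcal O_{V^\vee}=\Sym_{\mathcal O_X}\mathcal E^*[1]$, the ideal $I_{M^\vee}$ is generated by the derivations annihilating $\mathcal O_X$, so $R_M\cong\mathcal O_{V^\vee}\otimes_{\mathcal O_X}\Sym_{\mathcal O_X}T_X[1]$ with \emph{all} generators of degree $\leq -2$ now sitting in the base $\mathcal O_{V^\vee}$. An algebra isomorphism that is the identity in degrees $0$ and $-1$ is then exactly an isomorphism $V'^\vee\to V^\vee$ of graded varieties tensored with the identity on $\Sym_{\mathcal O_X}T_X[1]$, and its canonical symplectic lift is a Poisson isomorphism inducing $\varphi$ on the nose --- no recursion, no correction; only afterwards does one apply Theorem \ref{t-T1} to adjust $\Phi(S)$ to $S'$ by a gauge equivalence, as in your Step 3. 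If you want to salvage your direct approach you would have to construct the images of the dual generators $\beta^j$ by hand so that all bracket relations $[\beta^i,\beta^*_j]=\delta_{ij}$ are preserved simultaneously with matching $\varphi$ modulo $I_{M'}$, which is precisely the bookkeeping the duality isomorphism packages for you.
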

\begin{proof}
  We use the duality, see \ref{ss-Duality}, to represent $R_M$ as $\mathcal
  O_{M^\vee}/I_{M^\vee}$ with $M^\vee=T^*[-1]V^\vee$. By construction $\mathcal
  O_{V^\vee}=\mathcal O_X\oplus\bigoplus_{j\leq -2}\mathcal O_{V^\vee}^j$. It
  follows that derivations of $\mathcal O_{V^\vee}$ of degree $\geq2$ vanish on
  $\mathcal O_X$.  Conversely, any derivation vanishing on $\mathcal O_X$ is in
  the $\mathcal O_V$-submodule generated by derivations of degree $\geq 2$. Such
  derivations correspond to elements of positive degree in $T_{V^\vee}[1]$.
  Therefore $I_{M^\vee}$ is the ideal of
  $
  \mathcal O_{M^\vee}=\widehat\Sym_{\mathcal O_{V^\vee}}T_{V^\vee}[1]
  $
  generated by derivations acting trivially
  on $\mathcal O_X$.  Thus $\mathcal O_{M^\vee}/I_{M^\vee}$  is the symmetric
  algebra over $\mathcal O_{V^\vee}$ of the quotient of $T_{V^\vee}[1]$ by the
  derivations acting trivially on $\mathcal O_X$. The latter quotient is
  canonically $\mathcal O_{V^\vee}\otimes_{\mathcal O_X} T_X[1]$. Thus 
  \[
  \mathcal O_{M^\vee}/I_{M^\vee}\cong 
  \mathcal O_{V^\vee}\otimes_{\mathcal O_X}\Sym_{\mathcal O_X} T_X[1].
  \]
  By assumption, $\varphi$ acts trivially on the second factor and the
  restriction to $\mathcal O_{V^\vee}$ lifts to a Poisson isomorphism $\Phi\colon
  \mathcal O_{M^\vee}\to \mathcal O_{(M')^\vee}$ which coincides with $\varphi$
  on $\mathcal O_{V^\vee}$.  To show that $\Phi$ induces $\varphi$ on $\mathcal
  O_{M^\vee}/I_{M^\vee}$ it remains to show that $\Phi$ induces the identity on
  $T_X[1]=(T_{V^\vee}/\mathrm{Ann}\,\mathcal O_X)[1]$.  But this follows from the
  fact that $\varphi$ is the identity on $\mathcal O_X$: the restriction of the
  image $\varphi^{-1}\circ\xi\circ\varphi$ of a derivation $\xi$ to $\mathcal
  O_X$ is $\xi$. 

  Thus $\Phi(S)$ is a solution of the master equation in $\mathcal O_{M'}(X)$
  associated with the same Tate resolution $R'$ as $S'$. By Theorem
  \ref{t-T1}, $S'=g\circ\Phi (S)$ for some Poisson automorphism $g\in G(M')$.
\end{proof}

Let $R\to J(S_0)$ be a Tate resolution of the Jacobian
ring and let $(W,\delta_W)$ be an acyclic negatively graded complex of
$\K$-vector spaces with finite dimensional homogeneous components. Then the
differential $\delta_{W[1]}$ extends uniquely as a derivation of $\Sym(W[1])$
and $(\Sym(W[1]),\delta_{W[1]})$ is a differential graded algebra with
cohomology $\cong \K$. The tensor product $R\otimes
\Sym(W[1])$ is another resolution of $J(S_0)$.
\begin{prop}\label{p-3} 
  Let $R_j\to J(S_0)$, $j=1,2$ be Tate resolutions extending $\delta\colon T_X\to\mathcal O_X$. 
  Then there exist acyclic
  negatively graded complexes  $W_1,W_2$ of $\K$-vector spaces and an isomorphism
  of differential graded commutative algebras 
  \[ 
  R_1\otimes \Sym(W_1[1])\cong R_2\otimes \Sym(W_2[1]),  
  \] 
  that is the identity in degree $-1$ and $0$.
\end{prop}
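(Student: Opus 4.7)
The plan is to construct the acyclic complexes $W_1,W_2$ and the isomorphism inductively on the (negative) degree of the generators. Write $R_i=\Sym_{\mathcal O_X}(\mathcal W_i)$ with $\mathcal W_i=\bigoplus_{j\leq -1}\mathcal W_i^{\,j}$, and denote by $R_i^{[k]}$ the sub-DGCA generated by $\mathcal W_i^{\,j}$ for $-k\leq j\leq -1$. By hypothesis both $R_i$ extend the same map $\delta\colon T_X\to\mathcal O_X$, so $\mathcal W_1^{-1}=T_X=\mathcal W_2^{-1}$ with matching differential, giving the canonical identification $R_1^{[1]}=R_2^{[1]}$ as the base case.

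For the induction step, suppose I have constructed acyclic complexes $W_i^{(k)}$ of $\K$-vector spaces concentrated in degrees $\geq -k+1$ and a DGCA isomorphism
\[
\phi_k\colon R_1^{[k]}\otimes_\K\Sym(W_1^{(k)}[1])\;\xrightarrow{\sim}\;R_2^{[k]}\otimes_\K\Sym(W_2^{(k)}[1])
\]
that is the identity in degrees $-1$ and $0$. I would then compare the degree $-k-1$ generators $\mathcal W_1^{-k-1}$ and $\mathcal W_2^{-k-1}$ via $\phi_k$: being parts of Tate resolutions, their differentials exhaust the cohomology of the respective truncations in degree $-k$, so under $\phi_k$ their images represent the same cohomology classes in a common DGCA. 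By adding to each $W_i^{(k)}$ a finite-dimensional acyclic complex of $\K$-vector spaces concentrated in degrees $-k-1$ and $-k-2$, I would supply free $\mathcal O_X$-generators in degree $-k-1$ so that after stabilization both generator modules become free $\mathcal O_X$-modules of the same rank with identifiable differentials. This yields the extension $\phi_{k+1}$. Passing to the colimit produces acyclic complexes $W_i=\varinjlim W_i^{(k)}$ (filtered colimits of acyclic complexes are acyclic) together with the desired DGCA isomorphism on the whole.

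The main obstacle is not the matching of ranks but the matching of the differentials. After aligning the generator modules in degree $-k-1$, the two differentials may still differ by a map into coboundaries of the lower-degree truncation; since the partial resolutions are acyclic in the degrees already handled, any such difference is a coboundary, and affineness of $X$ allows choosing a global primitive exactly as in the argument of Theorem \ref{t-T1}. Each such primitive is then absorbed by a new generator in degree $-k-2$ paired acyclically with one in degree $-k-1$, which is precisely why each inductive step must introduce two-term acyclic summands in consecutive degrees. A secondary subtlety is that $\mathcal W_i^{-k-1}$ is only locally free, not necessarily free; adding sufficiently many free $\mathcal O_X$-summands (coming from $\otimes_\K \Sym(W_i[1])$) to stabilize it to a free module of matching rank on both sides is possible on smooth affine $X$ and is the reason stabilization is required rather than a bare isomorphism of the original resolutions.
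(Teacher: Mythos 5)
Your overall strategy --- induction on the degree of the generators, adjoining at each step a two-term acyclic complex concentrated in degrees $-k-1$ and $-k-2$ to both sides, and passing to the colimit --- is exactly the route the paper takes (Appendix \ref{a-2}, Theorem \ref{t-111} and Lemma \ref{lemma-2}). But the heart of the induction step is asserted rather than proved. The clause ``so that after stabilization both generator modules become free $\mathcal O_X$-modules of the same rank with identifiable differentials'' is precisely the content of the proposition, and equal rank does not give it to you. What you actually have are two families of cocycles: the differentials $c_1,\dots,c_n$ of the degree $-(k{+}1)$ generators of $R_1$ and $e_1,\dots,e_m$ of those of $R_2$, each of which merely \emph{generates} the finitely generated $\mathcal O_X$-module $H^{-k}$ of the common truncation; already your phrase ``their images represent the same cohomology classes'' overstates this. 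Two generating systems of a module are in general not related by any bijection, nor even by an invertible matrix of the same size, so the ``alignment'' you invoke before correcting by coboundaries does not exist as described. The paragraph you devote to the ``main obstacle'' only handles the coboundary discrepancy, which is the easy part; the module-theoretic mismatch of the two generating systems is the real obstacle and is left untouched.

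The missing idea is \emph{how} the stabilization generators are used to produce the alignment. Writing $e_j=\sum_i a_{ji}\,\phi_k(c_i)+\delta u_j$ and $\phi_k(c_i)=\sum_j b_{ij}e_j+\delta v_i$, one adjoins $m$ \emph{closed} generators $S'_1,\dots,S'_m$ (with degree $-k-2$ partners) to the first algebra and $n$ closed generators $T'_1,\dots,T'_n$ to the second, and sets, as in Lemma \ref{lemma-2}, $f(T_i)=f_k(T_i)+T'_i$ and $f(S'_j)=S_j-f(g_k(S_j))$, where $g_k$ inverts the isomorphism already constructed on the truncation. This is a Whitehead-lemma-type manoeuvre: the stabilized system of differentials $(c,0)$ is carried to $(0,e)$ by a product of unipotent block-triangular matrices (up to coboundaries absorbed in the images of the generators), and invertibility holds exactly because the added generators are closed. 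Note also that the degree $-k-2$ partners are not there to ``absorb primitives'' --- the primitives $u_j,v_i$ are elements of the algebra already built and are simply added to the images of the generators --- but to keep the adjoined complex acyclic, so that the new closed degree $-(k{+}1)$ generators do not create cohomology in that degree.
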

We prove this Proposition in a slightly more general context in Appendix \ref{a-2}.

\begin{theorem}\label{t-33}
  Let $X$ be affine. 
  Any two BV varieties $(M_1,S_1),(M_2,S_2)$ 
  with support $(X,S_0)$ such that $M_1,M_2$ are
shifted cotangent bundle are stably equivalent.
\end{theorem}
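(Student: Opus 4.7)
The plan is to bridge the two BV varieties through their associated Tate resolutions. For each $i=1,2$, the quotient $R_i = (\mathcal{O}_{M_i}/I_{M_i}, d_{S_i})$ is a Tate resolution of the Jacobian ring $J(S_0)$ (see Section \ref{ss-Jac}). Moreover, by condition (i) in the definition of a BV variety, $S_i \equiv S_0 \mod I_{M_i}$, so the differential $T_X \to \mathcal{O}_X$ induced on each $R_i$ in degrees $-1 \to 0$ is $\xi \mapsto [S_i,\xi] \equiv \xi(S_0) \mod I_{M_i}$. Hence both $R_1, R_2$ extend the same map $\delta$ appearing in Proposition \ref{p-3}.

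Now apply Proposition \ref{p-3} to obtain acyclic negatively graded complexes $W_1, W_2$ of $\K$-vector spaces with finite dimensional homogeneous components, together with an isomorphism
\[
\varphi \colon R_1 \otimes \mathrm{Sym}(W_1[1]) \to R_2 \otimes \mathrm{Sym}(W_2[1])
\]
of sheaves of differential graded commutative $\mathcal{O}_X$-algebras that is the identity in degrees $-1$ and $0$. For each $i$, form the product BV variety $(N_i, T_i) := (M_i \times T^*[-1]W_i,\ S_i + S_{W_i})$. It has support $(X, S_0)$, and since $N_i = T^*[-1](V_i \times W_i)$ it is again a shifted cotangent bundle. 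By Remark \ref{rTate}, the Tate resolution of $(N_i, T_i)$ is precisely $R_i \otimes \mathrm{Sym}(W_i[1])$ with the tensor differential, so $\varphi$ is an isomorphism between the Tate resolutions of $(N_1, T_1)$ and $(N_2, T_2)$ that is still the identity in degrees $-1$ and $0$.

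Proposition \ref{p-lift} then applies to $\varphi$ to produce a Poisson isomorphism $\Phi \colon N_2 \to N_1$ with $\Phi^* T_1 = T_2$, exhibiting $(N_1, T_1)$ and $(N_2, T_2)$ as equivalent BV varieties. By definition, this means $(M_1, S_1)$ and $(M_2, S_2)$ are stably equivalent, proving the theorem.

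The main obstacle is deferred to Proposition \ref{p-3}, proved in the Appendix: producing a \emph{strict} isomorphism of differential graded commutative algebras (not merely a quasi-isomorphism) between two stabilized Tate resolutions, while preserving the identification in degrees $-1$ and $0$, is what makes the hypothesis of Proposition \ref{p-lift} available. Remark \ref{rTate} is the essentially formal but indispensable link between the algebraic operation of tensoring with $\mathrm{Sym}(W[1])$ and the geometric operation of taking products with trivial BV varieties.
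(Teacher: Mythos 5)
Your proof is correct and follows essentially the same route as the paper: both reduce the statement to Proposition \ref{p-3} (stable isomorphism of Tate resolutions) combined with Remark \ref{rTate} to identify the Tate resolutions of the stabilized products, and then invoke Proposition \ref{p-lift} to lift the resulting strict isomorphism to a Poisson equivalence. Your write-up is in fact slightly more careful than the paper's, since you explicitly verify that both resolutions extend the same map $\delta\colon T_X\to\mathcal O_X$ and that the products remain shifted cotangent bundles.
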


\begin{proof}
  Let $(T^*[-1]V_1,S_1)$, $(T^*[-1]V_2,S_2)$ be BV varieties 
with the same support $(X,S_0)$ and
corresponding Tate resolutions
  $(R_1,\delta_1)$, $(R_2,\delta_2)$.  By
  Prop.~\ref{p-3} there are acyclic negatively graded complexes $W_1$, $W_2$ and
  an isomorphism of differential graded commutative algebras $R_1\otimes
  \Sym(W_2[1])\cong R_2\otimes \Sym(W_2[1])$, which is the identity in degree
  $0$ and $-1$. 
  Then the product of $(M_i,S_i)$ with the trivial BV varieties $(T^*[-1]W_i,S_{W_i})$,
  see Section~\ref{ss-products}, give stably equivalent BV varieties whose Tate
  resolutions are isomorphic by Remark \ref{rTate}. By Prop.~\ref{p-lift},
  the BV varieties $V_1\times W_1$, $V_2\times W_2$ are equivalent.
\end{proof} 
Together with Prop.~\ref{l-55} this implies:
\begin{cor} If $(T^*[-1]V_1,S_1)$ and $(T^*[-1]V_2,S_2)$ are BV varieties with support $(X,S_0)$ then
  there exists a BV variety $(T^*[-1]V,S)$ with support $(X,S_0)$ and morphisms of sheaves of differential
  $P_0$-algebras 
  \[
  (\mathcal O_{T^*[-1]V_1},d_{S_1})\to 
  (\mathcal O_{T^*[-1]V},d_{S})\leftarrow 
  (\mathcal O_{T^*[-1]V_2},d_{S_2}),
  \]
  between the corresponding BRST complexes, inducing isomorphisms on the cohomology.
\end{cor}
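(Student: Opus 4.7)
The plan is to unpack the statement of Theorem \ref{t-33} and combine it with Lemma \ref{l-55} to produce the span explicitly. By Theorem \ref{t-33}, $(T^*[-1]V_1,S_1)$ and $(T^*[-1]V_2,S_2)$ are stably equivalent, so by the definition of stable equivalence in Section \ref{ss-products} there exist negatively graded acyclic complexes $W_1,W_2$ of finite-dimensional $\K$-vector spaces, with associated trivial BV varieties $(T^*[-1]W_i,S_{W_i})$, and a Poisson isomorphism
\[
\Phi\colon T^*[-1](V_1\times W_1)\longrightarrow T^*[-1](V_2\times W_2)
\]
inducing the identity on $X$ and satisfying $\Phi^*(S_2+S_{W_2})=S_1+S_{W_1}$.

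I would then \emph{define} the intermediate BV variety to be $V:=V_1\times W_1$ with $S:=S_1+S_{W_1}$; it is again a BV variety with support $(X,S_0)$ since products of BV varieties are BV varieties (Section \ref{ss-products}), and it is by construction a shifted cotangent bundle. The left-hand arrow of the span is then the canonical map
\[
(\mathcal O_{T^*[-1]V_1},d_{S_1})\longrightarrow(\mathcal O_{T^*[-1](V_1\times W_1)},d_{S_1+S_{W_1}})=(\mathcal O_{T^*[-1]V},d_S),
\]
which is a quasi-isomorphism of sheaves of differential $P_0$-algebras by Lemma \ref{l-55}. For the right-hand arrow, I apply Lemma \ref{l-55} to $(T^*[-1]V_2,S_2)$ and $(T^*[-1]W_2,S_{W_2})$ to obtain a quasi-isomorphism
\[
(\mathcal O_{T^*[-1]V_2},d_{S_2})\longrightarrow(\mathcal O_{T^*[-1](V_2\times W_2)},d_{S_2+S_{W_2}}),
\]
and then compose with $\Phi^*$, which is an isomorphism of sheaves of differential $P_0$-algebras because $\Phi$ is Poisson and carries $S_2+S_{W_2}$ to $S_1+S_{W_1}=S$.

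There is no real obstacle here; the whole argument is a matter of assembling the pieces correctly. The only point that requires a moment of care is verifying that all morphisms are compatible with the bracket as well as the product, but this is immediate because the canonical inclusion coming from a K\"unneth-type product and the pullback along a Poisson isomorphism are both manifestly morphisms of $P_0$-algebras, and their compatibility with the differentials $d_{S_1}$, $d_S$, $d_{S_2}$ is built into the choices of $S$ on each side.
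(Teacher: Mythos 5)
Your proposal is correct and is exactly the argument the paper intends: the Corollary is stated as an immediate consequence of Theorem \ref{t-33} together with Lemma \ref{l-55}, and your write-up simply makes explicit the span obtained by taking $V=V_1\times W_1$, using the K\"unneth quasi-isomorphisms of Lemma \ref{l-55} for both legs and transporting the second one through the Poisson isomorphism $\Phi^*$ furnished by stable equivalence. No gaps; this matches the paper's (implicit) proof.
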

\subsection{Adding a square}\label{ss-4}
\begin{prop}\label{p-2}
  Let $(M,S)$ be a BV variety with support $(X,S_0)$. Let $X'=X\times \mathbb
  A^1$ and $S'_0=S_0+a t^2\in{\mathcal O}(X')$, where $t$ is a coordinate on
  $\mathbb A^1$ and $a\in\K^\times$. 
  Then $(M'=M\times T^*[-1]\mathbb A^1,S'=S+a t^2)$ is a BV variety with support
  $(X',S'_0)$ and the corresponding BRST complexes are quasi-isomorphic
  differential $P_0$-algebras.  
\end{prop}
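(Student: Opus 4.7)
The plan is to mimic the K\"unneth-type argument used for products with trivial BV varieties (Lemma \ref{l-55}) by showing that the auxiliary factor $(T^*[-1]\mathbb{A}^1, at^2)$ behaves cohomologically like a trivial BV variety: its BRST cohomology is $\K$, concentrated in degree zero. Once this is in hand, both axiom (iii) and the asserted quasi-isomorphism of BRST complexes follow formally.

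First I would verify axioms (i) and (ii). Axiom (i) is immediate: $S'|_{X'} = S|_X + at^2 = S_0 + at^2 = S'_0$. For (ii), the product structure $M' = M \times T^*[-1]\mathbb{A}^1$ of $(-1)$-symplectic varieties makes the Poisson bracket split with no mixed brackets between functions pulled back from the two factors. Writing $S'$ as $S \otimes 1 + 1 \otimes at^2$, this gives $[S',S'] = [S,S] \otimes 1 + 1 \otimes [at^2, at^2]$; the first summand vanishes by hypothesis, and the second for degree reasons since $\mathcal{O}_{T^*[-1]\mathbb{A}^1}$ has no degree-$1$ component.

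Axiom (iii) is the crux. Because $\mathbb{A}^1$ is concentrated in degree zero as a graded variety, $\mathcal{O}_{T^*[-1]\mathbb{A}^1}$ has no elements of positive degree, so $I_{T^*[-1]\mathbb{A}^1}=0$ and there is a canonical identification
\[
\mathcal{O}_{M'}/I_{M'} \cong (\mathcal{O}_M/I_M) \otimes_{\K} \mathcal{O}_{T^*[-1]\mathbb{A}^1}
\]
with tensor-product differential $d_S \otimes 1 + 1 \otimes d_{at^2}$. The second factor is the two-term complex $\K[t]\,t^{*} \to \K[t]$ with $f(t)\,t^{*}\mapsto 2at\,f(t)$, whose cohomology equals $\K[t]/(t) \cong \K$ in degree $0$ and vanishes otherwise (using $a \in \K^{\times}$). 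Applying the K\"unneth formula, which is valid since this complex consists of flat $\K$-modules, gives vanishing of the cohomology of $\mathcal{O}_{M'}/I_{M'}$ in nonzero degree.

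For the quasi-isomorphism of BRST complexes I would exhibit the canonical inclusion $\mathcal{O}_M \hookrightarrow \mathcal{O}_M \otimes \mathcal{O}_{T^*[-1]\mathbb{A}^1} = \mathcal{O}_{M'}$ as a morphism of differential $P_0$-algebras---it intertwines $d_S$ and $d_{S'}$ because $d_{at^2}$ acts trivially on the first factor, and it respects the product and bracket by construction---and apply K\"unneth once more to deduce that it is a quasi-isomorphism. The main thing to verify carefully is that K\"unneth passes through the completion defining $\mathcal{O}_{M'}$; this causes no trouble here because the $\mathbb{A}^1$-factor is finitely generated as a $\K[t]$-module with components in only two degrees, so no completion occurs on that side of the tensor product.
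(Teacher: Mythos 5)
Your proposal is correct and follows the same route as the paper: both treat $(T^*[-1]\mathbb A^1, at^2)$ as a product factor whose BRST complex is the two-term resolution $\K[t]\,t^*\to\K[t]$, $f t^*\mapsto 2atf$, with cohomology $\K$ in degree $0$, and then invoke the K\"unneth formula exactly as in the argument for products with trivial BV varieties (Lemma~\ref{l-55}). Your write-up merely makes explicit the checks of axioms (i)--(iii) and the harmlessness of the completion that the paper leaves implicit.
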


\begin{proof}
  The argument is similar to the one for trivial solutions: we are taking the
  product with the BV variety $(T^*[-1]\mathbb A^1,a t^2)$. The shifted cotangent bundle
  $T^*[-1]\mathbb A^1$ has coordinates $t,t^*$ and differential such that
  $t\mapsto 0, t^*\mapsto 2at$ which is clearly acyclic. Thus the natural map
  $\mathcal O_{M}\to\mathcal O_{M'}$ is a quasi-isomorphism of
  sheaves of differential $P_0$-algebras by the K\"unneth formula.
\end{proof}
\subsection{Automorphisms of a BV variety}\label{s-auto}
\begin{theorem}\label{t-automorphisms} 
  Let $(M,S)$ be a BV variety and $\phi\colon M\to M$ a
  Poisson automorphism preserving $X$ such that $\phi^*S=S$. Then $\phi^*$
  induces the identity on the cohomology sheaf of the BRST complex.
\end{theorem}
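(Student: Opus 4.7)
The plan is to adapt the inductive filtration argument from the uniqueness part of Theorem~\ref{t-T1} (Section~\ref{ss-43}(c)) and show that $\phi$ differs from the identity by a convergent product of gauge equivalences in the subgroup $G(M,S)$, each of which acts trivially on BRST cohomology by Prop.~\ref{p-gaugeaction}.

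First I would record the structural data: $\phi^*$ preserves the filtration $F^\bullet\mathcal O_M$ (it is a grading preserving Poisson automorphism), it commutes with $d_S$ (because $\phi^*S=S$), and it acts as the identity on $\mathcal O_X=\mathcal O_M^0/I_M^0$ (because $\phi|_X=\mathrm{id}_X$). The aim is then to construct, by induction on $p\geq 1$, gauge equivalences $g_p=\exp(\mathrm{ad}_{h_p})\in G(M,S)$ with Hamiltonians $h_p\in\Gamma(X,\mathcal O_M^{-1}\cap I_M^{(2)}\cap F^{p}\mathcal O_M)$ satisfying $d_Sh_p=0$, such that the composition $(g_p\circ\cdots\circ g_1\circ\phi)^*$ agrees with the identity on $\mathcal O_M$ modulo $F^{p+1}\mathcal O_M$.

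The inductive step is the heart of the argument. Given the approximation at stage $p$, the residue $\psi=\phi^*-\mathrm{id}\pmod{F^{p+1}}$ is a $\K$-linear map $\mathcal O_M\to F^p\mathcal O_M$ that, using the multiplicative property of $\phi^*$ and Lemma~\ref{l-Fitzherbert}, projects to a derivation of degree zero of $\mathrm{gr}\,\mathcal O_M\cong R_M\otimes_{\mathcal O_X}\mathcal O_V$ which commutes with $\delta\otimes\mathrm{id}$ (Proposition~\ref{p-assgr}). The condition $\phi^*S=S$ forces this derivation to annihilate the class of $S_\mathrm{lin}$. A version of Prop.~\ref{p-ham} applied to the associated graded, combined with the vanishing Lemma~\ref{l-vanish} in the appropriate bidegree of $\mathcal G^\bullet_{p,\cdot}$, produces a local Hamiltonian $\bar h$; affineness of $X$ promotes it to a global section $h_{p+1}\in\Gamma(X,\mathcal O_M^{-1}\cap I_M^{(2)}\cap F^{p}\mathcal O_M)$ with $d_Sh_{p+1}\equiv 0\pmod{F^{p+1}}$ and $\mathrm{ad}_{h_{p+1}}\equiv \psi\pmod{F^{p+1}}$. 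One verifies by one more descent that $h_{p+1}$ can be adjusted inside its equivalence class so that $d_Sh_{p+1}=0$ exactly, putting $g_{p+1}=\exp(\mathrm{ad}_{h_{p+1}})\in G(M,S)$. Taking the inverse limit, every element of $\mathcal O_M$ is in some $F^p$-quotient on which only finitely many $g_j$ act nontrivially, so $\phi^*$ coincides on cohomology with an identity coming from a product of elements of $G(M,S)$, yielding the claim via Prop.~\ref{p-gaugeaction}.

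The principal obstacle is the inductive step: verifying that the leading part of $\phi^*-\mathrm{id}$ can be realized as $\mathrm{ad}_h$ for an $h$ that is simultaneously (a) in the correct bidegree of the associated graded so that Lemma~\ref{l-vanish} supplies a global section, (b) a $d_S$-cocycle (so that $\exp(\mathrm{ad}_h)\in G(M,S)$), and (c) compatible with the previous approximations. The bookkeeping runs parallel to, and is somewhat more delicate than, the one in Section~\ref{ss-43}(c), where one solved $\delta v=\text{obstruction}$ in the complex $\mathcal G^\bullet_{p+1,2}$ of degree $1$; here the obstruction is a derivation (or rather its Hamiltonian) of degree $-1$, so the corresponding vanishing lies in a different slice of $\mathrm{gr}\,\mathcal O_M$ for which Lemma~\ref{l-vanish} still applies.
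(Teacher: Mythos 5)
Your strategy has a genuine gap at the very first step, and it is not a repairable bookkeeping issue: a Poisson automorphism $\phi$ with $\phi^*S=S$ and $\phi|_X=\mathrm{id}$ need not be congruent to the identity modulo $I_M=F^1\mathcal O_M$. It induces an automorphism $f$ of the Tate resolution $R_M=\mathcal O_M/I_M$ that is the identity only in degrees $0$ and $-1$; in degrees $\leq-2$ (on the generators $\beta_j^*$, say) $f$ can be a nontrivial automorphism of the resolution. Every gauge equivalence $g=\exp(\mathrm{ad}_h)$ with $h\in\mathfrak g(M)=\Gamma(X,\mathcal O_M^{-1}\cap I_M^{(2)})$ acts as the identity on $R_M$ (by Corollary \ref{c-pro}, $[\mathfrak g_M,\mathcal O_M]\subset I_M$), so no product of gauge equivalences, convergent or not, can absorb this discrepancy. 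Consequently the residue $\phi^*-\mathrm{id}$ does not land in $F^p\mathcal O_M$ for any $p\geq1$, your induction has no base case, and the reduction to Prop.~\ref{p-gaugeaction} breaks down; this is precisely the point where Theorem \ref{t-automorphisms} is strictly stronger than Prop.~\ref{p-gaugeaction}. A concrete obstruction: for a trivial BV variety $(T^*[-1]W,S_W)$, any automorphism of the acyclic complex $W^*$ commuting with $d_{W^*}$ gives a Poisson automorphism fixing $S_W$ that is visibly not the identity on $R_M$. (A secondary worry: you invoke ``a version of Prop.~\ref{p-ham} for the associated graded,'' but that proposition concerns Poisson derivations of $\mathcal O_M$ itself, and the bracket does not descend to $\mathrm{gr}\,\mathcal O_M$ in the naive way, cf.\ Lemma \ref{l-Maud}.)

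The paper's proof supplies exactly the missing input. It first uses the homotopy uniqueness of Tate resolution morphisms (Lemma \ref{lemma-1}(ii) in Appendix A) to connect $f$ to $\mathrm{id}$ by a path $f_t$ of DGA morphisms that is invertible for $t$ in a Zariski open set containing $0$ and $1$; this path is lifted via duality and Prop.~\ref{p-lift} to a family $F_t$ of Poisson automorphisms with $F_0=\mathrm{id}$ and $F_1=\phi^*$, and then corrected by gauge equivalences $G_t$ with $G_0=G_1=\mathrm{id}$ so that the whole family fixes $S$ (your filtration induction does appear here, but applied along the family, not to $\phi^*$ itself). One then differentiates in $t$: the degree-zero Poisson derivation $F_t^{-1}\dot F_t$ is Hamiltonian by Prop.~\ref{p-ham}, its Hamiltonian $K_t$ is a $d_S$-cocycle of degree $-1$, the vanishing of $\mathcal H^{-1}$ gives $K_t=[S,L_t]$, and the resulting homotopy formula shows the action on cohomology is constant in $t$. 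If you want to retain an inductive flavour, you must first prove that $f$ is homotopic to the identity and build such an interpolation; at that point you have essentially reconstructed the paper's argument.
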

\begin{proof} Since the statement is local we can assume that $M=T^*[-1]V$
is a cotangent bundle.
 Let $F=\phi^*\colon \mathcal O_M\to\mathcal O_M$.
  The automorphism $F$ induces an automorphism $f$ of the sheaf of
  differential graded algebras $\mathcal O_M/I_M$.  Then $f$ and
  $\mathrm{id}$ are both automorphisms of the Tate resolution
  $R_M=\mathcal O_M/I_M$ of the Jacobian ring that are the identity
in degree $0$ and $-1$ and are thus
  related by a homotopy $H$, namely a morphism of differential graded
  algebras $R_M\to R_M[t,dt]$ such that $ev_0\circ
  H=\mathrm{id}$ and $ev_1\circ H=f$, see Lemma \ref{lemma-1} in the
  Appendix. In more detail,
  \[
  H=f_t+dt\, h_t,
  \]
  and $f_t$ is a morphism of differential graded algebras $ R_M\to
  R_M[t]=R_M\otimes_\K\K[t]$ which is the identity in degree $0$ and
  $-1$. Its $\K[t]$-linear extension $R_M[t]\to R_M[t]$ is not
  invertible in general, but since it is at $t=0$ and $t=1$, we can
  invert it at the generic point. More precisely, in each degree i we
  have a rational map $(f_t^{-1})^i\in
  \mathrm{End}(R_M^i)\otimes_\K\K(t)$ regular at $t=0$ and $t=1$, and
  such that $(f_0^{-1})^i=\mathrm{id}$ and $(f^{-1}_1)^i=
  f^{-1}|_{R_M^i}$. This map is the inverse of $f_t|{R_M^i}$ for $t$
  in a Zariski open subset $U_i$ of $\mathbb A^1$ containing $0,1$.
  We next use Prop.~\ref{p-lift} to lift $f_t$ to a family of Poisson
  automorphisms of $\mathcal O_{M}$. Recall that the lift is
  constructed using the duality isomorphism
  $T^*[-1]V=T^*[-1]V^\vee$. Under this identification a morphism of
  Tate resolutions is the same as an automorphism of $\mathcal
  O_{V^\vee}$ and the lift is the canonical symplectic lift of an
  automorphism of the base of a cotangent bundle. The latter is given
  in terms of $f_t$ and $f_t^{-1}$ and thus the lift is defined for
  all $t$ for which $f_t^{-1}$ is defined. Moreover the action of the
  lift on generators of degree bounded by $n$ is defined by the
  restriction of $f_t, f_t^{-1}$ to $R_M^i$ for finitely many $i$
  depending on $n$.  The result is that there is a family $F_t$ of
  automorphisms of $\mathcal O_{M}$ given by a sequence of compatible
  maps
  \[
  \mathcal O_M/F^p\mathcal O_M\to \mathcal O_M/F^p\mathcal O_M,
  \]
  parametrized by $t$ in a Zariski open ($p$-dependent) subset $V_p$
  (an intersection of finitely many $U_i$) of $\mathbb A^1$ containing
  $0$ and $1$. By construction $F_0$ is the identity and we may assume
  (by possibly composing $F_t$ with a gauge equivalence of the form
  $\exp(t a)$, $a\in \mathfrak g(M)$) that $F_1=F$.  Thus
  $S_t:=F_t(S)\in \lim_\leftarrow\mathcal O(M)\otimes\K(t)/F^p
  O(M)\otimes\K(t)$ is a family of solutions of the master equation
  for the Tate resolution $R_M$. It is given by a compatible sequence
  $S_t^{(p)}=S_t\mod F^p\mathcal O_M$ that is defined for $t$ in the
  open set $V_p\subset \mathbb A^1$ containg $0$ and $1$ and such that
  $S_0=S_1=S$.

The next step is to replace $F_t$ by $G_t\circ F_t$ for some gauge
equivalence $G_t$ such that $G_t(S_t)=S$ and such that
$G_0=G_1=\mathrm{id}$.  We need to check that $G_t$ can be chosen this
way and that it is defined when $F_t$ is. The construction of a gauge
equivalence relating two solutions of the master equation associated
with the same Tate resolution is done recursively in the filtration
degrees, see part (c) of the previous section, and the induction step
relies on the vanishing of the cohomology of the complexes $\mathcal
G_{p,q}$ of locally free $\mathcal O_X$-modules of finite rank.  Now
$S_t-S \mod F^p\mathcal O(M)$ vanishes at $t=0$ and $t=1$ and is
defined for $t\in V_p$. In other words $S_t-S\in \mathcal
O(M)/F^p\mathcal O(M) \otimes t(1-t)\K[V_p]$ where $\K[V_p]\subset
\K(t)$ is the space of rational functions that are regular on
$V_p$. Thus the construction of the previous section applies to the
complex $\mathcal G_{p,q}\otimes_\K t(1-t)\K[V_p]$ (the tensor product
with the free and thus flat $\mathcal O_X$-module $\mathcal O_X\otimes_\K t(1-t)\K[V_p]$)
gives recursively a gauge equivalence $G_t$ such that
$G_0=G_1=\mathrm{id}$ and $G_t(S_t)=S$, as required.

We thus have a compatible family of morphisms
$F^{(p)}_t\in\mathrm{End}(\mathcal O_M/F^p\mathcal O_M)\otimes \K[V_p]$ whose value
at every $t\in V_p$ is an automorphism such that $F_t^{(p)}(S)=S\mod F^p\mathcal O(M)$,
so that $F_t^{(p)}$ commutes with $d_S$.
The inverse limit $F_t$ is defined for $t$ in a countable intersection of Zariski
open subsets and is given by a sequence $F_t^{(p)}$ each parametrized by $t$ in
a Zariski open subset of $\mathbb A^1$.

{\it Claim}: $F^{(p)}_t$ acts trivially on the cohomology 
$H^j(\mathcal O_M/F^p\mathcal O_M,d_S)$ for $p$ large enough depending on $j$.
To prove this claim we use the following result.

\begin{lemma}
For any $j$ there exist a $p_0(j)$ such that
$H^j(\mathcal O_M/F^p\mathcal O_M,d_S)\simeq H^j(\mathcal O_M,d_S)$ 
for all $p\geq p_0(j)$.
\end{lemma}

This Lemma is proved in Section \ref{s-5}, see Theorem \ref{t-co2}. Given the
claim, it also implies that $F=F_1$ acts trivially on the cohomology.
To prove the claim, we take the derivative $\dot F_t$ of $F_t$ with respect to $t$.

The endomorphism $F_t^{-1}\circ\dot F_t$ is a Poisson derivation of
degree $0$ of $\mathcal O_{T^*[-1]V}$. By Prop.~\ref{p-ham} all
Poisson derivations of degree zero are uniquely hamiltonian.  Thus
there exists an element $K_t$ of degree $-1$ such that
    \[
      F_t^{-1}\circ \dot F_t= [K_t,\;\,].
    \]
As above, these expression have to be understood as sequences of families
of endomorphisms of $\mathcal O_M/F^p\mathcal O_M$  parametrized by $t\in V_p$.
By the uniqueness of the Hamiltonian, $K_t$ is defined whenever $F_t$ is. 
    As $F_t(S)=S$, we see that $K_t$ is a cocycle: $[S,K_t]=0$. But
    by Theorem \ref{t-01} (i) (proved in Section \ref{s-5})
    the cohomology in degree -1 is trivial, so there exists a family of elements
    $L^{(p)}_t\in\Gamma(X,\mathcal O^j_{M}/F^p\mathcal O^j_M)\otimes \K[V_p]$ for $p$ large
    of degree $-2$ such
    that $K_t=[S,L^{(p)}_t]\mod F^p\mathcal O_M$.  With the Jacobi identity we obtain the
    homotopy formula
    \[
      F_t^{-1}\circ\dot F_t(a)=[S,[L^{(p)}_t,a]]+[L^{(p)}_t,[S,a]], a\in\mathcal O_M^i/F^p\mathcal O_M^i,
    \]
from which it follows that the action on the cohomology is trivial.
\end{proof}
Theorem \ref{t-automorphisms} implies that the BRST cohomology is
canonically associated with $S_0$ in the affine case.
\begin{cor}\label{c-unique} Let $S_0$ be a function on a nonsingular affine
variety $X$ over $\K$. Then the BRST cohomology sheaf $\mathcal
H^\bullet(M,S)$ is determined by $(X,S_0)$ up to unique isomorphism.
\end{cor}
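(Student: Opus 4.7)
The plan is to combine Theorem \ref{t-33} with Theorem \ref{t-automorphisms}, using Lemma \ref{l-55} as the bridge. For the existence of an isomorphism, given two BV varieties $(M_1,S_1)$ and $(M_2,S_2)$ with support $(X,S_0)$, Theorem \ref{t-33} produces trivial BV varieties $(T^*[-1]W_i,S_{W_i})$ and a Poisson equivalence
\[
\Phi\colon (M_1\times T^*[-1]W_1,\,S_1+S_{W_1})\xrightarrow{\sim}(M_2\times T^*[-1]W_2,\,S_2+S_{W_2})
\]
intertwining the master actions. By Lemma \ref{l-55} the canonical inclusions $\mathcal O_{M_i}\hookrightarrow\mathcal O_{M_i\times T^*[-1]W_i}$ are quasi-isomorphisms of sheaves of differential $P_0$-algebras, so composing with $\Phi^*$ yields an isomorphism
\[
\alpha_\Phi\colon \mathcal H^\bullet(\mathcal O_{M_1},d_{S_1})\xrightarrow{\sim}\mathcal H^\bullet(\mathcal O_{M_2},d_{S_2}).
\]

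For uniqueness, let $\alpha_{\Phi'}$ be a second such isomorphism arising from data $(W'_1,W'_2,\Phi')$. I would extend $\Phi$ and $\Phi'$ by the identity on the complementary trivial factor to Poisson isomorphisms $\tilde\Phi,\tilde\Phi'$ between the common enlargements $(M_i\times T^*[-1](W_i\oplus W'_i),\,S_i+S_{W_i}+S_{W'_i})$. A further application of Lemma \ref{l-55} shows that both $\alpha_\Phi$ and $\alpha_{\Phi'}$ are equally computed via the canonical quasi-isomorphisms $\mathcal O_{M_i}\hookrightarrow\mathcal O_{M_i\times T^*[-1](W_i\oplus W'_i)}$ together with $\tilde\Phi^*$ and $\tilde\Phi'^*$ respectively. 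Then $\tilde\Phi'^{-1}\circ\tilde\Phi$ is a Poisson automorphism of the enlarged BV variety preserving $X$ and fixing its master action, so Theorem \ref{t-automorphisms} forces it to act as the identity on the BRST cohomology sheaf. Hence $\alpha_\Phi=\alpha_{\Phi'}$.

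The main obstacle is the uniqueness half, and specifically the bookkeeping required to carry out the stabilization step: one must verify that after enlarging $W_i$ to $W_i\oplus W'_i$, the relevant commutative squares of sheaves of differential $P_0$-algebras — with horizontal arrows the canonical inclusions of Lemma \ref{l-55} and vertical arrows $\Phi^*$ versus $\tilde\Phi^*$ — commute strictly, so that the induced maps on cohomology are literally equal. This commutativity should be a direct consequence of the functoriality of the product construction of Section \ref{ss-products} and of the K\"unneth description of the BRST complex of a product BV variety, so no further substantive input beyond Theorem \ref{t-automorphisms} should be needed.
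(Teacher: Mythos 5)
Your proposal is correct and follows essentially the same route as the paper's own proof, which likewise derives existence of the isomorphism from Theorem \ref{t-33} together with Lemma \ref{l-55}, and derives uniqueness from Theorem \ref{t-automorphisms} via the observation that any two equivalences differ by a Poisson automorphism. Your stabilization bookkeeping in the uniqueness step is just a more explicit rendering of the paper's one-line version of that argument, so no further comment is needed.
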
 
\begin{proof} By Lemma \ref{l-55} the BRST complexes of
BV varieties differing by taking products with trivial BV varieties are
canonically quasi-isomorphic.  By Theorem \ref{t-33} any two BV varieties with
support $(X,S_0)$ become equivalent after taking such products. The
equivalence induces a quasi-isomorphism of the corresponding BRST-complexes
as differential $P_0$-algebras. By Theorem \ref{t-automorphisms} any two
equivalences differ by an automorphism, so they induce the same map on BRST
cohomology.
\end{proof}

\section{Computing the BRST cohomology}\label{s-5}
The BRST complex of a BV variety $(M,S)$ with support $(X,S_0)$
is a sheaf of differential graded $P_0$-algebras
over $X$. We consider here the cohomology sheaf $\mathcal H(M,S)$ of the BRST complex for $X$ affine and $M=T^*[-1]V$. We use the local description of $\mathcal O_M$ of \ref{s-ld}.
\subsection{The spectral sequence}\label{sub-43}
The main tool for the computation of the BRST cohomology is the spectral sequence of the
filtered complex $(\mathcal O_M,d_S)$. Recall that the sheaf of Jacobian rings $J(S_0)$ is by definition
the cokernel of the map $dS_0\colon T_X\to \mathcal O_X$.
\begin{theorem}\label{t-co2} Let $(M,S)$ be a BV variety with support $(X,S_0)$ such that $X$ is affine. 
\begin{enumerate}
\item[(i)]
There is a fourth quadrant spectral sequence
degenerating at $E_2$ such that
\[
 E^{p,q}_2=\begin{cases} \mathcal H^p(J(S_0)\otimes_{\mathcal O_X}\mathcal O_V,d_1),& \text{ if $q=0$,}
\\
0, & \text{ if $q\neq 0$.}
\end{cases}
\]
The differential $d_1$ is described as follows.
Let $S^{(1)}$  the component in $\Gamma(X,T_V[1])$ of 
$S\in\Gamma(X,\widehat{\Sym}_{\mathcal O_V}T_V[1])$. Then the derivation $S^{(1)}$ vanishes on
the kernel of the canonical projection $\mathcal O_V\to J(S_0)\otimes_{\mathcal O_X}\mathcal O_V$.
The induced differential on the image is $d_1$.
\item[(ii)]
This spectral sequence converges to the cohomology $\mathcal H(M,S)$. More precisely,
the edge homomorphism
\[
\mathcal H^\bullet(M,S)\to E_2^{\bullet,0}
\]
is an isomorphism of graded commutative algebras.
\item[(iii)]
The natural map $\mathcal H^j({\mathcal O_M},d_S)\to \mathcal 
H^j({\mathcal O_M}/F^{p+1}{\mathcal O_M})$ 
is an isomorphism for $j<p$ and a monomorphism for $j=p$.
\end{enumerate}
\end{theorem}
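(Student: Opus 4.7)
The strategy is to run the spectral sequence of the decreasing filtration $F^\bullet\mathcal O_M$, which $d_S$ preserves by Lemma \ref{l-Maud}(i) because $S\in\mathcal O_M^0$. Since $X$ is affine and all relevant sheaves are quasi-coherent, the sheaf-level spectral sequence is equivalent to the one on global sections.

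For the $E_0$-page, Lemma \ref{l-assgr} gives $E_0^{p,q}=\mathrm{gr}^p\mathcal O_M^{p+q}\cong R_M^q\otimes_{\mathcal O_X}\mathcal O_V^p$. To compute $d_0$, I decompose $S=\sum_{k\geq 0}S^{(k)}$ by polynomial degree in $T_V[1]$. Each $T_V[1]$-factor has negative cohomological degree, so $S^{(k)}$ for $k\geq 1$ has $\mathcal O_V$-coefficients of cohomological degree $\geq k$; a direct Leibniz calculation then yields $[S^{(k)},\mathcal O_V^p]\subset F^{p+k}\mathcal O_M$, while $[S^{(0)},\mathcal O_V]=0$ because $\mathcal O_V$ is Poisson-commutative. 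Writing a representative of a class in $\mathrm{gr}^p$ as $\tilde r\cdot v$ with $\tilde r\in\mathcal O_M$ any lift of $r\in R_M$ and $v\in\mathcal O_V^p$, Leibniz yields $d_S(\tilde r\cdot v)\equiv(d_S\tilde r)\cdot v\pmod{F^{p+1}}$, and $d_S\tilde r\equiv\delta r\pmod{F^1}$ where $\delta$ is the induced differential on $R_M=\mathcal O_M/I_M$. BV axiom (iii) identifies $(R_M,\delta)$ as a Tate resolution of $J(S_0)$, so $d_0=\delta\otimes\mathrm{id}$; since each $\mathcal O_V^p$ is locally free and thus flat over $\mathcal O_X$, this gives $E_1^{p,q}=H^q(R_M,\delta)\otimes_{\mathcal O_X}\mathcal O_V^p$, equal to $J(S_0)\otimes\mathcal O_V^p$ for $q=0$ and vanishing otherwise.

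For $d_1\colon E_1^{p,0}\to E_1^{p+1,0}$, I lift $v\in\mathcal O_V^p$ trivially to $\mathcal O_M$ and expand $d_Sv=\sum_k[S^{(k)},v]$: only $[S^{(1)},v]=S^{(1)}(v)\in\mathcal O_V^{p+1}$ contributes to the $\mathcal O_V^{p+1}\subset R_M\otimes\mathcal O_V^{p+1}$ component of $F^{p+1}/F^{p+2}$, the $k\geq 2$ terms landing in $F^{p+2}$ for degree reasons. Hence $d_1$ is induced by the degree-$1$ derivation $S^{(1)}$. Both the descent of $d_1$ to $J(S_0)\otimes\mathcal O_V$ and the vanishing $d_1^2=0$ follow from decomposing $[S,S]=0$ by polynomial degree in $T_V[1]$: the degree-$0$ component gives $S^{(1)}(S_0)=0$, whence $S^{(1)}(\partial_iS_0)\in(\partial_jS_0)\mathcal O_V$ by differentiation; the degree-$1$ component identifies $[S^{(1)},S^{(1)}]$ with $-2[S^{(0)},S^{(2)}]$, and using Jacobi together with $[S_0,\mathcal O_V]=0$ one sees that the operator $(S^{(1)})^2$ acts on $\mathcal O_V$ by multiples of $\partial_iS_0$. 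For $r\geq 2$, $d_r\colon E_r^{p,0}\to E_r^{p+r,1-r}$ targets a trivial group, so the sequence degenerates at $E_2$, completing (i).

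For (ii), the filtration is complete by construction of $\mathcal O_M$ as $\lim_\leftarrow\mathcal O_M/F^p$ and exhaustive since $F^0=\mathcal O_M$; strong convergence then identifies $E_\infty=E_2$ with the associated graded of $\mathcal H^\bullet(M,S)$. Since $E_\infty$ is concentrated in $q=0$, the induced filtration on $\mathcal H^p(M,S)$ has a unique non-trivial graded piece, so the edge map $\mathcal H^p(M,S)\to E_2^{p,0}$ is an isomorphism, respecting the algebra structure because the filtration is multiplicative. For (iii), the same spectral sequence applied to the subcomplex $F^{p+1}\mathcal O_M$ (whose $E_1$-page lives in filtration indices $\geq p+1$ with $q=0$) gives $\mathcal H^j(F^{p+1}\mathcal O_M)=0$ for $j\leq p$; the long exact sequence attached to $0\to F^{p+1}\mathcal O_M\to\mathcal O_M\to\mathcal O_M/F^{p+1}\to 0$ then yields the claimed isomorphism for $j<p$ and injection for $j=p$. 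The principal obstacle is the simultaneous bookkeeping of three gradings (cohomological degree, filtration index, polynomial degree in $T_V[1]$), and isolating precisely the component of the master equation that forces $d_1^2=0$ modulo the Jacobian ideal.
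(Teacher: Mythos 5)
Your proof is correct and follows essentially the same route as the paper: the filtration spectral sequence, the identification $E_0^{p,q}\cong R_M^q\otimes_{\mathcal O_X}\mathcal O_V^p$ with $d_0=\delta\otimes\mathrm{id}$, the isolation of $S^{(1)}$ as the only contribution to $d_1$ (the $k\geq 2$ terms falling into $F^{p+2}$), degeneration for degree reasons, and the long exact sequence for (iii). The only point where you diverge is convergence: you assert strong convergence from completeness and exhaustiveness alone, which in general is not enough — you also need the degeneration (equivalently, the vanishing of $E_1^{p,m-p}$ for $p$ large) to kill $RE_\infty$; the paper instead proves the Cartan--Eilenberg regularity condition $\mathcal H^m(F^p\mathcal O_M)=0$ for $p>m$ directly by a successive-approximation argument in the complete filtration, which is also exactly the vanishing you need for part (iii), so you should either cite Boardman's criterion explicitly or reproduce that small lemma.
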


\begin{proof} The first term in the spectral sequence of the
filtered complex is (see, e.g., \cite{CartanEilenberg}, {Chap.~XV, \S 4})
\[
E_0^{p,q}=F^p\mathcal O_M^{p+q}/F^{p+1}\mathcal O_M^{p+q}.
\]
By Remark \ref{r-bigraded}, $E_0$ lives in the fourth
quadrant $p\geq 0, q\leq 0$ and, as a sheaf of $\K$-algebras,
\[
E_0^{p,q}\cong R_M^q\otimes_{\mathcal O_X}\mathcal O_V^p,
\]
where $(R_M=\mathcal O_M/I_M,\delta)$ is the resolution of $J(S_0)$ associated with $S$.
By Prop.~\ref{p-assgr}, the differential is $\delta\otimes \mathrm{id}$.
It follows that
\[
E_1^{p,q}\cong\left\{
\begin{array}{rl}
J(S_0)\otimes_{\mathcal O_X} \mathcal O_V^p,& \text{if $q=0$,}\\
0,&\text{if $q\neq 0$.}
\end{array}\right.
\]
Let us compute the differential $d_1\colon E_1^{p,0}\to E_1^{p+1,0}$ by decomposing
$S$ according to the power in the symmetric algebra:
\[
S=S^{(0)}+S^{(1)}+S^{(2)}+\cdots,\quad \text{with $S^{(j)}\in\Gamma(X,\Sym^j_{\mathcal O_V}T_V[1])$.}
\]
Since the natural map $\mathcal O_V^p\to F^p\mathcal O_M^p/F^{p+1}\mathcal O_M^p=E^{p,0}$
is an isomorphism, we may compute $d_1$ by acting with $d_S$ on 
representatives in $\mathcal O_V^p$.  
We have $S{(0)}=S_0$ and thus  $[S^{(0)},\mathcal
O_V^p]=0$.  Let $I_M^-$ be the ideal of $\mathcal O_M$ generated by elements of
{\em negative} degree. Then, if $j\geq 2$, $S^{(j)}\in I_M^{-}\cdot I_M^{-}$
and thus $[S^{(j)},\mathcal O_V^p]\subset I_M^{-}\cap \mathcal O_M^{p+1}\subset F^{p+2}\mathcal O_M$.
Hence for $a\in\mathcal O_V^p$ and $j\geq 2$ 
the class of $[S^{(j)},a]$ in $E^{p+1,0}=F^{p+1}\mathcal O_M^{p+1}/F^{p+2}\mathcal O_M^{p+1}$ vanishes.
By definition of the Poisson structure on $\mathcal O_M$, the bracket of 
$\mathcal S^{(1)}$ with $\mathcal O_V$ 
is the action of $S^{(1)}$ viewed as a derivation.

Since $E_1^{p,q}=0$ for $q\neq0$, all higher differentials vanish
for degree reasons and the spectral sequence degenerates.

It remains to show that the spectral sequence converges to the cohomology
$\mathcal H(M,S)$.  Recall from \cite{CartanEilenberg}, {p.~324} that a descending filtration
$\cdots \supset F^pA\supset F^{p+1}A\supset\cdots$ of a cochain complex $A$ is
called {\em regular} if for each $m$ there exists a $p_0=p_0(m)$ such that the
cohomology $H^m(F^pA)$ vanishes for $p\geq p_0$. By
\cite{CartanEilenberg}, {Chap.~XV, Prop.~4.1} the spectral sequence of a regular
filtration converges to the cohomology.

\begin{lemma}
Suppose $\cdots\supset F^pA\supset F^{p+1}A\supset\cdots $ is a filtration of a
cochain complex $(A,d)$ such that, for each $j$, the natural map
\begin{equation}\label{e-1}
A^j\to\lim_{\leftarrow}A^j/F^pA^j
\end{equation}
is an isomorphism and assume that $E_1^{p,m-p}=H^m(F^pA/F^{p+1}A)=0$ for $p\geq
p_0(m)$ sufficiently large. Then $H^m(F^pA)=0$ for $p\geq p_0(m)$ and thus the
filtration is regular.
\end{lemma}
\begin{proof}
  Let $p\geq p_0$ and $z\in F^pA^m$ be a cocycle representing a class in
  $H^m(F^pA)$. By the assumption on $E_1$, $z\equiv d y_0 \mod F^{p+1}A$ for some
  $y_0\in F^pA^{m-1}$. Thus $z-dy_0\in F^{p+1}A$ and by the same argument we find
  $y_1\in F^{p+1}A$ such that $z-dy_0-dy_1\in F^{p+2}A$. Iterating we conclude
  that $z\equiv d (y_0+\cdots +y_r)\mod F^{p+r+1}A^{m-1}$, for some $y_j\in
  F^{p+j}A^{m-1}$. The sequence $(y_0+\dots+y_r)_{r\geq 0}$ defines an element of
  $\lim_{\leftarrow}F^pA^{m-1}/F^{p+r} A^{m-1}$. Let $y\in F^pA^{m-1}$ be its
  inverse image by the isomorphism \eqref{e-1}. Then $z=dy$ and it follows that
  $H^m(F^pA)=0$.  
\end{proof}
Since the completed complex ${\mathcal O_M}$ obeys \eqref{e-1} and the
assumption on $E_1$ holds with $p_0(m)=m+1$, we have
\begin{equation}\label{e-4} 
  \mathcal H^m(F^p{\mathcal O_M},d_S)=0, \qquad \text{for $p>m$}
\end{equation} 
and the proof of convergence is complete. For the statement about the map
$\mathcal H^p(M,S)\to E_2^{p,0}$ see \cite{CartanEilenberg}, {Chap.~XV, Theorem~5.12}.
Since the product is compatible with the filtration, the edge homomorphism is
an algebra homomorphism.

The statement (iii) follows from the long exact sequence associated with the
short exact sequence
\[
0
\to F^{p+1}{\mathcal O_M}
\to{\mathcal O_M}
\to {\mathcal O_M}/F^{p+1}{\mathcal O_M}
\to 0,
\]
and \eqref{e-4}.
\end{proof}

\begin{remark}
Theorem \ref{t-co2} shows that although in general Tate resolutions require
in general an infinite dimensional $V$, computing the BRST cohomology in a
given degree is a finite process: computing $\mathcal H^j(M,S)$ requires
knowing $S$ modulo $F^{j+1}$, which in turn can be computed from the Tate
resolution down to degree $-j-1$.  
\end{remark}
\begin{cor}\label{c-neg}
  The BRST cohomology sheaf vanishes in negative degree.
\end{cor}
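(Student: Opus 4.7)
The plan is to deduce the vanishing directly from the spectral sequence of Theorem \ref{t-co2}. Vanishing of the cohomology sheaf $\mathcal H^\bullet(\mathcal O_M,d_S)$ is a local statement on $X$, so I may replace $X$ by an affine open neighborhood of an arbitrary point. By the existence part of Theorem \ref{t-00} (established in Section \ref{s-existence}), after shrinking $X$ I may further assume that $M\cong T^*[-1]V$ for a $\mathbb Z_{\geq 0}$-graded variety $V$, so that Theorem \ref{t-co2} applies.

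Now the spectral sequence of Theorem \ref{t-co2} is concentrated on the row $q=0$ at the $E_2$-page, where
\[
E_2^{p,0}\cong \mathcal H^p\bigl(J(S_0)\otimes_{\mathcal O_X}\mathcal O_V,\,d_1\bigr).
\]
Because $V$ is $\mathbb Z_{\geq 0}$-graded with $\mathcal O_V^0=\mathcal O_X$ and $\mathcal O_V^p=0$ for $p<0$, the complex $J(S_0)\otimes_{\mathcal O_X}\mathcal O_V$ is concentrated in non-negative degrees. Hence $E_2^{p,0}=0$ for $p<0$, so the entire $E_2$-page vanishes outside the first quadrant (and in fact on the non-negative $p$-axis).

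The spectral sequence converges to $\mathcal H^\bullet(\mathcal O_M,d_S)$, so in total degree $n=p+q$ the cohomology is built from the terms $E_\infty^{p,q}$ with $p+q=n$. For $n<0$ every such $(p,q)$ has either $q\neq 0$ or $p<0$, and in both cases $E_2^{p,q}=0$; since the sequence degenerates at $E_2$, the associated graded of $\mathcal H^n(\mathcal O_M,d_S)$ vanishes, and therefore $\mathcal H^n(\mathcal O_M,d_S)=0$ for $n<0$. As this holds on an affine cover, the global cohomology sheaf vanishes in negative degree.

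The only non-routine point is checking that the spectral sequence argument actually controls the cohomology sheaf and not merely the cohomology of global sections; this is handled by the regularity of the filtration (as in the proof of Theorem \ref{t-co2}), which is a local feature of $\mathcal O_M$ and so transfers to sheaves. Everything else is bookkeeping with the bidegrees.
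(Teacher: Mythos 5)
Your proof is correct and is essentially the paper's own argument: Corollary \ref{c-neg} is deduced there in one line from the fact that $E_2\cong\mathcal H(M,S)$ is the cohomology of the complex $E_1\cong J(S_0)\otimes_{\mathcal O_X}\mathcal O_V$, which is concentrated in non-negative degree. Your extra localization step is harmless but unnecessary in the paper's setting (Section \ref{s-5} already assumes $X$ affine and $M=T^*[-1]V$), and the local cotangent-bundle structure you want follows from the definition of a $(-1)$-symplectic variety rather than from Theorem \ref{t-00}.
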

\begin{proof}
This is obvious as $E_2\cong \mathcal H(M,S)$ is the cohomology of a
complex $E_1$ concentrated in non negative degree.  
\end{proof}
\subsection{BRST cohomology in degree $0$}\label{sub-35}
Let us compute $\mathcal H^0(M,S)$ from the spectral sequence:
\[
\mathcal H^0(M,S)\cong\mathrm{Ker}(d_1\colon J(S_0)\to
\oplus_{i=1}^rJ(S_0)\beta^i), 
\]
where $\beta^1,\dots, \beta^r$  is a local basis of the locally free
$\mathcal O_X$-module $\mathcal O_V^1$.  The differential $d_1$ is induced
from the bracket with the terms in $S$ linear in the dual variables
$x_i^*\in \mathcal O_M^{-1}$. To compute it we need to construct a Tate
resolution down to degree $-2$.

The Lie algebra $L(S_0)=\{\xi\in T_X\,|\, \xi(S_0)=0\}$ acts on the algebra
$J(S_0)$ by derivations. The vector fields $\xi(S_0)\eta-\eta(S_0)\xi$, for
$\xi,\eta\in T$, act by zero and span a Lie ideal $L_0(S_0)$. Let
$L^\mathrm{eff}(S_0)=L(S_0)/L_0(S_0)$. It is a Lie algebra and an $\mathcal
O_X$-module and comes with an $\mathcal O_X$-linear Lie algebra homomorphism
$L^{\mathrm{eff}}(S_0)\to\mathrm{Der}(J(S_0))$. As $L_0(S_0)$ acts trivially on
$J(S_0)$, the action of $L^\mathrm{eff}(S_0)$ on $J(S_0)$ is defined and
we have
\begin{equation}\label{e-3}
J(S_0)^{L(S_0)}=J(S_0)^{L^{\mathrm{eff}}(S_0)}.
\end{equation}
\begin{prop}\label{p-35}
$\mathcal H^0(M,S)\cong J(S_0)^{L(S_0)}$
\end{prop}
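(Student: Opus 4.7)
The plan is to apply Theorem~\ref{t-co2} and to identify the kernel of the first differential with $J(S_0)^{L(S_0)}$. Concretely, that theorem gives $\mathcal H^0(M,S)\cong E_2^{0,0}=\ker\bigl(d_1\colon J(S_0)\to J(S_0)\otimes_{\mathcal O_X}\mathcal O_V^1\bigr)$, where $d_1[f]$ is obtained by applying the degree-$1$ derivation $S^{(1)}\in\Gamma(X,T_V^1)$ to a lift $f\in\mathcal O_X$ and projecting $S^{(1)}(f)\in\mathcal O_V^1$ to $J(S_0)\otimes_{\mathcal O_X}\mathcal O_V^1$. By Corollary~\ref{c-unique} the BRST cohomology is independent of the chosen BV variety, so I may take one built from a Tate resolution as in Section~\ref{ss-TateResolutions}, with $S=S_{\mathrm{lin}}+v$, $v\in I_M^{(2)}$, and work in the local description of Section~\ref{s-ld}.

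By the defining step of the Tate resolution in degree $-2$, the generators $\beta_j^*$ satisfy $\xi_j:=\delta\beta_j^*\in L(S_0)$, and the $\xi_j$ generate $L(S_0)$ as an $\mathcal O_X$-module (this is what is required in order to kill $H^{-1}$ of $\Sym_{\mathcal O_X}T_X[1]$). Under the identification $\mathcal O_V^1\cong\oplus_j\mathcal O_X\beta^j$, the derivation attached to $S_{\mathrm{lin}}=S_0+\sum_j\xi_j\beta^j$ sends $f\in\mathcal O_X$ to $\sum_j\xi_j(f)\beta^j$. The main technical point is to check that the correction $v\in I_M^{(2)}$ contributes nothing to $S^{(1)}(f)$ for $f\in\mathcal O_X$: in the local coordinates, the $\Sym^1 T_V[1]$-piece of $v$ splits into a sum of terms with an $x_i^*$-generator, whose $\mathcal O_V$-coefficient is forced to lie in $I_V^{(2)}\cap\mathcal O_V^1$ and hence vanishes since $I_V^{(2)}\subset\mathcal O_V^{\geq 2}$, and terms with a $\beta_j^*$-generator, which act as $\partial/\partial\beta^j$-derivations and so annihilate $\mathcal O_X$. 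This degree-counting bookkeeping is the main obstacle.

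Granted that, $d_1[f]=\sum_j[\xi_j(f)]\beta^j\in\oplus_j J(S_0)\beta^j$, so $d_1[f]=0$ iff $\xi_j(f)\in(dS_0)$ for every $j$. By $\mathcal O_X$-linearity, and the fact that the $\xi_j$ generate $L(S_0)$, this is equivalent to $\xi(f)\in(dS_0)$ for every $\xi\in L(S_0)$, i.e., $\xi(f)=0$ in $J(S_0)$ for every $\xi\in L(S_0)$. This is exactly the defining condition for $[f]\in J(S_0)^{L(S_0)}$, yielding the required isomorphism $\mathcal H^0(M,S)\cong J(S_0)^{L(S_0)}$.
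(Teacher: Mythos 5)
Your proposal follows the paper's own route: apply the spectral sequence of Theorem \ref{t-co2}, identify $d_1$ on $E_1^{0,0}=J(S_0)$ with the action of the linear part of $S_{\mathrm{lin}}$, and check by degree counting that the correction $v\in I_M^{(2)}$ contributes nothing (the $x_i^*$-components would need coefficients in $I_V^{(2)}\cap\mathcal O_V^1=0$, and the $\beta_j^*$-components annihilate $\mathcal O_X$). The one imprecision is your parenthetical claim that killing $H^{-1}$ of the resolution forces the $\xi_j=\delta\beta_j^*$ to generate $L(S_0)$ as an $\mathcal O_X$-module: the Koszul part $\wedge^2T_X$ of the resolution already maps onto the submodule $L_0(S_0)$ spanned by the $\xi(S_0)\eta-\eta(S_0)\xi$, so Tate's construction only requires the $\xi_j$ to generate $L(S_0)/L_0(S_0)=L^{\mathrm{eff}}(S_0)$, and for a minimal resolution they need not generate $L(S_0)$ itself. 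Your final equivalence still holds because $L_0(S_0)$ acts on any $f\in\mathcal O_X$ by elements of the ideal $(dS_0)$, hence trivially on $J(S_0)$ --- this is exactly the paper's identity $J(S_0)^{L(S_0)}=J(S_0)^{L^{\mathrm{eff}}(S_0)}$, eq.~\eqref{e-3} --- but that observation needs to be stated; alternatively, since you already invoke Corollary \ref{c-unique} to choose your BV variety, you may simply pick the degree $-2$ generators so that the $\xi_j$ surject onto all of $L(S_0)$.
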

\begin{proof}
The Tate resolution down to degree $-2$ looks like
\[
\cdots \to \wedge^2T_X\oplus \bigoplus_{i=1}^r\mathcal O_X\beta_i^*\to T_X
\to {\mathcal O_X}.
\]
The map $\wedge^2T_X\to T_X$ sends $\xi\wedge\eta$ for vector fields
$\xi,\eta\in T_X$ to $\xi(S_0)\eta-\eta(S_0)\xi$. Its image is $L_0(S_0)$.
Since the cohomology must vanish in degree $-1$, $\delta$ must map generators
$\beta_i^*$ of degree $-2$ to vector fields $\xi_i=\delta(\beta_i^*)$ which
together with $L_0(S_0)$ span the kernel $L(S_0)$ of $dS_0\colon T_X\to\mathcal
O_X$.  In other words the classes of $\xi_i$ generate the ${\mathcal
O}_X$-module $L^\mathrm{eff}(S_0)$.  Next we use the fact that $d_1$ is given
by the induced action of the component in $\Gamma(X,T_V[1])$ of $S$. Now
$S=S_\mathrm{lin}\mod I_M^{(2)}$ The only term contributing to $d_1\colon
J(S_0)\to\oplus_i J(S_0)\beta^i$ is then
$\sum\delta(\beta_i^*)\beta^i=\sum\xi_i\beta^i$ appearing in $S_\mathrm{lin}$:
indeed, by degree reasons, the terms linear in $T_V[1]$ in $I_M^{(2)}$ cannot
have a component in $T_X[1]$ and thus vanish when acting on $J(S_0)$.
Therefore $d_1f=\sum[\xi_i\beta^i,f]=-\xi_i(f)\beta^i$. Thus the kernel
consists of elements of $J(S_0)$ annihilated by vector fields spanning
$L(S_0)^{\mathrm{eff}}$. By \eqref{e-3} this proves the claim.  
\end{proof}

\begin{cor}\label{c-0}
Let $(M,S)$ be a BV variety with support $(X,S_0)$ and suppose that
$S_0$ has no critical points. Then $\mathcal H(M,S)=0$.
\end{cor}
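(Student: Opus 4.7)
The plan is to deduce everything directly from the spectral sequence of Theorem \ref{t-co2}. The hypothesis that $S_0$ has no critical points means that the sections $\partial_1 S_0,\dots,\partial_n S_0$ have no common zero on $X$, so locally they generate the unit ideal in $\mathcal O_X$. Consequently the sheaf of Jacobian rings vanishes identically: $J(S_0)=0$.

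Next I would invoke Theorem \ref{t-co2}(i), which identifies the $E_2$-page of the spectral sequence associated to the filtration $F^\bullet\mathcal O_M$ as
\[
E_2^{p,q}=\begin{cases}\mathcal H^p\bigl(J(S_0)\otimes_{\mathcal O_X}\mathcal O_V,d_1\bigr),&q=0,\\ 0,&q\neq0.\end{cases}
\]
Since $J(S_0)=0$, the complex $J(S_0)\otimes_{\mathcal O_X}\mathcal O_V$ is the zero complex, so $E_2^{p,q}=0$ for every $(p,q)$.

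Finally I would apply the convergence statement Theorem \ref{t-co2}(ii): the edge homomorphism $\mathcal H^\bullet(M,S)\to E_2^{\bullet,0}$ is an isomorphism of graded commutative algebras. Since the target is zero, we conclude $\mathcal H(M,S)=0$, as required.

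There is no real obstacle here; the content is entirely in Theorem \ref{t-co2}, whose hypotheses are in force because $(M,S)$ is a BV variety over the affine $X$. The only thing to observe carefully is that "no critical points'' is exactly the statement that the partial derivatives generate the unit ideal, so that $J(S_0)$ vanishes as a sheaf and not merely generically.
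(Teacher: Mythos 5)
Your proposal is correct and is exactly the paper's argument: the paper's proof of Corollary \ref{c-0} is the one-line observation that $J(S_0)=0$ when $S_0$ has no critical points, hence $E_2=0$ and the conclusion follows from the convergence of the spectral sequence in Theorem \ref{t-co2}. You have simply spelled out the same steps in more detail.
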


\begin{proof}
In this case $J(S_0)=0$ and thus $E_2=0$.
\end{proof}

Thus the sheaf $\mathcal H(M,S)$ has support on the critical locus
of $S_0$.

\subsection{Hypercohomology}
The {\em BRST cohomology} $H(M,S)$ of a BV variety with support $(X,S_0)$ is
the hypercohomology of the BRST complex of sheaves. Then there is a
hypercohomology spectral sequence converging to $H^{p+q}(M,S)$ and whose
$E_2$-term is 
\[
E_2^{p,q}=H^p(X,\mathcal H^q(M,S)).
\]
The results on $\mathcal H$ in non-positive degree imply:
\begin{cor}\label{c-1}
The BRST cohomology of a BV variety $(M,S)$ with support $(X,S_0)$ vanishes
in negative degree and 
\[
H^0(M,S)=\Gamma(X,J(S_0)^{L(S_0)}).
\]
\end{cor}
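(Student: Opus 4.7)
The plan is to extract both statements directly from the hypercohomology spectral sequence introduced just before the corollary, using the vanishing and identification results for the cohomology sheaves $\mathcal H^q(M,S)$ that have already been established.

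First I would record the shape of the $E_2$-page of the hypercohomology spectral sequence $E_2^{p,q}=H^p(X,\mathcal H^q(M,S))\Rightarrow H^{p+q}(M,S)$. By Corollary~\ref{c-neg}, the sheaf $\mathcal H^q(M,S)$ vanishes for $q<0$; sheaf cohomology $H^p(X,-)$ vanishes for $p<0$. Consequently $E_2^{p,q}=0$ whenever $p<0$ or $q<0$, so the spectral sequence is concentrated in the first quadrant. Note that while Proposition~\ref{p-35} and Corollary~\ref{c-neg} are stated in Section~\ref{s-5} under the hypothesis that $X$ is affine, they describe the stalks (equivalently, the values on affine opens) of the sheaves $\mathcal H^q(M,S)$, hence give the corresponding identifications of sheaves on a general $X\in\mathcal C$ by restricting to an affine cover.

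Next I would deduce the vanishing in negative total degree: for $n<0$ every lattice point $(p,q)$ with $p+q=n$ has either $p<0$ or $q<0$, so $E_2^{p,q}=0$ and therefore $E_\infty^{p,q}=0$; the abutment gives $H^n(M,S)=0$.

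For the identification of $H^0(M,S)$, I would show that the filtration of $H^0$ coming from the spectral sequence collapses to $E_\infty^{0,0}$, and that $E_\infty^{0,0}=E_2^{0,0}$. The first point is immediate because the graded pieces are $E_\infty^{p,-p}$ with $p\neq 0$, all of which sit inside $E_2^{p,-p}$ and vanish as above. The second point holds because every differential into or out of $E_r^{0,0}$ has source or target of the form $E_r^{p,q}$ with $p<0$ or $q<0$, hence is zero. Finally Proposition~\ref{p-35} identifies $\mathcal H^0(M,S)$ with $J(S_0)^{L(S_0)}$, so
\[
H^0(M,S)=E_2^{0,0}=H^0(X,\mathcal H^0(M,S))=\Gamma(X,J(S_0)^{L(S_0)}),
\]
as required. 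There is no real obstacle here beyond the bookkeeping of the spectral sequence; the only point deserving a brief remark is the passage from the affine case of Proposition~\ref{p-35} and Corollary~\ref{c-neg} to the statement about sheaves on a general $X\in\mathcal C$, which is justified by restriction to an affine open cover.
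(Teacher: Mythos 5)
Your argument is correct and is essentially the proof the paper intends: the corollary is stated as an immediate consequence of the hypercohomology spectral sequence $E_2^{p,q}=H^p(X,\mathcal H^q(M,S))$ together with Corollary \ref{c-neg} and Proposition \ref{p-35}, exactly as you spell out. Your additional remark on passing from the affine statements to the sheaf-level statements via an affine cover is a sound piece of bookkeeping that the paper leaves implicit.
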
 
Also, there is a second hypercohomology spectral sequence
whose $E^{p,q}_2$ term is the $p$-th cohomology of $[S,\;\,]$ on $H^q(X,\mathcal O_M)$.
If $X$ is affine, the hypercohomology coincides with the cohomology
of global sections and we obtain:
\begin{cor}\label{c-2}
Let $X$ be an affine variety. Then
\[
H^\bullet(M,S)=H^\bullet(\Gamma(X,\mathcal O_{M}),d_S).
\]
\end{cor}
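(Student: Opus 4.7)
The plan is to invoke the \emph{second} hypercohomology spectral sequence mentioned immediately before the statement, namely the one with
\[
E_2^{p,q}=H^p\bigl([S,\;\,]\text{ on }H^q(X,\mathcal O_M)\bigr)\ \Longrightarrow\ H^{p+q}(M,S),
\]
and to show that, for $X$ affine, $H^q(X,\mathcal O_M^j)=0$ whenever $q>0$ (for every internal degree $j$). Once this vanishing is established, the spectral sequence collapses onto its $q=0$ row and the abutment reads $H^\bullet(M,S)=H^\bullet(\Gamma(X,\mathcal O_M),d_S)$, which is the claim.

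For the vanishing I would unwind the definition of the completion: in each internal degree $j$ one has
\[
\mathcal O_M^j=\lim_{\leftarrow p}\mathcal O_M^j/F^p\mathcal O_M^j.
\]
By Lemma~\ref{l-assgr}, the successive quotients $F^{p'}\mathcal O_M^j/F^{p'+1}\mathcal O_M^j$ identify with $R_M^{\,j-p'}\otimes_{\mathcal O_X}\mathcal O_V^{p'}$. Since $\mathcal O_V^{p'}$ is locally free of finite rank (only finitely many generators of $\mathcal E$ contribute to a fixed total degree $p'$) and $R_M^{\,j-p'}$ is a finite direct sum of locally free-of-finite-rank $\mathcal O_X$-modules (the bound on degree limits the number of $x_i^*,\beta_j^*$ factors), each graded piece is quasi-coherent. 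Consequently each $\mathcal O_M^j/F^p\mathcal O_M^j$ is a finite iterated extension of quasi-coherent $\mathcal O_X$-modules, hence quasi-coherent. Serre vanishing on the affine variety $X$ then gives $H^i(X,\mathcal O_M^j/F^p\mathcal O_M^j)=0$ for all $i>0$ and all $p$.

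The last step is to pass from these finite-level vanishings to $H^i(X,\mathcal O_M^j)=0$, and this is the only genuinely delicate point. The transition maps $\mathcal O_M^j/F^{p+1}\to \mathcal O_M^j/F^p$ are surjective, and on global sections remain surjective by the affine Serre vanishing just noted, so the inverse system satisfies Mittag--Leffler at every level. Combining this with the spectral sequence $E_2^{a,b}=\lim^{a}_{\leftarrow p}H^b(X,\mathcal O_M^j/F^p\mathcal O_M^j)\Rightarrow H^{a+b}(X,\mathcal O_M^j)$ (whose use is standard for a surjective inverse system of quasi-coherent sheaves on a noetherian affine scheme), one concludes $\lim^1=0$ and the higher cohomology of the limit vanishes, as required.

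The main obstacle is precisely this final $\lim/\lim^1$ argument: one must justify that the sheaf cohomology of the inverse limit really coincides with the inverse limit of the sheaf cohomologies in our setting. The Mittag--Leffler condition together with affineness of $X$ and the finite-rank structure of each graded piece make this routine, and everything else is a direct application of the spectral sequence already set up in the preceding discussion.
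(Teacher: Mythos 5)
Your proof is correct and follows essentially the same route as the paper: the paper simply invokes the second hypercohomology spectral sequence and the remark that for affine $X$ the hypercohomology coincides with the cohomology of global sections. The only difference is that you additionally justify the vanishing of $H^{q}(X,\mathcal O_M^j)$ for $q>0$ — which is not automatic from Serre vanishing since each $\mathcal O_M^j$ is an inverse limit of quasi-coherent sheaves rather than quasi-coherent itself — via Mittag--Leffler and a $\lim^1$ argument; this fills in a step the paper leaves implicit.
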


If $X$ is affine, the BRST cohomology is determined up to unique isomophism by
$(X,S_0)$, see Corollary \ref{c-unique}. It then makes sense to define the BRST
cohomology of $(X,S_0)$ as 
\[
H^\bullet (X,S_0)=H^\bullet(M,S),
\]
for any choice of $(M,S)$ with support $(X,S_0)$ as in Theorem \ref{t-00}.
From the Mayer--Vietoris sequence we then obtain:

\begin{cor}\label{c-MV}
Let $X$ be an affine variety.  Suppose that the critical locus of
$S_0\in\Gamma(X,\mathcal O_X)$ has two disjoint components $C_1\subset U_1$,
$C_2\subset U_2$ contained in open sets $U_1$, $U_2$ such that $C_1\cap
U_2=\varnothing=C_2\cap U_1$. Then 
\[
H^\bullet(X,S_0)=H^\bullet(U_1,S_0|_{U_1})\oplus
H^\bullet(U_2,S_0|_{U_2}). 
\]
\end{cor}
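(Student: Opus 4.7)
The plan is to apply Mayer--Vietoris twice, each time using that the BRST cohomology vanishes on any open subset of $X$ on which $S_0$ has no critical points (Corollary \ref{c-0}, together with the hypercohomology spectral sequence discussed before Corollary \ref{c-1}).

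Let $(M,S)$ be a BV variety with support $(X,S_0)$, provided by Theorem \ref{t-00}; its restriction to any open $U \subset X$ is again a BV variety with support $(U, S_0|_U)$, since conditions (i)--(iii) of Definition \ref{def-BV} are local, and we write $H^\bullet(U, S_0|_U)$ for the hypercohomology of the restricted BRST complex. Set $C = C_1 \cup C_2$ (the hypotheses force $C_1 \cap C_2 = \varnothing$) and define
\[
V_1 := U_1 \cup (X \setminus C), \qquad V_2 := U_2 \cup (X \setminus C).
\]
Since $C \subset U_1 \cup U_2$ one has $V_1 \cup V_2 = X$, while the hypotheses $C_1 \cap U_2 = \varnothing = C_2 \cap U_1$ give $V_1 \cap V_2 = (U_1 \cap U_2) \cup (X \setminus C)$, which is disjoint from $C$. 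Hence $S_0|_{V_1 \cap V_2}$ has no critical points and its BRST hypercohomology vanishes. The Mayer--Vietoris long exact sequence for the sheaf of differential $P_0$-algebras $\mathcal{O}_M$ on the cover $\{V_1, V_2\}$ therefore collapses to
\[
H^\bullet(X, S_0) \cong H^\bullet(V_1, S_0|_{V_1}) \oplus H^\bullet(V_2, S_0|_{V_2}).
\]
A second application of Mayer--Vietoris to the cover $V_i = U_i \cup (X \setminus C)$, whose intersection $U_i \cap (X \setminus C) = U_i \setminus C_i$ still contains no critical points of $S_0$, collapses in the same way to give $H^\bullet(V_i, S_0|_{V_i}) \cong H^\bullet(U_i, S_0|_{U_i})$, and combining the two isomorphisms yields the claimed decomposition.

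The only nontrivial point is the formal justification of Mayer--Vietoris for hypercohomology of a complex of sheaves; this is standard and reduces to the short exact sequence associated to any two-element open cover, applied termwise to the BRST complex. Once this is granted the argument becomes pure bookkeeping about which opens avoid the critical locus.
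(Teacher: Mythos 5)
Your argument is correct and is exactly the route the paper takes: the paper simply invokes the Mayer--Vietoris sequence (together with the fact, from Corollary \ref{c-0} and the support statement after it, that the BRST cohomology sheaf lives on the critical locus), and your covers $V_i=U_i\cup(X\setminus C)=X\setminus C_{3-i}$ with acyclic intersections are the natural way to fill in the details it omits. The only point worth making explicit is that the second Mayer--Vietoris application also uses $H^\bullet(X\setminus C,S_0)=0$ (not just the vanishing on the intersection $U_i\setminus C_i$) to kill the extra summand, but that follows from the same vanishing result.
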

\section{Examples}\label{s-5andahalf}
In this section we discuss some examples of functions $S_0$ and
corresponding BV varieties $(M,S)$, see Definition \ref{def-BV}.

The simplest non-trivial examples are the quadratic forms.
\begin{example}\label{exa-1}
Suppose that $S_0=a_1(x^1)^2+\cdots+a_j(x^j)^2\in
\K[x^1,\dots,x^n]$ for some $a_i\in\K^\times$ 
with $0\leq j\leq n$. Then we can choose
the Tate resolution 
\[
R=\K[x_1^*,\dots,x_n^*,\beta^*_{j+1},\dots,\beta_n^*], \quad
\deg(x_i^*)=-1,\quad \deg(\beta_i^*)=-2,
\]
and $\delta(x_i^*)=2a_ix^i$, $\delta(\beta^*_i)=x_i^*$ ($i>j$). Then
\[
S=S_0+\sum_{i=j+1}^n x_i^*\beta^i
\]
is a solution of the classical master equation. The BRST cohomology is
one-dimensional concentrated in degree $0$. Indeed this example is obtained
from the zero example $(X=\{\mathrm{pt}\},S_0=0)$ by adding squares
(Prop.~\ref{p-2}) and taking the product with trivial BV varieties, see
\ref{ss-products}.  
\end{example}

\begin{example}\label{exa-2} 
Let $S_0$ be a regular function on a nonsingular affine variety $X$.
Suppose that $\Gamma(X,T_X)$ is spanned by vector fields $\xi_1,\dots,\xi_n$
with the property that $\xi_1(S_0),\dots,\xi_n(S_0)$ form a regular
sequence. Then we can choose $\Sym_{\mathcal O_X}(T_X[1])=\wedge T_X$, the
exterior algebra of the ${\mathcal O_X}$-module $T_X$, as a Tate resolution:
it is the Koszul resolution associated with the regular sequence. Then
$S=S_0$ and the BRST complex is concentrated in non-positive degree and has
cohomology $H^0(M,S)\cong J(S_0)$, $H^j(M,S)=0$, $j\neq 0$. This class
includes the case of isolated critical points.  
\end{example}
\begin{example}(Faddeev--Popov action)\label{exa-3}
Let $\mathfrak g$ be a finite dimensional Lie algebra over $\K$ acting on a
nonsingular algebraic variety $X$. We assume that the action of $\mathfrak
g$ is {\em infinitesimally free and transitive} on the fibers of a flat morphism
$\pi\colon X\to Y$ with $Y$ smooth and irreducible smooth fibers, namely
that it is given by a Lie algebra homomorphism $\theta\colon\mathfrak
g\to\Gamma(X,T_X)$ such that (a) $\theta(\mathfrak g)\subset
\Gamma(X,T_{X|Y})$ where $T_{X|Y}$ is the sheaf of vector fields tangent to
the fibers and (b) the action $\theta\colon\mathcal O_X\otimes\mathfrak g\to
T_{X|Y}$ is an isomorphism of $\mathcal O_X$-modules.  For example
$\pi\colon X\to Y$ could be a principal $G$-bundle where $G$ is a connected
algebraic group whose Lie algebra is $\mathfrak g$.

Suppose that $S_0\in\Gamma(X,\mathcal O_X)^{\mathfrak g}$ is the pull-back of a function
$S_{0,Y}$ on $Y$ with isolated critical points. Then a BV variety with
support $(X,S_0)$ is given by the classical construction of
Faddeev and Popov in the context of gauge theory \cite{FaddeevPopov}.  
The graded variety $V$ is $(X,\mathcal O_V)$ where
$\mathcal O_V=\mathcal O_X\otimes\wedge\mathfrak g^*$ with $\mathfrak g^*$ in
degree $1$. Thus 
\[
\mathcal O_{T^*[-1]V}=\wedge_{\mathcal O_X} 
T_X\otimes 
\Sym\,\mathfrak g
\otimes
\wedge \mathfrak g^*,
\]
with $\mathfrak g$ in degree $-2$ and $T_X$ in degree $-1$.  The solution of
the master equation is $S=S_0+a+b$, where $a\in T_X\otimes \mathfrak
g^*\cong\mathrm{Hom}_\K(\mathfrak g,T_X)$ is the infinitesimal action and
$b\in\wedge^2\mathfrak g^*\otimes \mathfrak
g\cong\mathrm{Hom}_\K(\wedge^2\mathfrak g,\mathfrak g)$ is (-1/2) times the
bracket. More explicitly, 
let $\beta^1,\dots,\beta^m$ (the ``Faddeev--Popov ghosts'') 
be a basis of $\mathfrak
g^*$ with dual basis $\beta_1^*,\dots,\beta_m^*$ of $\mathfrak g$ (the
``antighosts''), with commutation
relations $[\beta_i^*,\beta_j^*]_{\mathfrak g}
=\sum_\ell c_{ij}^\ell \beta_\ell^*$ and
fundamental vector fields $\theta_i=\theta(\beta_i^*)$.
Then
\begin{equation}\label{e-FP}
S=S_0+\sum_{i=1}^m\theta_i\beta^i-\frac12\sum_{i,j=1}^mc^\ell_{ij}\beta^*_\ell\beta^i\beta^j.
\end{equation}
It is a
well-known exercise to check that $[S,S]=0$, and it follows from the
more general result of the next example.  
Since $S|_X=S_0$ axioms (i), (ii) of BV varieties, see Definition \ref{def-BV}, are fulfilled

Let us check that $(M,S)$ obeys axiom (iii).  We have $R_M=\mathcal
O_M/I_M=\Sym\,\mathfrak g\otimes \wedge T_X$ with $\mathfrak g$ in degree $-2$
and $T_X$ in degree $-1$. The induced differential is the $\mathcal O_X$-linear
derivation sending $a\in\mathfrak g$ to $\theta_a\in T_X$ and $\xi\in T_X$ to
$\xi(S_0)\in\mathcal O_X$.  Consider the ascending filtration $0\subset
F_0R_M\subset F_1R_M\subset F_2R_M\subset\cdots$ where $F_pR_M$ is spanned by
$a\otimes \xi_1\wedge\cdots\wedge\xi_m\in\Sym\,\mathfrak g\otimes \wedge\mathfrak
T_X$, where at most $p$ among $\xi_1,\dots,\xi_m$ do not belong to $T_{X|Y}$. The
associated graded is then 
\begin{eqnarray*}
E^0_{p,q}&=&F_pR_M^{-p-q}/F_{p-1}R_M^{-p-q}\\
&=&\bigoplus_{2j+\ell=q} \Sym^j\mathfrak
g\otimes \wedge{}^\ell T_{X|Y}\otimes_{\mathcal O_X}\wedge{}^p(T_X/T_{X|Y}),
\quad p,q\geq0.
\end{eqnarray*}
Here we switched to chain complex conventions to get a first quadrant homology
spectral sequence instead of the third quadrant cohomology spectral sequence
$E_{r}^{-p,-q}=E^r_{p,q}$ corresponding to the descending filtration
$F^{-p}=F_p$. The differential $d_0$ is the derivation vanishing on $T_{X|Y}$
and on $T_X/T_{X|Y}$, and coinciding with $\theta\colon\mathfrak g\to T_{X|Y}$
on $\mathfrak g$. Since $d_0\colon \mathcal O_X\otimes \mathfrak g\to T_{X|Y}$
is an isomorphism, the cohomology of $(E^0_{p,\bullet},d_0)$ is concentrated in
degree $0$ and equal to $E^1_{p,0}=\wedge^pT_X/T_{X|Y}=\pi^*\wedge^pT_Y$.  The
next differential is $d_1=\pi^*\delta$, where $\delta(\xi)=\xi(S_{0,Y})$. For
isolated singularities $(\wedge T_Y,\delta)$ is a Koszul resolution and thus
has cohomology concentrated in degree $0$. Since $\pi$ is flat, $\pi^*$ is
exact and $E^2$ is concentrated in bidegree $(0,0)$. The spectral sequence
degenerates and the cohomology of $\mathcal O_M/I_M$ is trivial in nonnegative
degrees. Thus $(M,S)$ obeys (iii) and is indeed a BV variety.

The BRST complex is quasi-isomorphic to 
$(J(S_0)\otimes\wedge\mathfrak g^*,d_S)$.  The induced
differential $d_S$ is the Chevalley--Eilenberg differential of the $\mathfrak
g$-module $J(S_0)$.
\end{example}
\begin{example} (Lie algebroids)\label{exa-4}
The previous example is a special case of a more general construction: let $X$
be a nonsingular variety, $\mathcal L$ a Lie algebroid, i.e. a locally free
sheaf $\mathcal L$ of $\mathcal O_X$-modules with a Lie bracket
$[\;,\;]_{\mathcal L}\colon\mathcal L\otimes \mathcal L\to \mathcal L$ and a
Lie algebra homomorphism $\rho\colon \mathcal L\to T_X$, called the anchor,
such that $[v,fw]_{\mathcal L}=f[v,w]_{\mathcal L}+\rho_v(f)w$ for all $v,w\in
\mathcal L$, $f\in \mathcal O_X$.  As shown by Vaintrob \cite{Vaintrob}, Lie
algebroid structures on a locally free $\mathcal L$ are in one-to-one
correspondence with homological vector fields on the graded variety
$V=(X,\Sym_{\mathcal O_X}\mathcal L^*[-1])$, namely vector fields
$Q\in\Gamma(X,T_V)$ of degree 1 such that $[Q,Q]=0$. The vector field $Q$
corresponding to a Lie algebroid structure is the Chevalley--Eilenberg
differential on $\mathcal O_V= \wedge_{\mathcal O_X} \mathcal L^*$ namely the
derivation sending $f\in\mathcal O_X$ to $Q(f)\in\mathcal L^*$ with
$Q(f)(v)=\rho_v(f)$ and $\beta\in \mathcal L^*$ to $Q(\beta)\in\wedge^2\mathcal
L^*\cong\scriptHom_{\mathcal O_X}(\wedge^2\mathcal L,\mathcal O_X)$ given by
\[
u\wedge v\mapsto \rho_u\beta(v)-\rho_v\beta(u)
-\beta([u,v]_{\mathcal L}),\quad u,v\in\mathcal L.
\]
 The corresponding Hamiltonian function $S_Q$ on $M=T^*[-1]V$ of degree zero is
then a solution of the master equation. 

If $S_0\in\Gamma(X,\mathcal O_X)$ is a regular function invariant under the Lie
algebroid, in the sense that $\rho_u(S_0)=0$ for all local sections $u\in \mathcal L$,
then $S=S_0+S_Q$ is a solution
of the master equation.  It has the same local form as \eqref{e-FP} with
$\xi_i=\rho_{\beta_i^*}$ for some local basis $\beta_i^*$ of $\mathcal L$. The
main difference is that the structure constants $c_{jk}^\ell$ are not constants
but functions in $\mathcal O_X$.   

The pair $(M,S)$ is a BV variety if the cohomology of $\mathcal O_M/I_M$ is
trivial in nonzero degree. As in the case of the previous example, this follows
if there is flat morphism $p\colon X\to Y$, such that $\rho\colon \mathcal L\to
T_{X|Y}$ is an isomorphism and $S_0$ is the pull-back of a function on $Y$ with
isolated critical points.  
\end{example}
In the next example we consider the case of the function $0$ on
a (not necessarily affine) variety $X$. We show that 
the $(-1)$-shifted cotangent bundle of the $1$-shifted
tangent bundle of any variety $X$ has a canonical structure
of BV variety with support $(X,0)$. Its BRST cohomology is
the de Rham cohomology with trivial bracket. 
\begin{example}\label{exa-5andahalf}
Let $\Omega^\bullet_X=\Sym_{\mathcal O_X}^\bullet T^*_X[-1]$ 
be the sheaf of differential forms
on a nonsingular variety $X$ and let $T[1]X$ (the 1-shifted tangent
bundle to $X$) denote 
the $\mathbb Z_{\geq 0}$-graded variety $T[1]X=(X,\Omega_X^\bullet)$.
The de Rham differential $d$ is a derivation of $\Omega_X^\bullet$ of degree 1
obeying $[d,d]=0$. Let $M=T^*[-1](T[1]X)$. Let $S$ be $d$, viewed as a 
section of degree 0 
of $\mathrm{Der}(\Omega_X^\bullet)[1]\subset \mathcal O_{M}$.
The function $S\in\Gamma(X,\mathcal O_M)$ 
is a hamiltonian function of $d$ and is a solution of the
classical master equation. 
\begin{prop}
The pair $(M=T^*[-1](T[1]X),S)$ is a BV variety with support $(X,0)$.
The canonical inclusion of scalars
\[
i\colon\Omega^\bullet_X\to \mathcal O_M
=\Sym_{\Omega^\bullet_X}(\mathrm{Der}(\Omega^\bullet_X)[1])
\]
is a quasi-isomorphism of sheaves of differential $P_0$-algebras from the
de Rham algebra $(\Omega^\bullet_X,d)$, viewed as a $P_0$-algebra with trivial
bracket to the BRST complex $(\mathcal O_M,d_S)$.
\end{prop}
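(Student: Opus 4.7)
The plan is to verify the three axioms of Definition \ref{def-BV} for $(M,S)$, then show that $i$ is a morphism of differential $P_0$-algebras, and finally identify $i^*$ with the edge homomorphism of the spectral sequence of Theorem \ref{t-co2}, which has trivially vanishing higher differentials in this situation.

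Working locally on an affine open $U\subset X$ with coordinates $x^i$, one has $\mathcal O_V(U)=\Omega^\bullet_X(U)$ freely generated as a graded algebra by $\beta^i:=dx^i$ of degree $1$, with dual fibre coordinates $x_i^*$ of degree $-1$ and $\beta_i^*$ of degree $-2$ as in \ref{s-ld}. A direct computation shows that the de Rham differential corresponds (up to sign) to $S=\sum_i \beta^i x_i^*\in\mathcal O_M^0$. Since each summand lies in $F^1\mathcal O_M=I_M$, the restriction $S|_X$ equals $0$, giving condition (i). Condition (ii) follows from $d\circ d=0$: the Poisson bracket on $\mathcal O_M$ restricted to $T_V[1]\subset\mathcal O_M$ extends (up to shift) the graded commutator of derivations of $\mathcal O_V$, so $[S,S]$ corresponds to $2d^2=0$. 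For (iii), the induced complex $\mathcal O_M/I_M=\mathcal O_X[x_i^*,\beta_j^*]$ carries the differential $\beta_j^*\mapsto -x_j^*$ and $x_j^*\mapsto 0$, which is a contractible Koszul complex; hence its cohomology is $\mathcal O_X=J(0)$ in degree $0$ and vanishes in nonzero degree.

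The map $i$ is a morphism of graded commutative algebras by construction, as scalars sit inside a symmetric algebra. The Poisson bracket of two sections of $\mathcal O_V\subset\mathcal O_M$ vanishes because in the local description the bracket pairs only generators of $\mathcal O_V$ with those of $T_V[1]$; this matches the trivial $P_0$-bracket on the source. Finally, for $\omega\in\Omega^\bullet_X$, the formula $d_S(\omega)=[S,\omega]$ reproduces the action of the derivation $d\in T_V^1$ on $\omega$, by the very definition of the Poisson structure extending the commutator on $T_V[1]$; hence $d_S\circ i=i\circ d$.

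The quasi-isomorphism is a sheaf-level statement, which may be checked on affine opens by applying Theorem \ref{t-co2}. With $S_0=0$ we have $J(S_0)=\mathcal O_X$ and $\mathcal O_V^p=\Omega^p_X$, so $E_1^{p,0}=\Omega^p_X$ and $E_1^{p,q}$ vanishes for $q\neq 0$. The differential $d_1$ is induced from the component $S^{(1)}\in\Gamma(X,T_V[1])$ of $S$, which in our case is $d$ itself, so $d_1$ is the de Rham differential and $E_2^{p,0}=\mathcal H^p(\Omega^\bullet_X,d)$. By Theorem \ref{t-co2} the spectral sequence degenerates at $E_2$ and the edge homomorphism $\mathcal H^\bullet(\mathcal O_M,d_S)\stackrel{\sim}{\to}E_2^{\bullet,0}$ is induced by $i$, since $i(\Omega^p_X)$ provides representatives of classes in $F^p\mathcal O_M^p/F^{p+1}\mathcal O_M^p=E_1^{p,0}$. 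The delicate points are the verification of the Koszul structure of $(\mathcal O_M/I_M,d_S)$ via the bracket computations on the shifted tangent variables, and the identification of the edge map of Theorem \ref{t-co2} with the map induced by the scalar inclusion $i$.
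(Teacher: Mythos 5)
Your proof is correct, but it takes a genuinely different route from the paper's on both halves. For the BV axioms, the paper reduces the first statement to Example \ref{exa-4} (the tangent bundle with identity anchor is a Lie algebroid over $Y=\mathrm{pt}$), so axiom (iii) is inherited from the filtration/spectral-sequence argument of Example \ref{exa-3}; you instead verify (iii) directly by observing that $(\mathcal O_M/I_M,d_S)$ is locally $\mathcal O_X[x_i^*,\beta_j^*]$ with the contractible differential $\beta_j^*\mapsto \pm x_j^*$, which is a clean and self-contained check. For the quasi-isomorphism, the paper exploits the left inverse $p\colon\mathcal O_M\to\Omega^\bullet_X$ and builds an explicit local contracting homotopy $H=\bigoplus_{m>0}\frac1m h$ from the $\Omega^\bullet_X$-linear derivation $h$ sending Lie derivatives $L_j=[d,\iota_j]$ to contractions $\iota_j$, so that $d_S h+h d_S=m\,\mathrm{id}$ on $\Sym^m$; you instead invoke the spectral sequence of Theorem \ref{t-co2} with $J(0)=\mathcal O_X$ and identify $\mathcal H(i)$ as the inverse of the edge isomorphism, using that $i(\omega)$ is a $d_S$-cocycle in $F^p\mathcal O_M^p$ whose symbol is $\omega$. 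The paper's homotopy gives more (a local deformation retraction at the chain level, with no appeal to convergence of the filtration spectral sequence), while your argument is less computational and makes transparent why the answer is de Rham cohomology; both are legitimate, and your restriction to affine opens is harmless since the assertion concerns cohomology sheaves.
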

The first statement follows from the fact that this example
is a special case of the previous one, as the tangent bundle
with the Lie bracket of sections and identity anchor is a Lie algebroid.
To prove the second statement, notice that the morphism $i$ is
the inclusion of the first summand in
\[
\mathcal O_M=\Omega^\bullet_X\oplus 
\mathrm{Der}(\Omega^\bullet_X)[1]\oplus\Sym^2_{\Omega^\bullet_X}
(\mathrm{Der}(\Omega^\bullet_X)[1])\oplus\cdots,
\footnote{There is no completion here as the degrees are bounded above}
\]
and thus comes with a
projection $p\colon \mathcal O_M\to \Omega^\bullet_X$ such that
$p\circ i=\mathrm{id}$. The left inverse $p$ is a morphism of
differential graded commutative algebras. It is sufficient to show
that the chain map $i\circ p$ is locally
homotopic to the identity. To show this notice that every point has
an open neighborhood $U$ such that 
$\mathrm{Der}(\Omega^\bullet_X)|_U$ is a free $\Omega^\bullet_U$-module generated by
 interior multiplication $\iota_j$ and Lie derivative 
$L_j=d_S(\iota_j)=[d,\iota_j]$ by
vector fields $\partial_j$, $j=1,\dots,\mathrm{dim}\,X$ trivializing $T_X$.
Then the $\Omega^\bullet_X$-linear derivation $h$ of $\mathcal O_M|_U$ sending
$L_j$ to $\iota_j$ and $\iota_j$ to zero obeys
\[
d_S\circ h+h\circ d_S=m\,\mathrm{id} \quad
\text{on $\Sym^m_{\Omega^\bullet_X}(\mathrm{Der}(\Omega^\bullet_X)[1])|_U$,}
\]
 and $H=0\oplus\bigoplus_{m>0}\frac1m h$ is a homotopy between $i\circ p$ and
the identity of $\mathcal O_M$.
\end{example}

\begin{example}\label{exa-6}  
Let $\pi:X\to Y$ be a vector bundle with a nondegenerate symmetric bilinear form
$\langle\ ,\ \rangle$ on the fibers.
The associated quadratic form 
\[
S_0(v)=\frac12 \langle v,v\rangle
\]
is a regular function on $X$ whose critical locus is the zero section.  BV
varieties with support $(X,S_0)$ can be obtained from orthogonal connections
$\nabla$ on $X$. Let $\mathcal E$ be the locally free $\mathcal O_Y$ module of
sections of $\pi$ so that $\mathcal O_X=\Sym_{\mathcal O_Y}\mathcal E^*$.
Suppose that $\nabla\in\mathrm{Hom}(\mathcal E,\mathcal E\otimes_{\mathcal O_Y}\Omega^1_Y)$ 
is a connection such that $d\langle u,v\rangle= \langle\nabla
u,v\rangle+\langle u,\nabla v\rangle$ for all $u,v\in\mathcal
E$. 
Let $V$ be the graded variety $(X,\pi^*\Omega^\bullet_Y)$. The solution $S$ of
the classical master equation is a function on $T^*[-1]V$. 
 Notice first that for an open set $U\subset Y$, 
\begin{equation}\label{e-Gus}
\mathcal O_V(U)=\pi^*\Omega^\bullet_Y(\pi^{-1}(U))
=
\Gamma(U,\Sym_{\mathcal O_Y}\mathcal E^*\otimes_{\mathcal O_Y}\Omega_Y).
\end{equation}
The connection $\nabla$ induces a connection $\nabla^*$ 
on $\mathcal E^*$ and therefore a
derivation of degree $1$, also denoted $\nabla^*$ 
of $\Sym_{\mathcal O_Y}\mathcal E^*\otimes_{\mathcal
O_Y}\Omega^\bullet_Y$ and via \eqref{e-Gus} a vector field of degree $1$ on $V$.
Let $S_\nabla$ be the corresponding hamiltonian: it is just $\nabla^*$ viewed as
a function on $T^*[-1]V$.
 Let 
$F^*\in\Gamma(Y,\Omega_Y^2\otimes_{\mathcal O_Y} \scriptEnd(\mathcal E^*))$ 
be the curvature of $\nabla^*$. Since the bilinear form is 
non-degenerate, it defines an isomorphism $b\colon \mathcal E^*\to\mathcal E$; 
therefore $b\circ F^*$ may be viewed as bilinear form on $\mathcal E^*$ 
which, by the orthogonality of the connection, is skew-symmetric and thus
defines
an element $4S_F$ of 
$\Omega^2_Y\otimes\wedge^2\mathcal E\subset \Sym^2_{\mathcal O_V}
(T_V[1])$.
The claim is that
\[
S=S_0+S_\nabla+S_F
\]
is a solution of the master equation. 
To check it, let us realize $\mathcal O_M$ as the symmetric algebra of
$T_V[1]$ and notice that $S_0$ is a function in $\mathcal O_V$,
so $[S_0,S_0]=0$. The fact that the connection is orthogonal is equivalent
to $[S_\nabla,S_0]=\nabla^* S_0=0$. 
Then $[S_\nabla,S_\nabla]$ is the derivation $[\nabla^*,\nabla^*]$ which is
$2F^*$ on generators in $\mathcal E^*$ and is extended as a derivation
of $\mathcal O_V$. We claim that 
\[
[S_\nabla,S_\nabla]+2[S_0,S_F]=0.
\]
Indeed, if $S_0=\frac12\sum g_{ij}v^iv^j$ for some 
local basis $v^i$ of $\mathcal E^*$ and $F^*(v^i)=\sum_jF^i_jv^j$ for
local sections $F^i_j$ of $\Omega^2_Y$, 
then $S_F=\frac14\sum F^{ij}\frac{\partial}{\partial v^i}\wedge
\frac\partial{\partial v^j}$  where $\sum_j F^{ij}g_{jk}=F^i_k$.
Thus $[S_0,S_F]=\sum F^{ij}g_{jk}v^k\frac{\partial}{\partial v^i}$,
which is the derivation sending $v^i$ to $\sum_j F^i_jv^j$ and is
thus equal to $-F^*$.
Finally $[S_\nabla,S_F]=0$  follows from the Bianchi identity 
$\nabla F=0$ and $[S_F,S_F]$
clearly vanishes. 

Alternatively, one can do an explicit calculation in the local description using
local functions $y^1,\dots,y^m\in\mathcal O_Y(U)$ such that $dy^1,\dots,d y^m$
are a basis of $\Omega_Y(U)$ on some neighborhood $U$ of a point of $Y$ and
local sections $v^i$ as above. Then one obtains a slightly more
general statement: let $A$ be the graded polynomial algebra over $\mathcal O_Y$
in variables $y^*_\mu,v^i,v^*_i,\beta^\mu,\beta^*_\mu$, $\mu=1,\dots,m$,
$i=1,\dots,r$ of degrees $-1,0,-1,1,-2$ and nontrivial Poisson brackets
among generators $[f,y^*_\mu]=\partial_\mu f$, $f\in\mathcal O_Y$, $[\beta^\mu,\beta^*_\nu]=\delta_{\mu\nu}$, $[v^i,v_j^*]=\delta_{ij}$. Suppose  
$(g_{ij})$ is a (not necessarily invertible) matrix
with entries in $\mathcal O_Y$,
$A^i_j=A^i_{j\mu}dy^\mu$ are one-forms on $Y$,
$F^{ij}=\frac12F^{ij}_{\mu\nu}dy^\mu\wedge dy^\nu$ are two-forms with
$F^{ij}=-F^{ji}$, such that
(a) $\partial_\mu g_{ij}+2A^\ell_{i\mu}g_{\ell j}=0$ (orthogonality),
(b) $dA^i_j+A^i_k\wedge A^k_j=F^{ik}g_{kj}$ (structure equation)
and (c) $dF^{ij}+A^i_\ell\wedge
F^{\ell j}-A^j_\ell\wedge F^{\ell i}=0$ (Bianchi identity). Then
\[
S=  
\frac12 g_{ij}v^iv^j
+\beta^\mu(y^*_\mu-A^i_{j\mu}v^jv^*_i)+
\frac14F^{ij}_{\mu\nu}\beta^\mu \beta^\nu v^*_iv^*_j,
\]
(we omit summation signs over repeated indices)
obeys the classical master equation in $A$. 

Let us check that the vanishing axiom (iii) of Definition \ref{def-BV} 
holds in this case by
writing the induced differential on $R_M(\tilde U)$ for some open set 
$\tilde U=\pi^{-1}U$ on which the local description is valid.

The induced differential $\delta$ on 
\[
R_M(\tilde U)=\mathcal O_Y(U)[v^i,v_i^*,y_\mu^*,\beta_\mu^*]
\]
is the $\mathcal O_Y$-linear derivation such that
\[
\delta \beta^*_\mu=y^*_\mu- A^i_{j\mu}v^jv^*_i,\quad
\delta v_i^*= g_{ij}v^j,\quad
\delta v^i=0,\quad
\delta y^*_\mu=\frac12\partial_\mu g_{ij}v^iv^j.
\]
Let $(g^{ij})\in\mathcal O_Y(U)$ the matrix inverse to $(g_{ij})$.
Let $h$ be the $\mathcal O_Y$-linear derivation of degree -1 such that
\[
h v^i=g^{ij}v_j^*,\quad
h y^*_\mu=\beta^*_\mu+ A^{i}_{j\mu}g^{jk}v_k^*v_i^*,\quad
h\beta^*_\mu=0=h v_i^*.
\]
Then it is easy to check that for $x=v^i,v_i^*,y^*_\mu,\beta^*_\mu$, one
has $(h\circ\delta+\delta\circ h)(x)=x$. 
It follows that the cohomology is spanned
by polynomials of degree 0 in these variables,  namely those
in $\mathcal O_Y(U)$. Thus $(R_M,\delta)$ is indeed a resolution of
the Jacobian ring $J(S_0)(\tilde U)\cong\mathcal O_Y(U)$.
\end{example}

The next example illustrates 
the fact that polynomial functions $S_0$ on affine spaces
with critical locus of positive dimension usually lead to complicated solutions
of the master equation. It is also an example with nontrivial BRST cohomology in
positive degree.
\begin{example}\label{exa-7} 
Let $X=\mathbb A^2$ be the affine plane with coordinates $x,y$, and set
\[
S_0=(x^2+y^2-1)^2/4.
\]
The Jacobian ring is
$J(S_0)=\K[x,y]/(xh,yh)$, where $h=x^2+y^2-1$.
The critical locus is $h^{-1}(0)\cup\{0\}$. 
We have
$L(S_0)=\K[x,y]\tau$, $\tau=y\partial_x-x\partial_y$, $L_0(S_0)=\K[x,y]h\tau$.
$L^{\mathrm{eff}}(S_0)$ is the ${\mathcal O}$-module generated by $\tau$ with
relation $h\tau=0$. There does not appear to be a finitely generated Tate
resolution in this case. Here is a BV variety $(M,S)$
up to filtration degree $4$: $V$ has coordinates $x,y$ of degree $0$, $\beta$
of degree $1$, $\gamma$ of degree $2$, $\xi,\eta$ of degree 3,
$\rho,\mu,\nu,\phi$ of degree $4$. The solution of the master equation up to
filtration degree four, computed with the help of Macaulay2 \cite{M2} is 
\begin{eqnarray*} S&\equiv& S_0
+(xy^*-yx^*)\beta
+(h\beta^*-x^*y^*)\gamma
\\
&&+(x\gamma^*-x^*\beta^*)\xi
+(y\gamma^*-y^*\beta^*)\eta
+(xy^*\gamma^*-yx^*\gamma^*+x^*y^*\beta^*)\rho
\\
&&+(h\xi^*-x^*\gamma^*)\mu
+(h\eta^*-y^*\gamma^*)\nu+(\beta^*)^2\gamma^2
-\eta^*\beta\xi
+\xi^*\beta\eta
\\
&&
+(y\xi^*-x\eta^*-(\beta^*)^2/2)\phi\mod F^5.
\end{eqnarray*} It is probably hopeless to compute the BRST cohomology using
this formula. Instead we can use Corollary \ref{c-MV} to write $X=U_0\cup U_1$
with $U_c=\{(x,y)\in \mathbb A^2\,|\,x^2+y^2\neq c\}$.  Then $S_0|_{U_1}$ has an isolated critical
point with one dimensional Jacobian ring, so that
$H^\bullet(U_1,S_0|_{U_1})=\K$, concentrated in degree $0$. The map
$h\colon U_0\to\mathbb A^1$ is a principal $SO(2)$-bundle and $S_0|_{U_0}$ is
the pull-back of $h^2\in \mathcal O(\mathbb A^1)=\K[h]$, so we are
in the setting of Faddeev--Popov as in Example \ref{exa-3}.

The BRST complex is quasi-isomorphic 
to the Chevalley--Eilenberg complex of the one-dimensional Lie
algebra $\mathfrak{so}(2)=\K$ of the rotation group with values in the algebra
$A=\Gamma(U_0,J(S_0))$ of functions on the critical set. By definition
$A=\K[x,y,(x^2+y^2)^{-1}]/I$ where $I$ is the ideal generated by $xh,yh$. But $I$
is also generated by $h$ since $h=(x^2h+y^2h)(x^2+y^2)^{-1}$.  So over an algebraic
closure $\bar \K$, $A\otimes_\K\bar\K=\bar\K[t,t^{-1}]$, $t=x+\sqrt{-1}y$. The
Chevalley--Eilenberg complex is concentrated in degree $0$ and $1$ with
differential $\tau\colon A\to A$.  Since $\tau(t^j)=j\sqrt{-1} t^j$ we see that
$H^0(\mathfrak{so}(2),A)=\K 1$ and $H^1(\mathfrak{so}(2),A)=\K\alpha$, where
$\alpha$ is the class of $1$.
Thus
\[
H^\bullet(U_0,S_0|_{U_0})=\K[\alpha],\qquad \deg\alpha=1,\qquad \alpha^2=[\alpha,\alpha]=0.
\]
By Corollary \ref{c-MV}, the BRST cohomology is three-dimensional and
is the direct sum of algebras
\[
H^\bullet(X,S_0)=\K\oplus\K[\alpha], \qquad \deg\alpha=1,
\]
with trivial bracket.
\end{example}

The next is an example, due to Pavel Etingof, 
with infinite dimensional BRST cohomology.

\begin{example}\label{exa-8}
Let $X=\mathbb A^4$ with coordinates $x,y,z,w$ and
\[
S_0=x^3+y^3+z^3-3wxyz.
\]
This function is of weight $3$ if we assign weight $1$ to $x,y,z$ and $0$ to
$w$.  We claim that the zeroth BRST cohomology $J(S_0)^{L(S_0)}$ is infinite
dimensional. The Jacobian ring $J(S_0)=\K[x,y,z,w]/(x^2-wyz,y^2-wxz,z^2-wxy,xyz)$
is the quotient by an ideal generated by homogeneous elements and is thus
graded with respect to the weight. In weights $0,1,2$ it is a free
$\K[w]$-module generated by $1,x,y,z,xy,xz,yz$. In weight $3$ we have a free
$\K[w]/(w^3-1)\K[w]$-module generated by $x^2y$, $xy^2$. It follows that every
$f\in J(S_0)$ of weight $\geq3$ obeys $(w^3-1)f=0$.  Every vector field in
$L(S_0)$ is a sum of weight homogeneous vector fields. It is clear that there
are no vector fields of weight $-1$ in $L(S_0)$. A vector field of weight $0$
in $L(S_0)$ has the form $\xi=a\partial_x+b\partial_y+c\partial_z+d\partial_w$
with $a,b,c$ of weight $1$ and $d\in\K[w]$. Since $\xi(S_0)\equiv
3(ax^2+by^2+cz^2-dxyz)\mod w$ it follows that $a,b,c,d$ must all be divisible
by $w$. But if $\xi\in L(S_0)$ is divisible by $w$ then also $w^{-1}\xi\in
L(S_0)$.
It follows that $\xi=0$. 
Thus all homogeneous vector fields in
$L(S_0)$ have weight at least $1$. Let $g\in J(S_0)$ be of weight $2$. Then
$h=(w^3-1)^2g$ is annihilated by $L(S_0)$ since $\xi(h)$ is of weight at least
three and is divisible by $w^3-1$. But the space of such $h$ is infinite dimensional. 
\end{example}

\section{BRST cohomology in degree 0 and 1 and Lie--Rinehart cohomology}\label{s-6}
In this Section we show that, up to degree 1, the BRST cohomology of a
BV variety with support $(X,S_0)$, with $X$ an affine variety, coincides with
the de Rham cohomology of a Lie--Rinehart algebra associated with the critical
locus of $S_0$.
We also describe the induced bracket $H^0(M,S)\otimes H^0(M,S)\to
H^1(M,S)$ geometrically.  The degree $1$ cohomology appears as an obstruction
to extend a solution of the classical master equation to a solution of the
quantum master equation. The bracket $H^0\otimes H^0\to
H^1$ appears in deformation theory as the obstruction to extend an
infinitesimal deformation of a solution of the classical master equation to a
solution in formal power series.

\subsection{Lie--Rinehart algebras}
Recall that a Lie--Rinehart algebra over $\K$ is a pair $(A,\mathfrak g)$ where
$A$ is a commutative algebra over $\K$ and $\mathfrak g$ is both a Lie algebra
over $\K$ acting on $A$ by derivations and an $A$-module.
The required compatibility conditions are
$[\tau,f\cdot\sigma]=\tau(f)\cdot\sigma+f\cdot[\tau,\sigma]$ and
$(f\cdot\tau)(g)=f\cdot(\tau(g))$ for all $f,g\in A$ and
$\tau,\sigma\in\mathfrak g$. Here $f\otimes\tau\mapsto f\cdot\tau$ denotes the
module structure and $\tau\otimes f\mapsto \tau(f)$ the action of $\mathfrak g$
on $A$.  The main example is $(A,\mathrm{Der}(A))$ for an algebra $A$.

Lie--Rinehart algebras come with a differential graded algebra, 
the de Rham (or Chevalley--Eilenberg) complex
$\Omega_{\mathrm{dR}}^\bullet(A,\mathfrak{g})=\mathrm{Hom}_{A}(\wedge^\bullet_A\mathfrak{g},A)$ introduced by Rinehart \cite{Rinehart}.
We only consider this complex for $p\leq 2$. For
$g\in\Omega_{\mathrm{dR}}^0(A,\mathfrak{g})=A$ and
$\alpha\in\Omega_{\mathrm{dR}}^1(A,\mathfrak{g})$ we have 
\[
dg(\tau)=\tau(g),\quad d\alpha(\tau,\sigma)=\tau(\alpha(\sigma))-\sigma(\alpha(\tau))-\alpha([\tau,\sigma]),
\qquad
\tau,\sigma\in \mathfrak{g}.
\]
An {\em ideal} of a Lie--Rinehart algebra $(A,\mathfrak g)$ is a pair $(I,\mathfrak h)$ such that
\begin{enumerate}
\item[(i)] $I$ is an ideal of $A$.
\item[(ii)] $\mathfrak h$ is a Lie ideal of $\mathfrak g$.
\item[(iii)] $i\cdot\tau\in \mathfrak h$ and $\tau(i)\in I$ for all $ i\in I$, $\tau\in \mathfrak g$.
 \item[(iv)]  $f\cdot\sigma\in\mathfrak h$ and $\sigma(f)\in I$ for all $f\in A$, $\sigma\in\mathfrak h$.
\end{enumerate}
\begin{lemma}\label{l-n1}
Let $(I,\mathfrak h)$ be an ideal of the Lie--Rinehart algebra $(A,\mathfrak g)$. Then
$(A/I,\mathfrak g/\mathfrak h)$ is naturally a Lie--Rinehart algebra.
\end{lemma}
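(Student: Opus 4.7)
The plan is to verify that the algebraic structures on $A/I$ and $\mathfrak{g}/\mathfrak{h}$ together with the two compatibility operations descend from $(A,\mathfrak{g})$ to the quotient pair, and that the Lie--Rinehart axioms are then automatic because they are polynomial identities in operations that commute with passage to the quotient. So the heart of the argument is well-definedness, and this is exactly what axioms (iii) and (iv) guarantee.

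First I would note that $A/I$ is a commutative $\K$-algebra by (i) and that $\mathfrak{g}/\mathfrak{h}$ is a Lie algebra over $\K$ by (ii). Next I would define the two remaining pieces of data. The module action $A/I\otimes \mathfrak{g}/\mathfrak{h}\to\mathfrak{g}/\mathfrak{h}$ is specified on representatives by $\bar f\cdot\bar\tau:=\overline{f\cdot\tau}$. To check independence of the choices, I must show that the right-hand side lies in $\mathfrak{h}$ when either $f\in I$ or $\tau\in\mathfrak{h}$: the first case is precisely the statement $I\cdot\mathfrak{g}\subset\mathfrak{h}$ in (iii), and the second is the statement $A\cdot\mathfrak{h}\subset\mathfrak{h}$ in (iv). Similarly, the derivation action $\mathfrak{g}/\mathfrak{h}\otimes A/I\to A/I$ is defined by $\bar\tau(\bar f):=\overline{\tau(f)}$; well-definedness requires $\tau(I)\subset I$ and $\mathfrak{h}(A)\subset I$, which are the remaining halves of (iii) and (iv) respectively.

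Once the operations are in place, checking the Lie--Rinehart axioms is routine: the two required identities
\[
[\bar\tau,\bar f\cdot\bar\sigma]=\bar\tau(\bar f)\cdot\bar\sigma+\bar f\cdot[\bar\tau,\bar\sigma],\qquad (\bar f\cdot\bar\tau)(\bar g)=\bar f\cdot(\bar\tau(\bar g)),
\]
follow immediately by lifting to representatives $f,g\in A$ and $\tau,\sigma\in\mathfrak{g}$, applying the corresponding identities in $(A,\mathfrak{g})$, and then projecting; the fact that $\mathfrak{g}/\mathfrak{h}$ still acts by derivations on $A/I$ is inherited in the same way from the Leibniz rule in $\mathfrak{g}$.

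The only thing that could be called an obstacle is a bookkeeping one, namely making sure that the four clauses in the definition of an ideal are each used at the right point and nothing more is needed; but since conditions (iii) and (iv) are tailored precisely to make the two operations pass to the quotient, the verification is mechanical and no additional hypothesis enters.
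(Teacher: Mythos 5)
Your proof is correct and takes exactly the same route as the paper, which simply states that conditions (i)--(iv) guarantee the structure maps defined on representatives are well-defined on the quotient. Your write-up is just a more detailed expansion of that one-line argument.
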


\begin{proof} The conditions (i)--(iv) guarantee that the structure maps defined on representatives are well-defined
on the quotient.
\end{proof}

\subsection{Cohomology in degree $\leq 1$}
Let $S_0\in{\mathcal O}(X)$ be a regular function
and denote as before $J=J(S_0)={\mathcal O}_X/\mathrm{Im}(dS_0\colon T_X\to {\mathcal O}_X)$ the Jacobian
ring, $L=L(S_0)$ the Lie algebra of vector fields annihilating $S_0$, $L_0=L_0(S_0)$ the ${\mathcal O}_X$-submodule 
of $L$ spanned by $\xi(S_0)\eta-\eta(S_0)\xi$, $\xi,\eta\in T$.

\begin{lemma}\label{l-n2}
$(\mathrm{Im}(dS_0\colon T_X\to {\mathcal O_X}),L_0)$ is an ideal of the Lie--Rinehart algebra $({\mathcal O}_X,L)$.
\end{lemma}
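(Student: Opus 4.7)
The plan is to verify the four axioms (i)--(iv) of an ideal of a Lie--Rinehart algebra as listed just before the lemma, with $A=\mathcal O_X$, $\mathfrak g=L$, $I=\mathrm{Im}(dS_0)$ and $\mathfrak h=L_0$. Axiom (i) is immediate: for $\xi\in T_X$ and $f\in\mathcal O_X$, $f\cdot\xi(S_0)=(f\xi)(S_0)\in I$, so $I$ is an ideal. The inclusion $L_0\subset L$ follows from $(\xi(S_0)\eta-\eta(S_0)\xi)(S_0)=\xi(S_0)\eta(S_0)-\eta(S_0)\xi(S_0)=0$.

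For (iii), given $i=\xi(S_0)\in I$ and $\tau\in L$, I would write $i\cdot\tau=\xi(S_0)\tau=\xi(S_0)\tau-\tau(S_0)\xi$, which is one of the spanning elements of $L_0$; and $\tau(i)=\tau(\xi(S_0))=[\tau,\xi](S_0)+\xi(\tau(S_0))=[\tau,\xi](S_0)\in I$, using $\tau(S_0)=0$. For (iv) the first part is trivial since $L_0$ is by definition an $\mathcal O_X$-module; the second part is the observation that for a generator $\sigma=\xi(S_0)\eta-\eta(S_0)\xi$ and $f\in\mathcal O_X$ one has $\sigma(f)=\xi(S_0)\eta(f)-\eta(S_0)\xi(f)=(\eta(f)\xi)(S_0)-(\xi(f)\eta)(S_0)\in I$, and the general case follows since $I$ is an ideal.

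The only step that requires a short calculation is (ii), the statement that $[L_0,L]\subset L_0$. The key trick, which I view as the main (though mild) obstacle, is to apply the identity $\tau(\zeta(S_0))=[\tau,\zeta](S_0)$, valid because $\tau\in L$ kills $S_0$, so that the Lie derivative of a defining coefficient of a generator of $L_0$ along $\tau$ can be re-expressed as a term of the same shape. Concretely, for a generator $v=\xi(S_0)\eta-\eta(S_0)\xi$ one computes
\[
[v,\tau]=\xi(S_0)[\eta,\tau]-\tau(\xi(S_0))\,\eta-\eta(S_0)[\xi,\tau]+\tau(\eta(S_0))\,\xi,
\]
and then substituting $\tau(\xi(S_0))=[\tau,\xi](S_0)$ and $\tau(\eta(S_0))=[\tau,\eta](S_0)$ rearranges this as
\[
\bigl(\xi(S_0)[\eta,\tau]-[\eta,\tau](S_0)\xi\bigr)+\bigl([\xi,\tau](S_0)\eta-\eta(S_0)[\xi,\tau]\bigr),
\]
which is a sum of two generators of $L_0$. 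For a general $fv\in L_0$ with $f\in\mathcal O_X$ one has $[fv,\tau]=f[v,\tau]-\tau(f)v$, and since $L_0$ is an $\mathcal O_X$-module both summands lie in $L_0$. This completes (ii) and hence the verification that $(I,L_0)$ is an ideal in $(\mathcal O_X,L)$.
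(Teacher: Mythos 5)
Your proposal is correct and follows essentially the same route as the paper: a direct verification of axioms (i)--(iv), with the key point in each nontrivial step being the identity $\tau(\xi(S_0))=[\tau,\xi](S_0)$ for $\tau\in L$, used to re-express Lie derivatives of coefficients as generators of $L_0$ or elements of $I$. The only (harmless) differences are cosmetic: you compute $[v,\tau]$ where the paper computes $[\tau,v]$, and you explicitly reduce general elements $fv$ to generators, which the paper leaves implicit.
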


\begin{proof} (i) The subspace $\mathrm{Im}(dS_0)$ consists of functions of the form $\xi(S_0)$, $\xi\in T_X$. 
It is clearly an ideal of ${\mathcal O_X}$.
(ii) Let $\tau\in L$. By exploiting the fact that $\tau(S_0)=0$, we have
\[
[\tau,\xi(S_0)\eta-\eta(S_0)\xi]=[\tau,\xi](S_0)\eta-\eta(S_0)[\tau,\xi]+\xi(S_0)[\tau,\eta]-[\tau,\eta](S_0)\xi,
\]
which lies in $L_0$.
(iii) Let $\tau\in L$, $\xi(S_0)\in I$. Then, again because $\tau(S_0)=0$,
$\xi(S_0)\tau=\xi(S_0)\tau-\tau(S_0)\xi\in L_0$ and $\tau(\xi(S_0))=
[\tau,\xi](S_0)\in I$.
(iv) Let $f\in {\mathcal O_X}$, $\nu=\xi(S_0)\eta-\eta(S_0)\xi\in L_0$. Then clearly $f\nu\in L_0$ and $\nu(f)=
(\eta(f)\xi-\xi(f)\eta)(S_0)\in I$.
\end{proof}

It follows that $(J,\mathfrak g=L/L_0)$ is a Lie--Rinehart algebra. We use the
notation $\mathfrak g=\mathfrak g(S_0)$ to denote the $J$-module $L/L_0$. It is
convenient to distinguish it from $L^{\mathrm{eff}}$ which is $L/L_0$
considered as an ${\mathcal O_X}$-module.
\begin{theorem}\label{t-deRham}
The BRST cohomology is isomorphic to the de Rham cohomology up to degree $1$:
\[
H^p(M,S)\cong H_{\mathrm{dR}}^p(J(S_0),\mathfrak{g}(S_0)) \text{ for } p=0,1.
\]
\end{theorem}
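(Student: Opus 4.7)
The case $p=0$ is essentially Proposition~\ref{p-35}: by definition, $H^0_{\mathrm{dR}}(J,\mathfrak{g})=\{f\in J\mid\tau(f)=0\ \forall\tau\in\mathfrak{g}\}$, and via \eqref{e-3} this equals $J(S_0)^{L(S_0)}$, so I would simply record the identification.

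For $p=1$ the plan is to invoke Theorem~\ref{t-co2} to reduce to computing $E_2^{1,0}$ and use a convenient Tate resolution to identify it with $H^1_{\mathrm{dR}}(J,\mathfrak{g})$. Choose $\xi_1,\dots,\xi_r\in L(S_0)$ whose classes $\bar\xi_i$ generate $L^{\mathrm{eff}}=\mathfrak{g}$ over $\mathcal{O}_X$ and let the degree $-2$ generators $\beta_i^*$ of the Tate resolution satisfy $\delta\beta_i^*=\xi_i$; then pick further generators $\gamma_k^*$ in degree $-3$ with $\delta\gamma_k^*\equiv\sum_\ell c_{\ell k}\beta_\ell^*\pmod{\wedge^2 T_X}$ in such a way that the vectors $\vec c_k=(c_{\ell k})_\ell$ exhaust the $\mathcal{O}_X$-module of relations $\sum_\ell c_{\ell k}\bar\xi_\ell=0$ in $\mathfrak{g}$. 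Dualizing the surjection $(\mathcal E^1)^*\twoheadrightarrow L^{\mathrm{eff}}$ and tensoring with $J$ realizes $\Omega^1_{\mathrm{dR}}(J,\mathfrak{g})=\mathrm{Hom}_J(\mathfrak{g},J)$ canonically as the submodule of $J\otimes_{\mathcal{O}_X}\mathcal{O}_V^1=\bigoplus_i J\beta^i$ consisting of $\alpha=\sum_i f_i\beta^i$ with $\sum_i f_ic_{ik}=0$ in $J$ for every $k$.

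The main step is to match $d_1$ with the Lie--Rinehart differential under this identification. Reading the linear part $S^{(1)}\in\Gamma(X,T_V[1])$ off from $S_{\mathrm{lin}}$ gives $S^{(1)}(f)=\sum_j\xi_j(f)\beta^j$ on $\mathcal O_X$ and $S^{(1)}(\beta^i)\equiv\sum_k c_{ik}\gamma^k$ modulo $\mathrm{Sym}^2\mathcal E^1$; an allowed cubic correction $\sum\tilde B_{ij}^\ell\beta^i\beta^j\beta_\ell^*\in I_M^{(2)}\cap T_V[1]$ inside $S$ produces the $\mathrm{Sym}^2\mathcal E^1$-component of $S^{(1)}(\beta^m)$. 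Expanding $d_1\alpha$ via the graded Leibniz rule and splitting $\mathcal O_V^2=\mathcal E^2\oplus\mathrm{Sym}^2\mathcal E^1$, the $\mathcal E^2$-component equals $\sum_k(\sum_i f_ic_{ik})\gamma^k$ and vanishes exactly when $\alpha\in\Omega^1_{\mathrm{dR}}(J,\mathfrak{g})$; for such $\alpha$ the $\mathrm{Sym}^2\mathcal E^1$-component reads $\sum_{i<j}(\xi_i(f_j)-\xi_j(f_i)+\sum_\ell\tilde B_{ij}^\ell f_\ell)\beta^i\beta^j$. The coboundaries match directly via $d_1g=\sum_j\xi_j(g)\beta^j=(dg)(\bar\xi_j)\beta^j$.

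The principal obstacle is to show that the coefficients $\tilde B_{ij}^\ell$ encode the bracket of $\mathfrak{g}$, i.e.\ $[\xi_i,\xi_j]\equiv-\sum_\ell\tilde B_{ij}^\ell\xi_\ell\pmod{L_0(S_0)}$. I would extract this from $[S,S]=0$ by isolating the bidegree $(\mathrm{Sym}^2\mathcal E^1)\otimes T_X[1]$-component of the master equation modulo $F^3\mathcal O_M$: the $[S_{\mathrm{lin}},S_{\mathrm{lin}}]$-contribution at this bidegree is $\sum_{i,j}\beta^i\beta^j[\xi_i,\xi_j]_{T_X}$, which must cancel against the $2[S_{\mathrm{lin}},U]$-contribution $\sum\tilde B_{ij}^\ell\beta^i\beta^j\xi_\ell$ up to a piece in $\beta^i\beta^j\cdot\wedge^2 T_X$ representing the Tate-image of $L_0(S_0)$. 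Once this identification is made, the $J$-linearity of $\alpha$ gives $\alpha([\bar\xi_i,\bar\xi_j])=-\sum_\ell\tilde B_{ij}^\ell f_\ell$, so the vanishing of the $\mathrm{Sym}^2\mathcal E^1$-component becomes exactly the Lie--Rinehart 1-cocycle condition $\xi_i(f_j)-\xi_j(f_i)=\alpha([\xi_i,\xi_j])$; Lemmas~\ref{l-ooo} and \ref{l-Maud} are invoked to make sense of these computations in the completed algebra $\mathcal O_M$.
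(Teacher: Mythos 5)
Your proposal is correct and follows essentially the same route as the paper: reduce to $E_2^{1,0}$ via Theorem~\ref{t-co2}, build the Tate resolution to degree $-3$ from generators of $L/L_0$ and their relations, and identify cocycles/coboundaries of $d_1$ with Lie--Rinehart $1$-cocycles/coboundaries. The only (minor) differences are that the paper pins down the cubic coefficients by \emph{constructing} $S$ with $-\frac12 f_{ij}^k\beta_k^*\beta^i\beta^j$ from the lifted relation $[\hat\tau_i,\hat\tau_j]=\sum f_{ij}^k\hat\tau_k+dS_0\lrcorner g_{ij}$ rather than extracting them from $[S,S]=0$ for a general associated $S$ (both work), and it makes explicit the point you leave implicit: the remaining degree $-3$ generators needed to complete the resolution have differentials in $R_0^{-2}$, hence contribute nothing to $d_1$ after passing to $J(S_0)$.
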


\begin{remark} The de Rham cohomology of a Lie--Rinhart algebra $(A,\mathfrak g)$
is the appropriate cohomology for 
Lie--Rinehart algebras only if the Lie algebra $\mathfrak g$ is a projective module
over the commutative algebra $A$. If it is not one should replace $\mathfrak g$
by a quasi-isomorphic differential graded Lie algebra which is
projective over the commutative algebra, see \cite{CasasLadraPirashvili}.
The resulting cohomology groups $H^p(A,\mathfrak g)$ coincide with the
de Rham groups for $p=0,1$ but are in general different. We conjecture
that $H^p(M,S)\cong H^p(J(S_0),\mathfrak{g}(S_0))$ for all $p\geq0$.
\end{remark} 

We have seen that $H^0(M,S)\cong J^L=J^\mathfrak g$ since $L_0$ acts trivially. This is the
claim for $p=0$.
To prove this theorem for $p=1$ we need to construct a solution of the master equation modulo $F^3$.

Let  $\tau_1,\dots,\tau_r$ be a system of generators of $L/L_0$ as an ${\mathcal O_X}$-module.
Then
\begin{equation}\label{e-00}
[\tau_i,\tau_j]=\sum_{k=1}^rf_{ij}^k\tau_k,
\end{equation}
for some (non-unique) $f_{ij}^k\in{\mathcal O_X}$.

Let $r_{aj}\in{\mathcal O_X}$, $a=1,\dots,s$, $j=1,\dots,r$,  a generating system of relations, namely a set of elements of ${\mathcal O_X}$ such that
\begin{enumerate}
\item[(i)]
$
\sum_{j=1}^rr_{aj}\tau_j=0, \qquad a=1,\dots s
$
\item[(ii)] if $\sum_{j=1}^r r_j\tau_j=0$ with $r_j\in{\mathcal O_X}$
  then $r_j=\sum_{a=1}^s f_ar_{aj}$ for some $f_a\in{\mathcal O_X}$.
\end{enumerate}
In other words we choose a presentation of the ${\mathcal O_X}$-module
$L^{\mathrm{eff}}(S_0)$:
\[
\mathcal O_X^s\to\mathcal O_X^r\to L^{\mathrm{eff}}(S_0)\to 0.
\]
Using this presentation we construct a Tate resolution
$R=(\Sym_{\mathcal O_X}\mathcal W,\delta)$ down to degree $-3$, with
$\mathcal W$ of the form $\mathcal W=T_X[1]\oplus \mathcal E^*[1]$,
see \ref{ss-TateResolutions}, and a solution $S\in
\Gamma(X,\mathcal O_{T^*[-1]V})$ of the classical master equation modulo $F^3$
on the shifted cotangent bundle of $V=(X,\Sym_{\mathcal O_X}\mathcal E)$. We set $\mathcal
W^{-1}=T_X$, $\mathcal W^{-2}=\mathcal O_X^r$ with basis
$\beta_1^*,\dots,\beta_r^*$, $\mathcal W^{-3}=\mathcal O_X^s\oplus E$,
where $\mathcal O_X^s$ has basis $\gamma_1^*,\dots,\gamma_s^*$ and $E$
is a free ${\mathcal O_X}$-module to be determined and whose precise
form will not be important for the computation of the cohomology. The
beginning of the Tate resolution looks like
\[
R^{-3}
\stackrel{\delta}{\to}
R^{-2}
\stackrel{\delta}{\to}
R^{-1}
\stackrel{\delta}{\to}
R^{0},
\]
with
\[
R^{-3}=\wedge^3 T_X\oplus (T_X\otimes \mathcal O_X^r)\oplus \mathcal O_X^s\oplus E,\quad
R^{-2}=\wedge^2 T_X\oplus \mathcal O_X^r,\quad
R^{-1}=T_X,\quad
R^0=\mathcal O_X.
\]
Here $\wedge^\bullet T_X=\wedge^\bullet_{\mathcal O_X}T_X$ is the exterior algebra
of the $\mathcal O_X$-module $T_X$.
The differential $\delta\colon R^{-1}\to R^0$ is $\delta(\xi)=\xi(S_0)$.
The differential on the other components depends on a choice of lifts of $\tau_1,\dots,\tau_r\in L/L_0$ to vector fields $\hat\tau_1,\dots,\hat\tau_r\in L$.
We set
\[
\delta(\beta_j^*)=\hat\tau_j,
\]
so that $\delta^2|_{\mathcal O_X^r}=0$.
The relations for the lifts hold modulo $L_0$:
\begin{equation}\label{e-n1}
\sum_j r_{aj}\hat \tau_j+dS_0\lrcorner v_a=0,
\end{equation}
for some $v_a\in\wedge^2 T_X$. Here $\lrcorner$ is the contraction operator:
$dS_0\lrcorner\xi\wedge\eta=\xi(S_0)\eta-\eta(S_0)\xi\in L_0$.  By the Leibniz
rule we have $\delta(v)=dS_0\lrcorner v$ for $v\in\wedge^2T_X$. Thus
Eq.~\eqref{e-n1} implies that $v_a+\sum_j r_{aj}\beta^*_j$ is a cocycle for
$a=1,\dots,s$.
We set
\[
\delta(\gamma^*_a)=v_a+\sum_jr_{aj}\beta^*_j\in\wedge^2T_X\oplus \mathcal O_X^r.
\]
Thus $\delta^2|_{\mathcal O_X^s}=0$.

Let $R_0^{-2}$ be the ${\mathcal O_X}$-submodule spanned by $\xi(S_0)r$ with
$\xi\in T_X$, $r\in R^{-2}$. The differential on $E$ will be chosen so that
$\delta(E)\subset R_0^{-2}$.

Let us check that we indeed get a resolution down to degree $-3$. The kernel of
$\delta\colon T_X\to{\mathcal O_X}$ is $L$. Since $\delta(\wedge^2T_X)=L_0$ and
$\tau_1,\dots,\tau_r$ span $L/L_0$, the complex is exact at $R^{-1}$. The
kernel of the next differential consists of pairs $v\in\wedge^2T_X$, $\sum
r_i\beta^*_i\in\mathcal O_X^r$ such that $\sum r_i\hat \tau_i+dS_0\lrcorner
v=0$. We need to show that these cocycles are equivalent to cocycles lying in
$R_0^{-2}$, so that they can be killed by a judicious choice of $E$. Reducing
modulo $L_0$, we see that $\sum r_i\tau_i=0$ so that $r_i=\sum f_ar_{ai}$, see
(ii).  Possibly subtracting an element of $\delta(\mathcal O_X^s)$, we may
assume that $r_i=0$, $i=1,\dots,r$. Thus $dS_0\lrcorner v=0$ and therefore $v$
is a sum of bivector fields of the form $\xi\wedge\eta$ where $\eta\in L$,
i.e., $\eta$ is a linear combination of $\hat \tau_i$ and $dS_0\lrcorner w$,
$w\in\wedge^2T_X$, the latter being in $R_0^{-2}$. We can get rid of terms
$\xi\wedge\hat\tau_i$ modulo $R^{-2}_0$ by subtracting a coboundary from
$\delta(T_X\otimes \mathcal O_X^r)$: indeed we have 
\[
\delta(\xi\otimes\beta_i^*)=\xi(S_0)\beta_i^*-\xi\wedge\hat\tau_i\equiv
-\xi\wedge\hat\tau_i\mod R_0^{-2}.  
\] 
Thus any (-2)-cocycle is equivalent
modulo coboundaries to a cocycle lying in $R_0^{-2}$. Exactness at $R^{-2}$ is
achieved by defining $\delta$ to map $E$ onto the space of cocycles in
$R_0^{-2}$.

With this information on the Tate resolution we can construct a solution of the
master equation modulo $F^3$ along the line of the existence proof in
Section \ref{ss-existence} (b).  The first approximation is
\[
S_{\leq 1}\equiv S_0+\sum \delta(\beta_i^*)\beta^i+\sum \delta(\gamma_a^*)\gamma^a+\sum \delta(\rho^*_b)\rho^b\mod F^3,
\]
where $(\rho^*_b)$ is a basis of $E$. By construction, $[S_{\leq 1}, S_{\leq 1}]$ is
a section of $I^{(2)}$, see \ref{ss-existence} (b) (ii). So the first violation of the master
equation appears from the bracket of the second term on the right-hand side
with itself: $[S_{\leq 1},S_{\leq 1}]\equiv \sum
[\hat\tau_i,\hat\tau_j]\beta^i\beta^j\mod F^3$. The relation \eqref{e-00} lifts
to
\[
[\hat\tau_i,\hat\tau_j]=\sum f_{ij}^k\hat\tau_k+dS_0\lrcorner g_{ij},
\]
for some bivector field $g_{ij}\in \wedge^2T_X$. This dictates the form of the corrections to $S$. We obtain
\begin{eqnarray*}
S&\equiv&S_0
+\sum_i\underline{\hat\tau_i\beta^i}
+\sum_{a,j}\underline{r_{aj}\beta^*_j\gamma^a}
+\frac12 \sum_a v_a\gamma^a\\
&&+\sum_b\underline{\delta(\rho^*_b)\rho^b}
+\frac12\sum_{i,j} g_{ij}\beta^i\beta^j-\frac12\sum_{i,j,k}
\underline{f_{ij}^k\beta_k^*\beta^i\beta^j}\mod F^3.
\end{eqnarray*}
The BRST cohomology is isomorphic to the cohomology of the first term
$E^{\bullet,0}_1=J[\beta^i,\gamma^a,\rho^b,\dots]$ of the spectral sequence.
The beginning of the complex $E^{\bullet,0}_1$ is
\[
J\to \oplus_i J\beta^i\to(\oplus_{i<j} J\beta^i\beta^j)\oplus(\oplus_a J\gamma^a)\oplus (\oplus_bJ\rho^b)\to\cdots.
\]
The differential is obtained from the terms in $S$ linear in the generators,
underlined in the formula for $S$ above. We have 
\[
d_1(f)=-\sum_i\tau_i(f)\beta^i,
\quad
d_1(\beta^i)=-\sum_a r_{ai}\gamma^a+\frac12
f_{jk}^i\beta^j\beta^k.
\]
In $\delta(\rho^*_b)\rho^b$ there could be a linear term in $\beta^*_i$ that
would contribute to $d_1(\beta^i)$. But since $\delta(\rho^*_b)$ lies in
$R_0^{-2}$ its image in $E_1$ vanishes.

The Leibniz rule gives the differential of a general 1-cochain
\[
d_1(\sum_j g_j\beta^j)=-\sum_{i,j}\tau_i(g_j)\beta^i\beta^j+\frac12
\sum_{i,j,k}g_if_{jk}^i\beta^j\beta^k
-\sum_{a,i}r_{ai}g_i\gamma^a.
\]
Thus $H^1(M,S)\cong Z^1/B^1$ with
\begin{eqnarray*}
Z^1&=&\biggl\{\sum_i g_i\beta^i\in \oplus_i J\beta^i\,\text{ such that }
\\
&&\text{(a) } \tau_i(g_j)-\tau_j(g_i)-\sum_kf_{ij}^kg_k=0,\forall i,j,\quad
\text{(b) } \sum_k r_{ak}g_k=0,\forall a\biggr\},
\\
B^1&=&\biggl\{\sum_i\tau_i(f)\beta^i\,|\,f\in J\biggr\}.
\end{eqnarray*}
Cochains $\sum_ig_i\beta^i$ obeying (b) are in one-to-one correspondence with
linear functions $\alpha\in \mathrm{Hom}_{\mathcal O_X}(L/L_0,J)$ via $\alpha(\tau_i)=g_i$.
This space is canonically isomorphic to $\mathrm{Hom}_J(J\otimes_{\mathcal O_X}
L/L_0,J)=\Omega_{\mathrm{dR}}^1(J,\mathfrak g)$ and (a) translates to the
cocycle condition for the de Rham differential. Thus
\[
Z^{1}\cong \mathrm{Ker}(d\colon \Omega_{\mathrm{dR}}^1(J,\mathfrak g)\to\Omega_{\mathrm{dR}}^2(J,\mathfrak g)).
\]
Similarly $B^1\cong d(\Omega_{\mathrm{dR}}^0(J,\mathfrak g))$.
This concludes the proof of Theorem \ref{t-deRham}.
\subsection{The $P_0$-algebra structure}
As the filtration is compatible with the product, it follows that the
isomorphism of Theorem \ref{t-deRham} respects the product $H^p\otimes H^q\to
H^{p+q}$, $p+q\leq1$. The situation with the bracket is more tricky and
potentially more interesting.

Let, as above, $\mathfrak g=L(S_0)/L_0(S_0)$ and $f\in
H^0_{dR}(J,\mathfrak{g})=J(S_0)^{L(S_0)}$ be an invariant function on the
critical locus.  Let $\tilde f\in{\mathcal O_X}$ be a representative of $f$ in
${\mathcal O_X}$. Then for each $\tau\in L(S_0)$
\[
\tau(\tilde f)=\xi(S_0),
\]
for some $\xi=\xi(\tau,\tilde f)\in T$ defined modulo $L(S_0)$ depending
${\mathcal O_X}$-linearly on $\tau$ and $\K$-linearly on $\tilde f$.  This
defines a homomorphism of ${\mathcal O_X}$-modules
\begin{equation}\label{e-n}
L(S_0)\to T/L(S_0),\qquad \tau\mapsto \xi(\tau,\tilde f)\mod L(S_0).
\end{equation}
Recall that, by Lemma \ref{l-n2}, $L_0(S_0)$ acts trivially on $J$, so that
the map \eqref{e-n} descends to a map $\mathfrak g\to T/L(S_0)$.  We define a symmetric
$\K$-bilinear map $H^0(J,\mathfrak g)\times H^0(J,\mathfrak g)\to
H^1(J,\mathfrak g)$
\[
B(f,g)=\text{class of }\left(\tau\mapsto \xi(\tau,\tilde f)(\tilde
g)+\xi(\tau,\tilde g)(\tilde f)\right) \in \mathrm{Hom}_J(\mathfrak g,J),
\]
for any choice of lifts $\tilde f,\tilde g\in{\mathcal O_X}$ of $f,g\in J$.
\begin{lemma} 
  The bilinear map $B$ is well-defined.
\end{lemma}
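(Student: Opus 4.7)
The plan is to verify well-definedness by checking, one at a time, that each choice entering the formula can be varied without changing the class in $H^1_{\mathrm{dR}}(J,\mathfrak g)$. The choices are: the lifts $\tilde f,\tilde g\in\mathcal O_X$ of $f,g\in J$; and the vector fields $\xi(\tau,\tilde f),\xi(\tau,\tilde g)\in T_X$, which are defined only modulo $L(S_0)$. Along the way I also need to verify that $\Phi_{f,g}(\tau):=\xi(\tau,\tilde f)(\tilde g)+\xi(\tau,\tilde g)(\tilde f)$ is $\mathcal O_X$-linear in $\tau$, descends from $L(S_0)$ to $\mathfrak g=L/L_0$, and represents a de Rham cocycle in $\Omega^1_{\mathrm{dR}}(J,\mathfrak g)$.

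The first checks are quick. $\mathcal O_X$-linearity is immediate since $h\,\xi(\tau,\tilde f)$ is a valid choice for $\xi(h\tau,\tilde f)$. If $\xi(\tau,\tilde f)$ is changed by $\eta\in L(S_0)$, then $\Phi_{f,g}(\tau)$ changes by $\eta(\tilde g)$, which lies in $\mathrm{Im}(dS_0)$ by invariance of $g$; so the class in $J$ is unaffected. For the vanishing on $L_0$, pick a generator $\tau=\alpha(S_0)\beta-\beta(S_0)\alpha$ and take $\xi(\tau,\tilde f)=\beta(\tilde f)\alpha-\alpha(\tilde f)\beta$ (and analogously for $\tilde g$); an immediate antisymmetric cancellation shows $\Phi_{f,g}(\tau)=0$ on the nose.

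For the cocycle condition I would expand $(d\Phi_{f,g})(\tau,\sigma)=\tau(\Phi_{f,g}(\sigma))-\sigma(\Phi_{f,g}(\tau))-\Phi_{f,g}([\tau,\sigma])$ using the Leibniz rule together with the identity $\tau(\tilde f)=\xi(\tau,\tilde f)(S_0)$. After reorganization the expression becomes the sum of $\bigl([\tau,\xi(\sigma,\tilde f)]-[\sigma,\xi(\tau,\tilde f)]-\xi([\tau,\sigma],\tilde f)\bigr)(\tilde g)$, the symmetric expression with $\tilde f$ and $\tilde g$ swapped, and extra Lie-bracket terms of the form $[\xi(\sigma,\tilde f),\xi(\tau,\tilde g)](S_0)$ that manifestly land in $\mathrm{Im}(dS_0)$. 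The vector field $[\tau,\xi(\sigma,\tilde f)]-[\sigma,\xi(\tau,\tilde f)]-\xi([\tau,\sigma],\tilde f)$ lies in $L(S_0)$: applying it to $S_0$ and using $\tau(S_0)=\sigma(S_0)=0$ together with $\xi(\rho,\tilde f)(S_0)=\rho(\tilde f)$ yields zero. Hence its action on $\tilde g$ is in $\mathrm{Im}(dS_0)$ by invariance of $g$, and $d\Phi_{f,g}\equiv 0$ in $J$.

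Finally, if $\tilde f$ is replaced by $\tilde f+h(S_0)$ with $h\in T_X$, one checks that $\xi(\tau,\tilde f)+[\tau,h]$ is an allowed choice for $\xi(\tau,\tilde f+h(S_0))$, since $\tau\in L(S_0)$ kills $S_0$. The resulting change in $\Phi_{f,g}(\tau)$ is $[\tau,h](\tilde g)+\xi(\tau,\tilde g)(h(S_0))$, and one Leibniz-rule rewriting identifies this, modulo $\mathrm{Im}(dS_0)$, with $\tau(h(\tilde g))$---the de Rham coboundary of the class of $h(\tilde g)\in J$. Symmetry in $f,g$ finishes the argument. The main technical obstacle is the cocycle check: it requires a careful double-nested expansion of vector field actions and the recognition that the surviving vector field is in $L(S_0)$, which is essentially the Jacobi identity combined with the defining relation for $\xi$.
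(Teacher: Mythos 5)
Your argument is correct, and on the one point the paper actually proves---independence of the choice of lifts $\tilde f,\tilde g$---your computation is identical to the paper's: both reduce the change in $\Phi_{f,g}(\tau)$ to $\tau(h(\tilde g))$ (the paper writes $\zeta$ for your $h$), i.e.\ to the de Rham coboundary of the class of $h(\tilde g)$ in $J$. Where you differ is in scope: the paper's proof of this lemma addresses \emph{only} the lift ambiguity, leaving the $\mathcal O_X$-linearity, the ambiguity of $\xi(\tau,\tilde f)$ modulo $L(S_0)$, and the descent to $\mathfrak g=L/L_0$ to the discussion preceding the lemma, and never explicitly verifies that $\Phi_{f,g}$ is a de Rham cocycle (this is needed even to speak of its class in $H^1=Z^1/B^1$, and in the paper it is only obtained indirectly from the subsequent theorem identifying $B$ with the induced Poisson bracket). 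Your direct cocycle check is sound: the expansion of $\tau(\Phi(\sigma))-\sigma(\Phi(\tau))-\Phi([\tau,\sigma])$ does organize into $\bigl([\tau,\xi(\sigma,\tilde f)]-[\sigma,\xi(\tau,\tilde f)]-\xi([\tau,\sigma],\tilde f)\bigr)(\tilde g)$, its $f\leftrightarrow g$ mirror, and brackets evaluated on $S_0$; the displayed vector field kills $S_0$ by the defining relation $\xi(\rho,\tilde f)(S_0)=\rho(\tilde f)$, so invariance of $g$ finishes it. So your proof is a self-contained strengthening of the paper's, at the cost of the longer double expansion; the paper's route is shorter but relies on the surrounding text and on the later comparison with the BRST bracket to supply the omitted verifications.
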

\begin{proof}
Any two lifts $\tilde f$ of $f\in J$ differ by $\zeta(S_0)$ for some vector
field $\zeta\in T$. We need to show that $B(\zeta(S_0),\tilde g)=0$.  We have
$\tau(\zeta(S_0))=[\tau,\zeta](S_0)$ for $\tau\in L$. Thus
\[
\xi(\tau,\zeta(S_0))=[\tau,\zeta],
\]
and therefore
\[
B(\zeta(S_0),\tilde g)(\tau)=[\tau,\zeta](\tilde g)+\xi(\tilde g,\tau)(\zeta(S_0)).
\]
The second term is equal to $[\xi(\tilde g,\tau),\zeta](S_0)+\zeta(\tau(\tilde
g))$ by definition of $\xi$. The bracket evaluated on $S_0$ vanishes in $J$ and
we get 
\[
B(\zeta(S_0),\tilde g)(\tau)=\tau(\zeta(\tilde g)),
\]
which belongs to the zero cohomology class in $H^1(J,\mathfrak g)$.
\end{proof}
\begin{theorem}
The bilinear form $B$ coincides with the induced bracket 
\[
H^{0}(M,S)\times
H^{0}(M,S)\to H^1(M,S)
\] under the identification $H^p(M,S)\cong
H^p(J,\mathfrak g)$ of Theorem \ref{t-deRham}.  
\end{theorem}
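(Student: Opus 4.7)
The plan is to produce explicit cocycle representatives $F, G \in \Gamma(X,\mathcal{O}_M^0)$ of classes $f, g \in H^0(M,S)$ via the edge isomorphism $H^0(M,S) \cong E_2^{0,0}=J^L$ of Theorem \ref{t-co2}(ii), compute the Poisson bracket $[F,G]$ modulo $F^2\mathcal{O}_M$, and identify its class in $E_2^{1,0}$ with the image of $B(f,g)$ under the isomorphism $Z^1\cong\mathrm{Ker}(d\colon \Omega^1_{\mathrm{dR}}(J,\mathfrak{g})\to\Omega^2_{\mathrm{dR}}(J,\mathfrak{g}))$ used in the proof of Theorem \ref{t-deRham}.

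First I would lift $f$ to a cocycle. Pick any $\tilde f\in\mathcal{O}_X$ representing $f$. Using $S\equiv S_0+\sum_i\hat\tau_i\beta^i \bmod I_M^{(2)}$ and the graded Leibniz/antisymmetry axioms, a direct calculation gives
\[
d_S(\tilde f)\equiv -\sum_i \tau_i(\tilde f)\beta^i \mod F^2\mathcal{O}_M.
\]
Because $f$ is $L$-invariant, $\tau_i(\tilde f)=\xi_i(S_0)$ with $\xi_i=\xi(\tau_i,\tilde f)\in T_X$; the same sort of calculation yields $d_S(\xi_i\beta^i)\equiv -\xi_i(S_0)\beta^i\bmod F^2\mathcal{O}_M$, so $F_{\le 1}:=\tilde f - \sum_i\xi_i\beta^i$ is a cocycle modulo $F^2$. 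The recursive construction in filtration degree that produced $S$ in Section \ref{ss-existence}, together with Theorem \ref{t-co2}(iii), extends $F_{\le 1}$ to an honest global cocycle $F=F_{\le 1}+F_{\ge 2}$ with $F_{\ge 2}\in F^2\mathcal{O}_M^0$. Let $G=\tilde g-\sum_j\eta_j\beta^j+G_{\ge 2}$ be the analogous cocycle for $g$, with $\eta_j=\xi(\tau_j,\tilde g)$.

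The bracket $[F,G]$ modulo $F^2$ is then a short computation. Brackets involving $F_{\ge 2}$ or $G_{\ge 2}$ land in $F^2\mathcal{O}_M$ by Lemma \ref{l-Maud}(i); $[\tilde f,\tilde g]=0$ since $\mathcal{O}_X$ is Poisson commutative; and $[\xi_i\beta^i,\eta_j\beta^j]\in F^2$ because every nonvanishing Leibniz term carries the factor $\beta^i\beta^j$. The surviving cross terms, evaluated using $[\tilde f,\eta_j]=-\eta_j(\tilde f)$ and $[\xi_i,\tilde g]=\xi_i(\tilde g)$ together with Leibniz, yield
\[
[F,G]\equiv \sum_i\bigl(\xi_i(\tilde g)+\eta_i(\tilde f)\bigr)\beta^i \mod F^2\mathcal{O}_M.
\]
Under $Z^1\cong\mathrm{Ker}(d)$ sending $\sum g_i\beta^i$ to the $J$-linear map $\tau_i\mapsto g_i$, this is exactly the cocycle $\tau_i\mapsto\xi(\tau_i,\tilde g)(\tilde f)+\xi(\tau_i,\tilde f)(\tilde g)=B(f,g)(\tau_i)$.

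The main obstacle is sign and filtration bookkeeping together with the independence-of-choices verifications ($\tilde f$ modulo the Jacobian ideal, $\xi_i$ modulo $L(S_0)$, and the higher-order extensions $F_{\ge 2},G_{\ge 2}$); each such change either leaves $[F,G]$ unchanged modulo $F^2$ or shifts it by an element in the image of $d_1$, mirroring the ambiguity already built into the definition of $B$. Graded symmetry of the bracket on $H^0$—which, because $[a,b]=[b,a]$ for even-degree elements in a $P_0$-algebra, is symmetric—provides a useful internal consistency check against the manifestly symmetric expression for $B(f,g)$.
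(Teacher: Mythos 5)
Your proposal is correct and follows essentially the same route as the paper's own proof: both lift $f,g$ to cocycles modulo $F^2\mathcal O_M$ of the form $\tilde f-\sum_i\xi_i\beta^i$ using the linear part $\sum_i\hat\tau_i\beta^i$ of $S$, compute the bracket modulo $F^2$ to obtain $\sum_i(\xi_i(\tilde g)+\eta_i(\tilde f))\beta^i$, and read off its class in $E_1^{1,0}$ as $B(f,g)$. Your extra care about extending $F_{\le 1}$ to a genuine cocycle and about independence of choices is sound but not needed beyond what the filtration argument already gives.
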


\begin{proof}
To compute the induced bracket \[
[\;\,,\;]\colon H^0(M,S)\otimes H^0(M,S)\to
H^1(M,S),
\] using the description of Theorem \ref{t-deRham}, we need to lift
the cocycles of $E_1$ to cocycles of the BRST complex modulo $F^2$. So let
$\tilde f\in{\mathcal O_X}$ represent a cocycle $f$ in $E_1^{0,0}=J$. Then
$\hat\tau_i(\tilde f)=\xi_i(S_0)$ for some vector fields $\xi_i=\xi(\tilde
f,\hat\tau_i)$.
\begin{eqnarray*}
[S,f]&\equiv& [\sum_i\hat\tau_i\beta^i,f]\mod F^2
\\     &\equiv& -\sum_i\hat\tau_i(f)\beta^i \mod F^2
\\     &\equiv& -\sum_i\xi_i(S_0)\beta^i\mod F^2.
\\     &\equiv&[S,\sum_i\xi_i\beta^i]\mod F^2.
\end{eqnarray*}
It follows that a cocycle in the BRST complex corresponding to $f$ is
\[
F\equiv f-\sum_i\xi_i\beta^i \mod F^2.
\]
Let us do the same for a second function $g\in J^L$: $G\equiv g+\sum_i\eta_i\beta^i$.
Then
\begin{eqnarray*}
[F,G]&\equiv& [f-\sum_i\xi_i\beta^i,g-\sum_i\eta_i\beta^i]\mod F^2\\
     &\equiv& (\xi_i(g)+\eta_i(f))\beta^i \mod F^2
\end{eqnarray*}
The right-hand side is the image in $E^{1,0}_1$ of $[F,G]$ and coincides with the claimed
formula.
\end{proof}

\section{The case of quasi-projective varieties}\label{s-7}
Let $X$ be a nonsingular algebraic variety over a field $\K$ of
characteristic $0$, $S_0\in\mathcal O(X)/\K_X$ a function defined
modulo constants or more generally, in the setting of
Sect.~\ref{ss-mBV}, a multivalued function $S_0=\int\lambda$ for a
closed $1$-form $\lambda\in\Gamma(X,\Omega^1_X)$. Then on an open
affine subset $U\subset X$, the restriction of $S_0$ to $U$ gives rise
to a BV variety $(M=T^*[-1]V,S)$ with support $(U,S_0|_U)$ which is
unique up to stable equivalence. In particular the BRST complex
$(\mathcal O_{T^*[-1]V},d_S)$ is uniquely determined, up to
quasi-isomorphism of sheaves of differential $P_0$-algebras on $U$, by
$S_0$. Is it possible to glue these local data to form a sheaf of
differential $P_0$-algebras on $X$, whose restriction to any open
affine subset is quasi-isomorphism to the local BRST complex?  How
unique is the construction?

We cannot answer these questions in general, but we show existence in
the case of quasi-projective varieties $X$. In this case, every
coherent sheaf admits a resolution by locally free sheaves and, as a
consequence, a global Tate resolution of the sheaf $J(S_0)$ of
Jacobian rings exists. Let us fix such a Tate resolution.  Then for
any covering $(U_i)$ by affine open sets, the Tate resolution
restricts on every non-empty intersection $U=U_{i_1}\cap\cdots\cap
U_{i_k}$ to a Tate resolution of $J(S_0)|_{U}$ and we can consider the
class of BV varieties on $U$ associated with this Tate resolution (see
Sect.~\ref{s-existence}).  As the Tate resolution is defined globally,
the corresponding solutions of the master equation are sections
$S\in\Gamma(U,\mathcal O_{T^*[-1]V})$ for some globally defined graded
variety $V$ with support $X$.  We know by Theorem \ref{t-T1} that the
group of gauge equivalances acts transitively on the BV varieties with
support on an affine set and associated with a fixed Tate resolutions.
To glue, we need a stronger form of this result and show that the
corresponding action groupoid extends to a simplicial groupoid whose
sets of morphisms are contractible Kan complexes.  We first introduce
the necessary notions to formulate the result.

Let $U$ be nonsingular affine, $S_0\in\mathcal O(U)$ and $(R,\delta)$ be a Tate
resolution of the Jacobian ring of $S_0$. We realize $R$ as in Section \ref{ss-TateResolutions}
as $\mathcal O_{M}/I_{M}$ where $M=T^*[-1]V$ is the cotangent bundle of a 
$\mathbb Z_{\geq 0}$-graded variety $V$. Let
 $\mathcal B=\mathcal B(M,\delta)$ be the set of BV varieties $(T^*[-1]V,S)$ with support $(U,S_0)$ associated with 
the Tate resolution $(\mathcal O_{M}/I_{M},\delta)$, see Def.~\ref{d-asso}. 
Then by Theorem \ref{t-T1} the group of gauge equivalences $G(M)=\exp\mathrm{ad}\,\mathfrak g(M)\subset \mathrm{Aut}(\mathcal O_M)$, 
see Sect.~\ref{s-gauge}, fixes the Tate resolution and
acts transitively on $\mathcal B$. 
The Lie algebra $\mathfrak g(M)$ is the degree $0$ component of the graded
Lie algebra $\mathfrak g^\bullet(M)=\Gamma(U,I_M^{(2)})[-1]$. We will need the other components
as well.
 
Let $\mathcal G_0$ denote the action groupoid of the action of $G(M)$ on $\mathcal B$: its object set
is $\mathcal B$ and the set of morphisms $S_1\to S_2$ is
\[
\mathcal G_0(S_1,S_2)=\{g\in G(M)\,|\,g\cdot S_1=S_2\}.
\]
We will show that $\mathcal G_0(S_1,S_2)$ is the set of $0$-simplices of 
a contractible Kan
complex, namely a simplicial set $\mathcal G(S_1,S_2)$ such that every
simplicial sphere $\partial\Delta^n\to \mathcal G(S_1,S_2)$ can be filled to a
map of simplicial sets 
$\Delta^n\to \mathcal G(S_1,S_2)$. Here $\Delta^n$
is the standard combinatorial $n$-simplex. It is the simplicial set
$\mathrm{Hom}_\Delta(\;,[n])$ represented by the object $[n]=\{0,\dots,n\}$
of the simplicial category $\Delta$ of finite ordered sets and non-decreasing
maps.

Let $A$ be the $P_0$-algebra $A=\Gamma(U,\mathcal O_M)$.
Let $\Omega^\bullet(|\Delta^n|)$ be the de Rham algebra of polynomial
differential forms on the geometric $n$-simplex $|\Delta^n|$:
\[
\Omega^{\bullet}(|\Delta^n|)=
\K[t_0,\dots,t_n,dt_0,\dots,dt_n]/(\sum_{i=0}^n t_i-1,\sum_{i=0}^n dt_i),
\; \deg(t_i)=0,\deg(dt_i)=1.
\] 
Then the sequence ${\mathfrak g}_n^\bullet=\mathfrak g^\bullet(M)
\otimes\Omega^\bullet(|\Delta^n|)$ is a simplicial differential
graded Lie algebra. The Lie bracket is $[a\otimes\omega,b\otimes \eta]=
(-1)^{\deg\,b\,\deg\,\omega}[a,b]\otimes\omega\eta$. The simplicial algebra
$\Omega^\cdot(|\Delta^\bullet|)$ is the functor from $\Delta$ to differential
graded algebras sending $[n]$ to  $\Omega^\cdot(|\Delta^n)$ and 
$f\colon [n]\to [m]$ to the map of differential graded algebras
such that $t_i\mapsto \sum_{j\in f^{-1}(i)}t_j$.

Let $G_n=\exp(\mathrm{ad}(\mathfrak g_n^0))$. It is a subgroup of the
group
$\mathrm{Aut}_{\Omega^\bullet(|\Delta^n|)}(A\otimes\Omega^\bullet(|\Delta^n|))$
of $\Omega^\bullet(|\Delta^n|)$-linear automorphism of the graded
$P_0$-algebra $A\otimes\Omega^\bullet(|\Delta^n|)$. A solution $S\in
\mathcal B$ of the classical master equation defines a differential
$d_S=[S,\ ]$ on $A$. Let us use the same notation for the differential
on the product of complexes $A\otimes \Omega^\bullet(|\Delta^n|)$ given by
\[
d_S(a\otimes \omega)=[S,a]\otimes\omega+(-1)^{\deg\,a}a\otimes d\omega.
\]
We set
\[
\mathcal G_n(S_1,S_2)=\{g\in G_n\,|\,g \circ d_{S_1}=d_{S_2}\circ g\}.
\]
For $n=0$ this condition means that $[g\cdot S_1,a]=[S_2,a]$ for all $a\in A$.
Since the Poisson center consists of constants and $S_1$ and $S_2$ are both congruent to $S_0$
modulo $I_M$, the definition coincides with the previous definition of $\mathcal G_0$.
The collection $\mathcal G(S_1,S_2)=(\mathcal G_n(S_1,S_2))_{n=0,1,\dots}$ is then naturally a 
simplicial set.

\begin{prop}\label{p-contractible}
Let $S_1,S_2\in \mathcal B$.
The restriction map  
\[
\mathrm{Hom}_{\mathrm{SSet}}(\Delta^n,\mathcal G(S_1,S_2))
\to
\mathrm{Hom}_{\mathrm{SSet}}(\partial \Delta^n,\mathcal G(S_1,S_2))
\]
is surjective.
\end{prop}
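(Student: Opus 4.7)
Proof proposal. The plan is to identify $\mathcal G(S_1,S_2)$ with a simplicial set of gauge equivalences between Maurer--Cartan-type elements in an auxiliary pronilpotent dgla with coefficients in Sullivan polynomial forms, and then extend from the boundary by combining this with the recursive gauge-correction technique of Section \ref{ss-43}(c). By the standard Yoneda principle for Sullivan's polynomial de Rham functor, a morphism $\partial\Delta^n\to\mathcal G(S_1,S_2)$ corresponds to an element $\bar u\in(\mathfrak g^\bullet(M)\otimes\Omega^\bullet(|\partial\Delta^n|))^0$ such that $\exp(\mathrm{ad}\,\bar u)$ intertwines $d_{S_1}+d_{\mathrm{DR}}$ and $d_{S_2}+d_{\mathrm{DR}}$ on $\Gamma(U,\mathcal O_M)\otimes\Omega^\bullet(|\partial\Delta^n|)$, and one seeks an extension $u\in\mathfrak g_n^0$ with the same intertwining property.

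Two ingredients are essential: (1)~the restriction of polynomial forms $\Omega^\bullet(|\Delta^n|)\twoheadrightarrow\Omega^\bullet(|\partial\Delta^n|)$ is surjective, a classical fact from Sullivan's PL de Rham theory; and (2)~the adjoint action of $\mathfrak g^\bullet(M)$ is pronilpotent with respect to the filtration $F^\bullet\mathcal O_M$ (Corollary \ref{c-pro}), so that both the exponential series and the gauge-correction series converge at every stage.

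Using (1) I would first lift $\bar u$ componentwise to a naive extension $\tilde u\in\mathfrak g_n^0$. The intertwining defect
\[
D(\tilde u):=e^{\mathrm{ad}\,\tilde u}\circ(d_{S_1}+d_{\mathrm{DR}})-(d_{S_2}+d_{\mathrm{DR}})\circ e^{\mathrm{ad}\,\tilde u}
\]
restricts to zero on $|\partial\Delta^n|$, and therefore takes values in $\mathfrak g^\bullet(M)\otimes K^\bullet$, where $K^\bullet:=\ker(\Omega^\bullet(|\Delta^n|)\to\Omega^\bullet(|\partial\Delta^n|))$. I would then correct $\tilde u$ inductively by adding elements of $\mathfrak g^\bullet(M)\otimes K^\bullet$, reducing $D$ modulo successive $F^p\mathcal O_M$. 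At each step the obstruction is a cocycle in the tensor complex $\mathcal G^\bullet_{p,q}\otimes K^\bullet$ (with $\mathcal G^\bullet_{p,q}$ as in Lemma \ref{l-vanish}), whose vanishing in the relevant degrees follows by a K\"unneth argument from the low-degree acyclicity of $\mathcal G^\bullet_{p,q}$.

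The main obstacle is executing this mixed recursion (over both the filtration degree $p$ and the auxiliary degree $q$ of part (c) of Section \ref{ss-43}) with coefficients in Sullivan forms, and in particular verifying the K\"unneth vanishing for the relative complex $\mathcal G^\bullet_{p,q}\otimes K^\bullet$ in the degrees needed for the correction. Once this is settled, the assembled correction yields $u\in\mathfrak g_n^0$ extending $\bar u$, and the resulting $g=\exp(\mathrm{ad}\,u)\in G_n$ provides the sought-for $n$-simplex of $\mathcal G(S_1,S_2)$.
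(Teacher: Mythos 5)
Your plan shares two ingredients with the paper's proof (surjectivity of $\Omega^\bullet(|\Delta^n|)\to\Omega^\bullet(|\partial\Delta^n|)$ and pronilpotence of $\mathfrak g^\bullet(M)$), but it leaves unexecuted exactly the step that carries all the difficulty, and it misses the simplification that makes that step unnecessary. The paper first uses the transitivity of $G_0$ on $\mathcal B$ (Theorem \ref{t-T1}) to reduce to the case $S_1=S_2=S$, and then proves that $\mathcal G_n(S,S)=\exp(\mathrm{ad}(\mathfrak g_n^0(S,S)))$ with $\mathfrak g_n^0(S,S)=\{\omega\in\mathfrak g_n^0\mid d\omega+[S,\omega]=0\}$ (via the identity $\exp(a)\circ d_M\circ\exp(-a)-d_M=\mathrm{ad}\bigl(\frac{\mathrm{id}-e^{\mathrm{ad}\,a}}{\mathrm{ad}\,a}d_L(a)\bigr)$ together with faithfulness of the adjoint action, Prop.~\ref{p-center}). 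This turns the nonlinear intertwining condition into a \emph{linear} cocycle condition. Decomposing $\omega=\omega_0+\cdots+\omega_n$ by de Rham degree, the condition becomes the finite triangular system $[S,\omega_0]=0$, $d\omega_{j-1}+[S,\omega_j]=0$, and the extension from the boundary is a finite induction over $j$: the obstruction at each step is a $d_S$-cocycle in $A^{-j-1}\otimes J$ (with $J$ the forms vanishing on $|\partial\Delta^n|$), killed directly by the vanishing of the BRST cohomology in negative degree (Corollary \ref{c-neg}). No recursion over the filtration $F^\bullet\mathcal O_M$ and no appeal to the complexes $\mathcal G^\bullet_{p,q}$ is needed.

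By contrast, your scheme works with the nonlinear defect $D(\tilde u)$ for two different solutions $S_1,S_2$ and proposes to remove it by a double recursion over $(p,q)$ in the style of Section \ref{ss-43}(c) with Sullivan-form coefficients. Two concrete problems remain open in your writeup. First, "correcting $\tilde u$ by adding elements of $\mathfrak g^\bullet(M)\otimes K^\bullet$" does not interact simply with the exponential: $\exp(\mathrm{ad}(\tilde u+v))\neq\exp(\mathrm{ad}\,v)\exp(\mathrm{ad}\,\tilde u)$, so the effect of a correction on $D$ is not the linear term you implicitly use; you would need to correct by group composition and then re-derive the obstruction calculus in that setting. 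Second, the identification of the obstruction as a cocycle in $\mathcal G^\bullet_{p,q}\otimes K^\bullet$ and the required K\"unneth vanishing are asserted, not proved — and you yourself flag this as the "main obstacle." (For what it is worth, the K\"unneth count does come out right: $H^j(K^\bullet)$ is concentrated in degree $n$, so $H^m(\mathcal G^\bullet_{p,q}\otimes K^\bullet)=0$ for $m<p+n$ by Lemma \ref{l-vanish}, which covers the degrees $0$ and $1$ where obstructions live.) So the route could probably be completed, but as written the core of the argument is missing; I would strongly recommend adopting the reduction to $S_1=S_2$ and the linearization lemma, after which the whole filtration recursion evaporates.
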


\begin{proof}
Since $G_0$ acts transitively on $\mathcal B$, 
post-composing $g$ with a map in $G_0$ mapping $S_2$ to $S_1$ reduces the
problem to the case where $S_1=S_2=S$.

By the Yoneda lemma $\mathrm{Hom}(\Delta^n,\mathcal G(S,S))=\mathcal G_n(S,S)$. 

\begin{lemma}
  $\mathcal G_n(S,S)=\exp(\mathrm{ad}(\mathfrak g_n^0(S,S)))$ where $\mathfrak
  g_n^0(S,S)=\{\omega\in\mathfrak g_n^0\,|\, d\omega+[S,\omega]=0\}$.
\end{lemma}

\begin{proof}
  This is a special case of a general result: let $(L,d_L)$ be a
  differential nilpotent graded Lie algebra and $(M,d_M)$ be a
  differential graded faithful $L$-module on which $L$ acts locally
  nilpotently. This means that $M$ is an $L$-module such that
  \begin{enumerate}
  \item For all $a\in L, m\in M$, $d_M(a\cdot m)=d_L(a)\cdot m+a\cdot
    d_L(m)$
  \item If $a\in L$ and $a\cdot m=0$ for all $m\in M$ then $a=0$.
  \item For all $a\in L, m\in M$, there exists an $n$ such that
    $a^n\cdot m=0$.
  \end{enumerate}
  The claim is then:
  \[
  a\in L^0,\ \exp(a)\circ d_M=d_M\circ \exp(a) \Leftrightarrow
  d_L(a)=0.
  \]
  The proof relies on the identity
  \begin{gather*}
    (\exp(a)\circ d_M \circ\exp(-a)-d_M)m =b\cdot m,\quad m\in M,a\in A,\\
    b=\frac{\mathrm{id}-\exp(\mathrm{ad}(a))}{\mathrm{ad}(a)}(d_L(a))\in
    L.
  \end{gather*}
  Since $b$ is obtained by the action of an invertible operator on
  $d_L(a)$ it follows that $b=0$ iff $d_L(a)=0$.  To check the
  identity, introduce
  \[
  b_t=\exp(t\,a)\circ d_M\circ\exp(-t\,a)-d_M \in\mathrm{End}(M)[t].
  \]
  Differentiating with respect to $t$ yields
  \[
  \dot b_t =\exp(t\,a)\circ (a\circ d_M-d_M\circ
  a)\circ\exp(-t\,a)=-\exp(t\,a)\circ d_L(a)\circ\exp(-t\,a).
  \]
  Thus $\dot b_t$ is the action of $-\exp(t\,\mathrm{ad}(a))d_L(a)\in
  L$ and we conclude that
  \[
  b_{t=1}=-\int_0^1\exp(t\,\mathrm{ad}(a))d_L(a)dt,
  \]
  implying the identity after integration.

  In our case we have $L=\mathfrak g^\bullet_n$ and $M=A\otimes
  \Omega^\bullet(|\Delta^n|)$ with differential $d+d_S$.  The action
  is faithful because the part of the Poisson center in $\mathfrak g^\bullet(M)$
  is trivial: $Z_M\cap\mathfrak g_M^\bullet=0$, see Prop.~\ref{p-center}.  The Lie
  algebra and the action are pronilpotent: they become nilpotent
  modulo $I_M^{(k)}$ for all $k$. The statement of the Lemma is
  equivalent to the statement modulo $I_M^{(k)}$ for all $k$.
\end{proof}
Let us decompose $\omega$ with respect to the de Rham degree
\[
\omega=\omega_0+\omega_1+\cdots+\omega_n, \quad \omega_j\in A^{-1-j}\otimes\Omega^j(|\Delta^n|).
\]
Then the condition for $\exp(\mathrm{ad}(\omega))\in G_n$ to lie in
$\mathcal G_n(S,S)$ is
\begin{equation}\label{e-coc}
  \begin{split}
    [S,\omega_0]&=0,
    \\
    d\omega_0+[S,\omega_1]&=0,
    \\ 
    \cdots
    \\
    d\omega_{n-1}+[S,\omega_n]&=0.
  \end{split}
\end{equation}
We need to show that $\omega\in \mathfrak g^\bullet \otimes
\Omega^\bullet(|\partial\Delta^n|)$ of degree $0$ obeying
\eqref{e-coc} extends to a differential form on the simplex obeying
\eqref{e-coc}.

The basic fact about polynomial differential forms is that every
differential form on the boundary of a simplex extends to the simplex
(see \cite{Sullivan}). Thus the restriction homomorphism
\[
\Omega^\bullet(|\Delta^n|)\to\Omega^\bullet(|\partial\Delta^n|)
\]
is surjective. Let us choose a right inverse $s\colon\omega\mapsto
\tilde\omega$ and use the same notation to denote the right inverse
$\mathrm{id}\otimes s$ of the $A$-linear extension
\[
A\otimes\Omega^\bullet(|\Delta^n|)\to A\otimes\Omega^\bullet(|\partial\Delta^n|).
\]
Let now $\omega=\omega_0+\cdots+\omega_{n-1}\in
A\otimes\Omega^\bullet(|\partial\Delta^n|)$ obey \eqref{e-coc} and
$\tilde\omega_0,\dots,\tilde\omega_{n-1}$ $A$-linear extensions to
$|\Delta^n|$. Clearly $[S,\;]$ commutes with $\mathrm{id}\otimes s$
and thus
\[
[S,\tilde\omega_0]=0.
\]
Let us assume inductively that, possibly after modifying the choice of
extensions $\tilde\omega_j$, 
\[
d\tilde\omega_{j-1}+[S,\tilde\omega_j]=0,\text{\ for $j\leq k$.}
\]
In particular, for $j=k$ this implies
\[
0=d[S,\tilde\omega_k]=[S,d\tilde\omega_k]. 
\]
Thus $\alpha=d\tilde\omega_k+[S,\tilde\omega_{k+1}]$ obeys
$[S,\alpha]=0$, $\alpha|_{|\partial\Delta^n|}=0$, where we set
$\tilde\omega_{k+1}=0$ for $k=n-1$.  Thus $\alpha$ is a cocycle for
$[S,\;]$ in $A^{-k-1}\otimes J^k$ where $J$ is the ideal of
differential forms on $|\Delta^n|$ whose restriction to the boundary
vanishes. Since the BRST cohomology vanishes in negative degree, there
exists a $\beta$ vanishing on the boundary such that
$\alpha=[S,\beta]$. Replacing $\tilde\omega_k$ by
$\tilde\omega_k-\beta$ yields
\[
d\tilde\omega_k+[S,\tilde\omega_{k+1}]=0,
\]
which proves the induction step.
\end{proof}

Thus $\mathcal G_n(S,S')$ is the set of $n$-simplices of a contractible
Kan complex. Moreover, we have composition maps
\[
\mathcal G_n(S,S')\times \mathcal G_n(S',S'')\to \mathcal G_n(S,S''),
\]
induced by the product in $A$ and the wedge product of differential
forms.  The compositions are maps of simplicial sets if we define the
Cartesian product $C\times D$ of simplicial sets $C,D$ as the sequence
of sets $C_n\times D_n$ with face maps
$\partial_i(x,y)=(\partial_ix,\partial_iy)$ and degeneracy maps
$s_i(x,y)=(s_ix,s_iy)$.

Thus there is a {\em simplicial category} $\mathcal G$, namely a
category enriched over simplicial sets, with objects $\mathcal B$
and morphisms $S\to S'$ given by the contractible Kan complex 
$\mathcal G(S,S')$.

\begin{cor}\label{c-quasipro}
Let $X$ be a nonsingular 
quasi-projective variety, $\lambda\in\Omega^1(X)$ a closed $1$-form,
$S_0=\int \lambda$. Then there exists a sheaf $\mathcal P$ of differential $P_0$-algebras
on $X$ which is locally the BRST complex of a local BV variety with support $(X,S_0)$.
More precisely, every point of $X$ has an open affine neighborhood $U$
such that $\mathcal P|_U$ is quasi-isomorphic, as a sheaf of differential
$P_0$-algebras, to $(\mathcal O_{M},d_S)$ for some BV variety $(M,S)$ with
support $(U,S_0|_U)$.
\end{cor}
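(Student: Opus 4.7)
The plan is to exploit the globally defined Tate resolution available on quasi-projective varieties, reduce the gluing problem to a homotopy coherent descent problem, and then invoke the homotopy gluing machinery of Appendix B. Since $X$ is nonsingular and quasi-projective, every coherent sheaf on $X$ admits a resolution by locally free sheaves of finite rank. Applying Tate's recursive construction from Section \ref{ss-TateResolutions} globally (killing successive cohomology sheaves by adjoining free locally free summands) produces a Tate resolution $R \to J(S_0)$ of the sheaf of Jacobian rings, realized as $\mathcal O_M/I_M$ for a globally defined $M = T^*[-1]V$ with $V$ a $\mathbb Z_{\geq 0}$-graded variety supported on $X$. For multivalued $S_0 = \int \lambda$, the same construction works since only $dS_0 = \lambda$ enters the definition of $\delta$ on degree $-1$ generators.

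Next, I fix an affine open cover $(U_i)_{i \in I}$ of $X$ and form the associated Čech simplicial object. On each nonempty finite intersection $U_\sigma = U_{i_0} \cap \cdots \cap U_{i_k}$, the restriction of $R$ is again a Tate resolution of $J(S_0)|_{U_\sigma}$, and by Theorem \ref{t-T1} the set $\mathcal B(U_\sigma)$ of solutions of the classical master equation associated with this restricted resolution is nonempty and acted on transitively by the group of gauge equivalences. By Proposition \ref{p-contractible}, the simplicial category $\mathcal G(U_\sigma)$ has contractible Kan complexes as morphism spaces. Restriction along inclusions $U_\tau \subset U_\sigma$ gives functors $\mathcal G(U_\sigma) \to \mathcal G(U_\tau)$ that are strictly compatible with composition in the nerve, yielding a simplicial presheaf of simplicial categories on the Čech nerve. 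Assigning to each $S \in \mathcal B(U_\sigma)$ the BRST complex $(\mathcal O_M|_{U_\sigma}, d_S)$, regarded as a sheaf of differential $P_0$-algebras on $U_\sigma$, and using that morphisms in $\mathcal G(U_\sigma)$ act by quasi-isomorphisms (Theorem \ref{t-automorphisms}), produces a homotopy coherent diagram of sheaves of differential $P_0$-algebras.

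The final step is to pass from this homotopy coherent Čech-type datum to a single sheaf $\mathcal P$ of differential $P_0$-algebras on $X$. For this I apply the homotopy limit construction of Appendix B: one chooses, for each simplex $\sigma$ of the nerve, an object $S_\sigma \in \mathcal B(U_\sigma)$ together with coherent choices of morphisms in the mapping complexes $\mathcal G(U_\sigma)(\,\cdot\,,\,\cdot\,)$ filling the higher simplicial cells; the contractibility of these Kan complexes guarantees that such choices can be made inductively on the skeleta. The resulting homotopy limit is a sheaf $\mathcal P$ on $X$ whose restriction to each $U_i$ receives a canonical quasi-isomorphism from the local BRST complex, yielding the required local quasi-isomorphisms of sheaves of differential $P_0$-algebras.

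The main obstacle is exactly what the contractibility result is designed to overcome: BV varieties with given support are not canonically isomorphic but are identified only up to a contractible space of choices, so descent in the strict category of sheaves of differential $P_0$-algebras fails. The whole strategy hinges on Proposition \ref{p-contractible} to promote the bare transitivity of the gauge equivalence action into a full $\infty$-coherence statement, after which the Appendix B homotopy limit formalism supplies the actual sheaf. A subsidiary technical point is to ensure that the $P_0$-structure (both the commutative product and the degree-$1$ bracket) is preserved through the gluing, which follows because the gauge equivalences are Poisson automorphisms and the wedge product with polynomial forms on simplices is compatible with the Poisson bracket.
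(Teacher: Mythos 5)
Your proposal follows essentially the same route as the paper: a global Tate resolution exists by quasi-projectivity, its restrictions to affine intersections yield classes of solutions on which the gauge groupoid acts with contractible Kan mapping complexes (Proposition \ref{p-contractible}), and the homotopy gluing machinery of Appendix \ref{appB} then assembles the local BRST complexes into the sheaf $\mathcal P$. This matches the paper's argument, so no further comparison is needed.
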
 

The sheaf $\mathcal P$ is constructed by gluing the BV varieties associated with
the restriction of a Tate resolution to the affine open subsets of a covering by using
the contractibility result of Prop.~\ref{p-contractible}. This construction
is described in Appendix \ref{appB}.

\appendix

\section{Stable isomorphism of Tate resolutions}\label{a-2}
Let $A$ be a Noetherian unital commutative ring containing $\mathbb Q$ and
$C_A$ be the monoidal category of non-positively graded differential graded
unital commutative algebras $C=\oplus_{i\leq0}C^i$ over $A$ whose homogeneous
components $C^i$ are finitely generated $A$-modules. Differential have degree
1. An object of $C_A$ is called semi-free if it is isomorphic, as a graded
commutative algebra, to the symmetric algebra $\Sym_A(V)$ of some projective
negatively graded $A$-module $V=\oplus_{i<0} V^i$.

Let $C$ be an object of $C_A$ concentrated in degree $0$. A {\em Tate
resolution} of $C$ is a quasi-isomorphism $E\to C$ in $C_A$ where $E=\Sym_A(V)$
is semi-free. Tate resolutions exist by Tate's recursive construction: the
subalgebra $E_{\geq-d}$ of $E$ generated by $\oplus_{i=-d}^0V^i$ is constructed
out of $E_{\geq-(d-1)}$ as
\begin{equation}\label{equation-1}
  E_{\geq-d}=E_{\geq-(d-1)}[T_1,\dots,T_n],\qquad \deg(T_i)=-d,
\end{equation}
with differential extending the differential on $E_{\geq -(d-1)}$ and such that
\[
d\,T_i=c_i,
\]
for some set of cocycles $c_i$ spanning the cohomology of $E_{\geq -d}$ in
degree ${-d}$ (the Noetherian hypothesis ensures that the cohomology of fixed
degree is a finitely generated $A$-module). One then takes $V^{-d}$ at the free
module with basis $T_1,\dots,T_n$.

A morphism of Tate resolutions $p_E\colon E\to C, p_F\colon F\to C$ is a morphism of
differential graded algebras $f\colon E\to F$ such that $p_F\circ f=p_E$.

Any two Tate resolutions are related by a quasi-isomorphism which is unique up
to homotopy. More precisely:
\begin{lemma}\label{lemma-1}
Let $p_E\colon E\to C$, $p_F\colon F\to C$ be two Tate resolutions.
\begin{enumerate}
\item[(i)] 
Any morphism $f\colon E_{\geq-d}\to F_{\geq-d}$ of differential graded Lie
algebras such that $p_F\circ f=p_E$ extends to a morphism $E\to F$ of Tate
resolutions.
\item[(ii)] 
Any two morphisms $f_0,f_1\colon E\to F$ of Tate resolutions are homotopic,
Namely, there is a morphism $h\colon E\to F\otimes_A A[t,dt]$ with
$\mathrm{id}\otimes\epsilon_j\circ h=f_j$, $j\in\{0,1\}$. Here
$\epsilon_j\colon A[t,dt]\to A$ is the map $t\mapsto j$, $dt\mapsto 0$.
\item[(iii)] 
Every morphism $f\colon E\to F$ of Tate resolution admits an inverse up to
homotopy: there is a morphism $g\colon F\to E$ of Tate resolutions so that
$f\circ g$ and $g\circ f$ are homotopic to the identity. In particular all
morphisms of Tate resolutions of $C$ are quasi-isomorphisms.
\end{enumerate}
\end{lemma}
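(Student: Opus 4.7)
The plan is to prove (i), (ii), (iii) in order by standard obstruction theory, using repeatedly that the quasi-isomorphism $p_F\colon F\to C$ (and, for (ii), its extension $\tilde F:=F\otimes_A A[t,dt]\to C$, still a quasi-isomorphism by K\"unneth since $A[t,dt]$ has cohomology $A$ concentrated in degree $0$) forces $F$ and $\tilde F$ to be acyclic in every negative degree. All three parts rest on the fact that morphisms of Tate resolutions are $A$-algebra maps, so in particular are the identity on the degree-$0$ subring $A$.

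For (i), I would extend $f$ one filtration step at a time along $E_{\geq-d}\subset E_{\geq-(d+1)}\subset\cdots$. By Tate's recursive construction, $E_{\geq-(e+1)}=E_{\geq-e}[T_1,\dots,T_m]$ where the new free generators $T_i$ of degree $-(e+1)$ satisfy $dT_i=c_i$ for cocycles $c_i\in E_{\geq-e}^{-e}$ whose classes are killed in $C$. To extend the DGA map $f$ it suffices to choose $f(T_i)\in F^{-(e+1)}$ with $df(T_i)=f(c_i)$, and such a preimage exists because $f(c_i)$ is a cocycle in $F$ with $p_F(f(c_i))=p_E(c_i)=0$ in $C$, hence a coboundary. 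For (ii), I would build the homotopy $h\colon E\to\tilde F$ by the same induction: on $A\subset E$ let $h$ be the inclusion (which matches $f_0$ and $f_1$ since both are the identity on $A$), and given a new generator $T$ of degree $-n\leq-1$ with $dT=c$, the cocycle $h(c)\in\tilde F^{-n+1}$ vanishes in $H^\bullet(\tilde F)\cong C$ by the analogous argument, so $h(c)=dw$ for some $w\in\tilde F^{-n}$. For each $j\in\{0,1\}$ the element $\epsilon_j(w)-f_j(T)\in F^{-n}$ is a cocycle in a negative degree, hence equals $du_j$ for some $u_j\in F^{-n-1}$. Setting
\[
h(T):=w-d\bigl((1-t)\,u_0+t\,u_1\bigr)
\]
yields an element of $\tilde F^{-n}$ with $dh(T)=h(c)$ and $\epsilon_j h(T)=f_j(T)$ for $j=0,1$, completing the induction step.

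For (iii), applying (i) with $d=0$ to the identity $A\to A$, viewed as a DGA morphism $F_{\geq0}\to E_{\geq0}$ compatible with $p_E,p_F$ (both restrict on $A$ to the canonical projection $A\to C$), produces a morphism of Tate resolutions $g\colon F\to E$. Then $g\circ f$ and $\mathrm{id}_E$ are both morphisms of Tate resolutions $E\to E$, so by (ii) they are homotopic; similarly $f\circ g\sim\mathrm{id}_F$. Passing to cohomology gives $g_*\circ f_*=\mathrm{id}$ and $f_*\circ g_*=\mathrm{id}$, so $f$ is a quasi-isomorphism. I expect the main technical point to be the two-step interpolation in the induction for (ii): one must simultaneously match the differential condition and both endpoint conditions, which forces one to first choose an arbitrary preimage $w$ of $h(c)$ and then correct it by the coboundary $d\bigl((1-t)u_0+tu_1\bigr)$. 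The remainder is routine DG-algebra bookkeeping, and no novel ingredient beyond the acyclicity of $F$ and $\tilde F$ in negative degrees is needed.
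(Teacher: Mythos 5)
Your proposal is correct and is exactly the argument the paper has in mind: its proof of Lemma \ref{lemma-1} is a two-sentence remark that the required maps are determined on free generators by images subject to cohomological conditions, solvable recursively by acyclicity of the resolution in lower degrees. You have simply written out that standard induction in full (including the correct two-step correction $h(T)=w-d\bigl((1-t)u_0+tu_1\bigr)$ needed to match both endpoints in part (ii)), so there is nothing to add.
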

The proof is standard: the required maps (extension of $f$, homotopy, inverse
up to homotopy) on free algebras are uniquely determined by the images of
generators, that are to obey cohomological conditions. The existence of images
obeying the conditions follows recursively by degree by the acyclicity of the
resolution in the previous degrees.

The next result is a stronger statement relating Tate resolutions of an object
by an isomorphism (rather than a quasi-isomorphism) after tensoring with a
semi-free acyclic algebra.

Let $\Sym_A(V\oplus V[1])$ the acyclic symmetric algebra on a free graded
$A$-module $V$ with differential defined on generators $V\oplus V[1]=\oplus
(V^i\oplus V^{i+1})$ as $d(v\oplus w)=w\oplus 0$. If $E\to C$ is a Tate
resolution, then also $E\otimes_A \Sym_A(V\oplus V[1])\to C$, sending $V\oplus
V[1]$ to zero,  is a Tate resolution and the obvious map $E\to E\otimes_A
\Sym_A(V\oplus V[1])$ is a quasi-isomorphism of Tate resolutions.

\begin{theorem}\label{t-111} 
Let $d\geq 0$, $C\in C_A$.
Let $f\colon E\to F$ be a morphism of Tate resolutions of $C$ which is an
isomorphism in degrees $\geq -d+1$. Then there is an isomorphism in
$C_A$
\[
E\otimes_A \Sym_A(V\oplus V[1])\cong F\otimes_A \Sym_A(W\oplus W[1]),
\]
restricting to $f$ in degrees $\geq -d+1$,
for some free graded $A$-modules $V,W$ concentrated in degree $\leq -d$. 
\end{theorem}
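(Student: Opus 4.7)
I proceed by an iterative construction that extends the given isomorphism one degree lower at each stage. Starting from $f\colon E \to F$ iso in degrees $\geq -d+1$, at stage $k \geq 0$ I construct a dg-algebra morphism
\[
\Phi_k\colon E \otimes_A \Sym_A(V_{\leq k} \oplus V_{\leq k}[1]) \longrightarrow F \otimes_A \Sym_A(W_{\leq k} \oplus W_{\leq k}[1])
\]
in $C_A$ that is an isomorphism in degrees $\geq -d-k+1$, where $V_{\leq k}$ and $W_{\leq k}$ are free $A$-modules concentrated in degrees $\{-d, -d-1, \ldots, -d-k+1\}$. Passing to the direct limit $k \to \infty$, $V := \bigcup_k V_{\leq k}$ and $W := \bigcup_k W_{\leq k}$ are free $A$-modules concentrated in degree $\leq -d$, and $\Phi := \lim_k \Phi_k$ is the desired isomorphism.

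The core step is a single-degree extension: given $g\colon E' \to F'$ a morphism of Tate resolutions of $C$ that is an isomorphism in degrees $\geq -e+1$, obtain an iso in degrees $\geq -e$ after tensoring with acyclic pieces. I enlarge $E'$ to $\tilde E' := E' \otimes_A \Sym_A(V_{F'}^{-e} \oplus V_{F'}^{-e}[1])$ and $F'$ to $\tilde F' := F' \otimes_A \Sym_A(V_{E'}^{-e} \oplus V_{E'}^{-e}[1])$; both $\tilde E'$ and $\tilde F'$ then have the same free $A$-module $V_{E'}^{-e} \oplus V_{F'}^{-e}$ as their generators in degree $-e$. I extend $g$ to a dg morphism $\tilde g$ by specifying it on the new generators: for $v \in V_{E'}^{-e}$ I set $\tilde g(v) := g(v) + v^{\mathrm{new}}$, where $v^{\mathrm{new}} \in V_{E'}^{-e,\mathrm{new}} \subset \tilde F'$ is the corresponding element of the acyclic piece (the chain map condition holds because $v^{\mathrm{new}}$ is a cocycle and $g$ is a chain map in higher degrees); for $w \in V_{F'}^{-e,\mathrm{new}} \subset \tilde E'$ I choose an $A$-linear lift $u(w) \in E'^{-e}$ with $\delta_{E'} u(w) = g^{-1}(\delta_{F'} w)$, write $u(w) = L(w) + u(w)^{\mathrm{dec}}$ with $L(w) \in V_{E'}^{-e}$, and set
\[
\tilde g(w) := w^{\mathrm{orig}} - g(u(w)) - L(w)^{\mathrm{new}}.
\]
The lift $u(w)$ exists because $g^{-1}(\delta_{F'} w)$ lies in the image of $\delta_{E'}$: by vanishing of $H^{-e+1}(E')$ for $e \geq 2$, and by equality of the images of $\delta_{E'}$, $\delta_{F'}$ in $A$ (both being the augmentation ideal) for $e = 1$. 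A direct computation gives $\delta \tilde g(w) = 0$, and $\tilde g$ extends to degree $-e-1$ generators (both the original $V_{E'}^{-e-1}$ and the acyclic $V_{F'}^{-e}[1]$) via similar lifts using $H^{-e}(F') = 0$.

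The main obstacle is to verify that $\tilde g$ is genuinely an isomorphism in degrees $\geq -e$, not merely a dg morphism. By multiplicativity this reduces to checking that the induced $A$-linear map on degree $-e$ generators modulo decomposables,
\[
M = \begin{pmatrix} \bar g & \mathrm{id} - \bar g L \\ \mathrm{id} & -L \end{pmatrix}\colon V_{E'}^{-e} \oplus V_{F'}^{-e} \longrightarrow V_{F'}^{-e} \oplus V_{E'}^{-e},
\]
is invertible. The block row operation $R_1 \leftarrow R_1 - \bar g R_2$ reduces $M$ to $\begin{pmatrix} 0 & \mathrm{id} \\ \mathrm{id} & -L \end{pmatrix}$, which is manifestly invertible regardless of $\bar g$ or $L$; this cancellation is the crucial observation, bypassing any need to compare $\bar g$ with the cocycle structure of $V_{F'}^{-e}$. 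Combined with the inherited iso on the decomposable part (a block lower-triangular structure with $\tilde g$-iso on both diagonal blocks), $\tilde g$ is an isomorphism in degree $-e$, hence in degrees $\geq -e$. Iterating this single-step extension yields the theorem.
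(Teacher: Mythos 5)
Your construction is essentially identical to the paper's proof of Theorem \ref{t-111}: the same one-degree-at-a-time induction (Lemma \ref{lemma-2}), the same symmetric adjunction of acyclic generators copying the other resolution's degree $-e$ generators, and the same formulas $v\mapsto g(v)+v^{\mathrm{new}}$ and $w\mapsto w^{\mathrm{orig}}-g(u(w))-L(w)^{\mathrm{new}}$ (the paper takes $u(w)=g_{d-1}(w)$ for a globally chosen homotopy inverse $g_{d-1}$ rather than an ad hoc lift, which is immaterial). The only cosmetic difference is how invertibility is verified --- you row-reduce the induced block matrix on degree $-e$ indecomposables, while the paper writes down the symmetric candidate inverse $g_d$ and checks $g_d\circ f_d=\mathrm{id}=f_d\circ g_d$ on generators; you should also add the paper's one-line reduction replacing the projective modules of degree $-e$ generators by free ones, since the theorem asserts $V,W$ free.
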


\begin{proof}
We prove inductively the following statement.

\begin{lemma}\label{lemma-2}
Let $f_{d-1}\colon E\to F$ be a morphism of Tate resolutions of $C$, which
restricts to an isomorphism $E_{\geq -(d-1)}\to F_{\geq -(d-1)}$ for some
$d>0$. Then there are graded free $A$-modules $V_d,W_d$ of finite rank
concentrated in degree $-d$ and a morphism of Tate resolutions
\[
f_d\colon
E\otimes_A \Sym_A(V_d\oplus V_d[1])\to F\otimes_A \Sym_A(W_d\oplus W_d[1]),
\]
restricting to an isomorphism in degree $\geq-d$ and coinciding with $f_{d-1}$
on $E_{\geq -(d-1)}$.
\end{lemma}

By using this Lemma and starting from any morphism $f_0\colon E\to F$ of Tate
resolutions, we obtain a morphism $f$ as in the claim of the theorem with
$V=\oplus V_d$ and $W=\oplus W_d$.

To prove the Lemma, we first of all notice that if $E=\Sym_A(W)$ we can
assume without loss of generality that the projective $A$-module $W^{-d}$ is 
free: if it is not, we replace $W$ by $W\oplus U[d]\oplus U[d+1]$, where
$W^{-d}\oplus U$ is free and the differential is extended so that $\delta\colon
U[d+1]\to U[d][1]$ is the identity. Similarly we can assume that the
generators of degree $-d$ of $F$ form a free $A$-module. 

Let $T_1,\dots,T_n$ be a basis of the space of generators of
degree $-d$ of $E$ as in \eqref{equation-1} and $S_1,\dots,S_m$ be a basis of
the space generators of degree $-d$ of $F$. We then take $V_d$ to be a vector
space of dimension $m$ with basis $S_1',\dots, S_m'$. Then
\[
\Sym_A(V_d\oplus V_d[1]) = A[S_1',\dots,S_m',S_1'',\dots,S_m''],
\quad \deg(S_j'')=-d-1,\quad d\,S_j''=S_j'.
\]
Similarly,
\[
\Sym_A(W_d\oplus W_d[1]) = A[T_1',\dots,T_n',T_1'',\dots,T_n''],
\quad\deg(T_i'')=-d-1,\quad d\,T_i''=T_i'.
\]
Let $g_{d-1}\colon F\to E$ be a morphism extending the inverse of the
restriction of $f_{d-1}$ to $E_{\geq-(d-1)}$.
Such an extension exists by Lemma \ref{lemma-1} (i).
Let $f_d$ coincide with $f_{d-1}$ on $E_{\geq-(d-1)}$ and similarly for $g_d$. We set
\begin{equation}\label{equation-2}
f_d(T_i)=f_{d-1}(T_i)+T_i',\qquad g_d(S_j)=g_{d-1}(S_j)+S_j'
\end{equation}
This defines $f_d$ as a morphism of graded algebras $E_{\geq-d}\to F_{\geq-d}$.
Since $d\,T_i'=0$, $f_d$ commutes with differentials on $E_{\geq-d}$. The same holds for $g_d$.
The next definition is devised to make $g_d$ the inverse of $f_d$ in degree $-d$:
\[
f_d(S_j')=S_j-f_d(g_{d-1}(S_j)),\qquad g_d(T_i')=T_i-g_d(f_{d-1}(T_i)).
\]
The right-hand sides are already defined since $g_{d-1}(S_j)\in E_{\geq-d}$ and $f_{d-1}(T_i)\in F_{\geq-d}$. Also,
since $f_d$ coincides with $f_{d-1}=g_{d-1}^{-1}$ in degree $\geq -d+1$, $f_d$, and similarly $g_d$,
commutes with the differential:
\[
d( f_d(S_j'))=dS_j-f_d(g_{d-1}(dS_j))=dS_j-f_{d-1}(g_{d-1}(dS_j))=0=f_d(dS_j').
\]
The inversion property is first checked on $T_i$, $S_j$.
\begin{eqnarray*}
g_d\circ f_d(T_i)&=&g_d(f_{d-1}(T_i))+g_d(T_i')\\
&=&
g_d(f_{d-1}(T_i))+
T_i-g_d(f_{d-1}(T_i))\\
&=&
T_i.
\end{eqnarray*}
This and the induction hypothesis proves that $g_d\circ f_d$ is the identity on
$E_{\geq-d}$.  Similarly, $f_d\circ g_d$ is the identity on $F_{\geq-d}$.  We
use these facts for the next check: 
\begin{eqnarray*}
    g_d\circ f_d(S_j')&=&g_d(S_j)-g_d\circ f_d(g_{d-1}(S_j))\\
  &=&
    S_j'+g_{d-1}(S_j)-g_{d-1}(S_j)\\
  &=&
    S_j'.
\end{eqnarray*}
Similarly, $f_d(g_d(T_i'))=T_i'$.

We have constructed an isomorphism of differential graded algebras
\[
f_d\colon
(E\otimes_A \Sym_A(V_{d}\oplus V_d[1]))_{\geq-d}\to (F\otimes_A \Sym_A(W_d\oplus W_d[1]))_{\geq-d},
\]
with inverse $g_d$. By Lemma \ref{lemma-1}, $f_d$ admits an extension to a
morphism of Tate resolution $E\otimes_A \Sym_A(V_{d}\oplus V_d[1])\to
F\otimes_A \Sym_A(W_d\oplus W_d[1])$.  Any such extension (in particular
defining $f_d(T_i'')$) has the properties required in the claim of the Lemma.
\end{proof}

\section{Gluing sheaves of differential graded algebra}\label{appB}
\centerline{by Tomer M. Schlank}

\bigskip

The aim of this Appendix is to describe how to  glue sheaves of differential $P_0$-algebras.

More precisely, let $X$ be a non-singular quasi-projective variety. 
For an open subvariety $U \subset X$ we denote by $\mathfrak{A}(U)$ the category of sheaves  of differential $P_0$-algebras on $U$.
Let  $U_0,\dots,U_n$ be a finite affine cover of $X$. Further let $A_i \in \mathfrak{A}(U_i) $ be a sheaf of differential $P_0$-algebras on $U_i$, our goal is to glue all the $A_i$'s to a sheaf of differential $P_0$-algebras $\mathbf{A} \in \mathfrak{A}(X)$  such that $\mathbf{A}|_{U_i}$ is quasi-isomorphic $A_i$. To better understand the issues at hand consider first the case where we are trying to glue sheaves given on two open subset $U_0, U_1$. In this case our gluing data consist of object $A_i \in \mathfrak{A}(U_i), \! 0 \leq i \leq 1$, and a quasi-isomorphism $K_{0,1} :A_0|_{U_0 \cap U_1} \to A_1|_{U_0 \cap U_1}$. Here we already see the first difference between classical gluing of sheaves and gluing of sheaves in our case, since we only assume that $K_{0,1}$ is a quasi-isomorphism and not necessarily an isomorphism as in the classical case. The difference from the classical case becomes even more apparent when we consider 3 open sets  $U_0, U_1, U_2.$
In this case in addition to  $A_i \in \mathfrak{A}(U_i), \! 0 \leq i \leq 2$ and the quasi-isomorphism $K_{0,1},K_{0,2},K_{1,2}$ our gluing data will consist of a homotopy $K_{0,1,2}$ between $K_{1,2}\circ K_{0,1}$ and $K_{0,2}$. Note that when gluing classical sheaves one will assume that the two maps $K_{1,2}\circ K_{0,1}$ and $K_{0,2}$ are the same.

To conclude we have several steps to go through so we can glue our sheaves of  differential $P_0$-algebras. The first step will be to define gluing data for  sheaves of differential $P_0$-algebras. The second step will be to show that given such gluing data we indeed can construct the desired $\mathbf{A}$. The last step will be to show that in the situation described in this paper we will indeed have such gluing data.

The datum and argument below are very categorical in nature. So we first have to take good care of the notations.

\subsection{Notation}
Throughout this appendix we fix a non-singular quasi-pro\-ject\-ive variety $X$.
Let $\mathcal{S}$ be the category of simplicial sets.

For an open subvariety $U \subset X$ we denote by $\mathfrak{A}(U)$ the category of sheaves of differential $P_0$-algebras on $U$. The de Rham algebras $\Omega_n^\bullet
=\Omega^\bullet(|\Delta^n|)$ of polynomial differential forms on 
the geometric $n$-simplices form a simplicial differential graded commutative
algebra (DGCA) $\Omega^\bullet$. Let us
denote by the same letter the functor 
\[
\Omega^\bullet\colon \mathcal{S} \to \mathrm{DCGA}, \qquad
K\mapsto \mathcal S(K,\Omega^\bullet)
\]
from the category of simplicial sets to the category of DCGA (so that 
$\Omega^\bullet(\Delta^n)=\Omega^\bullet_n$).   
For a simplex $\Delta^n \in \mathcal{S},A \in \mathfrak{A}(U)$ we denote
$$A^{\Delta^n} := A\otimes \Omega^{\bullet}(\Delta^n)\in \mathfrak{A}(U).$$
Similarly for every simplicial set $K \in \mathcal{S}$ we can define
$$A^{K} := A\otimes \Omega^{\bullet}(K)\in \mathfrak{A}(U).$$
For every $A_0,A_1 \in \mathfrak{A}(U)$ we define the simplicial set
$$\mathfrak{M}_{U}(A_0,A_1) \in \mathcal{S},$$
$$\mathfrak{M}_{U}(A_0,A_1)_n := \mathrm{Hom}_{\mathfrak{A}(U)}(A_0,A_1^{\Delta^n}).$$
Note that we can view this also as 
$\mathrm{Hom}_{\mathfrak{A}(U)\otimes\Omega^\bullet(\Delta^n)}(A_0^{\Delta^n},A_1^{\Delta^n}),$ 
so that compositions of maps in $\mathfrak{M}_U$ are
defined.
Given two open subsets $U \subset V \subset X$,
We denote by
$$\dow^V_U : \mathfrak{A}(V) \to \mathfrak{A}(U)$$
$$ A \mapsto A\dow^V_U,$$
the restriction functor, $\dow^V_U$ has a right adjoint which is the pushforward functor, we denote this functor by
$$\upa_U^V : \mathfrak{A}(U) \to \mathfrak{A}(V)$$
$$ A \mapsto A\upa_U^V.$$

\begin{lemma}\label{l:updown}
The functors $\dow^V_U$ and $\upa_U^V$, have the following properties:
\begin{enumerate}
\item $\dow^V_U$ is the left adjoint of $\upa_U^V$.
\item Let $U \subset V \subset W\subset X$; we have $$\dow^W_V\dow^V_U= \dow^W_U,$$ $$ \upa_U^V\upa_V^W= \upa_U^W. $$
\item Let $U,V \subset W$; we have $$ \upa_{U}^W \dow^{W}_V = \dow^U_{U\cup W} \upa_{U \cup W}^V.$$
\end{enumerate}

\end{lemma}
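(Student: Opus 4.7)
The plan is as follows. The three assertions are standard categorical properties of pullback and pushforward along open immersions, applied in the category of sheaves of differential $P_0$-algebras; since both $\dow^V_U$ and $\upa_U^V$ are constructed sheaf-theoretically and preserve the algebra structure pointwise, nothing beyond a verification on sections is required. I would handle the three parts in sequence.

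For part (1), I would exhibit the unit and counit of the adjunction explicitly. For $A\in\mathfrak{A}(V)$ and an open $W\subset V$, the unit $\eta_A\colon A\to \upa_U^V \dow^V_U A$ sends a section over $W$ to its restriction to $W\cap U$, using the identification $(\upa_U^V \dow^V_U A)(W)=A(W\cap U)$. For $B\in\mathfrak{A}(U)$, the counit $\varepsilon_B\colon \dow^V_U \upa_U^V B\to B$ is the identity, since for $W\subset U$ one has $(\upa_U^V B)(W)=B(W\cap U)=B(W)$. The triangle identities then reduce to the tautology that restricting twice equals restricting once.

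For part (2), the equality $\dow^W_V \dow^V_U=\dow^W_U$ is immediate by evaluation on sections: for any $A\in\mathfrak{A}(W)$ and any open $W'\subset U$, both sides produce $A(W')$ with coinciding restriction maps. The identity $\upa_U^V \upa_V^W=\upa_U^W$ then follows formally by taking right adjoints in the first equality and invoking the uniqueness of adjoints established in part~(1); alternatively it can be checked directly using $(\upa_U^W B)(W')=B(W'\cap U)$ on both sides.

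For part (3), the stated equality is a Beck--Chevalley base-change identity for a pullback square of open immersions of the form $U\cap V\to V$, $U\cap V\to U$, $U\to W$, $V\to W$. The verification is once more local on sections: evaluating both composites on a test sheaf and any open subset in the common domain of definition, one finds that the resulting sections and their restriction maps depend only on the relevant intersections of the open set with $U$ and $V$, and therefore coincide. I expect this to be the most bookkeeping-intensive step---the main obstacle is to pin down carefully which intersection each section lives over in each term---but there is no content beyond the functoriality already established in parts~(1) and~(2).
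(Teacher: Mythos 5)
Your proof is correct, and in fact the paper offers no proof of this lemma at all: it is stated in the appendix as a standard fact about restriction and pushforward along open immersions, so your explicit verification on sections (unit given by restriction to the intersection, counit the identity, triangle identities tautological, and the composition and base-change identities checked on $(\upa_U^V B)(W)=B(W\cap U)$) is precisely the argument the authors are implicitly relying on. One point worth making explicit: part (3) as printed, $\upa_U^W\dow^W_V=\dow^U_{U\cup W}\upa_{U\cup W}^V$, does not typecheck (with $U\subset W$ one has $U\cup W=W$, and $\dow^U_W$ would require $W\subset U$); the intended identity is the base change over the intersection, $\upa_U^W\dow^W_V=\dow^U_{U\cap V}\upa_{U\cap V}^V$, which is what you prove and which is the form actually used later in the appendix, e.g.\ in the computation $\upa_{U_I}^X\dow^X_{U_J}=\dow^{U_I}_{U_I\cap U_J}\upa^{U_J}_{U_I\cap U_J}$. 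You silently corrected the typo; it would be better to flag it. Beyond that, the only caveat is that these functors are later used as simplicial functors between the enriched mapping spaces $\mathfrak{M}$, so strictly one also wants the adjunction and the identities to be compatible with $-\otimes\Omega^\bullet(\Delta^n)$; this is immediate from your section-level description since everything is $\Omega^\bullet(\Delta^n)$-linear, but it is worth a sentence.
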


For $n \in \mathbb{N}$ we denote by $[n]$ the ordered set $0 < 1 <\cdots < n$. For any $ I \subset [n] $ we denote
$P_I$ to be the partially ordered set of subsets of $I$ containing the first and last element.
Now  let $$\{a_0,\dots,a_d\}  = I \subset [n],$$
and let $0<i<d$, We denote:
\begin{align*}
I_{\leq i} &:= \{a_0,\dots,a_i\}, \quad I_{\geq i} := \{a_i,\dots,a_d\},\quad
I_{\hat{i}} := \{a_0,\dots,\hat{a_i},\dots,a_d\},\\
&a_0 = n(I),\qquad
a_d = x(I).
\end{align*}
We introduce the following maps:
 $$T_{I,i}:P_{I_{\leq i}} \times P_{I\geq i} \to P_{I}, \qquad T_{I,i}(K,L) = K \cup L,$$
$$S_{I,i}:P_{I_{\hat{i}}}  \to P_{I},\qquad S_{I,i}(K) = K,$$
$$\hat{S}:P_{I_{\hat{i}}}  \to P_{I},\qquad \hat{S}_{I,i}(K) = K \cup \{i\}.$$
Let $\mathfrak{U} = \{U_0, U_1,\dots,U_n\}$  be a cover of $X$. For every non-empty $I \subset [n]$ We denote
$$U_I:= \bigcap_{i \in I}  U_i.$$
For any partially ordered set $P$ we denote by $N(P) \in \mathcal{S}$ the nerve of $P$.
Note that  for $I = \{a_0,\dots,a_d\} \subset [n], \quad d\geq 1,$ $N(P_I)$ is naturally isomorphic as a simplicial set to the $d-1$-dimensional cube $(\Delta^1)^{d-1}$.
We denote by $\partial N(P_I) \subset N(P_I)$ the boundary of the cube.
Note that $\partial N(P_I)$ is the union of the $2(d-1)$ faces of $N(P_I)$, each of which is a $d-2$-dimensional cube.
It easy to see that each of these faces is one of the images of the maps
$$N(S_{I,i}):N(P_{I_{\hat{i}}})  \to N(P_{I})$$
$$N(\hat{S}_{I,i}):N(P_{I_{\hat{i}}})  \to N(P_{I}),$$
for $0<i<d$.

\subsection{Gluing data}
In this section we shall define the gluing data required in order to glue sheaves of differential $P_0$-algebras.
Let $\mathfrak{U} := \{U_0, U_1,\dots,U_n\}$ be a finite open cover of $X$.
Let $(A_0,\dots,A_n;\{K_I\})$ consist of:
\begin{enumerate}
\item A sheaf of differential $P_0$-algebras, $A_i \in \mathfrak{A}(U_i)$ for every $i = 0,\dots,n$.
\item For every $I \subset [n]$ , $|I| \geq 2$ a map
$$K_I \in {\mathcal{S}}(N(P_I), \mathfrak{M}_{U_I}(A_{n(I)}\dow^{U_{n(I)}}_{U_I},A_{x(I)}\dow^{U_{x(I)}}_{U_I})).$$
\end{enumerate}
Then $(A_0,\dots,A_n;\{K_I\})$ are called \emph{gluing data on} $\mathfrak U$ if they satisfy the following conditions
\begin{enumerate}
\item For every $I \subset [n]$, $|I| = 2$, $K_I$ is a quasi-isomorphism.
\item For every $I = \{a_0,\dots,a_d\}$ and $0<i<d$ the diagram:
$$\xymatrix{
N(P_{I_{\leq i}}) \times N(P_{I_{\geq i}})  \ar[r]^{N(T_{I,i})} \ar[d]^-{K_{I_{\leq i}} \times K_{I_{\geq i}}} &
N(P_I) \ar[d]^{K_I} \\
\mathfrak{M}_{U_{I_{\leq i}}}
\times \mathfrak{M}_{U_{I_{\geq i}}}
\ar[r]^-{C_{I,i}}   & \mathfrak{M}_{U_{I}}(A_{a_0}\dow^{U_{a_0}}_{U_I},A_{a_d}\dow^{U_{a_d}}_{U_I} )
}$$
with $$\mathfrak{M}_{U_{I_{\leq i}}}=\mathfrak{M}_{U_{I_{\leq i}}}(A_{a_0}\dow^{U_{a_0}}_{U_{I_{\leq i}}},A_{a_i}\dow^{U_{a_i}}_{U_{I_{\leq i}}}),
$$ $$
\mathfrak{M}_{U_{I_{\geq i}}}=\mathfrak{M}_{U_{I_{\geq i}}}(A_{a_i}\dow^{U_{a_i}}_{U_{I_{\geq i}}},A_{a_d}\dow^{U_{a_d}}_{U_{I_{\geq i}}} ),$$
commutes.
Here $C_{I,i} = \bigcirc \circ (\dow^{U_{I_{\leq i}}}_{U_I} \times \dow^{U_{I_{\geq i}}}_{U_I})$ and $\bigcirc$ is the composition morphism.

\item For every $I = \{a_0,\dots,a_d\}$ and $0<i<d$ the diagram:
$$\xymatrix{
N(P_{I_{\hat i}}) \ar[r]^{N(S_{I,i})} \ar[d]^-{K_{I_{\hat i}}} &
N(P_I) \ar[d]^{K_I} \\
\mathfrak{M}_{U_{I_{\hat i}}}(A_{a_0}\dow^{U_{a_0}}_{U_{I_{\hat i}}},A_{a_d}\dow^{U_{a_d}}_{U_{I_{\hat i}}} )\ar[r]^{\dow^{U_{I_{ \hat i}}}_{U_I}} &
\mathfrak{M}_{U_{I}}(A_{a_0}\dow^{U_{a_0}}_{U_I},A_{a_d}\dow^{U_{a_d}}_{U_I} )
}$$
commutes.
\end{enumerate}
\begin{remark}
For $I= \{i\} \subset [n]$ it will be convenient to denote:

$$K_I = \mathrm{Id}_{A_i} \in \mathfrak{M}_{U_{i}}(A_{i},A_{i}). $$  
\end{remark}
\begin{remark}
If the $K_I$ are given only for  $I \subset [n]$, $2 \leq |I| \leq d$ for some integer $d$,
we shall call $(A_0,\dots,A_n;\{K_I\})$ \emph{$d$-partial gluing data}.
Note that this definition makes sense since the compatibility conditions for $K_I$ involve only $K_J$ with
$|J| < |I|$.
\end{remark}

We shall define a \emph{morphism} between two gluing data on $\mathfrak{U}$, $(A_0,\dots,A_n;K_I)$, $(B_0,\cdots,B_n;L_I)$
as a collection of morphism
$$ f_i \in \mathfrak{M}_{U_i}(A_i,B_i),$$
such that the following squares commute.
$$
\xymatrix{
A_{n(I)}\dow^{U_{n(I)}}_{U_I} \ar[d]^{f_n(I)\dow^{U_{n(I)}}_{U_I}} \ar[r]^{K_I} &
A^{N(P_I)}_{x(I)}\dow^{U_{x(I)}}_{U_I}\ar[d]^{f_x(I)\dow^{U_{x(I)}}_{U_I}} \\
B_{n(I)}\dow^{U_{n(I)}}_{U_I} \ar[r]^{L_I} & B^{N(P_I)}_{x(I)}\dow^{U_{x(I)}}_{U_I}
}
$$

For $(A_0,\dots,A_n)$ given  sheaves of differential $P_0$-algebras, $A_i \in \mathfrak{A}(U_i)$ we say that $(\{K_I\})$ are \emph{gluing data} for $(A_0,\dots,A_n)$ iff $(A_0,\dots,A_n;\{K_I\})$ are gluing data on $\mathfrak{U} := \{U_0, U_1,\dots,U_n\}$.

\subsection{The space of gluing data}
Let $(A_0,\dots,A_n)$ be sheaves of differential $P_0$-algebras, $A_i \in \mathfrak{A}(U_i)$.
The set of all possible gluing data  $(\{K_I\})$ for  $(A_0,\dots,A_n)$ can be considered as the zero simplices of a naturally defined simplicial set.
\begin{definition}
Let  $(A_0,\dots,A_n)$ be sheaves of differential $P_0$-algebras, $A_i \in \mathfrak{A}(U_i)$.
We define the \emph{space of gluing data for $(A_0,\dots,A_n)$ } to be the simplicial set $\mathbb{G}(A_0,\dots,A_n)$
such that $\mathbb{G}(A_0,\dots,A_n)_m $ consists of all collections of maps
$$ K_I \in {\mathcal{S}}(\Delta^m \times N(P_I), \mathfrak{M}_{U_I}(A_{n(I)}\dow^{U_{n(I)}}_{U_I},A_{x(I)}\dow^{U_{x(I)}}_{U_I})),$$
satisfying compatibly conditions as above.
\end{definition}

Given a finite cover of $X$,  $\mathfrak{U} := \{U_0, U_1,\dots,U_n\}$  we can take the disjoint union of  $\mathbb{G}(A_0,\dots,A_n)$ over all the possible   $(A_0,\dots,A_n)$ , $A_i \in \mathfrak{A}(U_i)$. We denote the resulting space by $\mathbb{G}(\mathfrak{U})$.

Similarly the collection of all morphism between gluing data on a cover $\mathfrak{U}$  can be considered as the zero simplices of a naturally defined simplicial set.

Namely let $(A_0,\dots,A_n),(B_0,\dots,B_n)$  be sheaves of differential $P_0$-algebras, $A_i$, $B_i \in \mathfrak{A}(U_i)$.
We shall denote by $\mathbb{H}((A_0,\dots,A_n),(B_0,\dots,B_n))$ the simplicial set of morphisms between gluing data for  $(A_0,\dots,A_n)$, $(B_0,\dots,B_n)$. Namely the $m$-simplices of $\mathbb{H}((A_0,\dots,A_n)$, $(B_0,\dots,B_n))$ are  collections of maps:
$$ K_I \in {\mathcal{S}}(\Delta^m \times N(P_I), \mathfrak{M}_{U_I}(A_{n(I)}\dow^{U_{n(I)}}_{U_I},A_{x(I)}\dow^{U_{x(I)}}_{U_I})),$$
$$ L_I \in {\mathcal{S}}(\Delta^m \times N(P_I), \mathfrak{M}_{U_I}(B_{n(I)}\dow^{U_{n(I)}}_{U_I},B_{x(I)}\dow^{U_{x(I)}}_{U_I})),$$
$$
f_i \in {\mathcal{S}}(\Delta^m , \mathfrak{M}_{U_i}(A_{i},B_{i})),$$
satisfying the natural computability conditions analogous to those specified in the previous subsection.
Again one takes the disjoint union of all possible such $(A_0,\dots,A_n)$ and $(B_0,\dots,B_n)$ and get a simplicial set $\mathbb{H}(\mathfrak{U})$.
Thus we get that the collection of gluing data on $\mathfrak{U}$ can be best described as a category object $(\mathbb{G}(\mathfrak{U}),\mathbb{H}(\mathfrak{U}))$ in the category of simplicial sets. A category object in a category is by definition a pair of objects $O$ and $M$ with two morphisms
$t,d\colon M\to O$ (target, source), a morphism $O\to M$ (idenity) and a
composition morphism $c\colon M\times_OM\to M$ satisfying compatibility conditions of a category.
We take $\mathbb{N}(\mathfrak{U})$ to be the nerve bi-simplicial set corresponding to this category object.
$$\mathbb{N}(\mathfrak{U})_m :=  \mathbb{H}(\mathfrak{U})\times_{\mathbb{G}(\mathfrak{U})}\cdots\times_{\mathbb{G}(\mathfrak{U})} \mathbb{H}(\mathfrak{U}), $$
where the product is taken over $m$ copies of $\mathbb{H}(\mathfrak{U})$  in an analogous fashion to the classical nerve construction.
The diagonal of the bi-simplicial set $\mathbb{N}(\mathfrak{U})$ is 
a simplicial set. We denote this simplicial set by $\mathbb{N}^{\Delta}(\mathfrak{U})$

\subsection{Gluing}
We shall prove the following statement.
\begin{theorem}
Let $\mathfrak{U} = \{U_0, U_1,\dots,U_n\}$ be a finite open cover of $X$
and
let $(A_0,\dots,A_n;\{K_I\}_I) $  be gluing data on  $\mathfrak{U}$. 
Then there exist a sheaf of differential $P_0$-algebra $\mathbf{A} \in \mathfrak{A}(X)$, such that for all $0\leq i \leq n$, $\mathbf{A} \dow^{X}_{U_i}$ is quasi-isomorphic to $A_i$.
\end{theorem}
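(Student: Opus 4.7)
The plan is to realize $\mathbf{A}$ as a (homotopy) limit of a cubical diagram on $X$ assembled from the local data. For each non-empty $I\subset[n]$ I would reinterpret the gluing datum
\[
K_I\in \mathcal{S}(N(P_I),\,\mathfrak{M}_{U_I}(A_{n(I)}\dow^{U_{n(I)}}_{U_I},\,A_{x(I)}\dow^{U_{x(I)}}_{U_I}))
\]
as a morphism $K_I\colon A_{n(I)}\dow^{U_{n(I)}}_{U_I}\to \mathcal{A}_I$ in $\mathfrak{A}(U_I)$, where $\mathcal{A}_I := A_{x(I)}\dow^{U_{x(I)}}_{U_I}\otimes \Omega^{\bullet}(N(P_I))$ carries the total differential (the sum of the differential of $A_{x(I)}$ and the de Rham differential on the contractible cube $N(P_I)$). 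The face maps $N(S_{I,i})$ and $N(\hat S_{I,i})$ together with the restriction functors induce projections $\mathcal{A}_I\to \mathcal{A}_{I_{\hat i}}\dow^{U_{I_{\hat i}}}_{U_I}$, and axioms (2)--(3) in the definition of gluing data say exactly that these projections intertwine the $K$'s.

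Next, I would push the $\mathcal{A}_I$ forward to $X$ via $\upa^X_{U_I}$. Using Lemma \ref{l:updown}, the face maps and the restrictions between $U_I$'s assemble the family $\upa^X_{U_I}\mathcal{A}_I$ into a strictly commutative diagram of sheaves of differential $P_0$-algebras on $X$, indexed by the poset of non-empty subsets of $[n]$. I then define
\[
\mathbf{A} \;:=\; \lim_{\emptyset\neq I\subset[n]} \upa^X_{U_I}\,\mathcal{A}_I,
\]
with product, bracket and differential induced componentwise; this is manifestly an object of $\mathfrak{A}(X)$. Projection to the one-element index $\{i\}$ followed by restriction to $U_i$ yields a canonical morphism $\varepsilon_i\colon \mathbf{A}\dow^{X}_{U_i}\to A_i$.

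To show $\varepsilon_i$ is a quasi-isomorphism, I would work locally. Shrinking to a small affine $W\subset U_i$ so that only subsets $I$ containing $i$ contribute, each such $I$ comes with an augmentation $\mathcal{A}_I|_W\to A_i|_W$ given by evaluating at the vertex of $N(P_I)$ labelled by $i$. Since $N(P_I)$ is a contractible cube, the polynomial Poincar\'e lemma makes this augmentation a quasi-isomorphism of $\Omega^{\bullet}(N(P_I))$-modules; combined with the hypothesis that $K_J$ is a quasi-isomorphism for $|J|=2$, every structure map of the restricted cubical diagram is a quasi-isomorphism. A spectral sequence filtered by the cardinality $|I|$ (equivalently, a Reedy-type totalization argument) then identifies $\mathbf{A}|_W$ with $A_i|_W$ up to quasi-isomorphism.

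The main obstacle will be justifying that the \emph{strict} inverse limit above computes the correct \emph{homotopy} limit in the category of sheaves of differential $P_0$-algebras. This amounts to checking that the cubical diagram is Reedy fibrant, which rests on the fact that tensoring with the polynomial forms $\Omega^{\bullet}(N(P_I))$ provides functorial path objects -- the same simplicial-enrichment mechanism used for the contractibility statement in Proposition~\ref{p-contractible}. Alternatively, and perhaps more concretely, I would construct $\mathbf{A}$ as an explicit totalization $\prod_I \upa^X_{U_I}(A_{x(I)}^{N(P_I)})$ equipped with a twisted total differential that encodes the $K_I$'s (a Čech--de Rham style construction), and verify the restriction quasi-isomorphism by writing down a contracting homotopy modelled on the Poincar\'e lemma on the cube.
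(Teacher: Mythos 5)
Your construction is genuinely different from the paper's. The appendix proves this theorem by induction on the number of opens: it glues two sheaves at a time, defining $A_0\coprod_{K_{0,1}}A_1$ as the strict limit of a small diagram in which the path object $A_1^{\Delta^1}$ plays the role of a mapping path space, verifies the local quasi-isomorphism by noting that the projection $A_1^{\Delta^1}\to A_1$ is a stalkwise surjective quasi-isomorphism (so the strict pullback is already a homotopy pullback), and in the induction step converts gluing data on $\{U_0,\dots,U_n\}$ into gluing data on $\{U_0,\dots,U_{n-2},U_{n-1}\cup U_n\}$. Your one-shot totalization $\prod_I\upa^X_{U_I}(A_{x(I)}^{N(P_I)})$ with boundary compatibilities is essentially the object the paper arrives at \emph{a posteriori} (Lemma \ref{ss:All} and the weighted-limit discussion), so the two constructions agree; what the induction buys is that homotopy-invariance of the limit comes for free from iterated pullbacks along stalkwise surjections, whereas in your approach the Reedy fibrancy of the cubical diagram --- which you correctly single out as the main obstacle --- must be established separately (the paper itself only sketches this and explicitly omits the full verification).

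Two steps in your verification that $\mathbf{A}\dow^{X}_{U_i}$ is quasi-isomorphic to $A_i$ would fail as written. First, you cannot shrink to a small affine $W\subset U_i$ ``so that only subsets $I$ containing $i$ contribute'': a point of $U_i$ may lie in many other $U_j$, and the factors indexed by $I\not\ni i$ do not vanish near such a point. Even in the two-set case the terms indexed by $\{1\}$ and $\{0,1\}$ persist after restriction to $U_0$; the correct statement is that they contribute acyclically relative to $A_0$, which is exactly what the paper's pullback argument proves and what your spectral sequence would have to establish (a Čech-descent-type collapse, or a cofinality statement for a suitable subdiagram). Second, evaluating $\mathcal{A}_I=A_{x(I)}\dow^{U_{x(I)}}_{U_I}\otimes\Omega^\bullet(N(P_I))$ at a vertex of the cube lands in $A_{x(I)}$, not in $A_i$; to compare with $A_i$ you must invert the quasi-isomorphisms $K_{\{i,x(I)\}}$, which can only be done on cohomology or through a zig-zag, so the ``augmentation $\mathcal{A}_I|_W\to A_i|_W$'' does not exist as a strict map. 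Both points are repairable, but as stated the local verification is the weak link, and it is precisely the part that the paper's two-at-a-time induction handles cleanly.
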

\begin{proof}
The proof will be done by induction on the number of open sets in the cover,
i.e., we shall first construct the gluing for $n=1$, then we shall show that given gluing data on $\mathfrak{U} = \{U_0, U_1,\dots,U_n\}$, one can
define gluing data on $\mathfrak{V} = \{U_0, U_1,\dots,U_{n-2},U_{n-1} \cup U_{n} \}$, by gluing $A_{n-1}$ and $A_{n}$.
\begin{definition}

Let $\mathfrak{U} = \{U_0, U_1\}$ be a cover of $X$ (i.e. $X = U_0 \cup U_1$)
and let
$$A_0 \in \mathfrak{A}(U_0),$$
$$A_1\in \mathfrak{A}(U_1),$$
$$K_{0,1}\in \mathrm{Hom}_{\mathfrak{A}(U_{0,1})}(A_0\dow^{U_0}_{U_{0,1}}, A_1\dow^{U_1}_{U_{0,1}}),$$
be gluing data on $\mathfrak{U}$.
We denote by
$A_0\coprod_{K_{0,1}}A_1$
the limit  of the following diagram in $\mathfrak{U}(X)$:
$$\xymatrix{
A_{0}\upa_{U_0}^{X} \ar[dd]_{\upa_{U_{0,1}}^{X}\circ K_{0,1}\circ \dow^{U_0}_{U_{0,1}}} &  A_{1}^{\Delta^1}\dow^{U_1}_{U_{0,1}}\upa_{U_{0,1}}^{X}\ar[ddl]^{\upa_{U_{0,1}}^{X} \circ \beta_0}\ar[ddr]_{\upa_{U_{0,1}}^{X}\circ\beta_1} &
A_1 \ar[dd]^{\upa_{U_{0,1}}^{X}\circ\dow^{U_1}_{U_{0,1}}} \upa_{U_1}^{X} \\
\\
A_1 \dow^{U^1}_{U_{0,1}}\upa_{U_{0,1}}^{X} & \empty & A_1 \dow^{U^1}_{U_{0,1}}\upa_{U_{0,1}}^{X}
}$$
Where $\beta_0,\beta_1$ are the maps corresponding to the two maps $\Delta^0 \to \Delta^1$

\end{definition}
\begin{lemma}
Given data as above, $(A_0\coprod_{K_{0,1}}A_1) \dow ^{X}_{U_i}$ is naturally quasi-iso\-mor\-phic to  $A_i$.
\end{lemma}
\begin{proof}
First we prove this for $i=0$: since the restriction functor  commutes with limits, we have that $(A_0\coprod_{K_{0,1}}A_1)\dow^{X}_{U_0}$
is the limit of the diagram:
$$\xymatrix{
A_{0}\ar[dd]  &  A_{1}^{\Delta^1}\dow^{U_1}_{U_{0,1}}\upa_{U_{0,1}}^{U_0} \ar[ddl]\ar[ddr]&
A_1 \dow^{U_1}_{U_{0,1}}\upa_{U_{0,1}}^{U_0} \ar[dd]^{\cong}\\
\\
A_1 \dow^{U^1}_{U_{0,1}}\upa_{U_{0,1}}^{U_0} & \empty & A_1 \dow^{U^1}_{U_{0,1}}\upa_{U_{0,1}}^{U_0}
}$$
Thus we have  a pullback diagram
$$\xymatrix{
(A_0\coprod_{K_{0,1}}A_1 )\dow^{X}_{U_0} \ar[r]\ar[d]& A_{1}^{\Delta^1}\dow^{U_1}_{U_{0,1}}\upa_{U_{0,1}}^{U_0} \ar[d]\\
A_0 \ar[r] & A_{1}\dow^{U_1}_{U_{0,1}}\upa_{U_{0,1}}^{U_0}
}$$
Now we need to show that  left vertical map is a quasi-isomorphism. Note that this can be checked stalk-wise. Now the right vertical map is stalk-wise surjective and quasi-isomorphism (i.e., with acyclic kernel) and thus so is the left vertical map.

For $i=1$:
since the restriction functor  commutes with limits we have that $(A_0\coprod_{K_{0,1}}A_1)\dow^{X}_{U_1}$
is the limit of the diagram:
$$\xymatrix{
A_{0}\dow^{U_0}_{U_{0,1}}\upa_{U_{0,1}}^{U_1} \ar[dd]^{K_{0,1}}  &  A_{1}^{\Delta^1}\dow^{U_1}_{U_{0,1}}\upa_{U_{0,1}}^{U_1} \ar[ddl]^{\beta_0}\ar[ddr]^{\beta_1}&
A_1 \ar[dd]\\
\\
A_1 \dow^{U^1}_{U_{0,1}}\upa_{U_{0,1}}^{U_1} & \empty & A_1 \dow^{U^1}_{U_{0,1}}\upa_{U_{0,1}}^{U_1}
}$$
Let $P$ be the pullback of left side of the diagram. Since $K_{0,1}$, $\beta_0$ and $\beta_1$ are quasi-isomorphisms and $\beta_0$ is surjective on stalks,
we get that the map induced by $\beta_1$, $b_1:P\to   A_1 \dow^{U^1}_{U_{0,1}}$ is a quasi-isomorphism. Further, it is easy to see  that $b_1$ is also surjective on stalks. Thus as in the case $i=0$ we get that the map $(A_0\coprod_{K_{0,1}}A_1)\dow^{X}_{U_1}  = P\times_{A_1 \dow^{U^1}_{U_{0,1}}} A_1 \to A_1$ is a quasi-isomorphism.
\end{proof}
We shall now take care of the induction step in the proof

\begin{prop}
Let $\mathfrak{U} = \{U_0, U_1,\dots,U_n\}$ be a finite open cover of $X$.
Let $(A_0,\dots,A_n;\{K_I\}_I)$ be gluing data on  $\mathfrak{U}$.
Consider the finite cover of $X$, $$\mathfrak{V} = \{V_0:=U_0, V_1:=U_1,\dots,V_{n-2}:=U_{n-2},V_{n-1}:=U_{n-1} \cup U_n\}.$$
There exist gluing data  $(B_0,\dots,B_{n-1};\{L_I\}_I)$ on $\mathfrak{V}$,
such that $B_{n-1}  = A_0\coprod_{K_{0,1}}A_1$ and $B_i = A_i$ for $0\leq i\leq n-2$,
\end{prop}
\begin{proof}
The proof will be done by an explicit construction. A complete description of the construction will take some effort and place. In order to simplify the exposition and improve readability we will follow the following notations and guidelines.
\begin{enumerate}
\item We shall construct the required data, but leave checking the compatibility conditions to the reader.
\item We shall use repeatedly properties  (1),(2) and (3)  from Lemma ~\ref{l:updown}. When we show that two objects are isomorphic by using one or more of these properties will use subscript next to the equal sign: e.g.\  if $ O \subset U \subset W \subset X$ we write
    $$\mathfrak{M}_{U}(A\dow^W_U,B\dow^W_O \upa_O^U) =_{1,2} \mathfrak{M}_{O}(A\dow^W_O,B\dow^W_O).$$
\item When two mapping spaces are isomorphic by applying natural adjunctions we will abuse notation and treat them as equal.
\end{enumerate}
Since $$B_i := A_i,\quad 0 \leq i \leq n-2,$$
$$B_{n-1}  = A_0\coprod_{K_{0,1}}A_1,$$
we are left with defining $L_I$ for every   $\varnothing \neq I \subset \{0,\dots,n-1\}$, $|I| >1$.
If $n-1 \not \in I$ we will just take $L_I := K_I$.
Now assume $n-1 \in I$; we denote $I_+ := I \cup \{n\}$, $I_-  = I_+ \backslash \{n-1\}$.
Note that we then have $V_I  = U_{I} \cup U_{I_-}$,  $U_{I} \cap U_{I_-} = U_{I_+}$.
For every $I$ such that $n-1 \in I$ we require a map
$$L_{I} \in \mathcal{S}(N(P_I), \mathfrak{M}_{V_I}(B_{n(I)}\dow^{U_{n(I)}}_{V_I},B_{n-1}\dow^{V_{n-1}}_{V_I})). $$
Note that $|I|>1$ so $n(I)< n-1$ and $B_{n(I)} = A_{n(I)}$. Further, since $B_{n-1}  = A_0\coprod_{K_{0,1}}A_1$ is a limit, $L_I$ can be described as three maps
$$L^{0}_{I} \in \mathcal{S}(N(P_I),\mathfrak{M}_{V_I}(A_{n(I)}\dow^{U_{n(I)}}_{V_I},(A_{n-1}\upa_{U_{n-1}}^{V_{n-1}})\dow^{V_{n-1}}_{V_I})), $$
$$L^{+}_{I} \in \mathcal{S}(N(P_I), \mathfrak{M}_{V_I}(A_{n(I)}\dow^{U_{n(I)}}_{V_I},(A_{n}^{I}\dow^{U_{n}}_{U_{n-1,n}}\upa_{U_{n-1,n}}^{V_{n-1}})\dow^{V_{n-1}}_{V_I})), $$
$$L^{-}_{I} \in \mathcal{S}(N(P_I), \mathfrak{M}_{V_I}(A_{n(I)}\dow^{U_{n(I)}}_{V_I},(A_n \upa_{U_n}^{V_{n-1}})\dow^{V_{n-1}}_{V_I}) ),$$
satisfying certain compatibility conditions. Again we will construct $L^0_I,L^{+}_I$ and $L^{-}_I$ and leave
checking the compatibility to the reader.
Let us start with defining $L^{+}_I$.
Note that we have
$$(A_{n}^{I}\dow^{U_n}_{U_{n-1,n}}\upa_{U_{n-1,n}}^{V_{n-1}})\dow^{V_{n-1}}_{V_I} =_{2,3}  $$
$$A_{n}^{I} \dow^{U_n}_{U_{I_+}}\upa_{U_{I_+}}^{V_I}. $$
Thus
$$\mathfrak{M}_{V_{I}}(A_{n(I)}\dow^{U_{n(I)}}_{V_I},(A_{n}^{I}\dow^{U_n}_{U_{n-1,n}}\upa_{U_{n-1,n}}^{V_{n-1}})\dow^{V_{n-1}}_{V_I}) =_{2,3} $$
$$\mathfrak{M}_{V_{I}}(A_{n(I)}\dow^{U_{n(I)}}_{V_I},A_{n}^{I} \dow^{U_n}_{U_{I_+}}\upa_{U_{I_+}}^{V_I} ) =_1 $$
$$\mathfrak{M}_{U_{I_1}}(A_{n(I)}\dow^{U_{n(I)}}_{V_{I_+}},A_{n}^{I} \dow^{U_n}_{U_{I_1}}). $$
Now we have:
$$S(N(P_I), \mathfrak{M}_{V_{I}}(A_{n(I)}\dow^{U_{n(I)}}_{V_I},(A_{n}^{I}\dow^{U_n}_{U_{n-1,n}}\upa_{U_{n-1,n}}^{V_{n-1}})\dow^{V_{n-1}}_{V_I})) = $$
$$S(N(P_I), \mathfrak{M}_{U_{I_+}}(A_{n(I)}\dow^{U_{n(I)}}_{V_{I_+}},A_{n}^{I} \dow^{U_n}_{U_{I_+}} )) = $$
 $$= S(N(P_I)\times I, \mathfrak{M}_{U_{I_+}}(A_{n(I)}\dow^{U_{n(I)}}_{V_{I_+}},A_{n} \dow^{U_n}_{U_{I_+}} ))  =$$
 $$ =S(N(P_{I_+}), \mathfrak{M}_{U_{I_+}}(A_{n(I)}\dow^{U_{n(I)}}_{V_{I_+}},A_{n} \dow^{U_n}_{U_{I_+}} )).  $$
Thus we can take $L^{+}_I = K_{I_+}$.

We shall now define
$$L^{0}_{I} \in \mathcal{S}(N(P_I),\mathfrak{M}_{V_I}(A_{n(I)}\dow^{U_{n(I)}}_{V_I},(A_{n-1}\upa_{U_{n-1}}^{V_{n-1}})\dow^{V_{n-1}}_{V_I}) ). $$
Note that we have
$$(A_{n-1}\upa_{U_{n-1}}^{V_{n-1}})\dow^{V_{n-1}}_{V_I} =_3 A_{n-1}\dow^{U_{n-1}}_{U_I}\upa_{U_I}^{V_I}.$$
Thus
$$\mathfrak{M}_{V_I}(A_{n(I)}\dow^{U_{n(I)}}_{V_I},(A_{n-1}\upa_{U_{n-1}}^{V_{n-1}})\dow^{V_{n-1}}_{V_I})=_3 $$
$$\mathfrak{M}_{V_I}(A_{n(I)}\dow^{U_{n(I)}}_{V_I},A_{n-1}\dow^{U_{n-1}}_{U_I}\upa_{U_I}^{V_I})=_{1,2} $$
$$\mathfrak{M}_{U_I}(A_{n(I)}\dow^{U_{n(I)}}_{U_I},A_{n-1}\dow^{U_{n-1}}_{U_I}).$$
Thus we can take $L^0_{I}= K_I.$

Finally we define
$$L^{-}_{I} \in \mathcal{S}(N(P_I), \mathfrak{M}_{V_I}(A_{n(I)}\dow^{U_{n(I)}}_{V_I},(A_n \upa_{U_n}^{V_{n-1}})\dow^{V_{n-1}}_{V_I})). $$
Note that
$$(A_n \upa_{U_n}^{V_{n-1}})\dow^{V_{n-1}}_{V_I} =_3 A_n \dow^{U_n}_{U_{I_-}} \upa_{U_{I_-}}^{V_{I}}.$$
Thus
$$\mathfrak{M}_{V_I}(A_{n(I)}\dow^{U_{n(I)}}_{V_I},(A_n \upa_{U_n}^{V_{n-1}})\dow^{V_{n-1}}_{V_I}) =_{3}$$
$$\mathfrak{M}_{V_I}(A_{n(I)}\dow^{U_{n(I)}}_{V_I}, A_n \dow^{U_n}_{U_{I_-}} \upa_{U_{I_-}}^{V_{I}}) =_{1}$$
$$\mathfrak{M}_{U_{I_-}}(A_{n(I)}\dow^{U_{n(I)}}_{V_I},A_n \dow^{U_n}_{U_{I_-}}).$$
So we can take $L^{-}_I = K_{I_-}$.

\end{proof}
\end{proof}

Above we discussed the process  of gluing  inductively, now we can also describe the final result:

\begin{lemma}\label{ss:All}
Let $(A_0,\dots,A_n;\{K_I\}_I) $ be gluing data on $\mathfrak{U} = \{U_0, U_1,\dots,U_n\}$.
Then the glued sheaf of differential $P_0$-algebra $\mathbf{A} \in \mathfrak{A}(X)$ , constructed in the previous section in the equalizer of a diagram
$$\prod \limits_{I \subset [n], 1\leq |I| } A_{x(I)}^{(\Delta^1)^{|I|-1}} \dow^{U_{x(I)}}_{U_I} \upa_{U_I}^{X} \rightrightarrows
\prod \limits_{I \subset [n], 1\leq |I| } A_{x(I)}^{\partial((\Delta^1)^{|I|-1})} \dow^{U_{x(I)}}_{U_I} \upa_{U_I}^{X} $$
one of the map is induced by the inclusions $\partial((\Delta^1)^{|I|-1}) \subset (\Delta^1)^{|I|-1}$ and the second one is defined by
 $(A_0,\dots,A_n;\{K_I\}_I) $
 \end{lemma}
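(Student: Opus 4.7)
The plan is to proceed by induction on $n$, showing at each stage that the sheaf $\mathbf{A}$ built by the iterated pushout construction of the preceding Proposition coincides with the equalizer in the statement. Throughout, one uses that (a) pushforward $\upa$ is a right adjoint and so commutes with limits, and (b) $A^{K}$ is functorial in $K$, sending colimits of simplicial sets to limits of sheaves, and in particular $A^{\partial \Delta^1}\cong A\times A$ and more generally $A^{\partial((\Delta^1)^k)}$ is the iterated fiber product along the $2k$ face maps.

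For $n=0$ the only nonempty $I$ is $\{0\}$, and $(\Delta^1)^0$ is a point with empty boundary, so the equalizer collapses to $A_0\upa_{U_0}^X=\mathbf{A}$. For $n=1$ one unpacks the defining limit of $A_0\coprod_{K_{0,1}}A_1$: the terms $A_0\upa_{U_0}^X$, $A_1\upa_{U_1}^X$ contribute to the $|I|=1$ part of the first product, while $A_1^{\Delta^1}\dow^{U_1}_{U_{0,1}}\upa_{U_{0,1}}^X$ is precisely the $I=\{0,1\}$ contribution. The two maps $\beta_0,\beta_1$, together with $K_{0,1}$ and the identity, encode exactly the ``restrict to $\partial\Delta^1$'' map versus the map built from the gluing datum $K_I$, so the limit agrees with the claimed equalizer.

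For the inductive step, apply the Proposition to replace $\mathfrak{U}=\{U_0,\dots,U_n\}$ by $\mathfrak{V}=\{V_0,\dots,V_{n-1}\}$ with $V_{n-1}=U_{n-1}\cup U_n$ and gluing data $(B_0,\dots,B_{n-1};\{L_J\})$, where $B_{n-1}=A_{n-1}\coprod_{K_{n-1,n}}A_n$. By induction, $\mathbf{A}$ is the equalizer indexed by nonempty $J\subset[n-1]$ of the pair of maps into the product over $J$ of $B_{x(J)}^{(\Delta^1)^{|J|-1}}\dow^{V_{x(J)}}_{V_J}\upa_{V_J}^X$. Now substitute: for $J$ not containing $n-1$, $B_{x(J)}=A_{x(J)}$ and $L_J=K_J$, giving directly the $I=J$ term of the target equalizer. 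For $J\ni n-1$, the formula $L_J=(L_J^0,L_J^+,L_J^-)=(K_J,K_{J_+},K_{J_-})$ together with the description of $B_{n-1}$ as a pullback expresses the single $J$-term in the $\mathfrak{V}$-equalizer as a further limit over the three indices $I=J,\ I=J\cup\{n\},\ I=(J\setminus\{n-1\})\cup\{n\}$. Since $I\subset[n]$ with $|I|\geq 1$ is uniquely one of: (i) $I\subset[n-2]\cup\{n-1\}$ not containing $n$, (ii) $I$ containing both $n-1$ and $n$, or (iii) $I$ containing $n$ but not $n-1$, all nonempty $I$ are enumerated exactly once. The extra $\Delta^1$ direction in $(\Delta^1)^{|I|-1}$ appearing when one passes from $J$ to $I=J\cup\{n\}$ is provided by the cylinder $A_n^{\Delta^1}$ built into the definition of $B_{n-1}$; commuting the limits (using the adjunction and Lemma~\ref{l:updown}(3) to move $\dow$ past $\upa$ over the affine overlap $U_{I_+}$) produces precisely the factor $A_{x(I)}^{(\Delta^1)^{|I|-1}}\dow^{U_{x(I)}}_{U_I}\upa_{U_I}^X$ with the correct face maps.

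The main obstacle is purely combinatorial bookkeeping: one must verify that the face maps $N(S_{I,i}), N(\hat S_{I,i})$ entering the compatibility conditions on the $K_I$ correspond, under the iterated substitution, to the standard face inclusions into $\partial((\Delta^1)^{|I|-1})$, and that no $I$ is counted twice nor omitted when the nested pushouts and $L_J$-decompositions are expanded. Once this matching is recorded carefully (most cleanly by identifying both sides with the limit over the poset of nonempty subsets of $[n]$, weighted by the cubical cochains of $(\Delta^1)^{|I|-1}$), the remaining verifications are formal consequences of the adjunction properties in Lemma~\ref{l:updown} and the fact that $\Omega^\bullet$ sends colimits of simplicial sets to limits of algebras.
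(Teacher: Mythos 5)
Your proof is correct and follows exactly the route the paper takes: the paper's entire proof of this lemma is the single line ``By induction on $n$,'' and your argument is precisely that induction (base cases $n=0,1$ from the defining limit of $A_0\coprod_{K_{0,1}}A_1$, inductive step via the Proposition replacing $U_{n-1},U_n$ by $V_{n-1}=U_{n-1}\cup U_n$ and expanding the triples $(L_J^0,L_J^+,L_J^-)=(K_J,K_{J_+},K_{J_-})$), carried out in full. The index bookkeeping you describe — partitioning nonempty $I\subset[n]$ according to membership of $n-1$ and $n$, with the extra $\Delta^1$ factor supplied by the cylinder in the definition of $B_{n-1}$ — is exactly the ``completely formal but tedious check'' the paper leaves to the reader.
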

 \begin{proof}
By induction on $n$.
 \end{proof}

\subsection{Gluing as a functor}
Given a finite cover $\mathfrak{U} = \{U_0, U_1,\dots,U_n\}$, gluing is a way to construct from gluing data on $\mathfrak{U} = \{U_0, U_1,\dots,U_n\}$  new gluing data on  $\mathfrak{V} = \{U_0, U_1,\dots,U_{n-2},U_{n-1} \cup U_{n} \}$. It is clear from the construction that it is functorial with respect to morphisms of gluing data. However, the collection of  gluing data has an additional structure as a category object in simplicial sets and to understand the behavior of gluing with respect to  this structure one needs additional care. Let us e.g.\ assume that we have  $\mathfrak{U} = \{U_0, U_1\}$.
A path in $(\mathbb{G}(\mathfrak{U}))_1$ is given by sheaves of $P_0$-algebras $A_i \in \mathfrak{A}(U_i)$ and a map
$$K_{0,1}:A_0\dow_{U_{0,1}} \to A_1^{\Delta^1}\dow_{U_{0,1}},$$
by restricting this path to its two endpoints we get two different gluing data,
 $$K^0_{0,1}:A_0\dow_{U_{0,1}} \to A_1\dow_{U_{0,1}},$$
  $$K^1_{0,1}:A_0\dow_{U_{0,1}} \to A_1\dow_{U_{0,1}}.$$
 We shall denote the result of gluing according to  $K^i_{0,1}$ by $B^i \in \mathfrak{A}(U_0 \cup U_1) $.
 There is no simplicial path connecting $B^0$ and $B^1$, but it is possible to connect them  by a zigzag of maps.
 Namely the map $$K_{0,1}:A_0\dow_{U_{0,1}} \to A_1^{\Delta^1}\dow_{U_{0,1}}$$ naturally defines a map
  $$K^{0,1}_{0,1}:A_0^{\Delta^1}\dow_{U_{0,1}} \to A_1^{\Delta^1}\dow_{U_{0,1}}.$$
  We denote the resulting gluing by $B^{0,1} \in \mathfrak{A}(U_0 \cup U_1) $.
  Note that restricting to the endpoints results in a zigzag:
  $$B^0 \leftarrow B^{0,1} \rightarrow B^1.$$
  Similarly a triangle   $$K_{0,1}:A_0\dow_{U_{0,1}} \to A_1^{\Delta^2}\dow_{U_{0,1}}$$
  gives rise to a commutative diagram of the form :
  $$\xymatrix{
  & & B^{1} & & \\
  &B^{0,1} \ar[ldd]\ar[ru]& & B^{0,2}\ar[lu] \ar[rdd]& \\
  & & B^{0,1,2} \ar[uu] \ar[lu] \ar[ru] \ar[rrd] \ar[lld]\ar[d]  & & \\
  B^{0} & & B^{0,2}\ar[rr]\ar[ll] & & B^{2}
   }$$
  To conclude, in general we have a natural map from
   $$\mathbb{N}^{\Delta}(\mathfrak{U})_m \to \mathcal{S}(\Delta_m \times sd \Delta^m, \mathbb{N}^{\Delta}(\mathfrak{V})),$$
   where $sd$ denote the barycenteric subdivision.
   Thus we get a map 
       $$\mathbb{N}^{\Delta}(\mathfrak{U}) \to \mathbb{N}^{\Delta}(\mathfrak{V}),$$
   defined up to homotopy, ``realizing'' the gluing construction.

\subsection{Gluing as homotopy limit}
In this section we shall describe the process of gluing as a certain homotopy limit in the category
$\mathfrak{A}(X)$. Usually when discussing homotopy limits one uses the language of model categories.
However, the definition of a homotopy limit actually depends only on the weak equivalences, and can be defined using only them (see ~\cite{Kan}).  
Although we have not presented a model category structure on $\mathfrak{A}(X)$ we do have a natural notion of weak equivalences as the quasi-isomorphisms in  $\mathfrak{A}(X)$.

Further complication results from the fact that we take a homotopy limit of a simplicial
functor 
$$F:D_n \to \mathfrak{A}(X),$$
 where $D_n$ is a  simplicial diagram.
The notion of a limit in the enriched situation is discussed in ~\cite{Kel82} and some  general results about homotopy limits in this realm are discussed in \cite{Shu06}. In this section we shall use the following definitions. 

\begin{definition}[\cite{Kan}, \S 33]  A \emph{ homotopical category} is a category $\mathfrak{M}$ equipped
with a class of morphisms called weak equivalences that contains all the identities
and satisfies the 2-out-of-6 property i.e.: if $hg$ and $gf$ are weak equivalences, then so
are $f$, $g$, $h$, and $hgf$.
\end{definition}

The category one gets from inverting all the weak equivalences is denoted by $\mathrm{Ho}(\mathfrak{M})$.
Note that there is a natural map:

$$\delta_\mathfrak{M}: \mathfrak{M}\to \mathrm{Ho}(\mathfrak{M})$$

Given a functor $$G:\mathfrak{M}  \to \mathfrak{N}$$ between two homotopical categories we say that
$G$ is \emph{homotopical} if $G$ takes weak equivalences to weak equivalences. Similarly given a functor
$$G:\mathfrak{M}  \to \mathrm{Ho}(\mathfrak{N})$$ we say that
$G$ is \emph{homotopical} if $G$ takes weak equivalences to isomorphisms .
\begin{definition}
Let $\mathfrak{M}$ and $\mathfrak{N}$ be two homotopical category
and let
$$G: \mathfrak{M} \to \mathfrak{N}$$ be a functor.
 A \emph{right derived functor of $G$} is a functor
 $$RG : \mathfrak{M} \to  ？o(\mathfrak{N})$$ equipped
with a comparison map
$$ \delta_{\mathfrak{N}} G \to RG $$ such that $RG$ is homotopical and initial  among
homotopical functors equipped with maps from  $ \delta_{\mathfrak{N}} G$.

\end{definition}
\begin{definition}
Let $\mathfrak{M}$ and $\mathfrak{N}$ be two homotopical category
and let
$$G: \mathfrak{M} \to \mathfrak{N}$$ be a functor.
A \emph{point-set right derived functor of $G$} is a homotopical functor
$$\mathbb{R}G: \mathfrak{M} \to \mathfrak{N}$$ equipped
with a comparison map $G \to \mathbb{R}G$ such that the induced map $ \delta_{\mathfrak{N}} G \to \delta_{\mathfrak{N}}\mathbb{R}G$ makes
$\delta_{\mathfrak{N}}\mathbb{R}G$ into a right derived functor of $G$.
\end{definition}

To present gluing as a homotopy limit we shall first explain how given a gluing data $(A_0,\dots,A_n;K_I)$ one can construct a diagram $F:D_n \to \mathfrak{A}(X)$ were $D_n$ depends on $n$ alone (but $F$ depends on   $(A_0,\dots,A_n;K_I)$). Then, the result of gluing  $(A_0,\dots,A_n;K_I)$ would turn out to be the homotopy limit of this diagram i.e the result of applying a  point-set right derived functor  of the limit functor  on $F$.


Now let $\mathfrak{U} = \{U_0, U_1,\dots,U_n\}$ be a finite cover of
$X$.  Let $(A_0,\dots,A_n;\{K_I\}_I) $ be gluing data on
$\mathfrak{U}$. We shall construct a finite simplicial category $D_n$
and a simplicial functor $F:D_n \to \mathfrak{A}(X)$ where the result
of gluing along $(A_0,\dots,A_n;\{K_I\}_I) $ is a homotopy limit of
$F$.

First we shall describe $D_n$.

\begin{definition}
Let $\varnothing \neq I \subset J \subset [n]$ be two non-empty subsets  we denote
$$H(I,J) :=  \{t \in J |x(I) \leq t  \} \subset J$$
We also denote
$$\mathbb{P}_{I,J}:= P_{H(I,J)} $$
For convenience  for   $\varnothing \neq I , J \subset [n]$ such that $I \not \subset J $ we shall take $P_{I,J} = \varnothing$
to be the empty poset.
\end{definition}

\begin{definition}
for $n\geq0$ We denote by $D_n$ the simplicial category such that
\begin{enumerate}
\item The objects of $D_n$ are non-empty subsets, $I \subset [n]$,
\item Given $I,J \in Ob D_n$, we take $$\mathrm{Map}_{D_n}(I,J):= N(\mathbb{P}_{I,J}).$$
\item For $I \subset J\subset K$, we take  the composition
$$N(\mathbb{P}_{I,J}) \times N(\mathbb{P}_{J,K}) \xrightarrow{c_{I,J,K}} N(\mathbb{P}_{I,K})$$
to be the nerve of the map induced by taking union of sets. (note that in all other cases there is only one possible map) \end{enumerate}
\end{definition}

We are now ready to define our simplicial functor $F$.
$$F:D_n \to \mathfrak{A}(X)$$

First, on objects we define:

$$F(I) := A_{x(I)}\dow^{U_{x(I)}}_{U_{I}} \upa_{U_{I}}^X \in \mathfrak{A}(X)$$

now it is left to define for every $I,J$,

$$F:  N(\mathbb{P}_{I,J})= \mathrm{Map}_{D_n}(I,J) \to \mathfrak{M}_{X}(A_{x(I)}\dow^{U_{x(I)}}_{U_{I}} \upa_{U_{J}}^X,A_{x(J)}\dow^{U_{x(J)}}_{U_{J}} \upa_{U_{J}}^X) $$

Clearly it is enough to consider the case $I \subset J$.
Now since $U_J \subset U_I$, we have
$$\upa_{U_{I}}^X \dow_{U_{J}}^X  = \dow^{U_I}_{U_I \cap U_J} \upa^{U_J}_{U_I \cap U_J} = \dow^{U_I}_{U_J} $$
Thus
$$ \mathfrak{M}_{X}(A_{x(I)}\dow^{U_{x(I)}}_{U_{I}} \upa_{U_{I}}^X,A_{x(J)}\dow^{U_{x(J)}}_{U_{J}} \upa_{U_{J}}^X)  = $$
$$
\mathfrak{M}_{U_{J}}(A_{x(I)}\dow^{U_{x(I)}}_{U_{I}} \upa_{U_{I}}^X \dow_{U_{J}}^X,A_{x(J)}\dow^{U_{x(J)}}_{U_{J}} )  =
$$
$$
\mathfrak{M}_{U_J}(A_{x(I)}\dow^{U_{x(I)}}_{U_{I}} \dow^{U_{I}}_{U_J},A_{x(J)}\dow^{U_{x(J)}}_{U_{J}} )  =
$$
$$
\mathfrak{M}_{U_J}(A_{x(I)}\dow^{U_{x(I)}}_{U_{J}} ,A_{x(J)}\dow^{U_{x(J)}}_{U_{J}} )
$$

We have to define:
$$F_{I,J}:  N(\mathbb{P}_{I,J}) \to \mathfrak{M}_{U_J}(A_{x(I)}\dow^{U_{x(I)}}_{U_{J}} ,A_{x(J)}\dow^{U_{x(J)}}_{U_{J}} )$$
Now recall that for $H = H(I,J)$ we have  map

$$ K_H:N(\mathbb{P}_{I,J}) = N(P_{H}) \to \mathfrak{M}_{U_H}(A_{n(H)}\dow^{U_{n(H)}}_{U_{H}} ,A_{x(H)}\dow^{U_{x(H)}}_{U_{H}} )$$

since $H \subset J$ we have also have a map
$$  K_H \dow^{U_{H}}_{U_J}:N(\mathbb{P}_{H}) \to \mathfrak{M}_{U_J}(A_{n(H)}\dow^{U_{n(H)}}_{U_{J}} ,A_{x(H)}\dow^{U_{x(H)}}_{U_{J}}).$$
Now since $n(H) = x(I),x(H)= x(J)$, We can just take $F_{I,J} = K_H \dow^{U_{H}}_{U_J}$.
We leave it to the reader to verify that the compatibility conditions on $(A_0,\dots,A_n;\{K_I\}_I) $ insures that $F$ respects composition.

Now, along the lines of ~\cite{Shu06} we have a functor
$$\lim:\mathfrak{A}(X)^{D_n} \to \mathfrak{A}(X).$$
We shall construct $\mathbb{R}\lim$ as a special form of weighted limit, i.e., 
we shall define a ``coefficients functor"
$C:D_n \to \mathcal{S}$
such that for any $F':D_n \to \mathfrak{A}(X)$ we have $\mathbb{R}\lim(F') = \lim^C F'$,
We will get the required natural transformation $\lim \to \mathbb{R}\lim$ by taking the unique natural transformation $C \to *$, where
$*$ is the constant functor on the terminal object and by using the identification   $\lim^*F = \lim F$.

We define
$C:D_n \to \mathcal{S}$
on objects to be $C(I) := N(Q_I)$ where $Q_I$ is the poset of subsets of $I$ that contain the last element $x(I)$ .
note that $N(Q_I) \cong (\Delta^1)^{|I|-1}$, Now we need to define a map
$$N(\mathbb{P}_{I,J})\times N(Q_I) \to N(Q_J).$$
Thus it is enough to define a map
$$C_{I,J}:\mathbb{P}_{I,J}\times Q_I \to Q_J.$$
for every $\varnothing \neq I \subset J \subset [n].$
We shall take:
$$C_{I,J}(K,I') = K \cup I'.$$
We leave it to the reader to check compatibly.

\begin{lemma}
Let $(A_0,\dots,A_n;K_I)$ be gluing data and let
$$F:D_n \to \mathfrak{A}(X)$$ be the corresponding functor.
 then the weighted limit $\lim^C F \in \mathfrak{A}(X)$ is isomorphic to result of gluing of according to  $(A_0,\dots,A_n;K_I)$.
\end{lemma}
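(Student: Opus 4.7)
The plan is to identify both the glued sheaf $\mathbf{A}$ and the weighted limit $\lim^C F$ with the same equalizer in $\mathfrak{A}(X)$ and thereby read off the isomorphism. By Lemma \ref{ss:All}, $\mathbf{A}$ is the equalizer of a parallel pair
$$\prod_{\varnothing \neq I \subset [n]} A_{x(I)}^{(\Delta^1)^{|I|-1}} \dow^{U_{x(I)}}_{U_I}\upa_{U_I}^X \rightrightarrows \prod_{\varnothing \neq I \subset [n]} A_{x(I)}^{\partial((\Delta^1)^{|I|-1})} \dow^{U_{x(I)}}_{U_I}\upa_{U_I}^X,$$
one arrow being restriction to the boundary of the cube, the other being assembled from the $K_I$. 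On the other side, the weighted limit admits the standard end presentation as the equalizer of
$$\prod_I F(I)^{C(I)} \rightrightarrows \prod_{I,J} F(J)^{\mathrm{Map}_{D_n}(I,J) \times C(I)},$$
the two arrows coming respectively from functoriality of $F$ (via the simplicial map $\mathrm{Map}_{D_n}(I,J) \to \mathfrak{M}_X(F(I),F(J))$) and from the coefficient structure map $C_{I,J}\colon \mathrm{Map}_{D_n}(I,J) \times C(I) \to C(J)$.

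First I would match the left-hand products. By the definitions, $C(I) = N(Q_I) \cong (\Delta^1)^{|I|-1}$ and $F(I) = A_{x(I)}\dow^{U_{x(I)}}_{U_I}\upa_{U_I}^X$, so $F(I)^{C(I)} \cong A_{x(I)}^{(\Delta^1)^{|I|-1}}\dow^{U_{x(I)}}_{U_I}\upa_{U_I}^X$ tautologically. To match the right-hand products I reindex the end by the target $J$ and observe that the image of $\bigsqcup_{I \subsetneq J} N(\mathbb{P}_{I,J}) \times N(Q_I)$ under the maps $C_{I,J}(K,I') = K \cup I'$ covers precisely the boundary $\partial N(Q_J)$ of the cube. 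Concretely, a codimension-one face of $N(Q_J) \cong (\Delta^1)^{|J|-1}$ is obtained by freezing one coordinate associated to an index $i \in J \smallsetminus \{x(J)\}$: freezing to ``absent'' produces the face coming from the $S_{J,i}$ maps (indexed by $I = J_{\hat i}$), while freezing to ``present'' produces the $\hat S_{J,i}$ face, and both are captured in the appropriate $N(\mathbb{P}_{I,J}) \times N(Q_I)$.

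Having identified the underlying products, I would finally verify that the two pairs of equalizer maps match. On the coefficient side, an element in $F(J)^{C(J)}$ is sent to its restriction along $C_{I,J}$, which by the previous step is nothing but restriction to $\partial(\Delta^1)^{|J|-1}$; this reproduces the boundary-restriction leg of the equalizer in Lemma \ref{ss:All}. On the $F$-side, an element in $F(I)^{C(I)}$ is pre-composed with $F(\sigma) \colon F(I) \to F(J)$ for a simplex $\sigma \in N(\mathbb{P}_{I,J})$, and unwinding the definition of $F$ shows that this map is $K_{H(I,J)}\dow^{U_{H(I,J)}}_{U_J}\upa_{U_J}^X$ applied after the restriction along $C_{I,J}$, exactly the $K$-twisted leg in Lemma \ref{ss:All}. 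The main obstacle is the bookkeeping: one must check that $\bigsqcup_{I \subsetneq J} N(\mathbb{P}_{I,J}) \times N(Q_I) \twoheadrightarrow \partial N(Q_J)$ realizes the cube boundary as the same colimit that appears implicitly in the gluing of boundary faces in Lemma \ref{ss:All}, and that the cocycle-type compatibility conditions satisfied by $(K_I)$ are precisely what guarantee that the parallel pair land in the correct end. Once this combinatorial match is in place, the desired isomorphism follows from the universal property of the equalizer.
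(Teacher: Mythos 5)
Your proposal is correct and follows essentially the same route as the paper: both identify $\lim^C F$ with the explicit equalizer of Lemma~\ref{ss:All} via the end presentation of the weighted limit. The paper simply declares this identification ``a completely formal but tedious check'' and stops at the induction on $n$, whereas you actually carry out the key combinatorial step (that the maps $C_{I,J}$ assemble the $N(\mathbb{P}_{I,J})\times N(Q_I)$ into the cube boundary $\partial N(Q_J)$, with the ``absent''/``present'' faces realized by $I=J_{\hat\imath}$ and $I=\{t\in J\mid t\le i\}$ respectively), which is a welcome elaboration rather than a different argument.
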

\begin{proof}
The weighed limit $\lim^C F$ can be computed as an  equalizer of the form

$$\prod_{\varnothing \neq I\subset [n]}A_{x(I)}^{N(Q_I)}\dow^{U_{x(I)}}_{U_I} \rightrightarrows \prod_{\varnothing \neq I\subset [n]}A_{x(I)}^{W_I}\dow^{U_{x(I)}}_{U_I} $$
Where $W_I$ is some simplicial set. We leave it to reader the completely formal but tedious  check that this  equalizer is isomorphic to the one described in Lemma  ~\ref{ss:All}.

\end{proof}

To complete the proof we need to show that $\lim^C$ is indeed a point set right derived functor of $\lim$.
The proof relies on two essential facts:
\begin{enumerate}
\item $D_n$ is a Reedy category and $C:D_n \to \mathcal{S}$ is Reedy cofibrant  --- this can be verified directly by computing latching objects.
\item The category $\mathfrak{A}(X)$ can be given the structure of a category of fibrant objects in a way that is compatible with the simplicial enrichment.
\end{enumerate}
However giving a complete account of this proof will exceed the scope of this appendix, thus it is  omitted.

\subsubsection{A different indexing category}
It is convenient to consider a alternative simplicial category $E_n$ which is more symmetric and comes naturally with a simplicial  functor
$$ W: E_n \to D_n$$ which is a categorical equivalence.
Thus we get that for every functor
$$F:D_n \to \mathfrak{A}(X)$$
we have
$$ \mathbb{R}\lim (F\circ W) \approx \mathbb{R}\lim (F)$$
where the homotopy limit is taken over $E_n$ in the left hand side and on $D_n$ on the right-hand side.
As a conclusion  we will have that gluing can also be considered as homotopy limit over $E_n$.

To define $E_n$ we take:
\begin{enumerate}
\item The objects of $E_n$ are all the non-empty  subsets $\varnothing \neq I \subset [n]$.
\item For $\varnothing \neq I,J \subset [n]$ we take define the poset $\mathbb{E}_{I,J}$ to be the poset of all
chains $$ I = I_0 \subset I_1 \subset \cdots \subset I_n = J$$ ordered by inclusion. We define
$$\mathrm{Map}_{E_n}(I,J) := N(\mathbb{E}_{I,J})$$
\item We take the composition to be the nerve of the map induced by taking the union of chains.
\end{enumerate}
We are now left with defining the functor
$$W:E_n \to D_n$$

On objects we shall just take $W(I) = I$. To define $W$ on morphisms it is enough to give for every $\varnothing \neq I \subset J\subset [n]$ a map

$$W_{I,J}:\mathbb{E}_{I,J}\to \mathbb{P}_{I,J}$$
we take the map:
$$W_{I,J}(I = I_0 \subset I_1 \subset \cdots \subset I_n = J) = x(I) = x(I_0) < x(I_1) < \cdots < x(I_n) = x(J) $$
note that $W$ is an isomorphism on objects and a weak equivalence on morphism spaces and thus a  categorical equivalence.

\subsection{The existence of gluing data}
In this section we shall demonstrate the existence and suitable uniqueness of gluing data in our case.
\begin{theorem}
Let  $X$ be a non-singular quasi-projective variety, $\lambda \in \Omega^1(X)$ a closed 1-form, $S_0 =  \int \lambda$.
Let  $\mathfrak{U} = \{U_0,\dots,U_n\}$ be a finite  affine cover of $X$.
Let $(M_i,T_i)$ be a BV variety with support $(U_i,S_0|_{U_i})$.
Denote $$A_i := (\mathcal{O}_{T*[-1]V_i},d_{T_i}),$$
and consider the sub simplicial sets
 $$\mathcal{G}({U_I})(T_{n(I)},T_{x(I)}) \subset \mathfrak{M}_{U_I}(A_{n(I)}\dow^{U_{n(I)}}_{U_I},A_{x(I)}\dow^{U_{x(I)}}_{U_I}).$$
Then the subspace of $\mathbb{G}(A_0,\dots,A_n)$ consisting of gluing data $\{K_I\}_I$, such that
the image of
$$K_I:N(P_I)\to  \mathfrak{M}_{U_I}(A_{n(I)}\dow^{U_{n(I)}}_{U_I},A_{x(I)}\dow^{U_{x(I)}}_{U_I})$$
lands in
$$\mathcal{G}({U_I})(T_{n(I)},T_{x(I)}) \subset \mathfrak{M}_{U_I}(A_{n(I)}\dow^{U_{n(I)}}_{U_I},A_{x(I)}\dow^{U_{x(I)}}_{U_I}),$$
is contractible. In particular it is non-empty.
\end{theorem}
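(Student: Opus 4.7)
The plan is to reduce the theorem to the statement that the target simplicial sets $\mathcal{G}(U_I)(T_{n(I)}|_{U_I},T_{x(I)}|_{U_I})$ are contractible Kan complexes (which is exactly Proposition \ref{p-contractible} applied on $U_I$; note that all restrictions $T_i|_{U_I}$ are associated with the restriction of the fixed global Tate resolution to $U_I$, so Proposition \ref{p-contractible} applies). A simplicial set is contractible (in particular non-empty) if and only if every map $\partial\Delta^m\to\mathbb{G}^{\mathrm{res}}$ extends to $\Delta^m$, where $\mathbb{G}^{\mathrm{res}}$ denotes the subspace of gluing data landing in the prescribed $\mathcal{G}$-subspaces. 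So it suffices to verify this extension property for all $m\geq 0$; the case $m=0$ yields non-emptiness.

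I would proceed by a double induction: the outer induction on $m$ provides the boundary data, and the inner induction on $|I|$ constructs the extensions $K_I\colon\Delta^m\times N(P_I)\to \mathcal{G}(U_I)(T_{n(I)}|_{U_I},T_{x(I)}|_{U_I})$. Suppose such extensions have been chosen for every $J\subsetneq I$ satisfying the compatibility relations (2) and (3) from the definition of gluing data. The compatibility relations applied on $N(P_I)$ prescribe a map
\[
\phi_I^{\mathrm{int}}\colon \Delta^m\times\partial N(P_I)\longrightarrow \mathcal{G}(U_I)(T_{n(I)}|_{U_I},T_{x(I)}|_{U_I}),
\]
since $\partial N(P_I)$ is covered by the images of $N(T_{I,i})$, $N(S_{I,i})$ and $N(\hat S_{I,i})$ on which the values of $K_I$ are dictated by compositions and restrictions of the previously constructed $K_J$. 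Combined with the given boundary map $K_I^{\partial}\colon \partial\Delta^m\times N(P_I)\to\mathcal{G}(\cdots)$, one obtains a well-defined map
\[
\psi_I\colon \partial\Delta^m\times N(P_I)\;\cup\;\Delta^m\times\partial N(P_I)\longrightarrow \mathcal{G}(U_I)(T_{n(I)}|_{U_I},T_{x(I)}|_{U_I}).
\]

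The inclusion $\partial\Delta^m\times N(P_I)\cup\Delta^m\times\partial N(P_I)\hookrightarrow \Delta^m\times N(P_I)$ is a monomorphism of simplicial sets, hence a cofibration in the Kan--Quillen model structure. Since a contractible Kan complex has the right lifting property against every such cofibration, $\psi_I$ extends to a map $K_I\colon \Delta^m\times N(P_I)\to \mathcal{G}(\cdots)$, completing the induction. The base case $|I|=2$ is immediate since $N(P_I)$ is a point and one simply extends a map from $\partial\Delta^m$ into the contractible Kan complex $\mathcal{G}(U_I)(T_{n(I)}|_{U_I},T_{x(I)}|_{U_I})$.

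The main obstacle is verifying that $\phi_I^{\mathrm{int}}$ is well defined, i.e. that the values prescribed on the various faces of $\partial N(P_I)$ agree on their pairwise intersections, and further that $\phi_I^{\mathrm{int}}$ agrees with $K_I^\partial$ on the intersection $\partial\Delta^m\times\partial N(P_I)$. Both agreements are formal consequences of the compatibility axioms (2) and (3) of gluing data applied to the $K_J$ with $J\subsetneq I$: the first reduces to the observation that two chains of compositions on a common sub-face are equal, and the second follows because the restrictions $K_J|_{\partial\Delta^m}$ coincide with $K_J^\partial$ by construction. These verifications are routine but combinatorially delicate, and consist of chasing the diagrams encoded in the maps $T_{I,i}$, $S_{I,i}$, $\hat S_{I,i}$ of the cube $N(P_I)$.
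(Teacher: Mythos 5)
Your argument is correct and is essentially the paper's own proof: the paper filters by $\mathbb{G}_d$, the spaces of $d$-partial gluing data, and shows each restriction $\mathbb{G}_{d+1}\to\mathbb{G}_d$ is a trivial Kan fibration by reducing, exactly as you do, to extending a map from $\partial\Delta^m\times N(P_I)\cup\Delta^m\times\partial N(P_I)$ to $\Delta^m\times N(P_I)$ into the contractible Kan complex $\mathcal{G}(U_I)(T_{n(I)},T_{x(I)})$ supplied by Proposition~\ref{p-contractible}. Your inner induction on $|I|$ is precisely this filtration, and the combinatorial compatibility checks you defer are likewise left to the reader in the paper.
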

\begin{proof}
First let us denote by $\mathbb{G}_d$ the space of all $d$-partial gluing data $\{K_I\}$ for $(A_0,\dots,A_n)$ such that
the image of
$$K_I:N(P_I)\to  \mathfrak{M}_{U_I}(A_{n(I)}\dow^{U_{n(I)}}_{U_I},A_{x(I)}\dow^{U_{x(I)}}_{U_I}).$$
will land
$$\mathcal{G}({U_I})(T_{n(I)},T_{x(I)}) \subset \mathfrak{M}_{U_I}(A_{n(I)}\dow^{U_{n(I)}}_{U_I},A_{x(I)}\dow^{U_{x(I)}}_{U_I}).$$
note that $\mathbb{G}_1 = *$ and our goal is to prove that $\mathbb{G}_{n+1}$ is contractible.
For this it is enough to show that  the projection map $\mathbb{G}_{d+1} \to \mathbb{G}_{d}$ is a trivial Kan fibration.
Thus we need to exhibit a lift  for diagrams of the sort:

$$\xymatrix{
\Lambda^n_m \ar[d] \ar[r] & \mathbb{G}_{d+1}\ar[d]\\
\Delta^m \ar[r] \ar@{-->}[ur] & \mathbb{G}_{d}
}$$
unraveling the definitions and compatibility conditions  and using standard adjunctions, specifying such  a lift is the same as specifying a collection of lifts:
$$\xymatrix{
\partial N(P_I) \times \Delta^m \coprod_{\partial N(P_I) \times \Lambda^n_m} N(P_I) \times \Lambda^n_m \ar[d]\ar[r] & \mathcal{G}({U_I})(T_{n(I)},T_{x(I)})\\
N(P_I)\times \Delta^m \ar@{-->}[ur]
}$$
for every $I \subset [n]$ with $|I| = d+1$.
But this all exist since   $\mathcal{G}({U_I})(T_{n(I)},T_{x(I)})$ is Kan contractible.

%
%
%
\end{proof}

\subsection{Permuting the ordering of the cover}
In this appendix we took a  minimalistic   approach to describe gluing data, i.e.,  we took the minimal amount of data required to perform the gluing. This minimality came with a price of some  symmetry breaking (as often happens). Specifically,  note that  gluing data $(A_0,\dots,A_n;K_I)$ is presented in a way which is not symmetric with respect to the ordering of the open subsets $U_0,\dots,U_n$. In this section we shall explain why the resulting glued object will be the same (up to weak equivalence) regardless  of the chosen order.

The essential point lies in the Kan contractible groupoid that we discussed above. Note that the lifting conditions we have are not sensitive to ordering. Thus one could get gluing data for any possible ordering.  Further the contractibility allows us to get homotopies that relate the different ordering. To show how this works we give a partial account of the case of two open sets, and a very rough sketch for the general case.  Let $U_0,U_1$ be a cover of $X$. By considering the groupoid above we get gluing data $(A_0,A_1;K_{0,1})$ as well as $(A_1,A_0;L_{1,0})$ when $K_{0,1}$ and $L_{1,0}$ are maps
$$K_{0,1}: A_0 \dow ^{U_0}_{U_{0,1}} \to A_1 \dow ^{U_1}_{U_{0,1}}$$
$$L_{1,0}: A_1 \dow ^{U_1}_{U_{0,1}} \to A_0 \dow ^{U_0}_{U_{0,1}}$$
Further the contractibly supply us with a homotopy:

$$H: A_0 \dow ^{U_0}_{U_{0,1}} \to A_0^{\Delta^1} \dow ^{U_0}_{U_{0,1}} $$
between the identity map and $L_{1,0}\circ K_{0,1}$ Thus we get a commutative diagram
$$
\xymatrix{
A_0\upa_{U_0}^{X}\ar[d]^{H} \ar[r]^{K_{0,1}}& A_1 \dow^{U_1}_{U_{0,1}}\ar[d]^{L_{1,0}} &\ar[l] A_1 \upa_{U_1}^X\ar[d] \\
A_0^{\Delta^1}\upa_{U_0}^{X} \ar[r]& A_0 \dow^{U_1}_{U_{0,1}} &\ar[l]^{L_{1,0}} A_1 \upa_{U_1}^X \\
A_0\upa_{U_0}^{X} \ar[r]\ar[u] & A_0 \dow^{U_1}_{U_{0,1}} \ar[u]&\ar[l]^{L_{1,0}} A_1 \upa_{U_1}^X \ar[u] \\
}
$$
where all the vertical maps are weak equivalences. 

Since the gluing along  $(A_1,A_0;L_{1,0})$ is the homotopy limit of
the bottom row  and the gluing along  $(A_0,A_1;K_{0,1})$ is the homotopy limit of
the top row, we get that the two results are weakly equivalent. In general in order to prove the contractibly  of space of results for arbitrary $n$ one uses the higher homotopies supplied by the contractible groupoid to show equivalences (or equivalence between equivalence etc.) of corresponding  diagrams $E_n \to \mathfrak{A}(X)$. This is here were  the symmetries of $E_n$ becomes useful.

Now let $\mathfrak{U} := \{U_0, U_1,\dots,U_n\}$,$\mathfrak{V} := \{V_0, V_1,\dots,V_m\}$  be two  affine coverings of $X$.  We would like to show that the result of our construction is independent of  the choice of a covering.
To see this consider the covering
\begin{align*}
\mathfrak{W}&:= \{U_0\cap V_0, U_1\cap V_0,\dots,U_n\cap V_0,
U_0\cap V_1, U_1\cap V_1,\dots,U_n\cap V_1,\dots,
\\
&U_0\cap V_m,U_1\cap V_m,\dots,
U_n \cap V_m\}.
\end{align*}
Indeed we get that the gluing with respect either $\mathfrak{U}$ or $\mathfrak{V}$  can be described by gluing along $\mathfrak{W}$ according to different orderings.

\bibliographystyle{amsplain}
\bibliography{The_Classical_Master_Equation_AMS}
		
\end{document}